\DeclareSymbolFont{EulerExtension}{U}{euex}{m}{n}
\DeclareMathSymbol{\euintop}{\mathop} {EulerExtension}{"52}
\DeclareMathSymbol{\euointop}{\mathop} {EulerExtension}{"48}
\def \id{\operatorname{id}}
\def \Id{\operatorname{Id}}
\def \C{\mathcal{C}}
\def \e{\varepsilon}
\def \M{\mathrm{M}}
\def \Z{\mathbb{Z}}
\def \k{\Bbbk}
\def \dim{\operatorname{dim}}
\def \Hom{\operatorname{Hom}}
\def \Id{\operatorname{Id}}
\def \Rep{\operatorname{Rep}}
\def \Rex{\operatorname{Rex}}
\def \C{\mathcal{C}}
\def \D{\Delta}
\def \e{\varepsilon}
\def \M{\mathrm{M}}
\def \End{\operatorname{End}}
\def \C{\mathcal{C}}
\def \D{\mathcal{D}}
\def \M{\mathcal{M}}
\def \Z{\mathcal{Z}}
\def \ev{\mathrm{ev}}
\def \coev{\mathrm{coev}}
\def \Vec{\mathsf{Vec}}
\def \1{\mathbf{1}}
\def \Rep{\mathsf{Rep}}
\def \op{\mathrm{op}}
\def \cop{\mathrm{cop}}
\def \biop{\mathrm{op\,cop}}
\def \intHom{\underline{\operatorname{Hom}}}
\def \pams{partially admissible mapping system}
\def \ams{admissible mapping system}
\def \pd{\boldsymbol}
\def \YD{\mathfrak{YD}}
\def \la{\langle}
\def \ra{\rangle}
\def \btl{\raisebox{0.2ex}{\,${\scriptstyle \blacktriangleleft}$}\,}
\def \btr{\raisebox{0.2ex}{\,${\scriptstyle \blacktriangleright}$}\,}
\def\joinrel{\mathrel{\mkern-4mu}}
\newcommand{\btd}{\mathop{\raisebox{0.2ex}{${\scriptstyle \blacktriangleright\joinrel\blacktriangleleft}$}}}
\newcommand{\dotltimes}
{\mathop{\raisebox{0.12ex}{$\shortmid$}\raisebox{0.2ex}{\makebox[0.86em][r]{${\scriptstyle\gtrdot\joinrel<}$}}}}
\newcommand{\dotrtimes}
{\mathop{\raisebox{0.2ex}{\makebox[0.86em][l]{${\scriptstyle>\joinrel\lessdot}$}}\raisebox{0.12ex}{$\shortmid$}}}
\numberwithin{equation}{section}
\newtheorem{theorem}{Theorem}[section]
\newtheorem{lemma}[theorem]{Lemma}
\newtheorem{proposition}[theorem]{Proposition}
\newtheorem{corollary}[theorem]{Corollary}
\newtheorem{definition}[theorem]{Definition}
\newtheorem{example}[theorem]{Example}
\newtheorem{remark}[theorem]{Remark}
\newtheorem{question}[theorem]{Question}
\newtheorem{conjecture}[theorem]{Conjecture}
\newtheorem{notation}[theorem]{Notation}
\begin{document}
\title[Partially dualized quasi-Hopf algebras]{Partially dualized quasi-Hopf algebras reconstructed from dual tensor categories to finite-dimensional Hopf algebras}

\author[K. Li]{Kangqiao Li}
\address{School of Mathematics, Hangzhou Normal University, Hangzhou 311121, China}
\email{kqli@hznu.edu.cn}

\thanks{2020 \textit{Mathematics Subject Classification}. 16T05, 18M05, 16D90.}

\keywords{Hopf algebra, Quasi-Hopf algebra, Tensor category, Module category, Dual tensor category, Reconstruction theorem.}

\thanks{$\dag$ This work was supported by National Natural Science Foundation of China [grant number 12301049].}

\date{}


\begin{abstract}
Let $H$ be a finite-dimensional Hopf algebra with a left coideal subalgebra $B$.
It is known that $\Rep(B)$, the category of finite-dimensional representations of $B$, is an indecomposable exact left $\Rep(H)$-module category.
This paper determines and systematically studies
a quasi-Hopf algebra structure $(H/B^+H)^\ast\#B$, called a (left) partial dual of $H$, which is reconstructed from the dual tensor category of $\Rep(H)$ with respect to $\Rep(B)$.
Consequently, $\Rep((H/B^+H)^\ast\#B)$ is categorically Morita equivalent to $\Rep(H)$.
As applications:
1) Our construction of partial duals unifies some classical results in the literature,
such as bismash products of matched pair of groups given by Takeuchi, bosonizations of dually paired Hopf algebras given by Heckenberger and Schneider, etc.
2) We show that any finite-dimensional Hopf algebra with coradical being an abelian extension is categorically Morita equivalent to a basic quasi-Hopf algebra.
3) We provide a process for constructing genuine quasi-Hopf algebras with an example.
\end{abstract}

\maketitle

\tableofcontents

\section{Introduction}

\subsection{Backgrounds and motivation: Reconstruction from dual tensor categories}\label{subsection:1.1}

Tensor categories play an important role in many areas of mathematics, and
the theory of (quasi-)Hopf algebras is one of the major sources and a closely related part of the theory of tensor categories.
A fundamental connection is that if $K$ is a finite-dimensional quasi-Hopf algebra, then the category $\Rep(K)$ of its finite-dimensional representations is canonically a finite tensor category. Conversely, it is more worthwhile to study when and how to reconstruct a finite-dimensional quasi-Hopf algebra $K$ from a finite tensor category $\C$, 
which means that there exists a tensor equivalence $\C\approx\Rep(K)$. This is referred to as the \textit{reconstruction theory} of quasi-Hopf algebras. Such questions were answered positively when $\C$ admits a quasi-fiber functor $\C\rightarrow\Vec$, where $\Vec$ is the category of finite-dimensional vector spaces (e.g. Majid \cite{Maj95}, Etingof and Schiffmann \cite{ES02}). Of course there are other ways to realize the reconstruction, such as the one provided by Etingof and Ostrik \cite{EO04} if $\C$ is integral.
Note that these conditions are indeed sufficient and necessary for an arbitrary finite tensor category $\C$ from which a quasi-Hopf algebra $K$ could be reconstructed.
On the other hand, one might also consider the reconstruction problem when $\C$ is defined in some particular ways.

A classical construction for finite tensor categories is the notion of \textit{dual tensor categories}, which was introduced and studied in \cite{Ost03,EO04}. Specifically, let $\M$ be an indecomposable exact left module category over a finite tensor category $\C$. The dual category of $\C$ with respect to $\M$ is defined as the finite tensor category
$$\C_\M^\ast:=\Rex_\C(\M)^\mathrm{rev}$$
of $\C$-module endofunctors of $\M$, but with reverse compositions as tensor products (opposite to the original definition in the references above) for convenience in this paper. In particular, this could be viewed as the categorical version of dual Hopf algebras, in the sense that
$\Rep(H)_\Vec^\ast\approx\Rep(H^\ast)$ holds for any finite-dimensional Hopf algebra $H$.
Furthermore, two finite tensor categories are said to be \textit{categorically Morita equivalent} (\cite{EO04,EGNO15}),
if there is an tensor equivalence $\D\approx\C_\M^\ast$ for some indecomposable exact left $\C$-module category $\M$.
Indeed, the categorically Morita equivalence was shown in \cite{Mug03} to be an equivalence relation. It has meaningful invariants (\cite{Sch01,ENO11,Shi12} etc.) and was also studied as an analogue of Morita equivalence between rings in the literature.

However, one might find that the dual category $\C_\M^\ast$ is defined in an abstract way, even if $\C$ is chosen as the category $\Rep(H)$ for a finite-dimensional Hopf algebra $H$. Fortunately, Andruskiewitsch and Mombelli \cite{AM07} classified indecomposable exact left $\Rep(H)$-module categories as $\Rep(B)$, where $B$ is an indecomposable exact left $H$-comodule algebra.
The first goal in this paper is to reconstruct from the dual category $\Rep(H)_{\Rep(B)}^\ast$ when $B$ is in particular regarded as a left coideal subalgebra of $H$.

For the purpose, we should introduce a certain kind of ``exact sequence'' with form
\begin{equation}\label{eqn:admissiblemapsys1.1}
\xymatrix{
B \ar@<.5ex>[r]^{\iota} & H \ar@<.5ex>@{-->}[l]^{\zeta}
\ar@<.5ex>[r]^{\pi\;\;\;\;\;\;\;\;\;\;\;\;}
& C:=H/B^+H\;. \ar@<.5ex>@{-->}[l]^{\gamma\;\;\;\;\;\;\;\;\;\;\;\;}  }
\end{equation}
Here, $\iota$ denotes the inclusion of the left coideal subalgebra $B\subseteq H$, and $\pi:H\twoheadrightarrow H/B^+H$ is the quotient map, while $\zeta$ and $\gamma$ satisfy several conditions listed in Definition \ref{def:PAMS}.
Although the requirements for (\ref{eqn:admissiblemapsys1.1}) seem a little bit complicated,
we remark that such a diagram would coincide with the following notions in the literature if some of the four maps $\iota,\pi,\zeta,\gamma$ are special:
\begin{itemize}
\item
If $C$ is a Hopf algebra and $\pi$, $\zeta$ are Hopf algebra maps, then
(\ref{eqn:admissiblemapsys1.1}) is an \textit{admissible mapping system} introduced by Radford \cite{Rad85};

\item
If $B$, $C$ are both Hopf algebras, and $\iota$, $\pi$ are Hopf algebra maps, then $B\xrightarrow{\iota}H\xrightarrow{\pi}C$ is referred to as a
(strictly) \textit{exact sequence of Hopf algebras} by Schneider \cite{Sch93},
or a
(cocleft) \textit{extension of Hopf algebras} by Masuoka \cite{Mas94}. In this situation,
(\ref{eqn:admissiblemapsys1.1}) is the mapping system mentioned by Schauenburg in \cite{Sch02,Sch02(b)};

\item
If $\gamma(C)$ is a right coideal subalgebra of $H$, then (\ref{eqn:admissiblemapsys1.1}) is the mapping system characterizing \textit{cross product Hopf algebras} which is introduced by Bespalov and Drabant in \cite{BD99}, and by Caenepeel, Ion, Militaru and Zhu in \cite{CMIZ99} as well.

\item
It is also related to the notion of \textit{wreath cleft algebras} introduced by Bulacu and Torrecillas in \cite{BT21} as well.
\end{itemize}

For convenience in this paper, let us call (\ref{eqn:admissiblemapsys1.1}) together with its linear dual diagram a \textit{\pams}, denoted by the pair $(\zeta,\gamma^\ast)$. Its detailed definition and basic properties would be collected in Section \ref{section:2}, in order to establish the main results.

\subsection{The main results}

Our first key result is the following one, which is a combination of Theorem \ref{thm:partialdual} and Corollary \ref{cor:catMoritaequiv}:

\begin{theorem}\label{thm:1.1}
Let $H$ be a finite-dimensional Hopf algebra over a field $\k$ with left coideal subalgebra $B$. Suppose $(\zeta,\gamma^\ast)$ is a {\pams} given in (\ref{eqn:admissiblemapsys1.1}), and
regard $C^\ast=(H/B^+H)^\ast$ as a right coideal subalgebra of $H^\ast$ by dualizing the quotient map $\pi$.
Then:
\begin{itemize}
\item[(1)]
The smash product algebra $C^\ast\#B$ has a structure of quasi-Hopf algebra, whose detailed operations are listed in Theorem \ref{thm:partialdual};
\item[(2)]
There is an equivalence between finite tensor categories:
\begin{equation}\label{eqn:1.1}
\Rep(H)_{\Rep(B)}^\ast\approx \Rep(C^\ast\#B).
\end{equation}
\end{itemize}
\end{theorem}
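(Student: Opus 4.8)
The plan is to pass through an explicit model of the dual category in which module endofunctors are replaced by bimodules, and then to exploit the coideal-subalgebra hypothesis through a Hopf--Galois structure theorem. First I would record the description of $\Rep(H)^\ast_{\Rep(B)}$ valid for any left $H$-comodule algebra $B$: by an Eilenberg--Watts argument, every right-exact endofunctor $F$ of $\Rep(B)$ has the form $N\otimes_B(-)$ for the $(B,B)$-bimodule $N=F(B)$, and the extra datum of a $\Rep(H)$-module functor structure on $F$ is precisely a left $H$-coaction on $N$ turning it into a relative Hopf bimodule. Composition of functors corresponds to the relative tensor product, so after reversing compositions one obtains a tensor equivalence
\[
\Rep(H)^\ast_{\Rep(B)}\;\simeq\;\bigl({}^{H}_{B}\M_{B},\,\otimes_B\bigr),
\]
where ${}^{H}_{B}\M_{B}$ denotes the category of left $H$-comodules equipped with a compatible $B$-bimodule structure and the tensor product is $\otimes_B$ taken in the order dictated by the reversal convention. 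This reduces everything to a concrete, purely Hopf-algebraic category.

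Next I would specialize to a left coideal subalgebra $B$ and set $\bar H=H/B^+H$, a quotient right $H$-module coalgebra with projection $\pi$. The key structural inputs are that $H$ is free, hence faithfully flat, over $B$ (Nichols--Zoeller/Skryabin) and that $B$ is exactly the ring of $\bar H$-coinvariants, so that $B\subseteq H$ is a faithfully flat $\bar H$-Galois extension. Applying the structure theorem for relative Hopf modules, the right $B$-module structure of an object of ${}^{H}_{B}\M_{B}$ becomes free and may be trivialized by a coinvariants functor and its induction adjoint, yielding an equivalence of abelian categories between ${}^{H}_{B}\M_{B}$ and the category of $\bar H$-comodules carrying a compatible residual $B$-action. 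Since $\bar H$ is finite-dimensional this latter category is precisely $\Rep(\bar H^\ast\#B)$; here I would identify $\bar H^\ast\#B$ with the subalgebra of the Heisenberg double $H^\ast\#H$ generated by $\bar H^\ast$ and $B$, which closes under multiplication because $\bar H^\ast=(B^+H)^\perp$ is stable under the left hit action $h\rightharpoonup(-)$ and because $\Delta(B)\subseteq H\otimes B$. This produces the underlying abelian equivalence of \eqref{eqn:1.1} and pins down the algebra in part (1).

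Finally I would transport $\otimes_B$ across this equivalence to equip $\Rep(K)$ with a monoidal structure, where $K=(H/B^+H)^\ast\#B$; by quasi-fiber-functor reconstruction applied to the forgetful functor $\Rep(K)\to\Vec$ --- which is quasi-monoidal because on underlying vector spaces the transported tensor product is the ordinary $\otimes_\k$ --- this endows $K$ with a quasi-Hopf algebra structure, proving part (1), the counit and coproduct being read off from the unit bimodule $B$ and the diagonal $B$-action and the quasi-antipode being supplied by rigidity of the finite tensor category. The tensor equivalence \eqref{eqn:1.1} is then the composite of the two steps above. I expect the main obstacle to be precisely the monoidality of the Galois reduction: the equivalence of the second step is only monoidal up to a nontrivial comparison isomorphism, because the identification used to trivialize the $B$-action does not commute on the nose with the relative tensor products $\otimes_B$. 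Tracking this discrepancy is what produces the Drinfeld associator $\phi$, and thereby forces a genuinely \emph{quasi}-Hopf rather than an honest Hopf algebra; the crux is thus to compute $\phi$ and to verify that the transported associativity constraint is coherent, i.e.\ that $\phi$ satisfies the pentagon and mixed-unit axioms.
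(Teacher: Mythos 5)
Your overall architecture --- realize the dual category as a concrete bimodule category, reduce it by a Takeuchi-type quotient to modules over a smash product, and then apply quasi-fiber-functor reconstruction --- is the same as the paper's, but in mirror image: you model $\Rep(H)^\ast_{\Rep(B)}$ by $B$-bimodules carrying a compatible left $H$-comodule structure and then kill the right $B$-action, whereas the paper models it by $(H/B^+H)^\ast$-bimodules in $\Rep(H)$ (equivalently, Doi--Hopf modules ${}_{(H/B^+H)^\ast}\mathfrak{M}_{(H/B^+H)^\ast}^{H^\ast}$, obtained from the internal End of the generator $\k\in\Rep(B)$) and kills the right $(H/B^+H)^\ast$-action via $M\mapsto M/M\left((H/B^+H)^\ast\right)^+$. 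The two presentations are exchanged by the self-duality of the setup (Proposition \ref{prop:BCP-selfdual}), and either one identifies the underlying algebra as the smash product sitting inside the Heisenberg double, exactly as you observe. So the route is genuinely parallel rather than genuinely different; neither presentation buys much over the other.

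The genuine gap sits exactly where you flag the crux. Faithful flatness, or the $\overline{H}$-Galois property, of $B\subseteq H$ yields the structure theorem and hence the equivalence of \emph{abelian} categories, but it does not produce any natural isomorphism $\overline{M\otimes_B N}\cong\overline{M}\otimes_\k\overline{N}$: ``tracking the discrepancy'' presupposes that a comparison isomorphism exists to be tracked, and Galois data alone gives none. What the paper actually needs --- and what your plan never supplies --- is the \emph{cleftness} of $H$ over $B$ (Masuoka--Doi, available because Skryabin shows every coideal subalgebra of a finite-dimensional Hopf algebra is Frobenius): a convolution-invertible, biunitary left $B$-module retraction $\zeta:H\rightarrow B$ together with its partner section $\gamma:H/B^+H\rightarrow H$, packaged as the {\pams} $(\zeta,\gamma^\ast)$ of Definition \ref{def:PAMS}. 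The convolution inverse $\overline{\gamma}^\ast$ is precisely the ingredient that lets one write the monoidal structure
$J_{M,N}(\overline{m}\otimes\overline{n})=\sum\overline{m_{(0)}\overline{\gamma}^\ast(m_{(1)})\otimes_{(H/B^+H)^\ast} n}$,
and the coproduct, the associator (\ref{eqn:associator}), and the preantipode are all expressed through $\zeta,\gamma$ and their convolution inverses. Without this datum your reduction functor carries no distinguished quasi-monoidal structure and $\phi$ cannot be written down, even in principle.

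A smaller misplacement of emphasis: once $J$ is constructed and shown to be a well-defined isomorphism of $(H/B^+H)^\ast\#B$-modules, the coherence of $\phi$ (pentagon and unit axioms) is automatic, because the entire monoidal structure is transported along an equivalence from the strictly associative $\otimes_{(H/B^+H)^\ast}$ (or $\otimes_B$ in your model). The labor in the paper is in building $J$, checking it is well defined modulo the augmentation ideal, and verifying module-map compatibility --- not in verifying the pentagon.
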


We would call $C^\ast\#B$ in Theorem \ref{thm:1.1}(1) a \textit{left partially dualized quasi-Hopf algebra} (or \textit{left partial dual}) of $H$ determined by the {\pams} $(\zeta,\gamma^\ast)$.
Meanwhile,  its dual structure is called a \textit{right partially dualized coquasi-Hopf algebra} (or \textit{right partial dual}) $C\btd B^\ast$ of $H$ in Definition \ref{def:rightpartialdual}.

As for the tensor equivalence (\ref{eqn:1.1}) in Theorem \ref{thm:1.1}(2),
roughly speaking,
it is formulated as the following composition functor:
\begin{equation}\label{eqn:1.3}
\Rep(H)_{\Rep(B)}^\ast
\approx {}_{C^\ast}\Rep(H){}_{C^\ast}
\cong {}_{C^\ast}\mathfrak{M}{}_{C^\ast}^{H^\ast}
\overset{\Phi}{\approx} {}_{C^\ast}\mathfrak{M}{}^{B^\ast}
\cong\Rep(C^\ast\#B).
\end{equation}
Here, ${}_{C^\ast}\mathfrak{M}{}_{C^\ast}^{H^\ast}$ and ${}_{C^\ast}\mathfrak{M}{}^{B^\ast}$ are categories of relative Doi-Hopf modules of respective types, whose analogues were studied as abelian categories in
\cite{Tak79,Mas92,CMZ97,Sch02,Skr07} etc.
Specifically,
the first equivalence is the canonical description (\cite{EO04}) of dual tensor categories when $C^\ast$ is regarded as a left $H$-module algebra via the hit action, and the last isomorphism is classical in \cite{Doi92}.

Therefore, the most complicated step within (\ref{eqn:1.3}) is to make the abelian category
${}_{C^\ast}\mathfrak{M}{}^{B^\ast}$ (or $\Rep(C^\ast\#B)$) a tensor category, as well as to
equip the linear abelian equivalence $\Phi:M\mapsto \overline{M}:= M/M\left(C^\ast\right)^+$ introduced in \cite{Tak79} with a suitable monoidal structure $J$.
This is also the reason why our formulation is highly dependent on the
properties of so-called {\pams}s $(\zeta,\gamma^\ast)$. In particular, such systems require the
\textit{cocleftness property} of finite-dimensional Hopf algebras $H$ over (left or right) coideal subalgebras, which was introduced by Masuoka and Doi \cite{MD92} and confirmed by Skryabin \cite{Skr07}. Specifically, since $H^\ast$ is right cocleft over its right coideal subalgebra $C^\ast$, there exists a (unitary and counitary) convolution invertible right $C^\ast$-module map $\gamma^\ast:H^\ast\rightarrow C^\ast$ retracting the injection $\pi^\ast:C^\ast\rightarrowtail H^\ast$. As a result, the desired monoidal structure $J$ for $\Phi$ could be defined as
\begin{equation}\label{eqn:1.4}
J_{M,N}:\overline{M}\otimes\overline{N}\cong\overline{M\otimes_{C^\ast}N},\;\;\;\;
\overline{m}\otimes\overline{n}
\mapsto\sum\overline{m_{(0)}\overline{\gamma}^\ast(m_{(1)})\otimes_{C^\ast}n},
\end{equation}
where $\overline{\gamma}^\ast$ is the convolution inverse of $\gamma^\ast$.
It helps us determine the whole structures of the left partial dual $C^\ast\#B$ as a quasi-Hopf algebra in Theorem \ref{thm:1.1}(1) with quite a long but constructive proof. In fact, the other related retraction $\zeta:H\rightarrow B$ admitting symmetric conditions with $\gamma^\ast$ is also required for this purpose.

Also, it should be remarked that (\ref{eqn:1.4}) coincides with a structure mentioned by Schauenburg \cite{Sch02} for the situation when $C^\ast$ is a Hopf subalgebra of $H^\ast$, and then $\Phi$ is also analogous to the \textit{de-equivariantization of the Hopf algebra} $H^\ast$ described by Angiono, Galindo and Mariana \cite{AGM14}.

Furthermore,
in order to study the classical examples (see Example \ref{ex:1.2} in the following subsection), we provide some elementary properties of partial dualizations with detailed proofs. The first one describes the \textit{opposite and coopposite structures} of left partial duals, which is a combination of the results given in Subsections \ref{subsection:5.1} and \ref{subsection:5.2}:

\begin{proposition}\label{prop:1.a}
Let $H$ be a finite-dimensional Hopf algebra with left partially dualized quasi-Hopf algebra $C^\ast\#B$. Then there are isomorphisms
\begin{eqnarray*}
& (C^\ast\#B)^\biop\cong B^\biop\# {C^\ast}^\biop, & \\
&(C^\ast\#B)^\op\cong {C^\ast}^\op\# B^\op
\;\;\;\;\text{and}\;\;\;\;
(C^\ast\#B)^\cop\cong {C^\ast}^\cop\# B^\cop&
\end{eqnarray*}
of quasi-bialgebras,
where $B^\biop\# {C^\ast}^\biop$, ${C^\ast}^\op\# B^\op$ and ${C^\ast}^\cop\# B^\cop$ are respectively left partial duals determined by certain {\pams}s (induced in Subsections \ref{subsection:2.2} and \ref{subsection:2.3}).
\end{proposition}

The other one is the conclusion of Subsection \ref{subsection:6.2} for the situation when partial duals are Hopf algebras (with trivial associators). It describes the structures obtained by applying multiple partial dualizations in canonical ways, and shows the
``partial self-dual property'' that left and right partial dualizations are mutually inverse:

\begin{proposition}\label{prop:1.b}
Suppose the left partial dual $C^\ast\#B$ of $H$ is a Hopf algebra (with trivial associator). Then:
\begin{itemize}
\item[(1)]
$H^\ast$ is a left partial dual of $C^\ast\#B$;
\item[(2)]
The right partial dual $C\btd B^\ast$ is a left partial dual of $H^\ast$;
\item[(3)]
$H$ is a left partial dual of the right partial dual of $C^\ast\#B$.
\end{itemize}
Similar statements hold for right partial dualizations.
\end{proposition}

\subsection{Classical examples: Unified structures in the literature}


The main result on (left and right) partial duals might be meaningful in a sense, as it is introduced in this paper and other papers that our construction unifies some classical structures of (quasi-)Hopf algebras as examples. We list some of them as follows, which also motivates us to study partial duals further:

\begin{example}\label{ex:1.2}
Suppose all the algebras, coalgebras and (quasi-)Hopf algebras in this example are over a field $\k$.
\begin{itemize}
\item[(1)]
Let $(F,G)$ be a matched pair of finite groups introduced by Takeuchi \cite{Tak81}, which determines a factorizable group $F\bowtie G$. Then:
\begin{itemize}
\item[i)]
The bismash product Hopf algebra $\k^G\#\k F$ (\cite{Tak81}) is a left partial dual of the group algebra $\k(F\bowtie G)$
\item[ii)]
The other bismash product Hopf algebra $\k G\#\k^F$ (e.g. \cite{BGM96}) is a right partial dual of the group algebra $\k(F\bowtie G)$.
\end{itemize}

\item[(2)]
Let $H$ and $K$ be two finite-dimensional Hopf algebras with a Hopf pairing $\sigma:K^\ast\otimes H\rightarrow\k$.
Then the generalized quantum double $K^{\ast\,\cop}\bowtie_\sigma H$ introduced by Doi and Takeuchi \cite{DT94} is a left partial dual of $K^\op\otimes H$.

- In particular, the Drinfeld double $D(H)$ (\cite{Dri86}) is a left partial dual of $H^\op\otimes H$.

\item[(3)]
Let $K\rightarrow H\rightarrow Q$ be a split extension of finite-dimensional Hopf algebras, which determines the bismash product $H=K\#Q$. Then the double cross product $Q\bowtie K^\ast$ studied by Majid \cite{Maj90} and Schauenburg \cite{Sch02} is a right partial dual of $H$.

\item[(4)]
Let $K\rightarrow H\rightarrow Q$ be an abelian extension of finite-dimensional Hopf algebras in the sense of \cite{Mas02}. Then each left partial dual of form $Q^\ast\#K$ is a commutative quasi-Hopf algebra, and hence $H$ is known to be group-theoretical (\cite{Nat03}).

\item[(5)]
Let $A$ be a finite-dimensional Hopf algebra. Suppose
$(B',B)$ is a dual pair of Hopf algebras in the category ${}^A_A\mathfrak{YD}$ of finite-dimensional (left-left) Yetter-Drinfeld modules introduced by Heckenberger and Schneider \cite{HS13}.
Then
the bosonization $B'\dotrtimes A$ is a right partial dual of $B\dotrtimes A$.
\end{itemize}
\end{example}


Although one could find that most of the examples of partial duals listed above are in fact Hopf algebras, we would provide in Subsection \ref{subsection:6.1} the conditions when a general left partially dualized quasi-Hopf algebra $C^\ast\#B$ has trivial associators.
If so, the corresponding {\pams} is said to be \textit{admissible}, which is closely related to the factorization problem of Hopf algebras (as cross products in the senses of \cite{BD99,BCT13}).
Consequently, we explain as another example that every partial dual of the 4-dimensional Taft algebra is a Hopf algebra isomorphic to itself.

Of course, there do exist examples of left partial dualized Hopf algebras $C^\ast\#B$ which are not Hopf algebras. Furthermore, some of them should not be gauge equivalent to any Hopf algebras. Therefore, our results could be used to construct new quasi-Hopf algebras as applications.

\subsection{Applications: Categorical Morita invariants and construction of genuine quasi-Hopf algebras}

As we have mentioned in Subsection \ref{subsection:1.1}, the notion of dual tensor categories is used to introduce the categorically Morita equivalence between finite tensor categories.
Here, let us remark that the left center (\cite{Maj91,JS91}) is an invariant under this relation (\cite{Sch01,EO04}).
Furthermore, two finite tensor categories $\C$ and $\D$ are categorically Morita equivalent if and only if their left centers $\Z(\C)$ and $\Z(\D)$ are braided tensor equivalent
(\cite{ENO11,EGNO15}).
In particular, if $\C=\Rep(K)$ for a finite-dimensional quasi-Hopf algebra $K$, then the category
${}^{K}_{K}\YD$ of Yetter-Drinfeld modules over $K$ and $\Rep(D(K))$
are also categorical Morita invariants (\cite{HN99,Sch02',BCP06}) as they are both braided equivalent to the left center $\Z(\Rep(K))$.

Thus, since
the tensor equivalence (\ref{eqn:1.1}) in Theorem \ref{thm:1.1}(2) implies that $\Rep(H)$ and $\Rep(C^\ast\#B)$ are categorically Morita equivalent, we could
obtain the corresponding properties of the left partially dualized quasi-Hopf algebra $C^\ast\#B$, which are found in Proposition \ref{prop:YDmodsequiv}. As applications to the structures in Example \ref{ex:1.2}, one might directly obtain the following results in the literature:

\begin{corollary}\label{cor:1.3}
\begin{itemize}
\item[(1)](\cite{BGM96})
Let $(F,G)$ be a matched pair of finite groups. Then the Drinfeld doubles $D(\k^G\#\k F)$ and $D(\k(G\bowtie F))$ are gauge equivalent, and $(\k^G\#\k F)^\ast\cong\k G\#\k^F$ as Hopf algebras.

\item[(2)](cf. \cite{Sch02})
Let $H$ be a finite-dimensional Hopf algebra fitting into a split abelian extension of Hopf algebras.
Then $\Rep(H)$ is categorically Morita equivalent to $\Rep(K)$ for some finite-dimensional cocommutative Hopf algebra $K$.

\item[(3)](\cite{HS13})
Let $A$ be a finite-dimensional Hopf algebra. Suppose $(B,B')$ is a dual pair of Hopf algebras in ${}^A_A\YD$. Then there is a braided tensor equivalence
$${}^{B\dotrtimes A}_{B\dotrtimes A}\YD\approx{}^{B'\dotrtimes A}_{B'\dotrtimes A}\YD$$
between the categories of finite-dimensional Yetter-Drinfeld modules over bosonizations.
\end{itemize}
\end{corollary}

On the other hand, as our main results on partial duals are provided constructively, they could be applied to construct new quasi-Hopf algebras with certain properties, especially genuine ones.
Recall that a quasi-Hopf algebra is said to be \textit{genuine}, if it is not gauge equivalent to any ordinary Hopf algebra.
To the best of the author's knowledge, including \cite{DPR90,EG05,Ang10,Liu14} etc., there seem to be not a large number of results on constructions of genuine quasi-Hopf algebras in the literature.
However, we would determine an example of left partially dualized quasi-Hopf algebra which is genuine.

Anyway in this paper, we provide two main applications for constructing quasi-Hopf algebras as follows, which are combinations of Propositions \ref{prop:coradicalabelext(MAMS)} and \ref{prop:PDofK8genuine(MAMS)}, as well as Example \ref{ex:D8(MAMS)}:

\begin{proposition}\label{prop:1.4}
\begin{itemize}
\item[(1)]
Suppose $H$ is a finite-dimensional Hopf algebra whose coradical $H_0$ is a Hopf subalgebra fitting into an abelian extension. Then $H$ has a left partially dualized quasi-Hopf algebra which is basic.

\item[(2)]
By determining a certain left partial dual of the $8$-dimensional Kac algebra over $\mathbb{C}$, we find a non-trivial $3$-cocycle $\pd{\phi}$ on the dihidral group $D_8$, such that $(\mathbb{C}D_8)^\ast$ is a  genuine (semisimple commutative) quasi-Hopf algebra.
\end{itemize}
\end{proposition}

We remark that Proposition \ref{prop:1.4}(1) is shown by a constructive proof based on our main result Theorem \ref{thm:1.1}(1).
As for Proposition \ref{prop:1.4}(2), the structures of $(\mathbb{C}D_8)^\ast$ as a quasi-Hopf algebra are found by detailed calculations, according to a suitable {\pams} as well as our constructive operations of left partial duals. However, in order to prove that it is genuine, some gauge invariants of fusion categories should be used, such as the exponent and the Grothendieck ring (\cite{Eti02,EO04}).

More generally, the process of the construction for Proposition \ref{prop:1.4}(2) might be applied to obtain a number of (genuine) quasi-Hopf algebras, including non-commutative ones and non-semisimple ones. However, it would require complicated calculations to construct such examples.

\subsection{Organization of the paper}

To begin with, in Section \ref{section:2}, we recall and conclude the cocleftness properties of coideal subalgebras in finite-dimensional Hopf algebras, and collect necessary properties of \textit{{\pams}s} for later uses.

Afterwards, the main theorem (Theorem \ref{thm:1.1}(1)) on the construction of \textit{left partially dualized quasi-Hopf algebras} $C^\ast\#B$ is stated in Section \ref{section:3}. Its detailed operations are listed, but for the moment we only verify that the ``comultiplication'' and ``counit'' are algebra maps (and the counit axiom) with some technical calculations.

As for the associator and antipodes, we formulate them in Section \ref{section:4} by establishing the suitable tensor product bifunctor on the finite abelian category $\Rep(C^\ast\#B)$ in (\ref{eqn:1.4}), as well as making $\Phi$ a tensor equivalence with monoidal structure $J$.
Meanwhile, the reconstruction result $\Rep(H)_{\Rep(B)}^\ast\approx \Rep(C^\ast\#B)$
is stated, and some consequences on their centers (as well as Yetter-Drinfeld modules, quantum doubles) are given.

Section \ref{section:5} is devoted to provide identifications (in Proposition \ref{prop:1.a}) of opposite and coopposite structures of left partially dualized quasi-Hopf algebras, as well as the notion of \textit{right partial dualized coquasi-Hopf algebras}.

In Section \ref{section:particularcases}, we provide conditions when partial duals are ordinary Hopf algebras (with trivial associators), and describe the structures obtained by applying multiple partial dualizations (Proposition \ref{prop:1.b}).
Then our general constructions are applied to describe bismash products of matched pair of groups and bosonizations of dually paired Hopf algebras as partial duals (within Example \ref{ex:1.2}), where some classical results follow as consequences (within Corollary \ref{cor:1.3}). An easy example on determining all the left partial duals of the $4$-dimensional Taft algebra is also considered.

Finally in Section \ref{section:genuinequasi-Hopf}, we focus on the {\pams}s which arise from extension of Hopf algebras, and provide basic properties of the partial duals determined by such systems.
As applications (Proposition \ref{prop:1.4}), we study Hopf algebras whose coradical fits into an abelian extension, as well as show how to construct a genuine quasi-Hopf algebra as a left partial dual of the 8-dimensional Kac algebra.

\section{Cocleftness property and {\pams}s}\label{section:2}

Throughout this paper, all vector spaces, algebras and Hopf algebras are over a field $\k$. The tensor product over $\k$ is denoted simply by $\otimes$.
Moreover for a Hopf algebra $H$, we always denote its antipode by $S$, and Sweedler notation $\Delta(h)=\sum h_{(1)}\otimes h_{(2)}$ is always used to denote the coproduct of $h\in H$.
We refer to \cite{Swe69,Mon93,Rad12} for basics of Hopf algebras.

\subsection{Preliminaries: Cocleftness of finite-dimensional Hopf algebras over coideal subalgebras}\label{subsection:cocleft}

The cocleftness property \cite[Definition 2.2]{MD92} of finite-dimensional Hopf algebras over (left or right) coideal subalgebras plays a role in theories of the extensions for Hopf algebras. It was introduced and studied in \cite{Mas92,MD92,Mas94,Skr07} etc., some of which would be recalled or concluded in this subsection.

Let $B$ be a left coideal subalgebra of a Hopf algebra $H$. We say that $H$ is left $B$-\textit{cocleft} if there exists a convolution invertible left $B$-module map $\zeta:H\rightarrow B$ (which is also called a \textit{cointegral}).
The notion of Hopf algebras being right cocleft over right coideal subalgebras is defined similarly.

In fact, it is known that the cocleftness property always holds for finite-dimensional Hopf algebras:

\begin{lemma}\label{lem:cocleftness}(\cite{Skr07})
Let $H$ be a finite-dimensional Hopf algebra. Then:
\begin{itemize}
\item[(1)]
Each left coideal subalgebra $B$ of $H$ is Frobenius, and $H$ is left cocleft over $B$;
\item[(2)]
There is a one-to-one correspondence between left coideal subalgebras $B$ of $H$ and right coideal subalgebras $A'$ of $H^\ast$, which is given by
\begin{equation}\label{eqn:correspondence}
B\mapsto (H/B^+H)^\ast \;\;\;\;\;\;\text{and}\;\;\;\;\;\;
A'\mapsto(H^\ast/H^\ast {A'}^+)^\ast.
\end{equation}
Here $B^+$ denotes the intersection of $B$ and the kernel of the counit, and ${A'}^+$ denotes similarly.
\end{itemize}
\end{lemma}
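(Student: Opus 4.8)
The plan is to assemble both parts from Skryabin's deep module-theoretic results in \cite{Skr07}, supplemented by the cleft/normal-basis dictionary of Masuoka and Doi \cite{MD92}, treating the genuinely hard input as a black box. The backbone of everything is the assertion that a finite-dimensional Hopf algebra $H$ is \emph{free} (equivalently, faithfully flat) as a one-sided module over any one-sided coideal subalgebra; I would invoke this from \cite{Skr07} rather than reprove it, since it is exactly the analytic-combinatorial core behind the statement.

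For part (1), I would take the Frobenius property of $B$ directly from Skryabin's theorem, noting conceptually that it reflects $H$ being a Frobenius extension of $B$. The real work is the cocleftness, i.e. producing a convolution-invertible left $B$-module retraction $\zeta\colon H\to B$ of the inclusion $B\hookrightarrow H$. Here I would use the equivalence, within the Masuoka--Doi framework \cite{MD92}, between cocleftness of $H$ over $B$ and the \emph{normal basis property} of the coideal-subalgebra extension $B\subseteq H$ relative to the quotient coalgebra $H/B^+H$. Freeness of $H$ over $B$ furnishes precisely such a normal basis; from a basis adapted to the $H/B^+H$-coaction one builds $\zeta$ explicitly, then checks that it is left $B$-linear and that the map dual to the coaction provides its convolution inverse.

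For part (2), I would verify that the two assignments in \eqref{eqn:correspondence} are well defined and mutually inverse. First, since $B$ is a left coideal subalgebra, a short Sweedler computation shows $B^+H$ is simultaneously a left ideal and a coideal, so $H/B^+H$ is a quotient left $H$-module coalgebra; dualizing turns a left module into a right comodule, whence $(H/B^+H)^\ast$ is a right coideal subalgebra of $H^\ast$, and symmetrically for $A'\mapsto(H^\ast/H^\ast{A'}^+)^\ast$. The substance is that the round trip $B\mapsto(H/B^+H)^\ast\mapsto\big(H^\ast/H^\ast((H/B^+H)^\ast)^+\big)^\ast$ returns $B$. I would carry this out by an annihilator computation: identify $(H/B^+H)^\ast$ with the annihilator $(B^+H)^\perp\subseteq H^\ast$, rewrite $H^\ast((H/B^+H)^\ast)^+$ in terms of $B$, and invoke $H^{\ast\ast}\cong H$ to close the loop. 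The dimension bookkeeping $\dim B\cdot\dim(H/B^+H)=\dim H$ that makes the nested annihilators match is exactly what freeness of $H$ over $B$ (and of $H^\ast$ over $A'$) guarantees.

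The main obstacle is therefore the freeness theorem itself. Both the Frobenius conclusion and the bijectivity of the correspondence ultimately rest on knowing that $H$ is free over its coideal subalgebras and that $H/B^+H$ has the expected dimension; once this is granted, the remaining steps are formal manipulations with convolution, annihilators, and finite-dimensional duality. I would thus concentrate the argument on importing \cite{Skr07} cleanly and then discharging these routine verifications.
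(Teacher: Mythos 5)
Your overall architecture --- import Skryabin's theorem for the hard module-theoretic input and then use the Masuoka--Doi cleftness machinery --- is exactly the paper's strategy: the paper proves (1) by citing \cite[Theorem 6.1(i)]{Skr07} for the Frobenius property of $B^{\mathrm{op\,cop}}$ inside $H^{\mathrm{op\,cop}}$ and then \cite[Theorem 3.5]{MD92} to pass from \emph{Frobenius} to \emph{cocleft}, and proves (2) by citing \cite[Corollary 6.5]{Skr07} together with \cite[Proposition 2.10(ii)]{Mas92}. The one step of your part (1) that does not hold as stated is the claim that freeness of $H$ over $B$ ``furnishes precisely such a normal basis.'' Freeness gives an isomorphism $H\cong B^{(n)}$ of left $B$-modules only; the normal basis property demands an isomorphism $H\cong B\otimes H/B^+H$ that is simultaneously left $B$-linear and right $H/B^+H$-colinear, and a free $B$-basis need not be adapted to the coaction. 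The correct input to the Masuoka--Doi equivalence is the Frobenius property of $B$ (which you already have from Skryabin), not bare freeness; with that substitution your argument for (1) coincides with the paper's.

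For part (2) you genuinely diverge from the paper: instead of quoting \cite[Corollary 6.5]{Skr07} and \cite[Proposition 2.10(ii)]{Mas92}, you propose a direct annihilator computation. This works, but note that the round trip reduces to the identity $(H^\ast{A'}^+)^\perp=\{h\in H\mid \Delta(h)\in H\otimes(B^+H+\k 1)\}=B$, i.e.\ to the statement that $B$ equals the coinvariants of $H$ under the quotient coaction $(\id\otimes\pi)\circ\Delta$. The inclusion $B\subseteq$ coinvariants is formal, but the reverse inclusion is not pure ``dimension bookkeeping'': you need $\dim$ of the coinvariant space to equal $\dim H/\dim(H/B^+H)$, which is again a consequence of the freeness (or cocleftness) established in (1) --- this is precisely \cite[Proposition 3.10(1)]{Mas94(b)}, which the paper invokes elsewhere for the same purpose. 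So your route is more self-contained and explicit, at the cost of having to prove the coinvariants identification that the paper's citations package for free; both are legitimate.
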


\begin{proof}
\begin{itemize}
\item[(1)]
Consider the biopposite finite-dimensional Hopf algebra $H^\biop$ with right coideal subalgebra $B^\biop$.
We can know by \cite[Theorem 6.1(i)]{Skr07} that $B^\biop$ is a Frobenius algebra, which implies that $H^\biop$ is right $B^\biop$-cocleft by \cite[Theorem 3.5]{MD92}.
This is equivalent to say that $B$ is Frobenius and $H$ is left $B$-cocleft.

\item[(2)]
It is sufficient to apply \cite[Corollary 6.5]{Skr07} to Hopf algebras $H^\cop$ and $H^{\ast\op}$, which are dual to each other. Specifically, note that all the left coideal subalgebras $B$ of $H$ and all the right coideal subalgebras $A'$ of $H^\ast$ are Frobenius according to the proof of (1) above. Therefore, \cite[Proposition 2.10(ii)]{Mas92} can be used on $H^\cop$ to obtain the correspondence (\ref{eqn:correspondence}).
\end{itemize}
\end{proof}

For any left coideal subalgebra $B$ of a finite-dimensional Hopf algebra $H$, we always denote respectively the corresponding inclusion map and quotient map by
\begin{equation}\label{eqn:coidealsubalgs}
\iota:B\rightarrowtail H \;\;\;\;\;\;\text{and}\;\;\;\;\;\; \pi:H\twoheadrightarrow H/B^+H,
\end{equation}
and one could find that the correspondence (\ref{eqn:correspondence}) sends $\iota$ to the injection
$\pi^\ast:(H/B^+H)^\ast\rightarrowtail H^\ast$
of right $H^\ast$-comodule algebras. Thus $(H/B^+H)^\ast$ can be regarded as a right coideal subalgebra of $H^\ast$ via $\pi^\ast$, and $H^\ast$ is also right $(H/B^+H)^\ast$-cocleft. Consequently, we will use notations for $b\in B$ and $f\in (H/B^+H)^\ast$ that
\begin{equation}\label{eqn:coidealsubalgsnotation(MAMS)}
\sum b_{(1)}\otimes b_{(2)}\in H\otimes B
\;\;\;\;\;\;\text{and}\;\;\;\;\;\;\sum f_{(1)}\otimes f_{(2)}\in (H/B^+H)^\ast\otimes H^\ast
\end{equation}
to represent the structures of the left and right coideals (or comodules) respectively.

For the sake of subsequent applications, we conclude more cocleftness properties of $\iota$ and $\pi^\ast$ as follows. The counits of $H$ and $H^\ast$ are denoted by $\e$ and $\langle-,1\rangle$ respectively, by which one could immediately write
\begin{equation}\label{eqn:piiota}
\pi\circ\iota=\la\e|_B,-\ra\pi(1).
\end{equation}
Moreover, we would denote the quotient right $H$-module structure of $H/B^+H$ by $\btl$.

\begin{lemma}\label{lem:cleftcocleft}
Let $H$ be a finite-dimensional Hopf algebra with left coideal subalgebra $B$. Then there exist convolution invertible maps $\zeta:H\rightarrow B$ and $\gamma:H/B^+H\rightarrow H$ such that
\begin{equation}\label{eqn:admissiblemapsys}
\begin{array}{ccc}
\xymatrix{
B \ar@<.5ex>[r]^{\iota} & H \ar@<.5ex>@{-->}[l]^{\zeta} \ar@<.5ex>[r]^{\pi\;\;\;\;\;\;}
& H/B^+H \ar@<.5ex>@{-->}[l]^{\gamma\;\;\;\;\;\;}  }
\end{array}
\end{equation}
where:
\begin{itemize}
\item[(1)]
$\iota$ is a map of left $H$-comodules and algebras, and $\zeta$ is a map of left $B$-modules;
\item[(2)]
$\pi$ is a map of right $H$-modules and coalgebras, and $\gamma$ is a map of right $H/B^+H$-comodules;
\item[(3)]
Denote the convolution multiplication on $\End_\k(H)$ by $\ast$. Then
\begin{equation}\label{eqn:convolution}
(\iota\circ\zeta)\ast(\gamma\circ\pi)=\id_H.
\end{equation}
Moreover, $\zeta\circ\gamma=\la\e_{H/B^+H},-\ra 1_B$ holds on $H/B^+H$.
\end{itemize}
\end{lemma}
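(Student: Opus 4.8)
The plan is to obtain $\zeta$ directly from cocleftness and to manufacture $\gamma$ out of $\zeta$ by a single convolution formula, so that the key identity \eqref{eqn:convolution} holds essentially by construction; the only genuinely new computation is the well-definedness of $\gamma$. First I would dispose of the structural assertions that cost nothing. The map $\iota$ is a morphism of algebras and of left $H$-comodules precisely because $B$ is a left coideal subalgebra (so $\Delta\iota(b)=\sum b_{(1)}\otimes b_{(2)}$ with $b_{(2)}\in B$), and $\pi$ is a morphism of coalgebras and of right $H$-modules because $B^+H$ is at once a coideal and a right ideal. For $\zeta$ I would invoke Lemma \ref{lem:cocleftness}(1): $H$ is left $B$-cocleft, so there is a convolution invertible left $B$-module map $\zeta\colon H\to B$; write $\bar\zeta$ for its convolution inverse in $\Hom_\k(H,B)$. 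Since $\iota$ is an algebra map, $\iota\zeta$ is then convolution invertible in $\End_\k(H)$ with inverse $\iota\bar\zeta$, because $(\iota\zeta)\ast(\iota\bar\zeta)=\iota\circ(\zeta\ast\bar\zeta)=\e 1_H$.

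I would then define $\gamma$ by the prescription $\gamma\circ\pi=(\iota\bar\zeta)\ast\id_H$, that is, $\gamma(\pi(h))=\sum\bar\zeta(h_{(1)})h_{(2)}$. This choice is in fact forced: as $\iota\zeta$ is convolution invertible with inverse $\iota\bar\zeta$, equation \eqref{eqn:convolution} is equivalent to $\gamma\circ\pi=(\iota\bar\zeta)\ast\id_H$. The one nontrivial point, and the main obstacle, is that the right-hand side descends along $\pi$, i.e. annihilates $\ker\pi=B^+H$. I expect to establish this from the ``twisted module'' identity
$$\sum\bar\zeta(b_{(1)}x)\,b_{(2)}=\e(b)\,\bar\zeta(x)\qquad(b\in B,\ x\in H),\qquad \textstyle\sum b_{(1)}\otimes b_{(2)}\in H\otimes B.$$
To prove it I would expand $(\bar\zeta\ast\zeta)(bx)=\e(b)\e(x)1_B$ using $\Delta(bx)=\sum b_{(1)}x_{(1)}\otimes b_{(2)}x_{(2)}$ and the left $B$-linearity $\zeta(b_{(2)}x_{(2)})=b_{(2)}\zeta(x_{(2)})$ (legitimate since $b_{(2)}\in B$), recognize the resulting expression as the convolution of the claimed left-hand side with $\zeta$, and cancel $\zeta$ by its convolution invertibility. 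Substituting this identity into $\gamma(\pi(bh))$ gives $\e(b)\gamma(\pi(h))$, so $(\iota\bar\zeta)\ast\id_H$ kills $B^+H$ and $\gamma$ is well defined.

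Everything else should then be formal. Identity \eqref{eqn:convolution} holds since $(\iota\zeta)\ast(\gamma\pi)=(\iota\zeta)\ast(\iota\bar\zeta)\ast\id_H=\id_H$, and $\zeta\circ\gamma=\la\e_{H/B^+H},-\ra 1_B$ follows from $\zeta(\gamma(\pi(h)))=\sum\bar\zeta(h_{(1)})\zeta(h_{(2)})=(\bar\zeta\ast\zeta)(h)=\e(h)1_B$ together with $\e_{H/B^+H}\circ\pi=\e$. For the right $H/B^+H$-comodule property of $\gamma$ I would apply $(\id\otimes\pi)\Delta$ to $\gamma(\pi(h))=\sum\bar\zeta(h_{(1)})h_{(2)}$; expanding the coproduct and using $\pi(bx)=\e(b)\pi(x)$ for $b\in B$ (a consequence of right $H$-linearity of $\pi$ and \eqref{eqn:piiota}) collapses the $B$-factor through the counit and leaves exactly $\sum\gamma(\pi(h_{(1)}))\otimes\pi(h_{(2)})$. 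Finally, to see that $\gamma$ is convolution invertible I would exhibit its inverse explicitly by $\bar\gamma\circ\pi=S\ast(\iota\zeta)$, i.e. $\bar\gamma(\pi(h))=\sum S(h_{(1)})\zeta(h_{(2)})$; this descends along $\pi$ because $\sum S(b_{(1)})b_{(2)}=\e(b)1$ by the antipode axiom (after $S$ is pulled through the product anti-multiplicatively and $\zeta$ is applied left $B$-linearly), and then, using $\id_H\ast S=\e 1_H$,
$$(\gamma\ast\bar\gamma)\circ\pi=(\gamma\pi)\ast(\bar\gamma\pi)=(\iota\bar\zeta)\ast\id_H\ast S\ast(\iota\zeta)=(\iota\bar\zeta)\ast(\iota\zeta)=\e 1_H,$$
with the symmetric computation on the other side; since $\pi$ is a surjective coalgebra map (so $(-)\circ\pi$ is an injective homomorphism of convolution algebras) this yields $\gamma\ast\bar\gamma=\bar\gamma\ast\gamma=\la\e_{H/B^+H},-\ra 1_H$.

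In short, all the identities in (1)--(3) and both convolution invertibilities reduce to elementary convolution calculus in $\End_\k(H)$ and the antipode axioms once $\gamma$ exists; the single real difficulty I anticipate is the descent of $(\iota\bar\zeta)\ast\id_H$ through $\pi$, which amounts to controlling how the convolution inverse $\bar\zeta$ interacts with left multiplication by $B$ and with the coideal coproduct, and is exactly what the twisted module identity above supplies.
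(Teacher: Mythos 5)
Your proposal is correct and follows essentially the same route as the paper: you take $\zeta$ from cocleftness, define $\gamma\circ\pi=(\iota\overline{\zeta})\ast\id_H$ and $\overline{\gamma}\circ\pi=S\ast(\iota\zeta)$ exactly as in the paper's formulas (\ref{eqn:gammadef}) and (\ref{eqn:gammabardef}), and your "twisted module identity" $\sum\overline{\zeta}(b_{(1)}x)b_{(2)}=\e(b)\overline{\zeta}(x)$ is precisely the cancellation that the paper performs inline by inserting $\zeta\ast\overline{\zeta}=\e 1$ into the well-definedness computation. The only differences are cosmetic (you isolate that identity as a lemma and make the convolution-invertibility of $\gamma$ explicit via the injectivity of $(-)\circ\pi$ on convolution algebras, which the paper leaves as a "straightforward verification").
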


\begin{proof}
Firstly, the desired properties for $\iota$ and $\pi$ are evident as long as we note that $B^+H$ is a coideal of $H$, and we could write
\begin{equation}\label{eqn:iotapi}
\sum b_{(1)}\otimes\iota(b_{(2)})=\sum \iota(b)_{(1)}\otimes\iota(b)_{(2)}\in H\otimes H
\;\;\;\;\text{and}\;\;\;\;\pi(hk)=\pi(h)\btl k\in H/B^+H
\end{equation}
with our notations for all $b\in B$ and $h,k\in H$.

According to Lemma \ref{lem:cocleftness}(1), there exists a left $B$-module map $\zeta:H\rightarrow B$ with the convolution inverse $\overline{\zeta}$, and it follows that
\begin{equation}\label{eqn:zeta}
\zeta[\iota(b)h]=b\zeta(h)\;\;\;\;\;\;(\forall b\in B,\;h\in H).
\end{equation}
Then consider the map
\begin{equation}\label{eqn:gammadef}
\gamma:H/B^+H\rightarrow H,\;\pi(h)\mapsto\sum\iota[\overline{\zeta}(h_{(1)})]h_{(2)}.
\end{equation}
It is well-defined, as one could compute for any $b\in B$ and $h\in H$ that
\begin{eqnarray*}
\gamma[\pi(\iota(b)h)]
&\overset{(\ref{eqn:gammadef})}{=}&
\sum\iota[\overline{\zeta}(\iota(b)_{(1)}h_{(1)})]\iota(b)_{(2)}h_{(2)}
~\overset{(\ref{eqn:iotapi})}{=}~
\sum\iota[\overline{\zeta}(b_{(1)}h_{(1)}]\iota(b_{(2)})h_{(2)}  \\
&=&
\sum\iota[\overline{\zeta}(b_{(1)}h_{(1)})b_{(2)}]h_{(2)}
~=~ \sum\iota[\overline{\zeta}(b_{(1)}h_{(1)})b_{(2)}\zeta(h_{(2)})\overline{\zeta}(h_{(3)})]h_{(4)}  \\
&\overset{(\ref{eqn:zeta})}{=}&
    \sum\iota[\overline{\zeta}(b_{(1)}h_{(1)})\zeta(\iota(b_{(2)})h_{(2)})\overline{\zeta}(h_{(3)})]h_{(4)}
~=~ \e(b)\sum\iota[\overline{\zeta}(h_{(1)})]h_{(2)}  \\
&\overset{(\ref{eqn:gammadef})}{=}&
\gamma[\pi(\e(b)h)].
\end{eqnarray*}
Furthermore, a straightforward verification shows that $\gamma$ has a convolution inverse
\begin{equation}\label{eqn:gammabardef}
\overline{\gamma}:H/B^+H\rightarrow H,\;\pi(h)\mapsto \sum S(h_{(1)})\iota[\zeta(h_{(2)})],
\end{equation}
which is also well-defined because
\begin{eqnarray*}
\overline{\gamma}[\pi(\iota(b)h)]
&\overset{(\ref{eqn:gammabardef})}{=}&
\sum S(b_{(1)}h_{(1)})\iota[\zeta(\iota(b_{(2)})h_{(2)})]
~\overset{(\ref{eqn:zeta})}{=}~ \sum S(h_{(1)})S(b_{(1)})\iota[b_{(2)}\zeta(h_{(2)})]  \\
&=& \sum S(h_{(1)})S(b_{(1)})\iota(b_{(2)})\iota[\zeta(h_{(2)})]
~\overset{(\ref{eqn:iotapi})}{=}~
\e(b)\sum S(h_{(1)})\iota[\zeta(h_{(2)})]  \\
&\overset{(\ref{eqn:gammabardef})}{=}&
\overline{\gamma}[\pi(\e(b)h)]
\end{eqnarray*}
for any $b\in B$ and $h\in H$.

Finally, the remaining properties for $\zeta$ and $\gamma$ are shown in the followings:

\begin{itemize}
\item[(1)]
The desired property of $\zeta$ has been obtained as (\ref{eqn:zeta}) at the beginning of this proof.

\item[(2)]
Let us verify that $\gamma$ defined by (\ref{eqn:gammadef}) preserves right $H/B^+H$-coactions, where the right $H/B^+H$-comodule structure on $H$ is chosen to be $(\id\otimes\pi)\circ\Delta$. Indeed, note in (\ref{eqn:piiota}) that $\pi\circ\iota$ is trivial on $B$, and hence for any $h\in H$,
\begin{eqnarray*}
\sum\gamma[\pi(h)]_{(1)}\otimes\pi\left(\gamma[\pi(h)]_{(2)}\right)
&\overset{(\ref{eqn:gammadef})}{=}&
\sum\iota[\overline{\zeta}(h_{(1)})]_{(1)}h_{(2)}
    \otimes\pi\left(\iota[\overline{\zeta}(h_{(1)})]_{(2)}h_{(3)}\right)  \\
&\overset{(\ref{eqn:iotapi})}{=}&
\sum\overline{\zeta}(h_{(1)})_{(1)}h_{(2)}\otimes
    \left[\pi\left(\iota[\overline{\zeta}(h_{(1)})_{(2)}]\right)\btl h_{(3)}\right]  \\
&\overset{(\ref{eqn:piiota})}{=}&
\sum\iota[\overline{\zeta}(h_{(1)})]h_{(2)}\otimes
    \left[\pi(1)\btl h_{(3)}\right]  \\
&\overset{(\ref{eqn:iotapi})}{=}&
\sum\iota[\overline{\zeta}(h_{(1)})]h_{(2)}\otimes \pi(h_{(3)})  \\
&\overset{(\ref{eqn:gammadef})}{=}&
\sum\gamma[\pi(h_{(1)})]\otimes\pi(h_{(2)})  \\
&=&
\sum\gamma[\pi(h)_{(1)}]\otimes\pi(h)_{(2)}.
\end{eqnarray*}

\item[(3)]
For any $h\in H$, we know by the definition of $\gamma$ in (\ref{eqn:gammadef}) that
\begin{eqnarray*}
\sum \iota[\zeta(h_{(1)})]\gamma[\pi(h_{(2)})]
&\overset{(\ref{eqn:gammadef})}{=}&
\sum \iota[\zeta(h_{(1)})]\iota[\overline{\zeta}(h_{(2)})]h_{(3)}  \\
&=&  \sum \iota[\zeta(h_{(1)})\overline{\zeta}(h_{(2)})]h_{(3)}
= h.
\end{eqnarray*}
The desired equation (\ref{eqn:convolution}) is obtained.

Also, one could know by \cite[Lemma 1]{DT86} that $\iota\circ\overline{\zeta}$ is the convolution inverse of $\iota\circ\zeta\in\End_\k(H)$, as $\iota$ is an algebra map. Consequently, it follows by (\ref{eqn:convolution}) that the equation $\gamma\circ\pi=(\iota\circ\overline{\zeta})\ast\id_H$ holds as well, which implies that
\begin{eqnarray*}
\zeta\left(\gamma[\pi(h)]\right)
&=& \sum\zeta\left(\iota[\overline{\zeta}(h_{(1)})]h_{(2)}\right)
\overset{(\ref{eqn:zeta})}{=}
\sum\overline{\zeta}(h_{(1)})\zeta(h_{(2)})  \\
&=& \la\e,h\ra1
=\la\varepsilon_{H/B^+H},\pi(h)\ra1
\end{eqnarray*}
for any $h\in H$. In other words, $\zeta\circ\gamma$ is trivial on $H/B^+H$.
\end{itemize}
\end{proof}

The properties for $\zeta^\ast:B^\ast\rightarrow H^\ast$ and $\gamma^\ast:H^\ast\rightarrow(H/B^+H)^\ast$ dual to those in Lemma \ref{lem:cleftcocleft} are more useful in this paper, which are listed in the following corollary. We denote by $\btr$ the left $H^\ast$-module structure on $B^\ast$ induced by $\iota^\ast$.

\begin{corollary}\label{cor:cleftcocleft*}
Suppose $\zeta:H\rightarrow B$ and $\gamma:H/B^+H\rightarrow H$ satisfy the properties in Lemma \ref{lem:cleftcocleft}. Then
\begin{equation}\label{eqn:admissiblemapsys*}
\begin{array}{ccc}
\xymatrix{
(H/B^+H)^\ast \ar@<.5ex>[r]^{\;\;\;\;\;\;\pi^\ast}
& H^\ast \ar@<.5ex>@{-->}[l]^{\;\;\;\;\;\;\gamma^\ast} \ar@<.5ex>[r]^{\iota^\ast}
& B^\ast \ar@<.5ex>@{-->}[l]^{\zeta^\ast}  },
\end{array}
\end{equation}
where:
\begin{itemize}
\item[(1)]
$\iota^\ast$ is a map of left $H^\ast$-modules and coalgebras, and $\zeta^\ast$ is a map of left $B^\ast$-comodules. Namely:
\begin{equation}\label{eqn:iota*}
\iota^\ast(h^\ast k^\ast)=h^\ast\btr \iota^\ast(k^\ast)
\;\;\;\;\;\;(\forall h^\ast,k^\ast\in H^\ast),
\end{equation}
and
\begin{equation}\label{eqn:zeta*}
\sum\iota^\ast[\zeta^\ast(b^\ast)_{(1)}]\otimes\zeta^\ast(b^\ast)_{(2)}
=\sum b^\ast_{(1)}\otimes\zeta^\ast(b^\ast_{(2)})
\;\;\;\;\;\;(\forall b^\ast\in B^\ast);
\end{equation}
\item[(2)]
$\pi^\ast$ is a map of right $H^\ast$-comodules and algebras, and $\gamma^\ast$ is a map of right $(H/B^+H)^\ast$-modules. Namely:
\begin{equation}\label{eqn:pi*}
\sum\pi^\ast(f)_{(1)}\otimes\pi^\ast(f)_{(2)}
=\sum\pi^\ast(f_{(1)})\otimes f_{(2)}
\;\;\;\;\;\;(\forall f\in (H/B^+H)^\ast),
\end{equation}
and
\begin{equation}\label{eqn:gamma*}
\gamma^\ast[h^\ast\pi^\ast(f)]=\gamma^\ast(h^\ast)f
\;\;\;\;\;\;(\forall h^\ast\in H^\ast,\;f\in (H/B^+H)^\ast);
\end{equation}
\item[(3)]
Denote also the convolution multiplication on $\End_\k(H^\ast)$ by $\ast$. Then
\begin{equation}\label{eqn:convolution*}
(\zeta^\ast\circ\iota^\ast)\ast(\pi^\ast\circ\gamma^\ast)=\id_{H^\ast}.
\end{equation}
Moreover,
\begin{equation}\label{eqn:gamma*zeta*}
\gamma^\ast\circ\zeta^\ast=\langle-,1\rangle \e_{H/B^+H}
\end{equation}
holds on $B^\ast$.
\end{itemize}
\end{corollary}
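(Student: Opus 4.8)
The plan is to derive each clause by transporting the corresponding statement of Lemma \ref{lem:cleftcocleft} through the contravariant duality functor $(-)^\ast=\Hom_\k(-,\k)$, using throughout that $H$ (and hence $B$, $H/B^+H$ and all their duals) is finite-dimensional. Three elementary dualization principles are all that is needed: the $\k$-dual of an algebra map is a coalgebra map and vice versa; the $\k$-dual of a left (resp.\ right) module map is a left (resp.\ right) comodule map and vice versa; and the $\k$-dual preserves convolution, in the form $(f\ast g)^\ast=f^\ast\ast g^\ast$ for $f,g\in\End_\k(H)$. This last identity I would verify once, by rewriting $(f\ast g)^\ast=\Delta_H^\ast\circ(f\otimes g)^\ast\circ m_H^\ast$ and invoking the finite-dimensional identifications $(H\otimes H)^\ast\cong H^\ast\otimes H^\ast$, $(f\otimes g)^\ast=f^\ast\otimes g^\ast$, $\Delta_H^\ast=m_{H^\ast}$ and $m_H^\ast=\Delta_{H^\ast}$; the point to record is that convolution order is \emph{preserved}, while ordinary composition reverses as $(g\circ f)^\ast=f^\ast\circ g^\ast$.

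Granting these, parts (1) and (2) are direct translations. Lemma \ref{lem:cleftcocleft}(1) says $\iota$ is a map of algebras and of left $H$-comodules and $\zeta$ is a left $B$-module map, so dually $\iota^\ast$ is a map of coalgebras and of left $H^\ast$-modules and $\zeta^\ast$ is a left $B^\ast$-comodule map; likewise Lemma \ref{lem:cleftcocleft}(2) dualizes to the claims for $\pi^\ast$ and $\gamma^\ast$. To pass from these abstract assertions to the explicit Sweedler identities (\ref{eqn:iota*})--(\ref{eqn:gamma*}) I would simply unpack the definitions of the transposed structure maps: the action $\btr$ is, by its very definition as the structure induced by $\iota^\ast$, the transpose of the left $H$-comodule structure of $B$, so that (\ref{eqn:iota*}) expresses $H^\ast$-linearity of $\iota^\ast$; and analogously the right $H^\ast$-coaction on $H^\ast$ is transpose to the multiplication $m_H$ of $H$, which turns the comodule/module statements into (\ref{eqn:zeta*}), (\ref{eqn:pi*}) and (\ref{eqn:gamma*}) after pairing both sides against an arbitrary argument.

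For part (3) I would dualize the two displayed identities of Lemma \ref{lem:cleftcocleft}(3). Applying $(-)^\ast$ to $(\iota\circ\zeta)\ast(\gamma\circ\pi)=\id_H$ and combining convolution-preservation with composition-reversal gives $(\zeta^\ast\circ\iota^\ast)\ast(\pi^\ast\circ\gamma^\ast)=\id_{H^\ast}$, which is (\ref{eqn:convolution*}). For (\ref{eqn:gamma*zeta*}), the map $\zeta\circ\gamma=\la\e_{H/B^+H},-\ra\,1_B$ factors as $H/B^+H\xrightarrow{\,\e\,}\k\xrightarrow{\,1_B\,}B$, whose transpose factors as $B^\ast\xrightarrow{\la-,1\ra}\k\xrightarrow{\,\e\,}(H/B^+H)^\ast$, yielding exactly $\gamma^\ast\circ\zeta^\ast=\la-,1\ra\,\e$.

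The main obstacle is not depth but bookkeeping: one must confirm that the one-sided (co)actions named in the statement --- in particular $\btr$ and the several right (co)actions --- genuinely coincide with the transposes of the structures used in Lemma \ref{lem:cleftcocleft}, so that the dualization dictionary applies verbatim, and one must be careful with the handedness in $(f\ast g)^\ast=f^\ast\ast g^\ast$, where the swap of $m$ and $\Delta$ under transposition conspires with the reversal of composition. Once these conventions are pinned down, every clause follows formally.
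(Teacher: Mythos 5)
Your proposal is correct and matches the paper's intent exactly: the paper gives no separate proof of this corollary, treating it as the formal linear dual of Lemma \ref{lem:cleftcocleft}, which is precisely the dualization argument you spell out (with the right bookkeeping that $(-)^\ast$ preserves convolution order, reverses composition, and that the named one-sided (co)actions such as $\btr$ are the transposes of the structures in the lemma).
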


\begin{remark}\label{rmk:convolution}
Suppose that $\overline{\zeta}$ and $\overline{\gamma}$ are convolution inverses of $\zeta$ and $\gamma$ respectively.
As shown in the proof of Lemma \ref{lem:cleftcocleft}(3), we have in this situation additional equations in the convolution algebra $\End_\k(H)$, and especially their dual forms in $\End_\k(H^\ast)$. For examples:
\begin{equation}\label{eqn:convolution*1}
\zeta^\ast\circ\iota^\ast=\id_{H^\ast}\ast\;(\pi^\ast\circ\overline{\gamma}^\ast)
\end{equation}
as well as
\begin{equation}\label{eqn:convolution*2}
\pi^\ast\circ\overline{\gamma}^\ast=S\ast(\zeta^\ast\circ\iota^\ast),\;\;\;\;
\overline{\zeta}^\ast\circ\iota^\ast=(\pi^\ast\circ\gamma^\ast)\ast S\;\;\;\;
\text{and}\;\;\;\;
(\pi^\ast\circ\overline{\gamma}^\ast)\ast(\overline{\zeta}^\ast\circ\iota^\ast)=S
\end{equation}
and so on. Here the antipode of $H^\ast$ is also denoted by $S$ without confusions.
\end{remark}

Furthermore, we could assume without the loss of generality that the cointegral $\zeta:H\rightarrow B$ is \textit{unitary} and \textit{counitary}, which means that
\begin{equation*}
\zeta(1_H)=1_B\;\;\;\;\text{and}\;\;\;\;\e|_B \circ\,\zeta=\varepsilon
\end{equation*}
both hold. This is due to reasons appearing in the proofs of \cite[Theorem 9]{DT86} and \cite[Lemma 2.15]{Mas92}. Specifically, it is clear that
$\zeta':H\rightarrow B,\;h\mapsto\zeta(h)\overline{\zeta}(1)$
would be a unitary left $B$-module map. Moreover,
$\overline{\zeta'}:h\mapsto\zeta(1)\overline{\zeta}(h)$ is the convolution inverse of $\zeta'$ satisfying
\begin{equation}\label{eqn:zeta'bar}
\la\e|_B,\overline{\zeta'}(\iota(b)h)\ra
=\la\e|_B,b\overline{\zeta'}(h)\ra
\end{equation}
for any $b\in B$ and $h\in H$.
In fact, this could be obtained by following computations:
\begin{eqnarray*}
\la\e|_B,\overline{\zeta'}(\iota(b)h)\ra
&=&
\sum\la\e|_B,\overline{\zeta'}(\iota(b)_{(1)}h)\ra\la\e,\iota(b)_{(2)}\ra  \\
&\overset{(\ref{eqn:iotapi})}{=}&
\sum\la\e|_B,\overline{\zeta'}(b_{(1)}h_{(1)})\ra
  \la\e|_B,b_{(2)}\zeta'(h_{(2)})\overline{\zeta'}(h_{(3)})\ra  \\
&\overset{(\ref{eqn:zeta})}{=}&
\sum\la\e|_B,\overline{\zeta'}(b_{(1)}h_{(1)})\ra
  \la\e|_B,\zeta'[\iota(b_{(2)})h_{(2)}]\overline{\zeta'}(h_{(3)})\ra  \\
&\overset{(\ref{eqn:iotapi})}{=}&
\sum\la\e|_B,\overline{\zeta'}[\iota(b)_{(1)}h_{(1)}]\zeta'[\iota(b)_{(2)}h_{(2)}]
  \overline{\zeta'}(h_{(3)})\ra
~=~ \la\e|_B,b\overline{\zeta'}(h)\ra.
\end{eqnarray*}
Next we define
\begin{equation}\label{eqn:zeta''}
\zeta'':H\rightarrow B,\; h\mapsto \sum \zeta'(h_{(1)})\la\e|_B,\overline{\zeta'}(h_{(2)})\ra,
\end{equation}
which is clearly unitary and counitary, with convolution inverse
$$\overline{\zeta''}:h\mapsto
\sum\la\e|_B,\zeta'(h_{(1)})\ra\overline{\zeta'}(h_{(2)}).$$
Meanwhile, $\zeta''$ also preserves left $B$-actions, since we might calculate for any $b\in B$ and $h\in H$ that
\begin{eqnarray*}
\zeta''[\iota(b)h]
&\overset{(\ref{eqn:zeta''})}{=}&
\sum \zeta'(b_{(1)}h_{(1)})\la\e|_B,\overline{\zeta'}[\iota(b_{(2)})h_{(2)}]\ra \\
&\overset{(\ref{eqn:zeta'bar})}{=}&
\sum \zeta'(b_{(1)}h_{(1)})\la\e|_B,b_{(2)}\overline{\zeta'}(h_{(2)})\ra  \\
&=& \sum \zeta'[\iota(b)h_{(1)}]\la\e|_B,\overline{\zeta'}(h_{(2)})\ra
~\overset{(\ref{eqn:zeta})}{=}~
   \sum b\zeta'(h_{(1)})\la\e|_B,\overline{\zeta'}(h_{(2)})\ra  \\
&\overset{(\ref{eqn:zeta''})}{=}& b\zeta''(h).
\end{eqnarray*}

As a conclusion, the constructions for $\zeta$ and $\gamma$ in Lemma \ref{lem:cleftcocleft} could be unitary as well as counitary. For convenience, we say that a linear map is \textit{biunitary} if it preserves unit and counit at the same time.

\begin{lemma}\label{lem:cleftcocleft2}
Let $H$ be a finite-dimensional Hopf algebra with left coideal subalgebra $B$. Then there exist convolution invertible maps $\zeta:H\rightarrow B$ and $\gamma:H/B^+H\rightarrow H$ satisfying properties in Lemma \ref{lem:cleftcocleft} (and Corollary \ref{cor:cleftcocleft*}), as well as the followings:
\begin{itemize}
\item[(1)]
$\zeta$ is biunitary:
\begin{equation}\label{eqn:zetabiunitary}
\zeta(1)=1_B\;\;\;\;\text{and}\;\;\;\;\e|_B\circ\,\zeta=\e,
\end{equation}
and $\zeta\circ\iota=\id_B$;
\item[(2)]
$\gamma$ is biunitary:
\begin{equation}\label{eqn:gammabiunitary}
\gamma[\pi(1)]=1\;\;\;\;\text{and}\;\;\;\;\e\circ\gamma=\e_{H/B^+H},
\end{equation}
and $\pi\circ\gamma=\id_{H/B^+H}$;
\end{itemize}
Dually:
\begin{itemize}
\item[(3)]
$\zeta^\ast$ is biunitary, and
\begin{equation}\label{eqn:iota*zeta*}
\iota^\ast\circ\zeta^\ast=\id_{B^\ast};
\end{equation}
\item[(4)]
$\gamma^\ast$ is biunitary, and
\begin{equation}\label{eqn:gamma*pi*}
\gamma^\ast\circ\pi^\ast=\id_{(H/B^+H)^\ast}.
\end{equation}
\end{itemize}
\end{lemma}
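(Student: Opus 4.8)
The plan is to bootstrap the whole statement from the material already assembled, rather than to rebuild $\zeta$ and $\gamma$ from scratch. First I would start from any pair $\zeta,\gamma$ satisfying Lemma \ref{lem:cleftcocleft}, and replace $\zeta$ by the map $\zeta''$ produced in the discussion preceding this lemma: by the definition (\ref{eqn:zeta''}) and the computation following it, $\zeta''$ is a convolution-invertible left $B$-module map that is in addition biunitary, i.e. $\zeta''(1_H)=1_B$ and $\e|_B\circ\zeta''=\e$. After renaming $\zeta:=\zeta''$, I would let $\gamma$ be defined from this new $\zeta$ by the very formula (\ref{eqn:gammadef}). The key observation is that the proof of Lemma \ref{lem:cleftcocleft} is entirely constructive and uses only that $\zeta$ is a convolution-invertible left $B$-module map; hence this redefined $\gamma$ automatically inherits every property asserted in Lemma \ref{lem:cleftcocleft} and Corollary \ref{cor:cleftcocleft*}. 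It therefore remains only to check the four new items.

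For (1), biunitarity of $\zeta$ holds by the above construction, and $\zeta\circ\iota=\id_B$ follows at once by evaluating the module identity (\ref{eqn:zeta}) at $h=1_H$, giving $\zeta(\iota(b))=b\,\zeta(1_H)=b$. For (2), I would first record two consequences of biunitarity of $\zeta$, namely $\overline{\zeta}(1_H)=1_B$ and $\e|_B\circ\overline{\zeta}=\e$; both are obtained by evaluating the convolution identity $\zeta\ast\overline{\zeta}=\e(-)\,1_B$ at $1_H$, respectively by applying the algebra map $\e|_B$ to it and using $\e|_B\circ\zeta=\e$. With these in hand, $\gamma[\pi(1_H)]=\iota[\overline{\zeta}(1_H)]=\iota(1_B)=1_H$, while $\e(\gamma[\pi(h)])=\sum\e|_B(\overline{\zeta}(h_{(1)}))\e(h_{(2)})=\e(h)$ gives $\e\circ\gamma=\e_{H/B^+H}$ since $\pi$ is surjective. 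Finally, for $\pi\circ\gamma=\id_{H/B^+H}$ I would compute $\pi(\gamma[\pi(h)])=\sum\pi(\iota[\overline{\zeta}(h_{(1)})])\btl h_{(2)}$ using (\ref{eqn:iotapi}), then apply (\ref{eqn:piiota}) together with $\e|_B\circ\overline{\zeta}=\e$ to collapse the first tensor factor, leaving $\pi(1)\btl h=\pi(h)$; surjectivity of $\pi$ concludes.

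Parts (3) and (4) I would then get purely by dualization. Since taking linear duals interchanges unit and counit, $\zeta$ and $\gamma$ being biunitary immediately yields that $\zeta^\ast$ and $\gamma^\ast$ are biunitary. Likewise, dualizing the identities $\zeta\circ\iota=\id_B$ and $\pi\circ\gamma=\id_{H/B^+H}$ and using $(\zeta\circ\iota)^\ast=\iota^\ast\circ\zeta^\ast$ and $(\pi\circ\gamma)^\ast=\gamma^\ast\circ\pi^\ast$ produces exactly (\ref{eqn:iota*zeta*}) and (\ref{eqn:gamma*pi*}).

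I do not expect any genuine obstacle here: the only delicate point is the one already discharged in the preamble, namely that the biunitarization procedure $\zeta\mapsto\zeta''$ preserves both the left $B$-module property and convolution invertibility. Once that is granted, the fact that $\gamma$ is defined \emph{through} $\zeta$ means no independent normalization of $\gamma$ is needed, and all four new identities reduce to the short counit/unit manipulations sketched above. The main care required is organizational rather than computational: making explicit that the constructive proof of Lemma \ref{lem:cleftcocleft} applies verbatim to the replacement $\zeta$, so that none of the previously established structure is lost.
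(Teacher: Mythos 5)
Your proposal is correct and follows essentially the same route as the paper: both rely on the biunitarization $\zeta\mapsto\zeta''$ carried out before the lemma, deduce $\zeta\circ\iota=\id_B$ from the left $B$-module property, check biunitarity of $\gamma$ via $\overline{\zeta}(1_H)=1_B$ and counitarity of $\overline{\zeta}$, and obtain (3)--(4) by dualization. The only (harmless) divergence is in $\pi\circ\gamma=\id_{H/B^+H}$, which you verify by a direct computation from (\ref{eqn:gammadef}) using (\ref{eqn:piiota}) and (\ref{eqn:iotapi}), whereas the paper deduces it from the right $H/B^+H$-comodule property of $\gamma$ together with its counitarity; both arguments are valid.
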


\begin{proof}
\begin{itemize}
\item[(1)]
As mentioned before the lemma, we might assume that (\ref{eqn:zetabiunitary}) holds for the cointegral $\zeta:H\rightarrow B$. This also implies that $\zeta\circ\iota=\id_B$, since $\zeta$ preserves left $B$-actions. Specifically, we have
$$\zeta[\iota(b)]\overset{(\ref{eqn:zeta})}{=}b\zeta(1)=b$$
for any $b\in B$.

\item[(2)]
It is known by (1) that $\overline{\zeta}(1)=1_B$, because $1_H$ is a grouplike element in $H$. With the definition of $\gamma$ (\ref{eqn:gammadef}), we find that
$\gamma(\pi(1))=\iota[\overline{\zeta}(1)]1=1_B$ holds.

On the other hand, $\overline{\zeta}$ is counitary since $\zeta$ is so. Thus
\begin{eqnarray*}
(\varepsilon\circ\gamma)[\pi(h)]
&\overset{(\ref{eqn:gammadef})}{=}&
\sum\la\e,\iota[\overline{\zeta}(h_{(1)})]h_{(2)}\ra
=\la\e|_B,\overline{\zeta}(h)\ra  \\
&=& \la\e,h\ra=\la\e_{H/B^+H},\pi(h)\ra
\end{eqnarray*}
for all $h\in H$, which implies that $\gamma$ is counitary (\ref{eqn:gammabiunitary}).
Furthermore, note that $\gamma$ is a map of right $H/B^+H$-comodules. Then for any $x\in H/B^+H$, we have the equation
$$\sum\gamma(x)_{(1)}\otimes\pi[\gamma(x)_{(2)}]
=\sum \gamma(x_{(1)})\otimes x_{(2)},$$
whose image under $\e\otimes\id$ would become
\begin{eqnarray*}
\pi[\gamma(x)]&=&\sum\la\e,\gamma(x)_{(1)}\ra\pi[\gamma(x)_{(2)}]
~=~\sum\la\e,\gamma(x_{(1)})\ra x_{(2)}  \\
&\overset{(\ref{eqn:gammabiunitary})}{=}&
\sum\la\e_{H/B^+H},x_{(1)}\ra x_{(2)}~=~x.
\end{eqnarray*}

\item[(3)]
This is dual to (1).
\item[(4)]
This is dual to (2).
\end{itemize}
\end{proof}

With the properties in Lemma 2.5 (2) and (3), we could write the module structures $\btl$ and $\btr$ on $H/B^+H$ and $B^\ast$ respectively as follows:
\begin{equation}\label{eqn:btl}
x\btl h=\pi[\gamma(x)h]
\;\;\;\;\;\;\text{for all}\;\;h\in H,\;x\in H/B^+H
\end{equation}
and
\begin{equation}\label{eqn:btr}
h^\ast\btr b^\ast
=\iota^\ast[h^\ast\zeta^\ast(b^\ast)]
\;\;\;\;\;\;\text{for all}\;\;h^\ast\in H^\ast,\;b\in B^\ast.
\end{equation}

\subsection{Partially admissible mapping system for left coideal subalgebra}\label{subsection:2.2}

As shown in the previous subsection, for every left coideal subalgebra $B$ of a finite-dimensional Hopf algebra $H$, there always exists a pair of cointegrals
$$\zeta:H\rightarrow B\;\;\;\;\text{and}\;\;\;\;\gamma^\ast:H^\ast\rightarrow (H/B^+H)^\ast$$
satisfying all the properties in Lemma \ref{lem:cleftcocleft}, Corollary \ref{cor:cleftcocleft*}, as well as Lemma \ref{lem:cleftcocleft2}. Such a pair $(\zeta,\gamma^\ast)$ would be called a \textit{{\pams}} in this paper, which is a generalization of an admissible mapping system introduced in \cite[Section 2.8]{Rad85}, or the mapping system introduced in \cite[Section 1]{Sch02(b)} and \cite[Section 3.3]{Sch02(b)}. Specifically:

\begin{definition}\label{def:PAMS}
Let $H$ be a finite-dimensional Hopf algebra. Suppose that
\begin{itemize}
\item[(1)]
$\iota:B\rightarrowtail H$ is an injection of left $H$-comodule algebras, and
$\pi:H\twoheadrightarrow C$ is a surjection of right $H$-module coalgebras;
\item[(2)]
The image of $\iota$ equals the space of the coinvariants of the right $C$-comodule $H$ with structure $(\id_H\otimes\pi)\circ \Delta$.
\end{itemize}
Then the pair of $\k$-linear diagrams
\begin{equation}
\begin{array}{ccc}
\xymatrix{
B \ar@<.5ex>[r]^{\iota} & H \ar@<.5ex>@{-->}[l]^{\zeta} \ar@<.5ex>[r]^{\pi}
& C \ar@<.5ex>@{-->}[l]^{\gamma}  }
&\;\;\text{and}\;\;&
\xymatrix{
C^\ast \ar@<.5ex>[r]^{\pi^\ast}
& H^\ast \ar@<.5ex>@{-->}[l]^{\gamma^\ast} \ar@<.5ex>[r]^{\iota^\ast}
& B^\ast \ar@<.5ex>@{-->}[l]^{\zeta^\ast}  },
\end{array}
\end{equation}
is said to be a {\pams} for $\iota$, denoted by $(\zeta,\gamma^\ast)$ for simplicity, if all the following conditions
\begin{itemize}
\item[(3)]
$\zeta$ and $\gamma$ have convolution inverses $\overline{\zeta}$ and $\overline{\gamma}$ respectively;
\item[(4)]
$\zeta$ preserves left $B$-actions, and $\gamma$ preserves right $C$-coactions;
\item[(5)]
$\zeta$ and $\gamma$ are biunitary (where the counit of $B$ and unit of $C$ are induced by those of $H$ via $\iota$ and $\pi$ respectively);
\item[(6)]
$(\iota\circ\zeta)\ast(\gamma\circ\pi)=\id_H$
\end{itemize}
hold (and the dual forms of (1) to (6) hold equivalently).
\end{definition}

\begin{remark}\label{rmk:pams}
\begin{itemize}
\item[(1)]
When $B$ is a left coideal subalgebra of $H$, the quotient right $H$-module coalgebra $C$ satisfying (1) and (2) in Definition \ref{def:PAMS} must be isomorphic to $H/B^+H$. This is because of \cite[Theorem 1]{Tak79} and \cite[Proposition 3.10(1)]{Mas94(b)} applied to the Hopf algebra $H^\biop$ and the cleftness property.

Consequently, there must exist a {\pams} $(\zeta,\gamma^\ast)$ for the inclusion $B\subseteq H$:
\begin{equation*}
\begin{array}{ccc}
\xymatrix{
B \ar@<.5ex>[r]^{\iota} & H \ar@<.5ex>@{-->}[l]^{\zeta} \ar@<.5ex>[r]^{\pi\;\;\;\;\;\;}
& H/B^+H \ar@<.5ex>@{-->}[l]^{\gamma\;\;\;\;\;\;}  }
&\;\;\text{and}\;\;&
\xymatrix{
(H/B^+H)^\ast \ar@<.5ex>[r]^{\;\;\;\;\;\;\pi^\ast}
& H^\ast \ar@<.5ex>@{-->}[l]^{\;\;\;\;\;\;\gamma^\ast} \ar@<.5ex>[r]^{\iota^\ast}
& B^\ast \ar@<.5ex>@{-->}[l]^{\zeta^\ast}  }
\end{array}
\end{equation*}
according to the conclusions in the previous subsection.

\item[(2)]
The condition (3) in Definition \ref{def:PAMS} could be inferred from the conditions (1),(2),(4),(6). Specifically,
since the convolution algebra $\End_\k(H)$ is finite-dimensional, it follows that the transformation $\theta:=(\gamma\circ\pi)\ast S$ is the convolution inverse of $\iota\circ\zeta$. Now we claim that $\theta(H)$ is included in
$$\iota(B)=\{h\in H\mid \sum h_{(1)}\otimes \pi(h_{(2)})=h\otimes \pi(1)\}.$$
In fact, denote the right $H$-action on $C$ (or $H/B^+H$) by $\btl$, and we could use the assumptions
\begin{equation}\label{eqn:pi-rmk}
\pi(hk)=\pi(h)\btl k\;\;\;\;(\forall h,k\in H)
\end{equation}
and
\begin{equation}\label{eqn:gamma-rmk}
\sum\gamma(x)_{(1)}\otimes\pi[\gamma(x)_{(2)}]
=\sum \gamma(x_{(1)})\otimes x_{(2)}\;\;
(\forall x\in C)
\end{equation}
from (1),(4) to calculate for every $h\in H$ that
\begin{eqnarray*}
(\id\otimes\pi)\circ\Delta(\theta(h))
&=&
\sum\gamma[\pi(h_{(1)})]_{(1)}S(h_{(3)})
\otimes\pi\left(\gamma[\pi(h_{(1)})]_{(2)}S(h_{(2)})\right)  \\
&\overset{(\ref{eqn:pi-rmk})}=&
\sum\gamma[\pi(h_{(1)})]_{(1)}S(h_{(3)})\otimes
  \left[\pi\left(\gamma[\pi(h_{(1)})]_{(2)}\right)\btl S(h_{(2)})\right] \\
&\overset{(\ref{eqn:gamma-rmk})}=&
\sum\gamma[\pi(h_{(1)})_{(1)}]S(h_{(3)})\otimes
  \left[\pi(h_{(1)})_{(2)}\btl S(h_{(2)})\right] \\
&=&
\sum\gamma[\pi(h_{(1)})]S(h_{(4)})\otimes\left[\pi(h_{(2)})\btl S(h_{(3)})\right] \\
&\overset{(\ref{eqn:pi-rmk})}=&
\sum\gamma[\pi(h_{(1)})]S(h_{(4)})\otimes\pi(h_{(2)}S(h_{(3)})) \\
&=&
\sum\gamma[\pi(h_{(1)})]S(h_{(2)})\otimes\pi(1)
~=~ \theta(h)\otimes\pi(1).
\end{eqnarray*}
Thus $\theta=\iota\circ\overline{\zeta}$ holds for some linear map $\overline{\zeta}:H\rightarrow B$. However, note that $\iota$ is an algebra map, which implies that $\overline{\zeta}$ is the convolution inverse of $\zeta$.

Similarly, one could show that $\gamma$ is also convolution invertible with the help of the diagram
$$\xymatrix{
C^\ast \ar@<.5ex>[r]^{\pi^\ast}
& H^\ast \ar@<.5ex>@{-->}[l]^{\gamma^\ast} \ar@<.5ex>[r]^{\iota^\ast}
& B^\ast \ar@<.5ex>@{-->}[l]^{\zeta^\ast}  }.$$
\end{itemize}
\end{remark}

As a particular consequence of Remark \ref{rmk:pams}(1), one might also find an equation on the dimensions due to \cite[Theorem 2.1(6)]{Mas92}:

\begin{corollary}\label{cor:dims}
Suppose that $(\zeta,\gamma^\ast)$ is a {\pams} for $\iota:B\rightarrowtail H$. With notations in Definition \ref{def:PAMS}, we have
\begin{equation}\label{eqn:dims}
\dim(H)=\dim(B)\dim(C)=\dim(B)\dim(H/B^+H).
\end{equation}
\end{corollary}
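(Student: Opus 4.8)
The plan is to prove the stronger statement that the map
\[
\Psi\colon B\otimes C\to H,\quad b\otimes c\mapsto \iota(b)\gamma(c),
\qquad
\Theta\colon H\to B\otimes C,\quad h\mapsto\sum\zeta(h_{(1)})\otimes\pi(h_{(2)}),
\]
is a linear isomorphism with inverse $\Theta$; the formula \eqref{eqn:dims} then follows immediately, the second equality being nothing but our standing identification $C=H/B^+H$ from Remark \ref{rmk:pams}. Although the statement is attributed to \cite[Theorem 2.1(6)]{Mas92}, everything needed is already available from the {\pams} axioms, so I would give the argument directly; it is precisely the normal-basis property of cleft extensions.

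That $\Psi\circ\Theta=\id_H$ is simply a rephrasing of the convolution identity \eqref{eqn:convolution}: for $h\in H$ one has $\Psi(\Theta(h))=\sum\iota[\zeta(h_{(1)})]\,\gamma[\pi(h_{(2)})]=\big((\iota\circ\zeta)\ast(\gamma\circ\pi)\big)(h)=h$. The substance of the proof is the opposite composite $\Theta\circ\Psi=\id_{B\otimes C}$. Here I would start from $\Delta(\iota(b)\gamma(c))=\sum b_{(1)}\gamma(c)_{(1)}\otimes\iota(b_{(2)})\gamma(c)_{(2)}$, using that $\iota$ is a comodule map so that $\Delta\iota(b)=\sum b_{(1)}\otimes\iota(b_{(2)})$ by \eqref{eqn:iotapi}. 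Applying $\zeta\otimes\pi$, one first uses that $\pi$ is a right $H$-module map with $\pi\circ\iota$ trivial to replace the second tensor factor by $\varepsilon(b_{(2)})\pi(\gamma(c)_{(2)})$ (via \eqref{eqn:piiota} and \eqref{eqn:iotapi}); the counit axiom then recombines the $B$-comultiplication into $\iota(b)$, after which left $B$-linearity of $\zeta$ (\eqref{eqn:zeta}) collapses the expression to $\sum b\,\zeta(\gamma(c)_{(1)})\otimes\pi(\gamma(c)_{(2)})$.

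The final step invokes the comodule property of $\gamma$ from Lemma \ref{lem:cleftcocleft}(2), namely $\sum\gamma(c)_{(1)}\otimes\pi(\gamma(c)_{(2)})=\sum\gamma(c_{(1)})\otimes c_{(2)}$, together with the triviality $\zeta\circ\gamma=\langle\varepsilon_{H/B^+H},-\rangle\,1_B$ from Lemma \ref{lem:cleftcocleft}(3); these yield $\sum\zeta(\gamma(c)_{(1)})\otimes\pi(\gamma(c)_{(2)})=1_B\otimes c$, whence $\Theta(\Psi(b\otimes c))=b\otimes c$. Thus $\Psi$ is bijective and $\dim(H)=\dim(B\otimes C)=\dim(B)\dim(C)$. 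The only delicate point, and the one I would write out most carefully, is the bookkeeping in the middle step: keeping straight which tensor factors lie in $B$, which in $H$, and which in $C$ while applying the four structure maps in the correct order. No genuinely hard estimate is involved---only the disciplined use of the {\pams} axioms.
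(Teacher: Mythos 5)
Your proof is correct, but it takes a different route from the paper: the paper derives \eqref{eqn:dims} purely by citation to Masuoka's freeness theorem (\cite[Theorem 2.1(6)]{Mas92}), whereas you reprove the underlying normal-basis isomorphism $B\otimes C\cong H$, $b\otimes c\mapsto\iota(b)\gamma(c)$, directly from the {\pams} axioms. Each step of your computation checks out: $\Psi\circ\Theta=\id_H$ is indeed just \eqref{eqn:convolution}; for $\Theta\circ\Psi$, the chain $\Delta(\iota(b)\gamma(c))=\sum b_{(1)}\gamma(c)_{(1)}\otimes\iota(b_{(2)})\gamma(c)_{(2)}$, then $\pi(\iota(b_{(2)})\gamma(c)_{(2)})=\la\e,b_{(2)}\ra\,\pi(\gamma(c)_{(2)})$ via $\pi(hk)=\pi(h)\btl k$ and \eqref{eqn:piiota}, then recombination and \eqref{eqn:zeta}, and finally the $C$-colinearity of $\gamma$ together with the triviality of $\zeta\circ\gamma$, lands exactly on $b\otimes c$. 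One small attribution quibble: for a general {\pams} in the sense of Definition \ref{def:PAMS} the colinearity of $\gamma$ and the triviality of $\zeta\circ\gamma$ should be quoted from Definition \ref{def:PAMS}(4) and Proposition \ref{prop:PAMS-comptrival}(2) rather than from Lemma \ref{lem:cleftcocleft}, which only treats the particular cointegrals constructed there; the content is identical, so this is cosmetic. What your approach buys is self-containedness, and as a bonus it exhibits explicitly the isomorphism $B\otimes H/B^+H\cong H$, $b\otimes f^\ast\mapsto\iota(b)\gamma(f^\ast)$, which the paper later invokes in Remark \ref{rmk:equivDelta}(1) by citing \cite[Theorem 2.3(ii)(b)]{MD92}; the paper's citation-only route is shorter but leaves that isomorphism external to the text.
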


Additional basic properties of {\pams}s could be concluded from Subsection \ref{subsection:cocleft}:

\begin{proposition}\label{prop:PAMS-comptrival}
Suppose that $(\zeta,\gamma^\ast)$ is a {\pams} for $\iota:B\rightarrowtail H$. Then:
\begin{itemize}
\item[(1)]
$\zeta\circ\iota=\id_B$ and $\pi\circ\gamma=\id_{C}$ both hold;
\item[(2)]
$\zeta\circ\gamma$ is trivial;
\item[(3)]
$\overline{\zeta}$ and $\overline{\gamma}$ are biunitary;
\item[(4)]
$\pi\circ S^{-1}\circ\iota$ is trivial.
\end{itemize}
Moreover, the dual forms of these properties hold as well.
\end{proposition}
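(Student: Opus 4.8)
The plan is to deduce all four assertions directly from the defining conditions (1)--(6) of a {\pams} in Definition \ref{def:PAMS}, working inside the convolution algebras $\End_\k(H)$ and $\End_\k(H^\ast)$; the dual forms then cost nothing, since the dual conditions are postulated to hold as well and every argument transports verbatim to the dual diagram. In spirit these are the same manipulations already carried out in Subsection \ref{subsection:cocleft} for the explicit pair $(\zeta,\gamma)$, but here they must be run from the axioms alone so that the conclusions apply to an arbitrary {\pams}.

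For part (1), the identity $\zeta\circ\iota=\id_B$ is immediate: since $\zeta$ preserves left $B$-actions and is unitary, $\zeta(\iota(b))=\zeta(\iota(b)1_H)=b\,\zeta(1_H)=b$. Dually, $\gamma$ preserves right $C$-coactions and is counitary, so applying $\e\otimes\id_C$ to the colinearity identity $\sum\gamma(x)_{(1)}\otimes\pi(\gamma(x)_{(2)})=\sum\gamma(x_{(1)})\otimes x_{(2)}$ collapses the left-hand side to $\pi(\gamma(x))$ and the right-hand side to $x$, giving $\pi\circ\gamma=\id_C$. Part (3) is of the same flavour: evaluating the convolution identity $\zeta\ast\overline{\zeta}=\e(-)1_B$ at the grouplike element $1_H$ forces $\overline{\zeta}(1_H)=1_B$, while applying the algebra map $\e|_B$ to this same identity gives $(\e|_B\circ\zeta)\ast(\e|_B\circ\overline{\zeta})=\e$; since $\e|_B\circ\zeta=\e$ is the convolution unit of $H^\ast$, the left-hand side is just $\e|_B\circ\overline{\zeta}$, whence $\e|_B\circ\overline{\zeta}=\e$. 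The two analogous computations on the dual diagram (with $1_C=\pi(1_H)$ grouplike) establish that $\overline{\gamma}$ is biunitary.

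For part (2) I would first solve condition (6) for $\gamma\circ\pi$. Because $\iota$ is an algebra map, $\iota\circ\overline{\zeta}$ is the convolution inverse of $\iota\circ\zeta$ (the lemma of \cite{DT86} already invoked in Lemma \ref{lem:cleftcocleft}(3)), so convolving $(\iota\circ\zeta)\ast(\gamma\circ\pi)=\id_H$ on the left by $\iota\circ\overline{\zeta}$ yields $\gamma\circ\pi=(\iota\circ\overline{\zeta})\ast\id_H$, that is $\gamma(\pi(h))=\sum\iota(\overline{\zeta}(h_{(1)}))h_{(2)}$. Applying $\zeta$ and using its left $B$-linearity turns the right-hand side into $(\overline{\zeta}\ast\zeta)(h)=\e(h)1_B$, and surjectivity of $\pi$ then gives $\zeta\circ\gamma=\langle\e_C,-\rangle 1_B$, the asserted triviality.

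The main obstacle is part (4), where the antipode must be threaded through both the comodule-algebra structure of $\iota$ and the module-coalgebra structure of $\pi$. I would start from the $S^{-1}$-form of the antipode axiom, $\sum\iota(b)_{(2)}S^{-1}(\iota(b)_{(1)})=\e(b)1_H$, rewrite the coproduct of $\iota(b)$ by comodule colinearity as $\sum b_{(1)}\otimes\iota(b_{(2)})$ so that the surviving $B$-factor sits on the right, and then apply the right $H$-module coalgebra map $\pi$, converting the product into the action $\btl$. Since $\pi\circ\iota$ is trivial by condition (2) and $\pi(1_H)\btl k=\pi(k)$, the whole expression collapses to $\pi(S^{-1}(\iota(b)))$, while the right-hand side is $\e(b)\pi(1_H)$; this is precisely the triviality of $\pi\circ S^{-1}\circ\iota$. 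The delicate points are choosing the $S^{-1}$-version (rather than the $S$-version) of the axiom and keeping careful track of which Sweedler factor lands in $B$ after colinearity, so that the action $\btl$ can absorb the antipode term cleanly; once these are arranged the computation is forced. The dual statement then follows by running the identical argument on the dual diagram of Definition \ref{def:PAMS}.
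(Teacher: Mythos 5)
Your proposal is correct. Parts (1), (2) and (3) run exactly the same computations the paper delegates to Lemmas \ref{lem:cleftcocleft}(3) and \ref{lem:cleftcocleft2} (left $B$-linearity plus unitarity for $\zeta\circ\iota$, the counit applied to colinearity for $\pi\circ\gamma$, solving (\ref{eqn:convolution}) for $\gamma\circ\pi$ and then applying $\zeta$ for part (2), and evaluating the convolution identities at $1_H$ resp.\ post-composing with the algebra map $\e|_B$ for part (3)); your phrasing via homomorphisms of convolution algebras is a mild repackaging of the same argument. The genuine divergence is in part (4). The paper reduces without loss of generality to the case $C=H/B^+H$ and invokes Koppinen's lemma applied to $H^\cop$ to get $S^{-1}(B^+)H=B^+H$, so that $\pi$ is simultaneously the quotient modulo $S^{-1}(B)^+H$. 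You instead compute intrinsically from the axioms: starting from $\sum \iota(b)_{(2)}S^{-1}(\iota(b)_{(1)})=\e(b)1_H$, rewriting $\Delta(\iota(b))=\sum b_{(1)}\otimes\iota(b_{(2)})$ by $H$-colinearity of $\iota$, pushing through the right $H$-module map $\pi$ to turn products into $\btl$, and absorbing $\pi\circ\iota(b_{(2)})=\e(b_{(2)})\pi(1)$ (which does follow from condition (2) by applying $\e\otimes\id$ to the coinvariance of $\iota(b)$), so that the counit axiom reassembles $\iota(b)$ and the chain collapses to $\pi(S^{-1}(\iota(b)))=\e(b)\pi(1)$. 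This buys a self-contained proof that never leaves the abstract {\pams} setting and needs no external freeness-type input; the paper's route is shorter on the page but imports Koppinen's result and the identification of $C$ with $H/B^+H$ from Remark \ref{rmk:pams}. Both are valid.
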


\begin{proof}
\begin{itemize}
\item[(1)]
This is due to the same reason as the proof of Lemma \ref{lem:cleftcocleft2}.
\item[(2)]
As shown in the proof of Lemma \ref{lem:cleftcocleft}(3), one could know $\zeta\circ\gamma=\la\e_C,-\ra1_B$ due to Definition \ref{def:PAMS}(6).
\item[(3)]
We only verify that $\overline{\zeta}$ is biunitary, as $\overline{\gamma}$ is so for similar reasons.

In fact, note that $\zeta$ is biunitary. Therefore, $\overline{\zeta}$ is unitary because
\begin{eqnarray*}
\overline{\zeta}(1)&=&\overline{\zeta}(1)\zeta(1)~=~(\overline{\zeta}\ast\zeta)(1)~=~1_B.
\end{eqnarray*}
On the other hand, $\overline{\zeta}$ is counitary because
\begin{eqnarray*}
(\e|_B\circ\overline{\zeta})(h)
&=&
\sum\la\e|_B,\overline{\zeta}(h_{(1)})\ra\la\e,h_{(2)}\ra
~=~
\sum\la\e|_B,\overline{\zeta}(h_{(1)})\ra\la\e|_B,\zeta(h_{(2)})\ra  \\
&=&
\sum\la\e|_B,\overline{\zeta}(h_{(1)})\zeta(h_{(2)})\ra
~=~
\la\e,h\ra
\end{eqnarray*}
for all $h\in H$.
\item[(4)]
Assume that $B\subseteq H$ is a left coideal subalgebra without loss of generality. Then we could know that $S^{-1}(B^+)H=B^+H$ holds as subspaces of $H$, by applying \cite[Lemma 3.1]{Kop93} to the Hopf algebra $H^\cop$ with antipode $S^{-1}$. It follows that $\pi$ is exactly the quotient map
$$\pi:H\twoheadrightarrow H/S^{-1}(B^+)H=H/S^{-1}(B)^+H$$
as well, which implies that $\pi\circ S^{-1}\circ\iota$ is also trivial.
\end{itemize}
\end{proof}

Clearly, the notion of {\pams} is self-dual in the sense of the following proposition:

\begin{proposition}\label{prop:BCP-selfdual}
Let $H$ be a finite-dimensional Hopf algebra. Suppose that the diagram
$$B\xrightarrow{\iota}H\xrightarrow{\pi}C$$
satisfies that
\begin{itemize}
\item
$\iota:B\rightarrowtail H$ is an injection of left $H$-comodule algebras, and
$\pi:H\twoheadrightarrow C$ is a surjection of right $H$-module coalgebras;
\item
The image of $\iota$ equals to the space of the coinvariants of the right $C$-comodule $H$ with structure $(\id_H\otimes\pi)\circ \Delta$.
\end{itemize}
Then the followings are equivalent:
\begin{itemize}
\item[(1)]
$(\zeta,\gamma^\ast)$ is a {\pams} for $\iota:B\rightarrowtail H$;
\item[(2)]
$(\gamma^\ast,\zeta)$ is a {\pams} for $\pi^\ast:C^{\ast\,\op\,\cop}\rightarrowtail H^{\ast\,\op\,\cop}$.
\end{itemize}
\end{proposition}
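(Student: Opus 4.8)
The plan is to exploit the fact that Definition~\ref{def:PAMS} is phrased as a self-dual list: a {\pams} requires conditions (1)--(6) \emph{together with} their dual forms. I would therefore not reprove anything from scratch, but instead build an explicit dictionary, furnished by the operation $(-)^{\ast\,\op\,\cop}$ combined with the canonical identification $V\cong V^{\ast\ast}$ in finite dimensions, under which the defining conditions for $(\gamma^\ast,\zeta)$ relative to $\pi^\ast$ become \emph{literally} the dual-form conditions for $(\zeta,\gamma^\ast)$ relative to $\iota$, and vice versa. Once this dictionary is in place the equivalence $(1)\Leftrightarrow(2)$ is immediate, since the two lists of required conditions coincide as sets.

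First I would record how each structural ingredient transports. Passing from $H^\ast$ to $H^{\ast\,\op\,\cop}$, the $\cop$ reverses every coalgebra/comodule side while the $\op$ reverses every module side; consequently a right coideal subalgebra of $H^\ast$ becomes a left coideal subalgebra of $H^{\ast\,\op\,\cop}$, so that $\pi^\ast:C^{\ast\,\op\,\cop}\rightarrowtail H^{\ast\,\op\,\cop}$ is an inclusion of the required type. Concretely, using Corollary~\ref{cor:cleftcocleft*}: that $\pi^\ast$ is a map of right $H^\ast$-comodule algebras reads, after $\op\,\cop$, as a map of left $H^{\ast\,\op\,\cop}$-comodule algebras; that $\iota^\ast$ is a map of left $H^\ast$-module coalgebras reads as a map of right $H^{\ast\,\op\,\cop}$-module coalgebras (now in the role of the quotient $\pi$); the right $C^\ast$-module map $\gamma^\ast$ becomes a left $C^{\ast\,\op\,\cop}$-module map (the new cointegral); and the left $B^\ast$-comodule map $\zeta^\ast$ becomes a right $B^{\ast\,\op\,\cop}$-comodule map (the new section). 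Under the double-dual identification these are packaged exactly as the pair $(\gamma^\ast,\zeta^{\ast\ast})=(\gamma^\ast,\zeta)$, matching the notation in the statement.

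With the dictionary fixed, I would run through the remaining items of Definition~\ref{def:PAMS} for $\pi^\ast$ and check that they are the dual-form items for $\iota$. Convolution invertibility (3) transports because $\overline{\gamma}^\ast$ and $\overline{\zeta}^\ast$ supply the inverses; biunitarity (5) transports because $\op\,\cop$ leaves the unit and the counit unchanged. The one genuinely non-cosmetic point is the convolution identity (6): convolution on $\End_\k(H^{\ast\,\op\,\cop})$ is the \emph{opposite} of that on $\End_\k(H^\ast)$, that is $f\ast_{\op\cop}g=g\ast f$, since $m^{\op}\circ(f\otimes g)\circ\Delta^{\cop}=g\ast f$. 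Hence the identity required of the new system, $(\pi^\ast\circ\gamma^\ast)\ast_{\op\cop}(\zeta^\ast\circ\iota^\ast)=\id$, is precisely $(\zeta^\ast\circ\iota^\ast)\ast(\pi^\ast\circ\gamma^\ast)=\id_{H^\ast}$, which is the dual form already established in Corollary~\ref{cor:cleftcocleft*}(3). Finally the coinvariance hypothesis (2) transports by dualizing the coinvariants description of $\im\iota$ to the corresponding statement for $\im\pi^\ast$ inside $H^{\ast\,\op\,\cop}$. The main obstacle is thus not any single computation but the disciplined bookkeeping of left-versus-right and module-versus-comodule under $\op\,\cop$, together with the convolution reversal in (6); once these are handled, the symmetry of the entire list yields both implications at once, the converse following by applying the same dictionary to $\pi^\ast$ and using $(H^{\ast\,\op\,\cop})^{\ast\,\op\,\cop}\cong H$.
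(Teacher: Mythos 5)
Your overall strategy is the same as the paper's: Definition \ref{def:PAMS} is built to be self-dual, so one transports the whole list of conditions through $(-)^{\ast\,\op\,\cop}$ and the canonical identification with the double dual. Your bookkeeping of sides under $\op\,\cop$ is correct, and your observation that convolution on $\End_\k(H^{\ast\,\op\,\cop})$ is the opposite of convolution on $\End_\k(H^\ast)$ — so that condition (6) for the new system is exactly $(\zeta^\ast\circ\iota^\ast)\ast(\pi^\ast\circ\gamma^\ast)=\id_{H^\ast}$, i.e.\ (\ref{eqn:convolution*}) — is a point the paper leaves implicit and is worth making explicit.

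There is, however, one concrete gap, and it sits exactly where the paper does its only real work. You claim the coinvariance hypothesis ``transports by dualizing the coinvariants description of $\im\iota$.'' It does not transport formally. The condition $\im\iota=H^{\mathrm{co}\,C}$ is an equalizer condition, $\im\iota=\Ker\big((\id\otimes\pi)\circ\Delta - (-\otimes\pi(1))\big)$; its literal linear dual is a cokernel statement, namely $\Ker\iota^\ast=H^\ast\big(\pi^\ast(C^\ast)\big)^+$, i.e.\ that $B^\ast$ is the quotient module coalgebra attached to the coideal subalgebra $\pi^\ast(C^\ast)\subseteq H^\ast$. What the new system actually requires is the different-looking statement that $\im\pi^\ast$ equals the coinvariants of $H^{\ast\,\biop}$ as a right $B^{\ast\,\biop}$-comodule via $\iota^\ast$. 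Bridging these two is not free: the paper proves the inclusion $\im\pi^\ast\subseteq(H^{\ast\,\biop})^{\mathrm{co}\,B^{\ast\,\biop}}$ from the triviality of $\iota^\ast\circ\pi^\ast$, and then forces equality by the dimension count $\dim(B^{\ast\,\biop})=\dim(H)/\dim(C)=\dim(H^{\ast\,\biop})/\dim(C^{\ast\,\biop})$ coming from Corollary \ref{cor:dims} together with the freeness results of Masuoka--Skryabin that give $\dim$ of the coinvariant subspace. Your argument needs this step inserted; with it, the proof closes and coincides with the paper's.
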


\begin{proof}
It is sufficient to show that (1) implies (2), as the converse is by a similar argument on the diagram
$$C^{\ast\,\biop}\xrightarrow{\pi^\ast} H^{\ast\,\biop}
  \xrightarrow{\iota^\ast}B^{\ast\,\biop}.$$

Evidently, the map $\pi^\ast$ is an injection of left $H^{\ast\,\op\,\cop}$-comodule algebras, and $\iota^\ast$ is a surjection of right $H^\ast$-module coalgebras.
Also, the image of $\pi^\ast$ equals the space of the coinvariants of the right $B^{\ast\,\biop}$-comodule $H^{\ast\,\biop}$, according to the triviality of $\iota^\ast\circ\pi^\ast$ and counting dimensions:
$$\dim(B^{\ast\,\biop})=\dim(B)\overset{(\ref{eqn:dims})}{=}
\frac{\dim(H)}{\dim(C)}
=\frac{\dim(H^{\ast\,\biop})}{\dim(C^{\ast\,\biop})}.$$
\end{proof}

However, recalling the proofs of Lemmas \ref{lem:cleftcocleft} and \ref{lem:cleftcocleft2}, we find that a {\pams} $(\zeta,\gamma^\ast)$ for $\iota:B\rightarrowtail H$ is determined according to any given biunitary cointegral $\zeta:H\rightarrow B$. With the help of Proposition \ref{prop:BCP-selfdual}, it could be shown that:

\begin{corollary}\label{cor:BCP-exist}
Let $H$ be a finite-dimensional Hopf algebra with a diagram $B\xrightarrow{\iota}H\xrightarrow{\pi}C$ satisfying the assumptions in Proposition \ref{prop:BCP-selfdual}. Then:
\begin{itemize}
\item[(1)]
For any convolution invertible left $B$-module map $\zeta:H\rightarrow B$ which is biunitary,
there exists a unique map $\gamma$ such that $(\zeta,\gamma^\ast)$ is a {\pams};
\item[(2)]
For any convolution invertible right $C$-comodule map $\gamma:C\rightarrow H$ which is biunitary,
there exists a unique map $\zeta$ such that $(\zeta,\gamma^\ast)$ is a {\pams}.
\end{itemize}
\end{corollary}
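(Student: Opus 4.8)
The plan is to prove part (1) directly and then deduce part (2) from it via the self-duality of {\pams}s recorded in Proposition \ref{prop:BCP-selfdual}. For part (1) the existence of a compatible $\gamma$ is already implicit in the proofs of Lemmas \ref{lem:cleftcocleft} and \ref{lem:cleftcocleft2}, so the genuinely new content is the uniqueness assertion. For part (2) essentially no further computation is needed once one has part (1) in hand, because the dual diagram fits the hypotheses of part (1) after op-cop twisting.

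For existence in part (1), I would revisit the construction in Lemma \ref{lem:cleftcocleft} and observe that it never used anything about the cointegral beyond what is now assumed: the formula $\gamma[\pi(h)]=\sum\iota[\overline{\zeta}(h_{(1)})]h_{(2)}$ from (\ref{eqn:gammadef}) is well defined, convolution invertible, a right $C$-comodule map, and (after Lemma \ref{lem:cleftcocleft2}) biunitary, using only that $\zeta$ is a biunitary convolution invertible left $B$-module map. Hence for any such $\zeta$ this formula yields a $\gamma$ making $(\zeta,\gamma^\ast)$ a {\pams}. For uniqueness I would appeal to axiom (6) of Definition \ref{def:PAMS}, namely $(\iota\circ\zeta)\ast(\gamma\circ\pi)=\id_H$. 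Since $\iota$ is an algebra map and $\zeta$ is convolution invertible, $\iota\circ\zeta$ is convolution invertible in $\End_\k(H)$ with inverse $\iota\circ\overline{\zeta}$; convolving the axiom on the left by $\iota\circ\overline{\zeta}$ forces $\gamma\circ\pi=(\iota\circ\overline{\zeta})\ast\id_H$. As $\pi$ is surjective this determines $\gamma$ completely, hence also $\gamma^\ast$, giving uniqueness.

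For part (2), I would apply part (1) to the biopposite diagram $C^{\ast\,\biop}\xrightarrow{\pi^\ast}H^{\ast\,\biop}\xrightarrow{\iota^\ast}B^{\ast\,\biop}$, in which the roles of $(\iota,\zeta,\gamma)$ are played by $(\pi^\ast,\gamma^\ast,\zeta)$. A given biunitary convolution invertible right $C$-comodule map $\gamma\colon C\rightarrow H$ dualizes to a map $\gamma^\ast$ that is biunitary, convolution invertible, and a left $C^{\ast\,\biop}$-module map, which is exactly the input part (1) demands in this dual picture. Thus part (1) furnishes a unique $\zeta$ making $(\gamma^\ast,\zeta)$ a {\pams} for $\pi^\ast$, and by Proposition \ref{prop:BCP-selfdual} this is equivalent to $(\zeta,\gamma^\ast)$ being a {\pams} for $\iota$, yielding the desired existence and uniqueness.

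The step I expect to require the most care is the translation of structures under dualization in part (2): one must verify that the hypothesis ``right $C$-comodule map'' passes, after dualizing and taking op-cop twists, precisely to the ``left module map'' hypothesis of part (1), and that biunitarity and convolution invertibility survive these operations. This bookkeeping is already packaged into the proof of Proposition \ref{prop:BCP-selfdual}, so the obstacle is one of careful citation and variance-checking rather than of new mathematical difficulty; the remaining care-point is simply to confirm that the earlier proofs truly used only the stated hypotheses on $\zeta$, so that existence holds for an arbitrary such retraction and not merely the particular one produced in Lemma \ref{lem:cocleftness}.
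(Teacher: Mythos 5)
Your proposal is correct and follows essentially the same route as the paper: existence in (1) by noting the constructions of Lemmas \ref{lem:cleftcocleft} and \ref{lem:cleftcocleft2} only use the stated hypotheses on $\zeta$, uniqueness from $\gamma\circ\pi=(\iota\circ\overline{\zeta})\ast\id_H$ together with surjectivity of $\pi$, and (2) by dualizing to the biopposite diagram, applying (1), and invoking Proposition \ref{prop:BCP-selfdual}.
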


\begin{proof}
Here we identify the right $H$-module coalgebra $C$ with $H/B^+H$ for convenience.
\begin{itemize}
\item[(1)]
The existence of $\gamma$ follows from the proofs of Lemmas \ref{lem:cleftcocleft} and \ref{lem:cleftcocleft2} as mentioned before this corollary. Moreover, the uniqueness of $\gamma$ is followed by the facts that $\gamma\circ\pi=(\iota\circ\overline{\zeta})\ast\id$ and that $\pi$ is surjective.

\item[(2)]
Suppose that $\gamma:H/B^+H\rightarrow H$ is a convolution invertible right $H/B^+H$-comodule map which is biunitary. Then its dual map $\gamma^\ast:H^{\ast\,\op\,\cop}\rightarrow(H/B^+H)^{\ast\,\op\,\cop}$ would also become convolution invertible and biunitary, but preserving left $(H/B^+H)^{\ast\,\op\,\cop}$-actions.

Consequently according to (1), there exists a {\pams} $(\gamma^\ast,\zeta)$ for the left coideal subalgebra $\pi^\ast:(H/B^+H)^{\ast\,\op\,\cop}\rightarrow H^{\ast\,\op\,\cop}$ for some cointegral
$$\zeta:H^{\op\,\cop}\cong(H^{\ast\,\op\,\cop})^\ast
\rightarrow
\left(H^{\ast\,\op\,\cop}/(H/B^+H)^{\ast\,\op\,\cop\,+}H^{\ast\,\op\,\cop}\right)^\ast
\overset{(\ref{eqn:correspondence})}{\cong} B^{\op\,\cop}.$$
Finally by Proposition \ref{prop:BCP-selfdual}, this is equivalent to say that $(\zeta,\gamma^\ast)$ is a {\pams} for $\iota:B\rightarrowtail H$.
\end{itemize}
\end{proof}

\textbf{Convention.}
We would assume $B\subseteq H$ and identify $C$ in Definition \ref{def:PAMS} with $H/B^+H$ from Subsection \ref{subsection:2.3} to Subsection \ref{subsection:5.3}. This is due to the statements in Remark \ref{rmk:pams}, and might make the contents easier to understand.

\subsection{Formulas on the convolution inverses of cointegrals, and consequent systems}\label{subsection:2.3}

In this subsection, we first explore more properties of a {\pams} $(\zeta,\gamma^\ast)$ for a left coideal subalgebra $B$ of a finite-dimensional Hopf algebra $H$.

For the subsequent applications, the formulas with the biunitary cointegral $\gamma^\ast$ and integral $\zeta^\ast$ would be in fact more useful (rather than $\zeta$ and $\gamma$), including Equations (\ref{eqn:iota*}) to (\ref{eqn:convolution*}) in Corollary \ref{cor:cleftcocleft*} as well as (\ref{eqn:iota*zeta*}) and (\ref{eqn:gamma*pi*}). Let us begin by pointing out some properties of their convolution inverses $\overline{\gamma}^\ast$ and $\overline{\zeta}^\ast$, where we would regard $B\subseteq H$ and $(H/B^+H)^\ast\subseteq H^\ast$ frequently to use simplified notations
$$\sum b_{(1)}\otimes b_{(2)}\in H\otimes B
\;\;\;\;\;\;\text{and}\;\;\;\;\;\;\sum f_{(1)}\otimes f_{(2)}\in (H/B^+H)^\ast\otimes H^\ast,$$
where $b\in B^\ast$ and $f\in(H/B^+H)^\ast$.

\begin{lemma}\label{eqn:f(1)gammabarf(2)}
\begin{itemize}
\item[(1)] For any $f\in(H/B^+H)^\ast$ and $h^\ast\in H^\ast$,
\begin{equation}\label{eqn:gamma*bar1}
\sum f_{(1)}\overline{\gamma}^\ast(h^\ast f_{(2)})
=\la f,1\ra\overline{\gamma}^\ast(h^\ast)\in(H/B^+H)^\ast;
\end{equation}
\item[(2)] For any $b\in B^\ast$,
\begin{equation}\label{eqn:zeta*bar1}
\sum(\overline{\zeta}^\ast(b^\ast_{(1)})_{(1)}\btr b^\ast_{(2)})\otimes\overline{\zeta}^\ast(b^\ast_{(1)})_{(2)}
=\e\otimes\overline{\zeta}^\ast(b^\ast)\in B^\ast\otimes H^\ast.
\end{equation}
\end{itemize}
\end{lemma}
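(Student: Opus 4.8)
The plan is to prove (1) first by a direct computation inside $H^\ast$, and then to obtain (2) as its dual via the self-duality of {\pams}s (Proposition \ref{prop:BCP-selfdual}). Throughout I identify $B\subseteq H$ and $(H/B^+H)^\ast\subseteq H^\ast$ as in the statement, so that for $f\in(H/B^+H)^\ast$ the coaction $\sum f_{(1)}\otimes f_{(2)}$ is nothing but the restriction of the coproduct of $H^\ast$, with $f_{(1)}\in(H/B^+H)^\ast$ and $f_{(2)}\in H^\ast$, by \eqref{eqn:pi*}. Since $\pi^\ast$ is an injective algebra map (it is split by $\gamma^\ast$ through \eqref{eqn:gamma*pi*}), it suffices to verify (1) after applying $\pi^\ast$ to both sides.

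For the key computation I would feed in the explicit convolution formula $\pi^\ast\circ\overline{\gamma}^\ast=S\ast(\zeta^\ast\circ\iota^\ast)$ from \eqref{eqn:convolution*2}. Applying it to $h^\ast f_{(2)}$, expanding $\Delta_{H^\ast}(h^\ast f_{(2)})$ by multiplicativity of the coproduct and using the anti-multiplicativity of $S$, the image under $\pi^\ast$ of the left-hand side of (1) becomes
\[
\sum f_{(1)}S(f_{(2)})\,S(h^\ast_{(1)})\,\zeta^\ast\iota^\ast(h^\ast_{(2)}f_{(3)}),
\]
where $f_{(1)},f_{(2)},f_{(3)}$ come from the iterated coproduct of $f$. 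Two standard simplifications then collapse this expression. First, the Hopf identity $\sum f_{(1)}S(f_{(2)})\otimes f_{(3)}=1\otimes f$ in $H^\ast$ removes the first two legs, leaving $\sum S(h^\ast_{(1)})\,\zeta^\ast\iota^\ast(h^\ast_{(2)}f)$. Second, since $\iota^\ast$ is a left $H^\ast$-module map \eqref{eqn:iota*} and $\iota^\ast\circ\pi^\ast$ is trivial (the dual of \eqref{eqn:piiota}, together with $\iota^\ast(1_{H^\ast})=1_{B^\ast}$), one gets $\iota^\ast(h^\ast_{(2)}f)=\langle f,1\rangle\,\iota^\ast(h^\ast_{(2)})$. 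Substituting this back recovers $\langle f,1\rangle\,(S\ast(\zeta^\ast\iota^\ast))(h^\ast)=\langle f,1\rangle\,\pi^\ast\overline{\gamma}^\ast(h^\ast)$, which is $\pi^\ast$ applied to the right-hand side; the injectivity of $\pi^\ast$ then yields (1).

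For (2) I would invoke Proposition \ref{prop:BCP-selfdual}: $(\gamma^\ast,\zeta)$ is a {\pams} for $\pi^\ast:(H/B^+H)^{\ast\,\op\,\cop}\rightarrowtail H^{\ast\,\op\,\cop}$, and under this self-duality the roles of $\gamma^\ast$ and $\zeta$ (hence of $\overline{\gamma}^\ast$ and $\overline{\zeta}^\ast$) are interchanged. Thus (2) is precisely the instance of (1) for the dual {\pams}, read back through the identifications. Alternatively, (2) admits the entirely parallel direct proof: one uses the companion formula $\overline{\zeta}^\ast\circ\iota^\ast=(\pi^\ast\circ\gamma^\ast)\ast S$ from \eqref{eqn:convolution*2}, the comodule-map property \eqref{eqn:zeta*} of $\zeta^\ast$ in place of the module-map property \eqref{eqn:iota*}, and the dual Hopf identity, reducing a general $b^\ast\in B^\ast$ to $\iota^\ast\zeta^\ast(b^\ast)=b^\ast$ by surjectivity of $\iota^\ast$.

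The main obstacle is not any single computation but the bookkeeping: keeping straight which coproduct or coaction is in play under the two identifications, and ensuring the antipode of $H^\ast$ lands on exactly the consecutive legs where the Hopf identity $\sum f_{(1)}S(f_{(2)})\otimes f_{(3)}=1\otimes f$ can be applied. The decisive idea that makes everything routine is to replace $\overline{\gamma}^\ast$ by the convolution formula of Remark \ref{rmk:convolution} and then exploit the antipode of the ambient Hopf algebra $H^\ast$, rather than working with $\overline{\gamma}$ and $\btl$ intrinsically on $(H/B^+H)^\ast$. For (2) the comparable difficulty is the $\op\cop$ bookkeeping in the self-duality, which is why the parallel direct argument may in fact be the cleaner route to write down.
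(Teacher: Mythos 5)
Your proposal is correct and follows essentially the same route as the paper: for (1) you expand $\pi^\ast\circ\overline{\gamma}^\ast=S\ast(\zeta^\ast\circ\iota^\ast)$, collapse $\sum\pi^\ast(f)_{(1)}S(\pi^\ast(f)_{(2)})\otimes\pi^\ast(f)_{(3)}$ by the antipode identity, and use the triviality of $\iota^\ast\circ\pi^\ast$; the only cosmetic difference is that you apply the (split) injection $\pi^\ast$ and invoke its injectivity, whereas the paper keeps the whole computation wrapped inside $\gamma^\ast(\cdots)$ via $\gamma^\ast\circ\pi^\ast=\id$ and the right-module property of $\gamma^\ast$. For (2) your primary route via Proposition \ref{prop:BCP-selfdual} followed by dualization against $b^\ast\in B^\ast$ is exactly what the paper does.
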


\begin{proof}
\begin{itemize}
\item[(1)]
We compute directly that
\begin{eqnarray*}
&& \sum f_{(1)}\overline{\gamma}^\ast(h^\ast f_{(2)})  \\
&\overset{(\ref{eqn:gamma*pi*})}{=}&
\sum \gamma^\ast\left(\pi^\ast(f_{(1)})\right)\overline{\gamma}^\ast(h^\ast f_{(2)})
~\overset{(\ref{eqn:gamma*})}{=}~
\sum \gamma^\ast\left(\pi^\ast(f_{(1)})\pi^\ast[\overline{\gamma}^\ast(h^\ast f_{(2)})]\right)  \\
&\overset{(\ref{eqn:convolution*2})}{=}&
\sum \gamma^\ast\left(\pi^\ast(f_{(1)})
  S(h^\ast_{(1)}f_{(2)})\zeta^\ast[\iota^\ast(h^\ast_{(2)} f_{(3)})]\right)  \\
&\overset{(\ref{eqn:pi*})}{=}&
\sum \gamma^\ast\left(\pi^\ast(f)_{(1)}
  S(\pi^\ast(f)_{(2)})S(h^\ast_{(1)})\zeta^\ast[\iota^\ast(h^\ast_{(2)} \pi^\ast(f)_{(3)})]\right)  \\
&=&
\sum \gamma^\ast\left(S(h^\ast_{(1)})\zeta^\ast[\iota^\ast(h^\ast_{(2)} \pi^\ast(f))]\right)  \\
&\overset{(\ref{eqn:iota*})}{=}&
\sum \gamma^\ast\left(S(h^\ast_{(1)})\zeta^\ast[h^\ast_{(2)}\btr \iota^\ast(\pi^\ast(f))]\right)  \\
&\overset{(\ref{eqn:piiota})}{=}&
\la f,1\ra\sum \gamma^\ast\left(S(h^\ast_{(1)})\zeta^\ast(h^\ast_{(2)}\btr \e)\right)
~\overset{(\ref{eqn:btr})}{=}~
\la f,1\ra\sum \gamma^\ast\left(S(h^\ast_{(1)})\zeta^\ast[\iota^\ast(h^\ast_{(2)})]\right)  \\
&\overset{(\ref{eqn:convolution*2})}{=}&
\la f,1\ra\sum \gamma^\ast\left(\pi^\ast[\overline{\gamma}^\ast(h)]\right)
~=~
\la f,1\ra\overline{\gamma}^\ast(h^\ast).
\end{eqnarray*}

\item[(2)]
If we consider by Proposition \ref{prop:BCP-selfdual} the {\pams} $(\gamma^\ast,\zeta)$ for $\pi^\ast:(H/B^+H)^{\ast\,\op\,\cop}\rightarrowtail H^{\ast\,\op\,\cop}$,
then one could know according to (1) that
$$\sum\overline{\zeta}(b_{(1)}h)b_{(2)}=\la\e,b\ra\overline{\zeta}(h)\in B$$
holds for all $b\in B$ and $h\in H$. Let us compare values of the both sides of this equation under an arbitrary $b^\ast\in B^\ast$, which are:
\begin{eqnarray*}
\left\la b^\ast,\sum\overline{\zeta}(b_{(1)}h)b_{(2)}\right\ra
&=&
\sum\la b^\ast_{(1)},\overline{\zeta}(b_{(1)}h)\ra\la b^\ast_{(2)},b_{(2)}\ra  \\
&\overset{(\ref{eqn:iota*zeta*})}{=}&
\sum\la \overline{\zeta}^\ast(b^\ast_{(1)})_{(1)},b_{(1)}\ra
    \la \overline{\zeta}^\ast(b^\ast_{(1)})_{(2)},h\ra \la \iota^\ast[\zeta^\ast(b^\ast_{(2)})],b_{(2)}\ra  \\
&=&
\sum\la \overline{\zeta}^\ast(b^\ast_{(1)})_{(1)},b_{(1)}\ra
    \la \overline{\zeta}^\ast(b^\ast_{(1)})_{(2)},h\ra \la b^\ast_{(2)},\zeta[\iota(b_{(2)})]\ra  \\
&\overset{(\ref{eqn:iotapi})}{=}&
\sum\la \overline{\zeta}^\ast(b^\ast_{(1)})_{(1)},\iota(b)_{(1)}\ra
    \la \overline{\zeta}^\ast(b^\ast_{(1)})_{(2)},h\ra
    \la \zeta^\ast(b^\ast_{(2)}),\iota(b)_{(2)}\ra  \\
&=&
\sum\la \iota^\ast[\overline{\zeta}^\ast(b^\ast_{(1)})_{(1)}\zeta^\ast(b^\ast_{(2)})],b\ra
    \la \overline{\zeta}^\ast(b^\ast_{(1)})_{(2)},h\ra  \\
&\overset{(\ref{eqn:btr})}{=}&
\sum\la \overline{\zeta}^\ast(b^\ast_{(1)})_{(1)}\btr b^\ast_{(2)},b\ra
    \la \overline{\zeta}^\ast(b^\ast_{(1)})_{(2)},h\ra
\end{eqnarray*}
and
\begin{eqnarray*}
\left\la b^\ast,\la\e,b\ra\overline{\zeta}(h)\right\ra
&=&
\la \e,b\ra \la b^\ast,\overline{\zeta}(h) \ra
~=~
\la \e,b\ra \la \overline{\zeta}^\ast(b^\ast),h \ra.
\end{eqnarray*}
As a conclusion, the desired equation
$$\sum(\overline{\zeta}^\ast(b^\ast_{(1)})_{(1)}\btr b^\ast_{(2)})\otimes\overline{\zeta}^\ast(b^\ast_{(1)})_{(2)}
=\e\otimes\overline{\zeta}^\ast(b^\ast)$$
holds in $B^\ast\otimes H^\ast$.
\end{itemize}
\end{proof}

This lemma implies that $\overline{\gamma}^\ast\circ S^{-1}$ preserves left $(H/B^+H)^\ast$-actions, and that $S^{-1}\circ\overline{\zeta}^\ast$ preserves right $B^\ast$-coactions:

\begin{corollary}\label{cor:cleftcocleft*convinv}
\begin{itemize}
\item[(1)]
For any $f\in(H/B^+H)^\ast$ and $h^\ast\in H^\ast$,
\begin{equation}\label{eqn:gamma*bar2}
f\overline{\gamma}^\ast(h^\ast)=\overline{\gamma}^\ast[h^\ast S^{-1}(\pi^\ast(f))]
\end{equation}
holds in $(H/B^+H)^\ast$, namely, $\overline{\gamma}^\ast\circ S^{-1}:H^\ast\rightarrow(H/B^+H)^\ast$ is a left $(H/B^+H)^\ast$-module map;

\item[(2)]
For any $b^\ast\in B^\ast$,
\begin{equation}\label{eqn:zeta*bar2}
\sum\overline{\zeta}^\ast(b^\ast_{(1)})\otimes b^\ast_{(2)}
=\sum\overline{\zeta}^\ast(b^\ast)_{(2)}\otimes\iota^\ast[S^{-1}(\overline{\zeta}^\ast(b^\ast)_{(1)})]
\end{equation}
holds in $H^\ast\otimes B^\ast$, namely, $S^{-1}\circ\overline{\zeta}^\ast:B^\ast\rightarrow H^\ast$ is a right $B^\ast$-comodule map.
\end{itemize}
\end{corollary}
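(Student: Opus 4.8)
The plan is to view both identities as strengthenings of Lemma \ref{eqn:f(1)gammabarf(2)}: there the maps $\overline{\gamma}^\ast$ and $\overline{\zeta}^\ast$ were shown to respect the relevant action, resp. coaction, only \emph{up to the counit}, and here we upgrade these to genuine module- resp. comodule-compatibilities after twisting by the inverse antipode $S^{-1}$ of $H^\ast$. I would prove part (1) directly from (\ref{eqn:gamma*bar1}), and then obtain part (2) as its mirror image, invoking the self-duality of {\pams}s recorded in Proposition \ref{prop:BCP-selfdual}.

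For (1), recall that $(H/B^+H)^\ast$ is a right coideal subalgebra of $H^\ast$, so its coaction $f\mapsto\sum f_{(1)}\otimes f_{(2)}$ is coassociative with legs $f_{(1)}\in(H/B^+H)^\ast$ and $f_{(2)}\in H^\ast$. First I would insert the antipode: applying the identity $\sum S^{-1}(a_{(2)})a_{(1)}=\la a,1\ra 1$ of $H^\ast$ to the second tensor leg of $\Delta(f)$ gives
\[
\sum f_{(1)}\otimes S^{-1}(f_{(3)})f_{(2)} = f\otimes 1 ,
\]
so that $f\,\overline{\gamma}^\ast(h^\ast)=\sum f_{(1)}\,\overline{\gamma}^\ast\!\left(h^\ast S^{-1}(f_{(3)})f_{(2)}\right)$. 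Next I would regroup by coassociativity, rewriting the triple coproduct as $\sum (f_{(1)})_{(1)}\otimes (f_{(1)})_{(2)}\otimes f_{(2)}$, so that the outer factor $(f_{(1)})_{(1)}$ together with the inner factor $(f_{(1)})_{(2)}$ forms a single coaction of $f_{(1)}$, with $S^{-1}(f_{(2)})$ a spectator. Then I would apply (\ref{eqn:gamma*bar1}) with $h^\ast$ replaced by $h^\ast S^{-1}(f_{(2)})$, collapsing the $f_{(1)}$-sum to $\sum\la f_{(1)},1\ra\,\overline{\gamma}^\ast\!\left(h^\ast S^{-1}(f_{(2)})\right)$. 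Finally, the counit axiom $\sum\la f_{(1)},1\ra f_{(2)}=f$ together with $S^{-1}$ being an anti-coalgebra map yields $\overline{\gamma}^\ast\!\left(h^\ast S^{-1}(f)\right)$, which is exactly (\ref{eqn:gamma*bar2}) under the identification $f=\pi^\ast(f)$.

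For (2), the cleanest route is to transport (1) through Proposition \ref{prop:BCP-selfdual}: applying part (1) to the {\pams} $(\gamma^\ast,\zeta)$ for $\pi^\ast:(H/B^+H)^{\ast\,\op\,\cop}\rightarrowtail H^{\ast\,\op\,\cop}$ produces the corresponding module-map identity for $\overline{\zeta}$ inside $H$, and pairing this against an arbitrary $b^\ast\in B^\ast$ — exactly as (\ref{eqn:zeta*bar1}) was deduced from (\ref{eqn:gamma*bar1}) in the proof of Lemma \ref{eqn:f(1)gammabarf(2)} — converts it into the comodule identity (\ref{eqn:zeta*bar2}) in $H^\ast\otimes B^\ast$, the $\iota^\ast$ and $S^{-1}$ being supplied by (\ref{eqn:iota*zeta*}), (\ref{eqn:btr}) and (\ref{eqn:iotapi}) in the course of the pairing. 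Alternatively, one may mirror the computation of (1) verbatim, using (\ref{eqn:zeta*bar1}), the coassociativity of the comultiplication of $B^\ast$, and the antipode axiom $\sum S^{-1}(a_{(2)})a_{(1)}=\la a,1\ra 1$ on the $H^\ast$-legs.

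The main obstacle I anticipate is purely bookkeeping: keeping straight which antipode ($S$ versus $S^{-1}$) and which side of the coproduct are involved, and checking at each step that the legs land in the correct spaces — the distinguished legs in the one-sided (co)ideal $(H/B^+H)^\ast$, resp. $B^\ast$, and the remaining legs in $H^\ast$ — so that both (\ref{eqn:gamma*bar1}) and the antipode identities are legitimately applicable. No genuinely new idea beyond Lemma \ref{eqn:f(1)gammabarf(2)} and the self-duality of Proposition \ref{prop:BCP-selfdual} should be needed.
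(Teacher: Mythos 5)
Your proposal is correct. Part (1) is essentially identical to the paper's proof: insert $\sum S^{-1}(f_{(3)})f_{(2)}=\la f_{(2)},1\ra\e$ into the argument of $\overline{\gamma}^\ast$, regroup by coassociativity of the right $H^\ast$-coaction on $(H/B^+H)^\ast$, apply (\ref{eqn:gamma*bar1}) with the spectator leg $h^\ast S^{-1}(f_{(2)})$, and collapse with the counit. For part (2) your primary route differs from the paper's: you propose to transport part (1) through the self-dual {\pams} $(\gamma^\ast,\zeta)$ of Proposition \ref{prop:BCP-selfdual} to get the identity $\overline{\zeta}(h)b=\overline{\zeta}[S^{-1}(\iota(b))h]$ in $B$, and then pair against $b^\ast\in B^\ast$; one checks readily that this pairing is exactly equivalent to (\ref{eqn:zeta*bar2}), so the route is sound and mirrors how the paper itself deduced (\ref{eqn:zeta*bar1}) from (\ref{eqn:gamma*bar1}). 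The paper instead proves (2) by the direct computation you list as your alternative: it writes $b^\ast_{(2)}=S^{-1}(\overline{\zeta}^\ast(b^\ast_{(1)})_{(2)})\overline{\zeta}^\ast(b^\ast_{(1)})_{(1)}\btr b^\ast_{(2)}$ using the antipode identity in $H^\ast$ acting via $\btr$, and then applies the coproduct-extended form (\ref{eqn:zeta*bar1Delta}) of (\ref{eqn:zeta*bar1}). The dualization route buys you a shorter argument at the cost of tracking which antipode and which opposite/coopposite structures are in play; the direct computation stays entirely inside $H^\ast$ and $B^\ast$ and avoids that bookkeeping. Either is acceptable, and no step in your plan would fail.
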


\begin{proof}
We verify the equations with the help of Lemma \ref{eqn:f(1)gammabarf(2)}:
\begin{itemize}
\item[(1)]
For any $f\in(H/B^+H)^\ast$ and $h^\ast\in H^\ast$,
\begin{eqnarray*}
f\overline{\gamma}^\ast(h^\ast)
&=&
\sum f_{(1)}\overline{\gamma}^\ast(h^\ast S^{-1}(f_{(3)})f_{(2)})
~\overset{(\ref{eqn:gamma*bar1})}{=}~
\sum \la f_{(1)},1\ra\overline{\gamma}^\ast(h^\ast S^{-1}(f_{(2)}))  \\
&\overset{(\ref{eqn:pi*})}{=}&
\sum \la\pi^\ast(f)_{(1)},1\ra\overline{\gamma}^\ast[h^\ast S^{-1}(\pi^\ast(f)_{(2)})]
~=~
\overline{\gamma}^\ast[h^\ast S^{-1}(\pi^\ast(f))].
\end{eqnarray*}
It follows that $\overline{\gamma}^\ast\circ S^{-1}$ preserves left $(H/B^+H)^\ast$-actions:
\begin{eqnarray*}
\overline{\gamma}^\ast[S^{-1}(\pi^\ast(f)h^\ast)]
&=&
\overline{\gamma}^\ast[S^{-1}(h^\ast)S^{-1}(\pi^\ast(f))]
~\overset{(\ref{eqn:gamma*bar2})}{=}~
f\overline{\gamma}^\ast(S^{-1}(h^\ast)).
\end{eqnarray*}

\item[(2)]
For any $b^\ast\in B^\ast$, we know by Equation (\ref{eqn:zeta*bar1}) that
\begin{equation}\label{eqn:zeta*bar1Delta}
\sum(\overline{\zeta}^\ast(b^\ast_{(1)})_{(1)}\btr b^\ast_{(2)})
\otimes\overline{\zeta}^\ast(b^\ast_{(1)})_{(2)}
\otimes\overline{\zeta}^\ast(b^\ast_{(1)})_{(3)}
=\e\otimes\overline{\zeta}^\ast(b^\ast)_{(1)}\otimes\overline{\zeta}^\ast(b^\ast)_{(2)},
\end{equation}
and consequently,
\begin{eqnarray*}
\sum\overline{\zeta}^\ast(b^\ast_{(1)})\otimes b^\ast_{(2)}
&=&
\sum\overline{\zeta}^\ast(b^\ast_{(1)})_{(3)}
    \otimes \left[S^{-1}(\overline{\zeta}^\ast(b^\ast_{(1)})_{(2)})
            \overline{\zeta}^\ast(b^\ast_{(1)})_{(1)}\btr b^\ast_{(2)}\right]  \\
&=&
\sum\overline{\zeta}^\ast(b^\ast_{(1)})_{(3)}
    \otimes \left[S^{-1}(\overline{\zeta}^\ast(b^\ast_{(1)})_{(2)})
            \btr\left(\overline{\zeta}^\ast(b^\ast_{(1)})_{(1)}\btr b^\ast_{(2)}\right)\right]  \\
&\overset{(\ref{eqn:zeta*bar1Delta})}{=}&
\sum\overline{\zeta}^\ast(b^\ast)_{(2)}
    \otimes \left(S^{-1}(\overline{\zeta}^\ast(b^\ast)_{(1)})\btr\e\right)  \\
&\overset{(\ref{eqn:btr})}{=}&
\sum\overline{\zeta}^\ast(b^\ast)_{(2)}\otimes\iota^\ast[S^{-1}(\overline{\zeta}^\ast(b^\ast)_{(1)})].
\end{eqnarray*}
It also follows that $S^{-1}\circ\overline{\zeta}^\ast$ preserves right $B^\ast$-coactions:
\begin{eqnarray*}
\sum S^{-1}(\overline{\zeta}^\ast(b^\ast))_{(1)}
\otimes\iota^\ast[S^{-1}(\overline{\zeta}^\ast(b^\ast))_{(2)}]
&=&
\sum S^{-1}(\overline{\zeta}^\ast(b^\ast)_{(2)})
\otimes\iota^\ast[S^{-1}(\overline{\zeta}^\ast(b^\ast)_{(1)})]  \\
&\overset{(\ref{eqn:zeta*bar2})}{=}&
\sum S^{-1}(\overline{\zeta}^\ast(b^\ast_{(1)}))\otimes b^\ast_{(2)}.
\end{eqnarray*}
\end{itemize}
\end{proof}

Now by the conclusions stated in Corollary \ref{cor:cleftcocleft*convinv}, some {\pams}s for left coideal subalgebras of $H^{\ast\,\cop}$ and $H^{\ast\,\op}$ could be provided as consequences:

\begin{proposition}\label{prop:cleftcocleftmore}
Suppose $(\zeta,\gamma^\ast)$ is a {\pams} for $\iota:B\rightarrowtail H$.
Then:
\begin{itemize}
\item[(1)]
$(\overline{\gamma}^\ast\circ S^{-1},\overline{\zeta})$ is a {\pams} for the left coideal subalgebra $\pi^\ast:(H/B^+H)^{\ast\,\cop}\rightarrowtail H^{\ast\,\cop}$:
$$\begin{array}{ccc}
\xymatrix{
(H/B^+H)^{\ast\,\cop} \ar@<.5ex>[r]^{\;\;\;\;\;\;\pi^\ast}
& H^{\ast\,\cop}
\ar@<.5ex>[l]^{\;\;\;\;\;\;\;\;\overline{\gamma}^\ast\circ S^{-1}}
\ar@<.5ex>[r]^{\;\;\iota^\ast\circ S^{-1}}
& B^{\ast\,\op} \ar@<.5ex>[l]^{\overline{\zeta}^\ast}  }
&\;\;\text{and}\;\;&
\xymatrix{
B^\cop \ar@<.5ex>[r]^{S^{-1}\circ\iota}
& H^\op
\ar@<.5ex>[l]^{\overline{\zeta}}
\ar@<.5ex>[r]^{\pi\;\;\;\;\;\;}
& (H/B^+H)^\op \ar@<.5ex>[l]^{S^{-1}\circ\overline{\gamma}\;\;\;\;\;\;}  },
\end{array}$$
where the right $H^{\ast\,\cop}$-module structure on $B^{\ast\,\op}$ is
$$B^{\ast\,\op}\otimes H^{\ast\,\cop}\rightarrow B^{\ast\,\op},\;\;
b^\ast\otimes h^\ast\mapsto S^{-1}(h^\ast)\btr b^\ast;$$

\item[(2)]
$(\overline{\gamma}^\ast,\overline{\zeta}\circ S^{-1})$ is a {\pams} for the left coideal subalgebra $S^{-1}\circ\pi^\ast:(H/B^+H)^{\ast\,\cop}\rightarrowtail H^{\ast\,\op}$:
$$\begin{array}{ccc}
\xymatrix{
(H/B^+H)^{\ast\,\cop} \ar@<.5ex>[r]^{\;\;\;\;\;\;\;\;S^{-1}\circ\pi^\ast}
& H^{\ast\,\op}
\ar@<.5ex>[l]^{\;\;\;\;\;\;\;\;\overline{\gamma}^\ast}
\ar@<.5ex>[r]^{\;\;\iota^\ast}
& B^{\ast\,\op} \ar@<.5ex>[l]^{S^{-1}\circ\overline{\zeta}^\ast}  }
&\;\;\text{and}\;\;&
\xymatrix{
B^\cop \ar@<.5ex>[r]^{\iota}
& H^\cop
\ar@<.5ex>[l]^{\overline{\zeta}\circ S^{-1}}
\ar@<.5ex>[r]^{\pi\circ S^{-1}\;\;\;\;\;\;}
& (H/B^+H)^\op \ar@<.5ex>[l]^{\overline{\gamma}\;\;\;\;\;\;}  },
\end{array}$$
where the left $H^{\ast\,\op}$-comodule structure on $(H/B^+H)^{\ast\,\cop}$ is
$$(H/B^+H)^{\ast\,\cop}\rightarrow H^{\ast\,\op}\otimes(H/B^+H)^{\ast\,\cop},\;\;
f\mapsto \sum S^{-1}(f_{(2)})\otimes f_{(1)}.$$
\end{itemize}
\end{proposition}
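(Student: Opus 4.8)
The plan is to observe that the two substantive axioms of a \pams{} --- that the subalgebra-retraction is a module map and that the quotient-retraction is a comodule map --- have already been verified in Corollary \ref{cor:cleftcocleft*convinv}, so that both parts reduce to checking the purely structural axioms of Definition \ref{def:PAMS} in the twisted setting and then invoking the existence-and-uniqueness result Corollary \ref{cor:BCP-exist}.

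Concretely, for part (1) I would treat $H^{\ast\,\cop}$ as the ambient Hopf algebra of the new system, $(H/B^+H)^{\ast\,\cop}$ as its left coideal subalgebra included by $\pi^\ast$, $B^{\ast\,\op}$ as the quotient via $\iota^\ast\circ S^{-1}$, and $\overline{\gamma}^\ast\circ S^{-1}$ as the candidate subalgebra-retraction. First I would verify the hypotheses of Proposition \ref{prop:BCP-selfdual}: that $\pi^\ast$ is an injection of left $H^{\ast\,\cop}$-comodule algebras (this is the $\cop$-twist of Equation \eqref{eqn:pi*}, the algebra structure being untouched by $\cop$); that $\iota^\ast\circ S^{-1}$ is a surjection of right $H^{\ast\,\cop}$-module coalgebras (here $S^{-1}$ is precisely what interchanges the left $H^\ast$-module and $\cop$-coalgebra structure of $\iota^\ast$ with the displayed right $H^{\ast\,\cop}$-module, $\op$-coalgebra structure on $B^{\ast\,\op}$); and that the image of $\pi^\ast$ is the coinvariant space, which follows from the triviality of $\iota^\ast\circ S^{-1}\circ\pi^\ast$ --- the dual form of Proposition \ref{prop:PAMS-comptrival}(4) --- together with the dimension equality of Corollary \ref{cor:dims}, exactly as in the proof of Proposition \ref{prop:BCP-selfdual}. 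Next I would check that $\overline{\gamma}^\ast\circ S^{-1}$ meets the hypotheses of Corollary \ref{cor:BCP-exist}(1): it preserves the left $(H/B^+H)^\ast$-action by Corollary \ref{cor:cleftcocleft*convinv}(1); it is biunitary because $\overline{\gamma}^\ast$ is biunitary by Proposition \ref{prop:PAMS-comptrival}(3) and $S^{-1}$ is biunitary; and it retracts $\pi^\ast$, since specializing Equation \eqref{eqn:gamma*bar2} to $h^\ast=\e$ and using unitarity of $\overline{\gamma}^\ast$ gives $\overline{\gamma}^\ast(S^{-1}(\pi^\ast(f)))=f$. Corollary \ref{cor:BCP-exist}(1) then yields a unique completing map, and I would identify its dual with $\overline{\zeta}$ --- equivalently, that the quotient-retraction is $\overline{\zeta}^\ast$ --- using Corollary \ref{cor:cleftcocleft*convinv}(2) and the convolution relations recorded in Corollary \ref{cor:cleftcocleft*} and Remark \ref{rmk:convolution}.

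Part (2) is the cleaner of the two: for the inclusion $S^{-1}\circ\pi^\ast:(H/B^+H)^{\ast\,\cop}\rightarrowtail H^{\ast\,\op}$ the subalgebra-retraction $\overline{\gamma}^\ast$ is a module map by Equation \eqref{eqn:gamma*bar2} read against the twisted left action displayed in the statement, while the quotient-retraction $S^{-1}\circ\overline{\zeta}^\ast$ is a comodule map verbatim by Corollary \ref{cor:cleftcocleft*convinv}(2); the remaining structural checks and the appeal to Corollary \ref{cor:BCP-exist}(1) then proceed as in part (1).

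I expect the main obstacle to be neither the module/comodule identities nor the existence step --- those are already supplied by Corollaries \ref{cor:cleftcocleft*convinv} and \ref{cor:BCP-exist} --- but the careful bookkeeping of how $S^{-1}$ and the $\op$/$\cop$ decorations interchange left and right module and comodule structures, so that each concrete map is matched to the correct abstract axiom. Within this, confirming convolution-invertibility of the retractions inside the $\op$/$\cop$-twisted convolution algebras, where the convolution product itself is reversed (cf. Remark \ref{rmk:convolution}), is the most delicate point.
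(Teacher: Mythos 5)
Your proposal is correct and follows essentially the same route as the paper: verify the structural hypotheses of Definition \ref{def:PAMS}(1)--(2) for the twisted diagram (with the coinvariance obtained from the triviality of $\iota^\ast\circ S^{-1}\circ\pi^\ast$ plus the dimension count), and obtain the module/comodule properties of the two retractions from Corollary \ref{cor:cleftcocleft*convinv}. The one organizational difference---routing the convolution axiom (6) through Corollary \ref{cor:BCP-exist}(1) and uniqueness rather than verifying $(\pi^\ast\circ\overline{\gamma}^\ast\circ S^{-1})\ast(\overline{\zeta}^\ast\circ\iota^\ast\circ S^{-1})=\id$ directly from \eqref{eqn:convolution*2} as the paper does---merely relocates that same computation into your identification step (showing the unique completing map is $\overline{\zeta}$), so it is harmless but saves no work; also note that convolution-invertibility in the $\op$/$\cop$-twisted convolution algebras is automatic, since a two-sided convolution inverse remains a two-sided inverse when the convolution product is reversed, so that point is less delicate than you anticipate.
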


\begin{proof}
\begin{itemize}
\item[(1)]
Firstly, it follows from Corollary \ref{cor:cleftcocleft*}(2) that $\pi^\ast$ is clearly an injection of left $H^{\ast\,\cop}$-comodules and algebras from $(H/B^+H)^{\ast\,\cop}$ to $H^{\ast\,\cop}$. On the other hand, as we could compute that
$$\iota^\ast[S^{-1}(h^\ast k^\ast)]=\iota^\ast[S^{-1}(k^\ast)S^{-1}(h^\ast)]
\overset{(\ref{eqn:iota*})}{=}S^{-1}(k^\ast)\btr \iota^\ast[S^{-1}(h^\ast)]
$$
for any $h^\ast,k^\ast\in H^\ast$,
the surjection $\iota^\ast\circ S^{-1}$ preserves right $H^{\ast\,\cop}$-coactions. It is evidently a coalgebra map from $H^{\ast\,\cop}$ to $B^{\ast\,\op}$ as well.

Moreover, note that $\iota^\ast\circ S^{-1}\circ \pi^\ast$ is trivial according to Proposition \ref{prop:cleftcocleftmore}(4). Thus the image of $\pi^\ast$ in contained by the space
$${H^{\ast\,\cop}}_\mathrm{coinv}:=\left\{h^\ast\in H\mid
\sum h^\ast_{(2)}\otimes \iota^\ast(S^{-1}(h^\ast_{(1)}))=h^\ast\otimes\e\right\}$$
of the coinvariants of $H^{\ast\,\cop}$ as a right $B^{\ast\,\op}$-comodule. In fact they are equal, since we know by \cite[Proposition 3.10(1)]{Mas94(b)} that
$$\dim({H^{\ast\,\cop}}_\mathrm{coinv})
=\frac{\dim(H^{\ast\,\cop})}{\dim(B^{\ast\,\op})}
\overset{(\ref{eqn:dims})}{=}\dim((H/B^+H)^{\ast\,\cop}).$$

Finally, it is direct to show that $\overline{\gamma}^\ast\circ S^{-1}$ and $\overline{\zeta}^\ast$ have convolution inverses $\gamma^\ast\circ S^{-1}$ and $\zeta^\ast$ respectively. The rest of requirements for them are due to Corollary \ref{cor:cleftcocleft*convinv} and Proposition \ref{prop:cleftcocleftmore}(3), as well as the following computations:
\begin{eqnarray*}
\sum\pi^\ast[\overline{\gamma}^\ast(S^{-1}(h^\ast_{(2)}))]
  \overline{\zeta}^\ast[\iota^\ast(S^{-1}(h^\ast_{(1)}))]
&=&
\sum\pi^\ast[\overline{\gamma}^\ast(S^{-1}(h^\ast)_{(1)})]
  \overline{\zeta}^\ast[\iota^\ast(S^{-1}(h^\ast)_{(2)})]  \\
&\overset{(\ref{eqn:convolution*2})}{=}&
S(S^{-1}(h))
~=~ h.
\end{eqnarray*}

\item[(2)]
This is due to direct verifications, which are completely similar in the proof of (1).
\end{itemize}
\end{proof}

On the other hand, let us combine the results of Propositions \ref{prop:BCP-selfdual} and \ref{prop:cleftcocleftmore} to obtain analogous {\pams}s for left coideals subalgebras of $H^\op$ and $H^\cop$ respectively, which would be more useful in later sections:

\begin{corollary}\label{cor:PAMSopcop}
Suppose $(\zeta,\gamma^\ast)$ is a {\pams} for $\iota:B\rightarrowtail H$.
Then:
\begin{itemize}
\item[(1)]
$(\overline{\zeta}\circ S^{-1},\overline{\gamma}^\ast)$ is a {\pams} for the left coideal subalgebra $\iota:B^\op\rightarrowtail H^\op$:
$$\begin{array}{ccc}
\xymatrix{
B^\op \ar@<.5ex>[r]^{\iota}
& H^\op
\ar@<.5ex>[l]^{\overline{\zeta}\circ S^{-1}}
\ar@<.5ex>[r]^{\pi\circ S^{-1}\;\;\;\;\;\;}
& (H/B^+H)^\cop \ar@<.5ex>[l]^{\overline{\gamma}\;\;\;\;\;\;}  }
&\;\;\text{and}\;\;&
\xymatrix{
(H/B^+H)^{\ast\,\op} \ar@<.5ex>[r]^{\;\;\;\;\;\;\;\;S^{-1}\circ\pi^\ast}
& H^{\ast\,\cop}
\ar@<.5ex>[l]^{\;\;\;\;\;\;\;\;\overline{\gamma}^\ast}
\ar@<.5ex>[r]^{\;\;\iota^\ast}
& B^{\ast\,\cop} \ar@<.5ex>[l]^{S^{-1}\circ\overline{\zeta}^\ast}  },
\end{array}$$
where the right $H^\op$-module structure on $(H/B^+H)^\cop$ is
\begin{equation}\label{eqn:rightH^opmod}
(H/B^+H)^\cop\otimes H^\op\rightarrow (H/B^+H)^\cop,\;\;
x\otimes h\mapsto x\btl S^{-1}(h);
\end{equation}

\item[(2)]
$(\overline{\zeta},\overline{\gamma}^\ast\circ S^{-1})$ is a {\pams} for the left coideal subalgebra $S^{-1}\circ\iota:B^\op\rightarrowtail H^\cop$:
$$\begin{array}{ccc}
\xymatrix{
B^\op \ar@<.5ex>[r]^{S^{-1}\circ\iota}
& H^\cop
\ar@<.5ex>[l]^{\overline{\zeta}}
\ar@<.5ex>[r]^{\pi\;\;\;\;\;\;}
& (H/B^+H)^\cop \ar@<.5ex>[l]^{S^{-1}\circ\overline{\gamma}\;\;\;\;\;\;}  }
&\;\;\text{and}\;\;&
\xymatrix{
(H/B^+H)^{\ast\,\op} \ar@<.5ex>[r]^{\;\;\;\;\;\;\pi^\ast}
& H^{\ast\,\op}
\ar@<.5ex>[l]^{\;\;\;\;\;\;\;\;\overline{\gamma}^\ast\circ S^{-1}}
\ar@<.5ex>[r]^{\;\;\iota^\ast\circ S^{-1}}
& B^{\ast\,\cop} \ar@<.5ex>[l]^{\overline{\zeta}^\ast}  },
\end{array}$$
where the left $H^\cop$-comodule structure on $B^\op$ is
\begin{equation}\label{eqn:leftH^copcomod}
B^\op\rightarrow H^\cop\otimes B^\op,\;\;
b\mapsto\sum S^{-1}(b_{(1)})\otimes b_{(2)}.
\end{equation}
\end{itemize}
\end{corollary}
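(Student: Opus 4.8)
The plan is to deduce both parts from the self-duality of \pams{}s recorded in Proposition \ref{prop:BCP-selfdual}, applied to the two \pams{}s for left coideal subalgebras of the dual Hopf algebras already constructed in Proposition \ref{prop:cleftcocleftmore}. Concretely, for (1) I would start from the \pams{} $(\overline{\gamma}^\ast,\overline{\zeta}\circ S^{-1})$ for $S^{-1}\circ\pi^\ast:(H/B^+H)^{\ast\,\cop}\rightarrowtail H^{\ast\,\op}$ of Proposition \ref{prop:cleftcocleftmore}(2), and for (2) from the \pams{} $(\overline{\gamma}^\ast\circ S^{-1},\overline{\zeta})$ for $\pi^\ast:(H/B^+H)^{\ast\,\cop}\rightarrowtail H^{\ast\,\cop}$ of Proposition \ref{prop:cleftcocleftmore}(1). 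Since these are already genuine \pams{}s, the injection/surjection and coinvariance hypotheses of Proposition \ref{prop:BCP-selfdual} are satisfied automatically and need no re-checking.

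Next I would run the self-duality. Proposition \ref{prop:BCP-selfdual} turns a \pams{} for $\iota':B'\rightarrowtail H'$ with quotient $\pi':H'\twoheadrightarrow C'$ into a \pams{} for ${\pi'}^\ast:{C'}^{\ast\,\op\,\cop}\rightarrowtail {H'}^{\ast\,\op\,\cop}$ whose quotient map is ${\iota'}^\ast$ and whose two retractions exchange roles. The key bookkeeping step is to evaluate the $\op\,\cop$ and $\ast$ operations. For (1) this gives ${H'}^{\ast\,\op\,\cop}=(H^{\ast\,\op})^{\ast\,\op\,\cop}=H^\op$ and ${C'}^{\ast\,\op\,\cop}=(B^{\ast\,\op})^{\ast\,\op\,\cop}=B^\op$; the dual of the surjection $\iota^\ast$ is the inclusion $\iota:B^\op\rightarrowtail H^\op$, and the exchanged retraction pair becomes $(\overline{\zeta}\circ S^{-1},\overline{\gamma}^\ast)$. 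For (2) the same computation yields $H^\cop$ and $B^\op$, the inclusion ${\pi'}^\ast=(\iota^\ast\circ S^{-1})^\ast=S^{-1}\circ\iota:B^\op\rightarrowtail H^\cop$ (using $(S_{H^\ast}^{-1})^\ast=S_H^{-1}$), and the pair $(\overline{\zeta},\overline{\gamma}^\ast\circ S^{-1})$.

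It then remains to match the induced structures with the stated formulas, which is where the genuine (if routine) computation lies. For (1) the new quotient map is $\pi''=(S^{-1}\circ\pi^\ast)^\ast=\pi\circ S^{-1}:H^\op\twoheadrightarrow(H/B^+H)^\cop$; imposing that it be a map of right $H^\op$-modules and using the original relation $\pi(ab)=\pi(a)\btl b$ forces the action $x\otimes h\mapsto x\btl S^{-1}(h)$ of (\ref{eqn:rightH^opmod}). For (2) the new inclusion is $\iota''=S^{-1}\circ\iota:B^\op\rightarrowtail H^\cop$; computing $\Delta^{\cop}\bigl(S^{-1}(\iota(b))\bigr)$ via $\Delta(S^{-1}(x))=\sum S^{-1}(x_{(2)})\otimes S^{-1}(x_{(1)})$ together with the original left $H$-comodule structure $\sum\iota(b)_{(1)}\otimes\iota(b)_{(2)}=\sum b_{(1)}\otimes\iota(b_{(2)})$ of $\iota$ yields the coaction $b\mapsto\sum S^{-1}(b_{(1)})\otimes b_{(2)}$ of (\ref{eqn:leftH^copcomod}). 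The hard part will be exactly this bookkeeping: keeping the antipode twists and the $\op/\cop$ conventions consistent through the dualization, so that the right module and left comodule structures emerge in precisely the normalized forms asserted rather than up to an implicit antipode correction.
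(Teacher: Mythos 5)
Your proposal is correct and follows essentially the same route as the paper: the paper's (one-line) proof likewise obtains (1) and (2) by applying the self-duality of Proposition \ref{prop:BCP-selfdual} to the {\pams}s of Proposition \ref{prop:cleftcocleftmore}(2) and (1) respectively, with the $\op/\cop$ and antipode bookkeeping you carry out left implicit. Your verification of the induced module and comodule structures (\ref{eqn:rightH^opmod}) and (\ref{eqn:leftH^copcomod}) is accurate and fills in exactly the details the paper omits.
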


\begin{proof}
These two claims follow from (2) and (1) of Proposition \ref{prop:cleftcocleftmore} respectively.
\end{proof}

In the end of this subsection,
we list the six {\pams}s which are induced by a given one
\begin{equation*}
\begin{array}{ccc}
\xymatrix{
B \ar@<.5ex>[r]^{\iota} & H \ar@<.5ex>@{-->}[l]^{\zeta} \ar@<.5ex>[r]^{\pi}
& C \ar@<.5ex>@{-->}[l]^{\gamma}  }
&\;\;\text{and}\;\;&
\xymatrix{
C^\ast \ar@<.5ex>[r]^{\pi^\ast}
& H^\ast \ar@<.5ex>@{-->}[l]^{\gamma^\ast} \ar@<.5ex>[r]^{\iota^\ast}
& B^\ast \ar@<.5ex>@{-->}[l]^{\zeta^\ast}  },
\end{array}
\end{equation*}
in the following table, as a collection of Propositions \ref{prop:BCP-selfdual}, \ref{prop:cleftcocleftmore} and Corollary \ref{cor:PAMSopcop}, according to identification $C\cong H/B^+H$:

\begin{table}[!ht]
\centering
\caption{Induced {\pams}s}
\begin{tabular}{|c|c|c|}
\hline
Inclusion & Quotient map & Partially admissible mapping system \\
\hline
$B\overset{\iota}\longrightarrow H$ & $H\overset{\pi}\longrightarrow H/B^+H$ & $(\zeta,\gamma^\ast)$  \\
$B^\op\overset{\iota}\longrightarrow H^\op$
  & $H^\op\overset{\pi\circ S^{-1}}\longrightarrow C^\cop$
  & $(\overline{\zeta}\circ S^{-1},\overline{\gamma}^\ast)$  \\
$B^\op\overset{S^{-1}\circ\iota}\longrightarrow H^\cop$
  & $H^\cop\overset{\pi}\longrightarrow C^\cop$
  & $(\overline{\zeta},\overline{\gamma}^\ast\circ S^{-1})$  \\
$C^{\ast\,\biop}\overset{\pi^\ast}\longrightarrow H^{\ast\,\biop}$
  & $H^{\ast\,\biop}\overset{\iota^\ast}\longrightarrow B^{\ast\,\biop}$ & $(\gamma^\ast,\zeta)$  \\
$C^{\ast\,\cop}\overset{\pi^\ast}\longrightarrow H^{\ast\,\cop}$
  & $H^{\ast\,\cop}\overset{\iota^\ast\circ S^{-1}}\longrightarrow B^{\ast\,\op}$
  & $(\overline{\gamma}^\ast\circ S^{-1},\overline{\zeta})$  \\
$C^{\ast\,\cop}\overset{S^{-1}\circ\pi^\ast}\longrightarrow H^{\ast\,\op}$
  & $H^{\ast\,\op}\overset{\iota^\ast}\longrightarrow B^{\ast\,\op}$
  & $(\overline{\gamma}^\ast,\overline{\zeta}\circ S^{-1})$  \\
\hline
\end{tabular}
\end{table}

\section{Left partially dualized quasi-Hopf algebras}\label{section:3}

\subsection{Left partially dualized quasi-Hopf algebras determined by {\pams}s}

Let $(\zeta,\gamma^\ast)$ be a {\pams} for a left coideal subalgebra $\iota:B\rightarrowtail H$ in the sense of Definition \ref{def:PAMS}, and as usual $\pi^\ast:(H/B^+H)^\ast\rightarrowtail H^\ast$ is then regarded as a right coideal subalgebra.

As a result, there is in the literature a structure of algebra $(H/B^+H)^\ast\#B$, namely, the \textit{smash product} of $(H/B^+H)^\ast$ and $B$. Its underlying vector space is $(H/B^+H)^\ast\otimes B$, with unit $\e\#1$ and multiplication given by
\begin{eqnarray}\label{eqn:smashprod}
(f\#b)(g\#c)
&:=& \sum f(b_{(1)}\rightharpoonup g)\#b_{(2)}c
=\sum fg_{(1)}\#(b\leftharpoonup g_{(2)})c  \nonumber   \\
&=& \sum fg_{(1)}\#\la g_{(2)},b_{(1)}\ra b_{(2)}c
\end{eqnarray}
for any $f,g\in(H/B^+H)^\ast$ and $b,c\in B$, where $\leftharpoonup$ and $\rightharpoonup$ are hit actions. Note that here we also write
$$\sum f_{(1)}\otimes f_{(2)}\in (H/B^+H)^\ast\otimes H^\ast
\;\;\;\;\text{and}\;\;\;\;\sum b_{(1)}\otimes b_{(2)}\in H\otimes B$$
for $f\in(H/B^+H)^\ast$ and $b\in B$ as in Subsection \ref{subsection:cocleft}.
This structure is in fact a particular case of \cite[Remark 1.3(b)]{Doi92} on the Hopf algebra $H^\ast$. See also \cite{Tak80} and \cite[Remark 3.1(1)]{CMZ97}.

Our first main goal in this paper is to show that the algebra $(H/B^+H)^\ast\#B$ has a structure of quasi-Hopf algebra:

\begin{theorem}\label{thm:partialdual}
Let $H$ be a finite-dimensional Hopf algebra. Suppose that $B$ is a left coideal subalgebra of $H$ with a {\pams} $(\zeta,\gamma^\ast)$.
Then the smash product $(H/B^+H)^\ast\#B$ with algebra structure (\ref{eqn:smashprod}) is a quasi-Hopf algebra, whose structures are defined as follows: For a linear basis $\{b_i\}$ of $B$ with dual basis $\{b_i^\ast\}$ of $B^\ast$,
\begin{itemize}
\item[(1)]
The ``comultiplication''
$\pd{\Delta}:
(H/B^+H)^\ast\#B\rightarrow\left((H/B^+H)^\ast\#B\right)^{\otimes 2}$
satisfies that for any $f\in(H/B^+H)^\ast$ and $b\in B$,
\begin{equation}\label{eqn:Delta(f)}
\pd{\Delta}(f\#1)
=\sum_{i}\left(f_{(1)}\#b_i\right)\otimes\left(\gamma^\ast[f_{(2)}\zeta^\ast(b_i^\ast)]\#1\right),
\end{equation}
and
\begin{equation}\label{eqn:Delta(b)}
\pd{\Delta}(\e\#b)
=\sum_{i}\left(\e\#\zeta[\gamma(f_i^\ast)b_{(1)}]\right)\otimes\left(f_i\#b_{(2)}\right),
\end{equation}
where $\{f_i\}$ is a linear basis of $(H/B^+H)^\ast$ with dual basis $\{f_i^\ast\}$ of $H/B^+H$.

\item[(2)]
The ``counit''
$\pd{\e}:(H/B^+H)^\ast\#B\rightarrow\k$
satisfies that
\begin{equation}\label{eqn:epsilon}
\pd{\e}(f\#b)=\la f,1\ra\la\e,b\ra
\end{equation}
for any $f\in(H/B^+H)^\ast$ and $b\in B$. Moreover,
The equations
\begin{equation}\label{eqn:counitaxiom}
(\pd{\e}\otimes\id)\circ\pd{\Delta}=\id=(\id\otimes\pd{\e})\circ\pd{\Delta}
\end{equation}
hold on $(H/B^+H)^\ast\#B$.

\item[(3)]
The associator
\begin{equation}\label{eqn:associator}
\pd{\phi}=\sum_{i,j}\left(\e\#b_i\right)
\otimes\left(\e\#b_j\right)
\left(\overline{\gamma}^\ast[S^{-1}(\overline{\zeta}^\ast(b^\ast_i)_{(1)})]\#1\right)
\otimes
\left(\overline{\gamma}^\ast[S^{-1}(\overline{\zeta}^\ast(b^\ast_j)
  \overline{\zeta}^\ast(b^\ast_i)_{(2)})]\#1\right).
\end{equation}

\item[(4)]
Define a linear transformation $\pd{T}:(H/B^+H)^\ast\#B\rightarrow (H/B^+H)^\ast\#B$ such that for any $f\in(H/B^+H)^\ast$ and $b\in B$,
$$\pd{T}(f\#b)
=\sum_i (\e\#b_i)
\left(\overline{\gamma}^\ast[\pi^\ast(f)
  (\iota(b)\rightharpoonup\overline{\zeta}^\ast(b^\ast_i))]\#1\right).$$
Then every antipode $\pd{S}$ with its distinguished elements $\pd{\alpha}$ and $\pd{\beta}$ satisfies the equation
$$\pd{\beta}\pd{S}(-)\pd{\alpha}=\pd{T}(-).$$
In particular, if the element
\begin{equation}\label{eqn:upsilon}
\pd{\upsilon}:=\pd{T}(\e\#1)=
\sum_i (\e\#b_i)\left(\overline{\gamma}^\ast[\overline{\zeta}^\ast(b^\ast_i)]\#1\right)
\in(H/B^+H)^\ast\#B
\end{equation}
is invertible, then there are two antipodes with their distinguished elements as follows:
$$\pd{S}_1:=\pd{T}(-)\pd{\upsilon}^{-1}\;\;\;\;\text{with}\;\;\;\;
\pd{\alpha}_1:=\pd{\upsilon},\;\;\pd{\beta}_1:=\e\#1$$
and
$$\pd{S}_2:=\pd{\upsilon}^{-1}\pd{T}(-)\;\;\;\;\text{with}\;\;\;\;
\pd{\alpha}_2:=\e\#1,\;\;\pd{\beta}_2:=\pd{\upsilon}.$$
\end{itemize}
\end{theorem}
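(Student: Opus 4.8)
The plan is to prove the four parts by combining direct algebraic verification of the ``comultiplication'' and ``counit'' with a categorical argument for the associator and antipode, following the chain of tensor equivalences
$$\Rep(H)_{\Rep(B)}^\ast\approx{}_{(H/B^+H)^\ast}\mathfrak{M}{}_{(H/B^+H)^\ast}^{H^\ast}\overset{\Phi}{\approx}{}_{(H/B^+H)^\ast}\mathfrak{M}{}^{B^\ast}\cong\Rep((H/B^+H)^\ast\#B)$$
indicated in the introduction. The first category is a finite tensor category by construction, so transporting its structure backwards through the monoidal functor $(\Phi,J)$ simultaneously guarantees a quasi-Hopf structure on $(H/B^+H)^\ast\#B$ and forces the explicit structure maps; it then remains to identify these transported maps with the formulas in (1)--(4).

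First, for parts (1) and (2) I would verify directly that (\ref{eqn:Delta(f)})--(\ref{eqn:epsilon}) define algebra homomorphisms. Since $\pd\Delta$ is prescribed separately on the generating families $f\#1$ and $\e\#b$, the initial task is to confirm that the two prescriptions are mutually compatible and extend multiplicatively; I would expand $\pd\Delta$ of a smash product using the rule (\ref{eqn:smashprod}) and collapse the resulting dual-basis sums with the module and comodule identities of Corollary \ref{cor:cleftcocleft*}, in particular that $\gamma^\ast$ is a right $(H/B^+H)^\ast$-module map (\ref{eqn:gamma*}) and $\zeta^\ast$ a left $B^\ast$-comodule map (\ref{eqn:zeta*}). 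The counit (\ref{eqn:epsilon}) is manifestly multiplicative, and applying $\pd\e\otimes\id$ and $\id\otimes\pd\e$ to (\ref{eqn:Delta(f)}) and (\ref{eqn:Delta(b)}) reduces the counit axiom (\ref{eqn:counitaxiom}) to the normalizations $\gamma^\ast\circ\pi^\ast=\id$ and $\iota^\ast\circ\zeta^\ast=\id$ of Lemma \ref{lem:cleftcocleft2} together with the biunitarity of the cointegrals.

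For parts (3) and (4) I would read the associator and antipode off the monoidal functor $(\Phi,J)$. The tensor product on the Doi-Hopf bimodule category is the ordinary balanced tensor product, hence strictly associative, while $J$ fails to intertwine this strict associativity; the resulting discrepancy
$$(\id\otimes J_{N,P})\circ J_{M,N\otimes P}\;\;\text{versus}\;\;(J_{M,N}\otimes\id)\circ J_{M\otimes N,P},$$
transported to the algebra via the regular object, is exactly the element $\pd\phi$ of (\ref{eqn:associator}), so that quasi-coassociativity and the pentagon axiom become the coherence of $J$. The antipode is obtained from rigidity: in a quasi-Hopf algebra only the combination $\pd\beta\,\pd S(-)\,\pd\alpha$ is canonically fixed by the evaluation and coevaluation of dual objects, and tracing these through $\Phi$ identifies that combination with $\pd T$; the remaining gauge freedom in splitting $\pd T$ then yields the two explicit antipodes $\pd S_1,\pd S_2$ with their distinguished elements, provided $\pd\upsilon=\pd T(\e\#1)$ is invertible.

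The main obstacle is the coherence of $J$ that underlies part (3): one must check that $J_{M,N}$ is a natural isomorphism satisfying the associativity constraint, and then show that the induced obstruction is the specific cocycle $\pd\phi$ and that $\pd\phi$ obeys the pentagon and triangle axioms. This is delicate because $\pd\phi$ involves $S^{-1}$ applied to nested coproducts of $\overline{\zeta}^\ast$, so recasting the dual-basis sums into the stated form requires the twisted identities of Corollary \ref{cor:cleftcocleft*convinv} --- namely that $\overline{\gamma}^\ast\circ S^{-1}$ is a left $(H/B^+H)^\ast$-module map (\ref{eqn:gamma*bar2}) and $S^{-1}\circ\overline{\zeta}^\ast$ a right $B^\ast$-comodule map (\ref{eqn:zeta*bar2}). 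Verifying in addition that $\pd\phi$ is invertible completes the quasi-bialgebra axioms and hence the whole quasi-Hopf structure.
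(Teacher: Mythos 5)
Your proposal follows essentially the same route as the paper: direct verification that $\pd{\Delta}$ and $\pd{\e}$ are algebra maps using the module/comodule identities of the {\pams} for parts (1)--(2), and then reading the associator off the failure of $J$ to intertwine the strict associativity of $-\otimes_{(H/B^+H)^\ast}-$ (evaluated on the regular object) and the antipode off the rigidity data transported through $\Phi$, with the canonical combination $\pd{\beta}\pd{S}(-)\pd{\alpha}$ identified as the preantipode $\pd{T}$. The tools you flag --- Corollaries \ref{cor:cleftcocleft*} and \ref{cor:cleftcocleft*convinv}, the normalizations of Lemma \ref{lem:cleftcocleft2}, and the coherence of $J$ as the main obstacle --- are exactly those the paper deploys.
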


\begin{remark}\label{rmk:phi^-1}
The inverse of the associator $\pd{\phi}$ in $\left((H/B^+H)^\ast\#B\right)^{\otimes 3}$ is
\begin{equation}\label{eqn:phi^-1}
\pd{\phi}^{-1}=\sum_{i,j}\left(\e\#b_i\right)
\otimes\left(\gamma^\ast[\zeta^\ast(b^\ast_i)_{(1)}]\#b_j\right)\otimes
\left(\gamma^\ast[\zeta^\ast(b^\ast_i)_{(2)}\zeta^\ast(b^\ast_j)]\#1\right).
\end{equation}
where $\{b_i\}$ is a basis of $B$ with dual basis $\{b_i^\ast\}$ of $B^\ast$.
\end{remark}

The proofs of Theorem \ref{thm:partialdual} and Remark \ref{rmk:phi^-1} would be provided in the subsequent subsections, which are Subsections \ref{subsection:Thm(1-2)Pf}, \ref{subsection:Thm(3)Pf} and \ref{subsection:Thm(4)Pf}. Before that, we introduce the following definition:

\begin{definition}
The smash product $(H/B^+H)^\ast\#B$ with the structure in Theorem \ref{thm:partialdual} is called the \textit{left partially dualized quasi-Hopf algebra} (or \textit{left partial dual} for simplicity) of the finite-dimensional Hopf algebra $H$ determined by the {\pams} $(\zeta,\gamma^\ast)$.
\end{definition}

Of course, we might find equivalent formulas of the structures described in Theorem \ref{thm:partialdual}, some of which should be more convenient to use in the subsequent subsections:

\begin{remark}\label{rmk:equivDelta}
Let $\{b_i\}$ denote a linear basis of $B$ with dual basis $\{b_i^\ast\}$ of $B^\ast$, and let $\{f_i\}$ denote a linear basis of $(H/B^+H)^\ast$ with dual basis $\{f_i^\ast\}$ of $H/B^+H$ as usual.
\begin{itemize}
\item[(1)]
For all $f\in(H/B^+H)^\ast$ and $b\in B$,
it is supposed to define that
$$\pd{\Delta}(f\#b)=\pd{\Delta}(f\#1)\pd{\Delta}(\e\#b),$$
and then the ``comultiplication'' $\mathbf{\Delta}$ could be concluded as
\begin{equation}\label{eqn:Deltacomplete}
\mathbf{\Delta}(f\#b)
=\sum_{i,j}\left(f_{(1)}\#b_i\zeta[\gamma(f_j^\ast)b_{(1)}]\right)
\otimes\left(\gamma^\ast[f_{(2)}\zeta^\ast(b_i^\ast)]f_j\#b_{(2)}\right).
\end{equation}
\end{itemize}
One could also directly apply the dual bases to verify the following formulas:
\begin{itemize}
\item[(2)]
We could also write
\begin{eqnarray}\label{eqn:Delta(b)2}
\mathbf{\Delta}(f\#1)
&=&
\sum_i\left(f_{(1)}\#\zeta[\gamma(f_i^\ast)\leftharpoonup f_{(2)}]\right)\otimes(f_i\#1),  \nonumber  \\
\mathbf{\Delta}(\e\#b)
&=&
\sum_i(\e\#b_i)\otimes\left(\gamma^\ast[b_{(1)}\rightharpoonup\zeta^\ast(b_i^\ast)]\#b_{(2)}\right)
\end{eqnarray}
both hold for all $f\in(H/B^+H)^\ast$ and $b\in B$.

\item[(3)]
The inverse of the associator $\pd{\phi}$ is
\begin{equation}\label{eqn:phi^-1 2}
\pd{\phi}^{-1}=\sum_{i,j}\left(\e\#\zeta[\gamma(f_i^\ast)\gamma(f_j^\ast)_{(1)}]\right)
\otimes\left(f_i\#\zeta[\gamma(f_j^\ast)_{(2)}]\right)
\otimes(f_j\#1).
\end{equation}

\item[(4)]
The transformation $\pd{T}$ satisfies
$$\pd{T}(f\#b)=
\sum_i \left(\e\#\overline{\zeta}[(
  \overline{\gamma}(f_i^\ast)\leftharpoonup \pi^\ast(f))\iota(b)]\right)
  (f_i\#1)$$
for all $f\in(H/B^+H)^\ast$ and $b\in B$,
and the element $\pd{T}(\e\#1)$ equals
$$\pd{v}=\sum_i \left(\e\#\overline{\zeta}[\overline{\gamma}(f_i^\ast)]\right)
(f_i\#1).$$
\end{itemize}
\end{remark}

Another straightforward fact we would like to mention is:

\begin{remark}
The \textit{Heisenberg double} of $H$ is defined as the smash product algebra $H^\ast\#H$, where $H$ is regarded as a left $H^\ast$-module algebra via the hit action $\rightharpoonup$. It is easy to find that
$$\pi^\ast\otimes\iota:(H/B^+H)^\ast\#B\rightarrowtail H^\ast\#H$$
is an injection of algebras for each left partially dualized quasi-Hopf algebra $(H/B^+H)^\ast\#B$ of $H$.
\end{remark}

\subsection{Further notations for dual basis of the left coideal subalgebra and structures of left partial duals}

Since the structures in Theorem \ref{thm:partialdual} of the left partial dual $(H/B^+H)^\ast\#B$ are all given through dual basis, we porvide some notation in this subsection for convenience of subsequent calculations.

For the purpose, let us consider the $\k$-linear abelian category ${}_{(H/B^+H)^\ast}\mathfrak{M}^{B^\ast}$, whose objects are finite-dimensional vector spaces $V$ with both left $(H/B^+H)^\ast$-module and right $B^\ast$-comodule structures satisfying the compatibility relation:
\begin{equation}\label{eqn:DoiHopfmod}
\sum(fv)_{\la 0\ra}\otimes(fv)_{\la 1\ra}
=\sum f_{(1)}v_{\la 0\ra}\otimes (f_{(2)}\btr v_{\la 1\ra})
\;\;\;\;\;\;(\forall f\in(H/B^+H)^\ast,\;v\in V),
\end{equation}
where $v\mapsto\sum v_{\la 0\ra}\otimes v_{\la 1\ra}$ denotes the right $B^\ast$-comodule structure on $V$ by Sweedler notation with angle brackets.

We remark that the category ${}_{(H/B^+H)^\ast}\mathfrak{M}^{B^\ast}$ is a particular case of the category of \textit{Doi-Hopf modules} (\cite{CMZ97,CMIZ99}) in the literature,
and a similar process as \cite[Remark(1.3)(b)]{Doi92} would follow an isomorphism of categories
\begin{equation}\label{eqn:PsiPhi2}
{}_{(H/B^+H)^\ast}\mathfrak{M}^{B^\ast}\cong 
\Rep((H/B^+H)^\ast\#B),
\end{equation}
where the latter category consists of finite-dimensional left $(H/B^+H)^\ast\#B$-modules.
Specifically, for each object $V\in{}_{(H/B^+H)^\ast}\mathfrak{M}^{B^\ast}$, the left $(H/B^+H)^\ast\#B$-action defined by
\begin{equation}\label{eqn:Bhit}
(f\#b)v
:=\sum fv_{\la0\ra}\langle v_{\la1\ra},b\rangle
\;\;\;\;\;\;(\forall f\in(H/B^+H)^\ast,\; b\in B,\;v\in V)
\end{equation}
makes $V$ an object in $\Rep((H/B^+H)^\ast\#B)$.

Conversely, we regard $(H/B^+H)^\ast\#B$ as an object in the category ${}_{(H/B^+H)^\ast}\mathfrak{M}^{B^\ast}$ of Doi-Hopf modules, and the right $B^\ast$-comodule structure for the unit element $e:=\e\#1$ is supposed to be denoted by
$$\sum e_{\la0\ra}\otimes e_{\la1\ra}\in ((H/B^+H)^\ast\#B)\otimes B^\ast,$$
and accordingly
$\sum e_{\la0\ra}\otimes e_{\la1\ra}\otimes e_{\la2\ra}
:=\sum e_{\la0\ra}\otimes (e_{\la1\ra})_{(1)}\otimes (e_{\la1\ra})_{(2)}$,
etc.
In fact, concerning the isomorphism (\ref{eqn:PsiPhi2}) of categories, we could find that
\begin{equation}\label{eqn:e}
\sum e_{\la0\ra}\otimes e_{\la1\ra}=\sum_i (\e\#b_i)\otimes b_i^\ast,
\end{equation}
where $\{b_i\}$ is a basis of $B$ with dual basis $\{b_i^\ast\}$ of $B^\ast$. Equation (\ref{eqn:e}) would be used frequently in this paper. For example:

\begin{lemma}\label{lem:dualbases(e)}
For any $h^\ast\in H^\ast$,
\begin{eqnarray}\label{eqn:dualbases(e)}
\sum (e_{\la0\ra}\leftharpoonup h^\ast)\otimes e_{\la1\ra}
&=& \sum e_{\la0\ra}\otimes(h^\ast \btr e_{\la1\ra})   \\
&=& \sum e_{\la0\ra}\otimes\iota^\ast[h^\ast\zeta^\ast(e_{\la1\ra})]  \nonumber
\end{eqnarray}
holds, where $\sum e_{\la0\ra}\otimes e_{\la1\ra}$ is also regarded as an element in $B\otimes B^\ast$ without confusions.
\end{lemma}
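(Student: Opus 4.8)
The plan is to reduce everything to the canonical element attached to dual bases. By (\ref{eqn:e}), when $\sum e_{\la0\ra}\otimes e_{\la1\ra}$ is regarded as an element of $B\otimes B^\ast$ it is simply $\sum_i b_i\otimes b_i^\ast$, the canonical element of the pair of dual bases $\{b_i\}$ of $B$ and $\{b_i^\ast\}$ of $B^\ast$. With this identification the second equality of (\ref{eqn:dualbases(e)}) is immediate: one merely applies the definition (\ref{eqn:btr}) of $\btr$ to each $b_i^\ast$, giving $h^\ast\btr b_i^\ast=\iota^\ast[h^\ast\zeta^\ast(b_i^\ast)]$ summand by summand. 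Hence the real content is the first equality, $\sum_i(b_i\leftharpoonup h^\ast)\otimes b_i^\ast=\sum_i b_i\otimes(h^\ast\btr b_i^\ast)$.

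To establish the first equality I would use that both sides lie in $B\otimes B^\ast$, and that an element of $B\otimes B^\ast$ is determined by pairing its $B^\ast$-leg against an arbitrary $c\in B$ (which recovers the $B$-component). On the left-hand side, since $b\mapsto b\leftharpoonup h^\ast$ is linear and $\sum_i b_i\la b_i^\ast,c\ra=c$, the pairing yields $c\leftharpoonup h^\ast=\sum\la h^\ast,c_{(1)}\ra c_{(2)}$, where $\sum c_{(1)}\otimes c_{(2)}\in H\otimes B$ is the left-coideal structure of $c$. On the right-hand side I would expand $h^\ast\btr b_i^\ast=\iota^\ast[h^\ast\zeta^\ast(b_i^\ast)]$ and pair with $c$, obtaining $\la h^\ast\zeta^\ast(b_i^\ast),\iota(c)\ra=\sum\la h^\ast,c_{(1)}\ra\la\zeta^\ast(b_i^\ast),c_{(2)}\ra$. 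The decisive simplification is $\la\zeta^\ast(b_i^\ast),c_{(2)}\ra=\la b_i^\ast,\zeta(c_{(2)})\ra=\la b_i^\ast,c_{(2)}\ra$, which uses $c_{(2)}\in B$ together with the retraction $\zeta\circ\iota=\id_B$ from Lemma \ref{lem:cleftcocleft2}(1). Summing over $i$ and applying $\sum_i b_i\la b_i^\ast,c_{(2)}\ra=c_{(2)}$ then gives $\sum\la h^\ast,c_{(1)}\ra c_{(2)}$, matching the left-hand side and finishing the identification.

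The only point requiring care, and the main (though modest) obstacle, is the bookkeeping of conventions: that the right hit action $b\leftharpoonup h^\ast=\sum\la h^\ast,b_{(1)}\ra b_{(2)}$ indeed lands back in $B$, which holds because $b_{(2)}\in B$ by the left-coideal property, and that the $\btr$-action is transported across the pairing $\la-,-\ra$ precisely by collapsing $\zeta^\ast$ on $B$ via $\zeta\circ\iota=\id_B$. Once these are in place the argument is a short transpose-of-an-operator computation, with no structural difficulty beyond tracking which Sweedler leg carries the $B$-component.
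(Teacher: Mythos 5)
Your proof is correct and supplies exactly the argument the paper leaves implicit: Lemma \ref{lem:dualbases(e)} is stated without proof as an immediate consequence of (\ref{eqn:e}), and the intended justification is precisely the dual-bases identity $\sum_i T(b_i)\otimes b_i^\ast=\sum_i b_i\otimes T^\ast(b_i^\ast)$ with $T=(-\leftharpoonup h^\ast)$, whose transpose you correctly identify with $h^\ast\btr(-)$ via (\ref{eqn:iotapi}), (\ref{eqn:btr}) and $\zeta\circ\iota=\id_B$. The bookkeeping points you flag (that $b\leftharpoonup h^\ast$ lands in $B$, and that $\zeta^\ast$ collapses on $\iota(B)$) are exactly the ones that need checking, and you handle them correctly.
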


More generally, we would simply write elements $f\#1$ and $\e\#b$ in the smash product $(H/B^+H)^\ast\#B$ by $f$ and $b$ in a number of cases, where equations
\begin{eqnarray}
bf &=& \sum(b_{(1)}\rightharpoonup f)b_{(2)}=\sum f_{(1)}(b\leftharpoonup f_{(2)})  \label{eqn:bf0}  \\
&=& \sum f_{(1)}\la f_{(2)},b_{(1)}\ra b_{(2)}  \label{eqn:bf}
\end{eqnarray}
could be written as a consequence.

\begin{lemma}
Denote $e=e'=\e\#1$. Then
\begin{itemize}
\item[(1)]
For any $f\in(H/B^+H)^\ast$,
\begin{eqnarray}\label{eqn:e<0>f}
\sum e_{\la0\ra}f\otimes e_{\la1\ra}
&=& \sum f_{(1)}e_{\la0\ra}\otimes(f_{(2)}\btr e_{\la1\ra})  \\
&=& \sum f_{(1)}e_{\la0\ra}\otimes\iota^\ast[f_{(2)}\zeta^\ast(e_{\la1\ra})].  \nonumber
\end{eqnarray}

\item[(2)]
We have
\begin{equation}\label{eqn:e<0>e'<0>}
\sum e_{\la0\ra}e'_{\la0\ra}\otimes e_{\la1\ra}\otimes e'_{\la1\ra}
=\sum e_{\la0\ra}\otimes e_{\la1\ra}\otimes e_{\la2\ra}.
\end{equation}
\end{itemize}
\end{lemma}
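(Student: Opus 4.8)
The plan is to reduce both identities to the explicit description of the $B^\ast$-comodule structure of the unit $e=\e\#1$ recorded in (\ref{eqn:e}), namely $\sum e_{\la0\ra}\otimes e_{\la1\ra}=\sum_i(\e\#b_i)\otimes b_i^\ast$, and then to push the resulting dual-basis sums through the algebra relations of the smash product.

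For part (1) I would first move $f$ past $e_{\la0\ra}$ by means of the commutation relation (\ref{eqn:bf0}). Since each $e_{\la0\ra}=\e\#b_i$ lies in $B$, (\ref{eqn:bf0}) gives $e_{\la0\ra}f=\sum f_{(1)}(e_{\la0\ra}\leftharpoonup f_{(2)})$, whence $\sum e_{\la0\ra}f\otimes e_{\la1\ra}=\sum f_{(1)}(e_{\la0\ra}\leftharpoonup f_{(2)})\otimes e_{\la1\ra}$. Treating the Sweedler components $f_{(1)}\otimes f_{(2)}$ as frozen coefficients, I then apply Lemma \ref{lem:dualbases(e)} with $h^\ast=f_{(2)}\in H^\ast$ to the inner sum over the comodule structure of $e$; this turns $\sum(e_{\la0\ra}\leftharpoonup f_{(2)})\otimes e_{\la1\ra}$ into $\sum e_{\la0\ra}\otimes(f_{(2)}\btr e_{\la1\ra})$. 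Moving $f_{(1)}$ back to the left of $e_{\la0\ra}$ (note $(f_{(1)}\#1)(\e\#b_i)=f_{(1)}\#b_i$) produces exactly $\sum f_{(1)}e_{\la0\ra}\otimes(f_{(2)}\btr e_{\la1\ra})$, the first claimed equality. The second equality is then immediate from $f_{(2)}\btr e_{\la1\ra}=\iota^\ast[f_{(2)}\zeta^\ast(e_{\la1\ra})]$ as in (\ref{eqn:btr}), equivalently from the last line of (\ref{eqn:dualbases(e)}).

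For part (2) I would substitute (\ref{eqn:e}) into both sides and compare. On the left, the smash-product multiplication (\ref{eqn:smashprod}) yields $e_{\la0\ra}e'_{\la0\ra}=(\e\#b_i)(\e\#b_j)=\e\#b_ib_j$, so the left-hand side equals $\sum_{i,j}(\e\#b_ib_j)\otimes b_i^\ast\otimes b_j^\ast$. On the right, expanding the coproduct of $B^\ast$ (which is dual to the product of $B$) as $(b_k^\ast)_{(1)}\otimes(b_k^\ast)_{(2)}=\sum_{i,j}\la b_k^\ast,b_ib_j\ra\,b_i^\ast\otimes b_j^\ast$, and using $\sum_k(\e\#b_k)\la b_k^\ast,b_ib_j\ra=\e\#b_ib_j$, gives the same expression $\sum_{i,j}(\e\#b_ib_j)\otimes b_i^\ast\otimes b_j^\ast$. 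Hence the two sides coincide.

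I expect the only delicate point to be the index bookkeeping in part (1): one must keep the dual-basis summation defining the $B^\ast$-coaction on $e$ separate from the Sweedler summation of $f$, and verify that Lemma \ref{lem:dualbases(e)}, stated for an arbitrary $h^\ast\in H^\ast$, may legitimately be applied leg-wise with $h^\ast=f_{(2)}$ (which indeed lies in $H^\ast$, not merely in $(H/B^+H)^\ast$, matching the hypothesis of that lemma). No genuine obstruction arises beyond this careful management; both identities are ultimately formal consequences of (\ref{eqn:e}), the commutation relation (\ref{eqn:bf0}), Lemma \ref{lem:dualbases(e)}, and the duality between the multiplication of $B$ and the comultiplication of $B^\ast$.
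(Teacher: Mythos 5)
Your proposal is correct and follows essentially the same route as the paper: part (1) is exactly the paper's computation (commute $f$ past $e_{\la0\ra}$ via (\ref{eqn:bf0}), then apply Lemma \ref{lem:dualbases(e)} leg-wise with $h^\ast=f_{(2)}$, and finish with (\ref{eqn:btr})), and part (2) is the same dual-basis manipulation the paper performs, merely organized by expanding both sides to $\sum_{i,j}(\e\#b_ib_j)\otimes b_i^\ast\otimes b_j^\ast$ rather than transforming the left side step by step into the right. The point you flag as delicate --- that $f_{(2)}$ lies in $H^\ast$ and hence satisfies the hypothesis of Lemma \ref{lem:dualbases(e)} --- is handled correctly.
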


\begin{proof}
\begin{itemize}
\item[(1)]
For any $f\in(H/B^+H)^\ast$, it could be calculated that
$$\sum e_{\la0\ra}f\otimes e_{\la1\ra}
\overset{(\ref{eqn:bf0})}{=}
\sum f_{(1)}(e_{\la0\ra}\leftharpoonup f_{(2)})\otimes e_{\la1\ra}
\overset{(\ref{eqn:dualbases(e)})}{=}
\sum f_{(1)}e_{\la0\ra}\otimes (f_{(2)}\btr e_{\la1\ra}).$$

\item[(2)]
Suppose $\{b_i\}$ is a basis of $B$ with dual basis $\{b_i^\ast\}$ of $B^\ast$. It is not hard to verify that
\begin{eqnarray*}
\sum e_{\la0\ra}e'_{\la0\ra}\otimes e_{\la1\ra}\otimes e'_{\la1\ra}
&\overset{(\ref{eqn:e})}{=}&
\sum_{i,j} (\e\#b_ib_j)\otimes b_i^\ast\otimes b_j^\ast
~=~ \sum_{i,j} (\e\#b_j)\otimes b_i^\ast\otimes (b_j^\ast\leftharpoonup b_i)  \\
&=&
\sum_{i,j} (\e\#b_j)\otimes b_i^\ast\otimes \la {b_j^\ast}_{(1)},b_i\ra {b_j^\ast}_{(2)}  \\
&=& \sum_{j} (\e\#b_j)\otimes {b_j^\ast}_{(1)} \otimes {b_j^\ast}_{(2)}  \\
&\overset{(\ref{eqn:e})}{=}&
\sum e_{\la0\ra}\otimes e_{\la1\ra}\otimes e_{\la2\ra}.
\end{eqnarray*}
\end{itemize}
\end{proof}

\subsection{Proofs of Theorem \ref{thm:partialdual} (1) and (2) with additional formulas on {\pams}s}\label{subsection:Thm(1-2)Pf}

This subsection is devoted to showing
the compatibility of the operations in (1) and (2) of Theorem \ref{thm:partialdual}. Before that, in order to simplify the proofs, we would like to introduce more formulas on {\pams}s by making full use of Definition \ref{def:PAMS}(6).

Let $H$ be a finite-dimensional Hopf algebra. Suppose that $B$ is a left coideal subalgebra of $H$ with a {\pams} $(\zeta,\gamma^\ast)$. Then:

\begin{lemma}
For any $h^\ast\in H^\ast$ and $b^\ast\in B^\ast$,
\begin{equation}\label{eqn:zeta*(pi*gamma*zeta*)}
\sum \zeta^\ast(h^\ast_{(1)}\btr b^\ast_{(1)})
    \pi^\ast\left(\gamma^\ast[h^\ast_{(2)}\zeta^\ast(b^\ast_{(2)})_{(1)}]\right)
  \otimes \zeta^\ast(b^\ast_{(2)})_{(2)}
=\sum h^\ast\zeta^\ast(b^\ast)_{(1)}\otimes\zeta^\ast(b^\ast)_{(2)}.
\end{equation}
In particular:
\begin{itemize}
\item[(1)]
For any $b^\ast\in B^\ast$,
\begin{equation}\label{eqn:zeta*(pi*gamma*zeta*)1}
\sum \zeta^\ast(b^\ast_{(1)})\pi^\ast\left(\gamma^\ast[\zeta^\ast(b^\ast_{(2)})_{(1)}]\right)
\otimes\zeta^\ast(b^\ast_{(2)})_{(2)}
=\sum \zeta^\ast(b^\ast)_{(1)}\otimes\zeta^\ast(b^\ast)_{(2)};
\end{equation}

\item[(2)]
For any $h^\ast\in H^\ast$ and $b^\ast\in B^\ast$,
\begin{equation}\label{eqn:zeta*(pi*gamma*zeta*)2}
\sum \zeta^\ast(h^\ast_{(1)}\btr b^\ast_{(1)})
  \pi^\ast\left(\gamma^\ast[h^\ast_{(2)}\zeta^\ast(b^\ast_{(2)})]\right)
=h^\ast\zeta^\ast(b^\ast).
\end{equation}
\end{itemize}
\end{lemma}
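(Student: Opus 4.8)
The plan is to prove the displayed identity~(\ref{eqn:zeta*(pi*gamma*zeta*)}) first and to deduce (1) and (2) as immediate specializations. Two ingredients already in hand drive everything: the comodule-map property~(\ref{eqn:zeta*}) of $\zeta^\ast$, and the convolution identity~(\ref{eqn:convolution*}), namely $(\zeta^\ast\circ\iota^\ast)\ast(\pi^\ast\circ\gamma^\ast)=\id_{H^\ast}$. Since the target right-hand side only sees the coproduct of $\zeta^\ast(b^\ast)$, whereas the left-hand side carries its $b^\ast$-dependence across the three legs $b^\ast_{(1)}$, $\zeta^\ast(b^\ast_{(2)})_{(1)}$, $\zeta^\ast(b^\ast_{(2)})_{(2)}$, the first task is to convert the latter into a single threefold coproduct of $\zeta^\ast(b^\ast)$.

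To do so I would apply $\id\otimes\Delta$ to~(\ref{eqn:zeta*}) and use coassociativity to get
\[
\sum b^\ast_{(1)}\otimes\zeta^\ast(b^\ast_{(2)})_{(1)}\otimes\zeta^\ast(b^\ast_{(2)})_{(2)}
=\sum\iota^\ast[\zeta^\ast(b^\ast)_{(1)}]\otimes\zeta^\ast(b^\ast)_{(2)}\otimes\zeta^\ast(b^\ast)_{(3)}
\]
in $B^\ast\otimes H^\ast\otimes H^\ast$. Feeding the right-hand triple into the left-hand side of~(\ref{eqn:zeta*(pi*gamma*zeta*)}), the slot $h^\ast_{(1)}\btr b^\ast_{(1)}$ turns into $h^\ast_{(1)}\btr\iota^\ast[\zeta^\ast(b^\ast)_{(1)}]=\iota^\ast[h^\ast_{(1)}\zeta^\ast(b^\ast)_{(1)}]$ by the module-map property~(\ref{eqn:iota*}), while the two copies of $\zeta^\ast(b^\ast_{(2)})$ become $\zeta^\ast(b^\ast)_{(2)}$ and $\zeta^\ast(b^\ast)_{(3)}$. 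The left-hand side of~(\ref{eqn:zeta*(pi*gamma*zeta*)}) is thereby rewritten as
\[
\sum \zeta^\ast\!\left(\iota^\ast[h^\ast_{(1)}\zeta^\ast(b^\ast)_{(1)}]\right)\,
\pi^\ast\!\left(\gamma^\ast[h^\ast_{(2)}\zeta^\ast(b^\ast)_{(2)}]\right)\otimes\zeta^\ast(b^\ast)_{(3)}.
\]

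The decisive step is then to apply~(\ref{eqn:convolution*}). By coassociativity the first two legs recombine, so the first tensor factor above is exactly $(\zeta^\ast\circ\iota^\ast)\ast(\pi^\ast\circ\gamma^\ast)$ evaluated at the single element $h^\ast\zeta^\ast(b^\ast)_{(1)}$ (with $\zeta^\ast(b^\ast)_{(2)}$ left in the outer slot); here I use that $\Delta$ is an algebra map to identify $\Delta(h^\ast\zeta^\ast(b^\ast)_{(1)})$ with $\sum h^\ast_{(1)}\zeta^\ast(b^\ast)_{(1)}\otimes h^\ast_{(2)}\zeta^\ast(b^\ast)_{(2)}$. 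Thus~(\ref{eqn:convolution*}) collapses that factor to $h^\ast\zeta^\ast(b^\ast)_{(1)}$, leaving $\sum h^\ast\zeta^\ast(b^\ast)_{(1)}\otimes\zeta^\ast(b^\ast)_{(2)}$, which is the right-hand side of~(\ref{eqn:zeta*(pi*gamma*zeta*)}). Statement~(1) is then the case $h^\ast=\e$, where $\e\btr b^\ast=\iota^\ast[\zeta^\ast(b^\ast)]=b^\ast$ by~(\ref{eqn:iota*zeta*}); statement~(2) follows by applying the counit $\la-,1\ra$ of $H^\ast$ to the second tensor factor of~(\ref{eqn:zeta*(pi*gamma*zeta*)}) and using the counit axiom. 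The one place demanding care—and the main obstacle—is precisely this regrouping: one must track the Sweedler indices closely enough to see that, after the substitution, the first tensor factor really is a single convolution $(\zeta^\ast\circ\iota^\ast)\ast(\pi^\ast\circ\gamma^\ast)$ evaluated at one element, so that~(\ref{eqn:convolution*}) applies in a single stroke rather than piecemeal.
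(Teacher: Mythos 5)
Your proof is correct and follows essentially the same route as the paper: apply $\id\otimes\Delta$ to (\ref{eqn:zeta*}) to convert the three legs into a threefold coproduct of $\zeta^\ast(b^\ast)$, pull $h^\ast_{(1)}$ inside $\iota^\ast$ via (\ref{eqn:iota*}), collapse with the convolution identity (\ref{eqn:convolution*}), and obtain (1) by setting $h^\ast=\e$ and (2) by applying the counit to the second tensorand. Your explicit justification of the regrouping step (using that $\Delta$ is an algebra map together with coassociativity) is a correct elaboration of what the paper leaves implicit.
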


\begin{proof}
Since $\zeta^\ast:B^\ast\rightarrow H^\ast$ is a left $B^\ast$-comodule map via $\iota^\ast$, we could know that
$$\sum b^\ast_{(1)}\otimes\zeta^\ast(b^\ast_{(2)})_{(1)}\otimes\zeta^\ast(b^\ast_{(2)})_{(2)}
\overset{(\ref{eqn:zeta*})}{=}
\sum \iota^\ast[\zeta^\ast(b^\ast)_{(1)}]\otimes\zeta^\ast(b^\ast)_{(2)}
  \otimes\zeta^\ast(b^\ast)_{(3)}.$$
Therefore,
\begin{eqnarray*}
&&
\sum \zeta^\ast(h^\ast_{(1)}\btr b^\ast_{(1)})
    \pi^\ast\left(\gamma^\ast[h^\ast_{(2)}\zeta^\ast(b^\ast_{(2)})_{(1)}]\right)
  \otimes \zeta^\ast(b^\ast_{(2)})_{(2)}  \\
&=&
\sum \zeta^\ast\left(h^\ast_{(1)}\btr \iota^\ast[\zeta^\ast(b^\ast)_{(1)}]\right)
    \pi^\ast\left(\gamma^\ast[h^\ast_{(2)}\zeta^\ast(b^\ast)_{(2)}]\right)
  \otimes \zeta^\ast(b^\ast)_{(3)}    \\
&\overset{(\ref{eqn:iota*})}{=}&
\sum \zeta^\ast\left(\iota^\ast[h^\ast_{(1)}\zeta^\ast(b^\ast)_{(1)}]\right)
    \pi^\ast\left(\gamma^\ast[h^\ast_{(2)}\zeta^\ast(b^\ast)_{(2)}]\right)
  \otimes \zeta^\ast(b^\ast)_{(3)}
  \\
&\overset{(\ref{eqn:convolution*})}{=}&
\sum h^\ast\zeta^\ast(b^\ast)_{(1)}\otimes\zeta^\ast(b^\ast)_{(2)},
\end{eqnarray*}
which is exactly Equation (\ref{eqn:zeta*(pi*gamma*zeta*)}).

\begin{itemize}
\item[(1)]
This is the case when we choose $h^\ast=\e$ in (\ref{eqn:zeta*(pi*gamma*zeta*)}).

\item[(2)]
This is the image of Equation (\ref{eqn:zeta*(pi*gamma*zeta*)}) under the linear map
$\id_{H^\ast}\otimes 1$.
\end{itemize}
\end{proof}

\begin{corollary}
For any $h^\ast,k^\ast\in H^\ast$ and $b^\ast\in B^\ast$,
\begin{equation}\label{eqn:gamma*gamma*1}
\sum \gamma^\ast[h^\ast\zeta^\ast(b^\ast_{(1)})]\gamma^\ast[\zeta^\ast(b^\ast_{(2)})_{(1)}]
\otimes\zeta^\ast(b^\ast_{(2)})_{(2)}
=\sum \gamma^\ast[h^\ast\zeta^\ast(b^\ast)_{(1)}]\otimes\zeta^\ast(b^\ast)_{(2)}
\end{equation}
and
\begin{equation}\label{eqn:gamma*gamma*2}
\sum \gamma^\ast[k^\ast\zeta^\ast(h^\ast_{(1)}\btr b^\ast_{(1)})]
\gamma^\ast[h^\ast_{(2)}\zeta^\ast(b^\ast_{(2)})]
=\gamma^\ast[k^\ast h^\ast\zeta^\ast(b^\ast)]
\end{equation}
hold.
\end{corollary}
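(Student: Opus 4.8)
The plan is to derive both identities directly from the preceding lemma by applying $\gamma^\ast$ to an appropriate $H^\ast$-slot and then invoking the fact (Equation (\ref{eqn:gamma*})) that $\gamma^\ast$ is a right $(H/B^+H)^\ast$-module map, namely $\gamma^\ast[h^\ast\pi^\ast(f)]=\gamma^\ast(h^\ast)f$. The whole point is that (\ref{eqn:gamma*gamma*1}) and (\ref{eqn:gamma*gamma*2}) are obtained from the special cases (\ref{eqn:zeta*(pi*gamma*zeta*)1}) and (\ref{eqn:zeta*(pi*gamma*zeta*)2}) respectively, simply by post-composing with $\gamma^\ast$ and then ``stripping off'' the inner $\pi^\ast$ through the module property; no new structural input is needed.

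For (\ref{eqn:gamma*gamma*1}), I would start from (\ref{eqn:zeta*(pi*gamma*zeta*)1}) and apply the linear map $x\otimes y\mapsto\gamma^\ast(h^\ast x)\otimes y$ to both sides. On the right-hand side this immediately produces $\sum\gamma^\ast[h^\ast\zeta^\ast(b^\ast)_{(1)}]\otimes\zeta^\ast(b^\ast)_{(2)}$, which is exactly the right-hand side of (\ref{eqn:gamma*gamma*1}). On the left-hand side the first tensor factor becomes $\gamma^\ast\bigl[h^\ast\zeta^\ast(b^\ast_{(1)})\,\pi^\ast(\gamma^\ast[\zeta^\ast(b^\ast_{(2)})_{(1)}])\bigr]$; setting $f=\gamma^\ast[\zeta^\ast(b^\ast_{(2)})_{(1)}]$ and applying (\ref{eqn:gamma*}) detaches the trailing $\pi^\ast(f)$ to give $\gamma^\ast[h^\ast\zeta^\ast(b^\ast_{(1)})]\,\gamma^\ast[\zeta^\ast(b^\ast_{(2)})_{(1)}]$, which recovers the left-hand side of (\ref{eqn:gamma*gamma*1}).

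For (\ref{eqn:gamma*gamma*2}), the argument is the same but one degree simpler, since (\ref{eqn:zeta*(pi*gamma*zeta*)2}) is a single-tensor identity: I would apply $\gamma^\ast(k^\ast\,-)$ to both sides. The right-hand side $h^\ast\zeta^\ast(b^\ast)$ turns into $\gamma^\ast[k^\ast h^\ast\zeta^\ast(b^\ast)]$, matching the right-hand side of (\ref{eqn:gamma*gamma*2}). The left-hand side becomes $\gamma^\ast\bigl[k^\ast\zeta^\ast(h^\ast_{(1)}\btr b^\ast_{(1)})\,\pi^\ast(\gamma^\ast[h^\ast_{(2)}\zeta^\ast(b^\ast_{(2)})])\bigr]$, and one more application of (\ref{eqn:gamma*}) with $f=\gamma^\ast[h^\ast_{(2)}\zeta^\ast(b^\ast_{(2)})]$ peels off the inner $\pi^\ast(f)$ to yield the desired product of two $\gamma^\ast$-terms.

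I do not anticipate a genuine obstacle, as both identities are formal consequences of the lemma together with the module property of $\gamma^\ast$. The only point requiring a little care is the bookkeeping of the Sweedler indices: after applying (\ref{eqn:gamma*}) one must check that the detached factor $f$ lands in the correct position and that, in the first identity, the surviving component $\zeta^\ast(b^\ast_{(2)})_{(2)}$ in the second tensor leg is carried along untouched throughout the computation.
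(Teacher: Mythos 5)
Your proposal is correct and follows essentially the same route as the paper: both proofs rest on the right $(H/B^+H)^\ast$-module property (\ref{eqn:gamma*}) of $\gamma^\ast$ together with the special cases (\ref{eqn:zeta*(pi*gamma*zeta*)1}) and (\ref{eqn:zeta*(pi*gamma*zeta*)2}) of the preceding lemma; the paper merely reads the computation in the opposite direction, starting from the left-hand sides of (\ref{eqn:gamma*gamma*1}) and (\ref{eqn:gamma*gamma*2}) and using (\ref{eqn:gamma*}) to merge the two $\gamma^\ast$-factors before invoking the lemma.
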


\begin{proof}
As $\gamma^\ast:H^\ast\rightarrow(H/B^+H)^\ast$ is a right $(H/B^+H)^\ast$-module map via $\pi^\ast$, one could verify Equations (\ref{eqn:gamma*gamma*1}) and (\ref{eqn:gamma*gamma*2}) according to Equations (\ref{eqn:zeta*(pi*gamma*zeta*)1}) and (\ref{eqn:zeta*(pi*gamma*zeta*)2}). Specifically,
\begin{eqnarray*}
&& \sum \gamma^\ast[h^\ast\zeta^\ast(b^\ast_{(1)})]\gamma^\ast[\zeta^\ast(b^\ast_{(2)})_{(1)}]
\otimes\zeta^\ast(b^\ast_{(2)})_{(2)}  \\
&\overset{(\ref{eqn:gamma*})}{=}&
\sum \gamma^\ast\left[h^\ast\zeta^\ast(b^\ast_{(1)})
\pi^\ast\left(\gamma^\ast[\zeta^\ast(b^\ast_{(2)})_{(1)}]\right)\right]
\otimes\zeta^\ast(b^\ast_{(2)})_{(2)}  \\
&\overset{(\ref{eqn:zeta*(pi*gamma*zeta*)1})}{=}&
\sum \gamma^\ast[h^\ast\zeta^\ast(b^\ast)_{(1)}]\otimes\zeta^\ast(b^\ast)_{(2)},
\end{eqnarray*}
and
\begin{eqnarray*}
\sum \gamma^\ast[k^\ast\zeta^\ast(h^\ast_{(1)}\btr b^\ast_{(1)})]
\gamma^\ast[h^\ast_{(2)}\zeta^\ast(b^\ast_{(2)})]
&\overset{(\ref{eqn:gamma*})}{=}&
\sum \gamma^\ast\left[k^\ast\zeta^\ast(h^\ast_{(1)}\btr b^\ast_{(1)})
\pi^\ast\left(\gamma^\ast[h^\ast_{(2)}\zeta^\ast(b^\ast_{(2)})]\right)\right]  \\
&\overset{(\ref{eqn:zeta*(pi*gamma*zeta*)2})}{=}&
\gamma^\ast[k^\ast h^\ast\zeta^\ast(b^\ast)].
\end{eqnarray*}
\end{proof}

Now we could verify that the operations $\pd{\Delta}$ and $\pd{\e}$ in Theorem \ref{thm:partialdual} (1) and (2) are both algebra maps, and that (\ref{eqn:counitaxiom}) holds.
The proof is given by direct computations, with the help of formulas above as well as those in the previous subsection.

~

\begin{proof}
[Proofs of the compatibility for the structures in Theorem \ref{thm:partialdual} (1) and (2)]~

\begin{itemize}
\item[(1)]
Our goal is to prove that $\pd{\Delta}$ is a map of algebras.
We know at first by Proposition \ref{prop:PAMS-comptrival}(2) that $\gamma^\ast\circ\zeta^\ast$ is trivial, and hence Formula (\ref{eqn:Delta(f)}) provides that
\begin{eqnarray*}
\mathbf{\Delta}(\e\#1)
&\overset{(\ref{eqn:Delta(f)})}{=}&
\sum_{i}\left(\e\#b_i\right)\otimes\left(\gamma^\ast[\zeta^\ast(b_i^\ast)]\#1\right)
~\overset{(\ref{eqn:gamma*zeta*})}{=}~
\sum_{i}\left(\e\#b_i\right)\otimes\left(\langle b_i^\ast,1\rangle\e\#1\right)  \\
&=& (\e\#1)\otimes(\e\#1)
\end{eqnarray*}
holds.
Similarly, Formula (\ref{eqn:Delta(b)}) also follows $\pd{\Delta}(\e\#1)=(\e\#1)\otimes(\e\#1)$ because $\zeta\circ\gamma$ is trivial.

Secondly, as mentioned in Remark \ref{rmk:equivDelta}(1), it is assumed that
$$\pd{\Delta}(f\#b)=\pd{\Delta}(f\#1)\pd{\Delta}(\e\#b)$$
for all $f\in(H/B^+H)^\ast$ and $b\in B$, and we try to show
$\pd{\Delta}(f\#1)\pd{\Delta}(g\#1)=\pd{\Delta}(fg\#1)$ for any $f,g\in(H/B^+H)^\ast$.
In fact, recall that we could write
\begin{equation}\label{eqn:Delta(f):e}
\pd{\Delta}(f)=\sum f_{(1)}e_{\la0\ra}\otimes \gamma^\ast[f_{(2)}\zeta^\ast(e_{\la1\ra})]
\end{equation}
with the notation of Equation (\ref{eqn:e}).
Therefore, let $e=e'=\e\#1$ and calculate that
\begin{eqnarray*}
\pd{\Delta}(f)\pd{\Delta}(g)
&\overset{(\ref{eqn:Delta(f):e})}{=}&
\sum \left(f_{(1)}e_{\la0\ra}\otimes\gamma^\ast[f_{(2)}\zeta^\ast(e_{\la1\ra})]\right)
  \left(g_{(1)}e'_{\la0\ra}\otimes\gamma^\ast[g_{(2)}\zeta^\ast(e'_{\la1\ra})]\right)  \\
&=&
\sum f_{(1)}e_{\la0\ra}g_{(1)}e'_{\la0\ra}
  \otimes\gamma^\ast[f_{(2)}\zeta^\ast(e_{\la1\ra})]\gamma^\ast[g_{(2)}\zeta^\ast(e'_{\la1\ra})]  \\
&\overset{(\ref{eqn:e<0>f})}{=}&
\sum f_{(1)}g_{(1)}e_{\la0\ra}e'_{\la0\ra}
  \otimes\gamma^\ast[f_{(2)}\zeta^\ast(g_{(2)}\btr e_{\la1\ra})]
         \gamma^\ast[g_{(3)}\zeta^\ast(e'_{\la1\ra})]  \\
&\overset{(\ref{eqn:e<0>e'<0>})}{=}&
\sum f_{(1)}g_{(1)}e_{\la0\ra}
  \otimes\gamma^\ast[f_{(2)}\zeta^\ast(g_{(2)}\btr e_{\la1\ra})]
         \gamma^\ast[g_{(3)}\zeta^\ast(e_{\la2\ra})]  \\
&\overset{(\ref{eqn:gamma*gamma*2})}{=}&
\sum f_{(1)}g_{(1)}e_{\la0\ra}\otimes\gamma^\ast[f_{(2)}g_{(2)}\zeta^\ast(e_{\la1\ra})]  \\
&\overset{(\ref{eqn:Delta(f):e})}{=}&
\pd{\Delta}(fg),
\end{eqnarray*}
and one could also verify in a similar way that
$\mathbf{\Delta}(\e\#b)\mathbf{\Delta}(\e\#c)=\mathbf{\Delta}(\e\#bc)$
holds for any $b,c\in B$.

It might be slightly complicated to show that
\begin{equation}\label{eqn:Delta(bf)}
\mathbf{\Delta}((\e\#b)(f\#1))=\mathbf{\Delta}(\e\#b)\mathbf{\Delta}(f\#1)
\end{equation}
holds for any $b\in B$ and $f\in(H/B^+H)^\ast$.
Note by Equation (\ref{eqn:Delta(b)2}) that
\begin{equation}\label{eqn:Delta(b):e}
\pd{\Delta}(b)=\sum e_{\la0\ra}
\otimes\gamma^\ast[b_{(1)}\rightharpoonup\zeta^\ast(e_{\la1\ra})]b_{(2)}
\end{equation}
could be written with the notation of Equation (\ref{eqn:e}).

Now we let $e=e'=\e\#1$ and make following computations:
\begin{eqnarray*}
&& \pd{\Delta}(b)\pd{\Delta}(f)  \\
&\overset{(\ref{eqn:Delta(b):e}),\;(\ref{eqn:Delta(f):e})}{=}&
\sum \left(e_{\la0\ra}\otimes\gamma^\ast[b_{(1)}\rightharpoonup\zeta^\ast(e_{\la1\ra})]b_{(2)}\right)
  \left(f_{(1)}e'_{\la0\ra}\otimes\gamma^\ast[f_{(2)}\zeta^\ast(e'_{\la1\ra})]\right)  \\
&=&
\sum e_{\la0\ra}f_{(1)}e'_{\la0\ra}
  \otimes\gamma^\ast[b_{(1)}\rightharpoonup\zeta^\ast(e_{\la1\ra})]b_{(2)}
  \gamma^\ast[f_{(2)}\zeta^\ast(e'_{\la1\ra})]  \\
&\overset{(\ref{eqn:bf0})}{=}&
\sum e_{\la0\ra}f_{(1)}e'_{\la0\ra}
  \otimes\gamma^\ast[b_{(1)}\rightharpoonup\zeta^\ast(e_{\la1\ra})]
  \left(b_{(2)}\rightharpoonup\gamma^\ast[f_{(2)}\zeta^\ast(e'_{\la1\ra})]\right)b_{(3)}  \\
&=&
\sum e_{\la0\ra}f_{(1)}e'_{\la0\ra}
  \otimes\gamma^\ast[\zeta^\ast(e_{\la1\ra})_{(1)}]\gamma^\ast[f_{(2)}\zeta^\ast(e'_{\la1\ra})]_{(1)}
\\  &&  \;\;\;\;\;\;\;\;\;\;\;\;\;\;\;\;\;\;\;\;\;\;\;\;\;\;\; \left\la\zeta^\ast(e_{\la2\ra})_{(2)}\gamma^\ast[f_{(2)}\zeta^\ast(e'_{\la1\ra})]_{(2)},b_{(1)}\right\ra
  b_{(2)}  \\
&\overset{(\ref{eqn:gamma*})}{=}&
\sum e_{\la0\ra}f_{(1)}e'_{\la0\ra}
  \otimes\gamma^\ast\left[\zeta^\ast(e_{\la1\ra})_{(1)}
    \pi^\ast\left(\gamma^\ast[f_{(2)}\zeta^\ast(e'_{\la1\ra})]_{(1)}\right)\right]
\\  &&  \;\;\;\;\;\;\;\;\;\;\;\;\;\;\;\;\;\;\;\;\;\;\;\;\;\;\;
  \left\la\zeta^\ast(e_{\la2\ra})_{(2)}\gamma^\ast[f_{(2)}\zeta^\ast(e'_{\la1\ra})]_{(2)},b_{(1)}\right\ra
  b_{(2)}  \\
&\overset{(\ref{eqn:pi*})}{=}&
\sum e_{\la0\ra}f_{(1)}e'_{\la0\ra}
  \otimes\gamma^\ast\left[\zeta^\ast(e_{\la1\ra})_{(1)}
    \pi^\ast(\gamma^\ast[f_{(2)}\zeta^\ast(e'_{\la1\ra})])_{(1)}\right]
\\  &&  \;\;\;\;\;\;\;\;\;\;\;\;\;\;\;\;\;\;\;\;\;\;\;\;\;\;\;
  \left\la\zeta^\ast(e_{\la2\ra})_{(2)}
  \pi^\ast(\gamma^\ast[f_{(2)}\zeta^\ast(e'_{\la1\ra})])_{(2)},b_{(1)}\right\ra b_{(2)}  \\
&=&
\sum e_{\la0\ra}f_{(1)}e'_{\la0\ra}
  \otimes\gamma^\ast\left[b_{(1)}\rightharpoonup\zeta^\ast(e_{\la1\ra})
    \pi^\ast(\gamma^\ast[f_{(2)}\zeta^\ast(e'_{\la1\ra})])\right]b_{(2)}  \\
&\overset{(\ref{eqn:e<0>f})}{=}&
\sum f_{(1)}e_{\la0\ra}e'_{\la0\ra}
  \otimes\gamma^\ast\left[b_{(1)}\rightharpoonup\zeta^\ast(f_{(2)}\btr e_{\la1\ra})
    \pi^\ast(\gamma^\ast[f_{(3)}\zeta^\ast(e'_{\la1\ra})])\right]b_{(2)}  \\
&\overset{(\ref{eqn:e<0>e'<0>})}{=}&
\sum f_{(1)}e_{\la0\ra}
  \otimes\gamma^\ast\left[b_{(1)}\rightharpoonup\zeta^\ast(f_{(2)}\btr e_{\la1\ra})
    \pi^\ast(\gamma^\ast[f_{(3)}\zeta^\ast(e_{\la2\ra})])\right]b_{(2)}  \\
&\overset{(\ref{eqn:zeta*(pi*gamma*zeta*)2})}{=}&
\sum f_{(1)}e_{\la0\ra}
  \otimes\gamma^\ast[b_{(1)}\rightharpoonup f_{(2)}\zeta^\ast(e_{\la1\ra})]b_{(2)}.
\end{eqnarray*}
On the other hand,
\begin{eqnarray*}
&&  \pd{\Delta}(bf)  \\
&\overset{(\ref{eqn:bf})}{=}&
\sum\pd{\Delta}(f_{(1)}b_{(2)})\la f_{(2)},b_{(1)}\ra
~=~
\sum\pd{\Delta}(f_{(1)})\pd{\Delta}(b_{(2)})\la f_{(2)},b_{(1)}\ra  \\
&\overset{(\ref{eqn:Delta(b):e}),\;(\ref{eqn:Delta(f):e})}{=}&
\sum \left(f_{(1)}e_{\la0\ra}\otimes\gamma^\ast[f_{(2)}\zeta^\ast(e_{\la1\ra})]\right)
  \left(e'_{\la0\ra}\otimes\gamma^\ast[b_{(2)}\rightharpoonup\zeta^\ast(e'_{\la1\ra})]b_{(3)}\right)
  \la f_{(3)},b_{(1)}\ra  \\
&=&
\sum f_{(1)}e_{\la0\ra}e'_{\la0\ra}\otimes
  \gamma^\ast[f_{(2)}\zeta^\ast(e_{\la1\ra})]\gamma^\ast[b_{(2)}\rightharpoonup\zeta^\ast(e'_{\la1\ra})]b_{(3)}
  \la f_{(3)},b_{(1)}\ra  \\
&=&
\sum f_{(1)}e_{\la0\ra}e'_{\la0\ra}\otimes
  \gamma^\ast[f_{(2)}\zeta^\ast(e_{\la1\ra})]\gamma^\ast[\zeta^\ast(e'_{\la1\ra})_{(1)}]b_{(2)}
  \left\la f_{(3)}\zeta^\ast(e'_{\la1\ra})_{(2)},b_{(1)}\right\ra  \\
&\overset{(\ref{eqn:e<0>e'<0>})}{=}&
\sum f_{(1)}e_{\la0\ra}\otimes
  \gamma^\ast[f_{(2)}\zeta^\ast(e_{\la1\ra})]\gamma^\ast[\zeta^\ast(e_{\la2\ra})_{(1)}]b_{(2)}
  \left\la f_{(3)}\zeta^\ast(e_{\la2\ra})_{(2)},b_{(1)}\right\ra  \\
&\overset{(\ref{eqn:gamma*gamma*1})}{=}&
\sum f_{(1)}e_{\la0\ra}\otimes
  \gamma^\ast[f_{(2)}\zeta^\ast(e_{\la1\ra})_{(1)}]b_{(2)}
  \left\la f_{(3)}\zeta^\ast(e_{\la1\ra})_{(2)},b_{(1)}\right\ra  \\
&=&
\sum f_{(1)}e_{\la0\ra}\otimes
  \gamma^\ast[b_{(1)}\rightharpoonup f_{(2)}\zeta^\ast(e_{\la1\ra})]b_{(2)},
\end{eqnarray*}
which coincides with $\pd{\Delta}(b)\pd{\Delta}(f)$ computed above.
Finally as a conclusion, $\pd{\Delta}$ is an algebra map.

\item[(2)]
Let us verify that $\pd{\e}$ is also a map of algebras. Indeed,
for any $f,g\in(H/B^+H)^\ast$ and any $b,c\in B$,
\begin{eqnarray*}
\pd{\e}\left((f\#b)(g\#c)\right)
&\overset{(\ref{eqn:smashprod})}{=}&
\sum \pd{\e}\left(f(b_{(1)}\rightharpoonup g)\#b_{(2)}c\right)  \\
&\overset{(\ref{eqn:epsilon})}{=}&
\sum \la f(b_{(1)}\rightharpoonup g),1\ra\la\e,b_{(2)}c\ra  \\
&=& \la f,1\ra\la\iota(b)\rightharpoonup g,1\ra\la\e,c\ra
~=~ \la f,1\ra\la\pi^\ast(g),\iota(b)\ra\la\e,c\ra  \\
&=& \la f,1\ra\la g,\pi[\iota(b)]\ra\la\e,c\ra
~\overset{(\ref{eqn:piiota})}{=}~
\la f,1\ra\la\e,b\ra\la g,1\ra\la\e,c\ra  \\
&\overset{(\ref{eqn:epsilon})}{=}&
\pd{\e}(f\#b)\pd{\e}(g\#c)
\end{eqnarray*}
hold.

The equations (\ref{eqn:counitaxiom})
could be verified directly as well, and here we calculate with Formula (\ref{eqn:Deltacomplete}) on $\pd{\Delta}$: For any $f\in(H/B^+H)^\ast$ and any $b\in B$,
\begin{eqnarray*}
(\pd{\e}\otimes\id)\circ\pd{\Delta}(f\#b)
&\overset{(\ref{eqn:Deltacomplete})}{=}&
\sum_{i,j}\left\la\pd{\e},f_{(1)}\#b_i\zeta[\gamma(f_j^\ast)b_{(1)}]\right\ra
  \left(\gamma^\ast[f_{(2)}\zeta^\ast(b_i^\ast)]f_j\#b_{(2)}\right)  \\
&\overset{(\ref{eqn:epsilon})}{=}&
\sum_{i,j}\la f_{(1)},1\ra\la\e,b_i\ra\la\e,\zeta[\gamma(f_j^\ast)b_{(1)}]\ra
  \left(\gamma^\ast[f_{(2)}\zeta^\ast(b_i^\ast)]f_j\#b_{(2)}\right)  \\
&=&
\sum_{j}\la\e,\zeta[\gamma(f_j^\ast)b_{(1)}]\ra
  \left(\gamma^\ast[\pi^\ast(f)\zeta^\ast(\e)]f_j\#b_{(2)}\right)  \\
&\overset{(\ref{eqn:zetabiunitary})}{=}&
\sum_{j}\la\e,\gamma(f_j^\ast)b_{(1)}\ra
  \left(\gamma^\ast[\pi^\ast(f)]f_j\#b_{(2)}\right)  \\
&\overset{(\ref{eqn:gammabiunitary})}{=}&
\sum_{j}\la\e,f_j^\ast\ra\la\e,b_{(1)}\ra
  \left(\gamma^\ast[\pi^\ast(f)]f_j\#b_{(2)}\right)  \\
&=&
\gamma^\ast[\pi^\ast(f)]\#b
~\overset{(\ref{eqn:gamma*pi*})}{=}~
f\#b,
\end{eqnarray*}
where $\{b_i\}$ is a basis of $B$ with dual basis $\{b_i^\ast\}$ of $\{B^\ast$, and $\{f_i\}$ is a basis of $(H/B^+H)^\ast$ with dual basis $\{f_i^\ast\}$ of $H/B^+H$.

The other equation
$(\id\otimes\pd{\e})\circ\pd{\Delta}=\id$ holds due to a similar argument.
\end{itemize}
\end{proof}

\section{Dual tensor categories to finite-dimensional Hopf algebras}\label{section:4}

We refer to \cite[Chapters 1 to 4]{EGNO15} for the definitions and basic properties about tensor categories. For the purpose, $\k$ is assumed to be algebraically closed in this section.

\subsection{Module categories over finite-dimensional Hopf algebras, and dual tensor categories}

The definition of (left) module categories over monoidal categories could be found in \cite[Sections 7.1 to 7.3]{EGNO15}.
In this subsection, we recall some elementary properties of module categories over a finite tensor category $\C$ over $\k$, as well as the corresponding dual categories in the literature.

\begin{lemma}(\cite[Lemma 3.4]{EO04})
Suppose $\M$ is an exact module category over a finite tensor category $\C$. Then $\M$ is finite as a $\k$-linear abelian category.
\end{lemma}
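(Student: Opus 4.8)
The plan is to unwind the definition of a \emph{finite} $\k$-linear abelian category and check each requirement in turn. Recall that $\M$ is finite precisely when it has finite-dimensional $\Hom$-spaces, all objects have finite length, there are finitely many isomorphism classes of simple objects, and every simple admits a projective cover (equivalently, $\M$ has a projective generator and is thus equivalent to the category of finite-dimensional modules over a finite-dimensional algebra). Since a module category over a finite tensor category is by hypothesis locally finite, the first two conditions are already built in, so the real content is to produce \emph{finitely many simples} together with \emph{enough projectives}. The workhorse throughout will be the biexactness of the action bifunctor $\otimes\colon\C\times\M\to\M$: because $\C$ is rigid, tensoring by a fixed object of $\C$ has two-sided adjoints given by tensoring with its duals, so $X\otimes-$ is exact; and $-\otimes M$ is exact as well. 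In particular the functor $X\mapsto X\otimes M$ from $\C$ to $\M$ is exact and, for $X\neq 0$, faithful (via the triangle identities for $\ev$ and $\coev$).

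First I would settle \emph{enough projectives}. Let $P\in\C$ be a projective object surjecting onto the unit, for instance the projective cover $P\twoheadrightarrow\unit$ (which exists since $\C$ is finite). For any $M\in\M$, applying the exact functor $-\otimes M$ to this surjection yields a surjection $P\otimes M\twoheadrightarrow\unit\otimes M\cong M$; by the exactness hypothesis on $\M$ the object $P\otimes M$ is projective. Hence every object of $\M$ is a quotient of a projective, so $\M$ has enough projectives, and objects of the form $P\otimes M$ supply an ample stock of them.

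The main obstacle is \emph{finitely many simples}, which I would obtain by exhibiting a projective generator of finite length. Fix a projective generator $P$ of $\C$ and any nonzero $M_0\in\M$, and set $G:=P\otimes M_0$. Then $G$ is projective by exactness and of finite length since biexactness of the action preserves finite length. The delicate point is the generator property, namely that every simple $T\in\M$ is a quotient of $G$; I would rephrase this through the internal $\Hom$. For fixed $M_0$ the left-exact functor $X\mapsto\Hom_\M(X\otimes M_0,T)$ is representable by an object $\intHom(M_0,T)\in\C$, so $\Hom_\M(P\otimes M_0,T)\cong\Hom_\C\bigl(P,\intHom(M_0,T)\bigr)$, which is nonzero as soon as $\intHom(M_0,T)\neq 0$ because $P$ is a generator of $\C$. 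Establishing $\intHom(M_0,T)\neq 0$ for every nonzero $T$ — using the faithfulness of the action coming from rigidity, together with the fact that $\intHom(M_0,M_0)$ is a nonzero algebra object carrying $\operatorname{id}_{M_0}$ — is where the argument has to be made carefully, and I expect this nonvanishing to be the crux. Granting it, $G$ is a finite-length projective generator, its top has finitely many simple constituents which are exactly the simple objects of $\M$, and $\M\simeq\mathrm{mod}\text{-}\End_\M(G)$ for the finite-dimensional algebra $\End_\M(G)$; equivalently one may invoke Ostrik's equivalence $\M\simeq\mathrm{Mod}_\C\bigl(\intHom(M_0,M_0)\bigr)$ and deduce finiteness directly from that of $\C$. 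Either way $\M$ is finite, as claimed.
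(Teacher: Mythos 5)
The paper does not actually prove this statement; it is quoted from \cite{EO04}, so your proposal has to be measured against the argument there. Your first half is correct and coincides with the proof of \cite[Lemma 3.4]{EO04}: for a projective cover $P\twoheadrightarrow\unit$ in $\C$, exactness of $-\otimes M$ gives a surjection $P\otimes M\twoheadrightarrow\unit\otimes M\cong M$, and $P\otimes M$ is projective by the definition of an exact module category; together with local finiteness (built into the definition of a module category) this gives enough projectives and finite length, which is all that \cite[Lemma 3.4]{EO04} itself asserts.

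The second half — finiteness of the set of simples — is where there is a genuine gap, and the step you flag as the crux is precisely the one that fails. The nonvanishing $\intHom(M_0,T)\neq 0$ for every nonzero $T$ is false in the stated generality, and neither rigidity nor the algebra structure on $\intHom(M_0,M_0)$ can repair it: if $\M=\M_1\oplus\M_2$ is a direct sum of module categories and $M_0\in\M_1$, then $X\otimes M_0\in\M_1$ for all $X\in\C$, so $\Hom_\M(X\otimes M_0,T)=0$ for every simple $T\in\M_2$, i.e.\ $\intHom(M_0,T)=0$ and your candidate generator $G=P\otimes M_0$ only sees one block. Worse, the conclusion itself does not follow from the hypotheses as literally stated: the category of finitely supported sequences of finite-dimensional vector spaces is a locally finite, semisimple (hence exact) module category over $\Vec$ with infinitely many simple objects. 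The statement is true in \cite{EO04} only because of their standing convention that module categories have finitely many isomorphism classes of simple objects (made explicit in \cite[Corollary 3.5]{EO04}, which is what the present lemma is really citing), and in this paper only indecomposable exact module categories are ever used. In the indecomposable case your internal-Hom strategy does go through, but the input $\intHom(M_0,T)\neq 0$ for all nonzero $M_0,T$ is itself a nontrivial lemma of \cite{EO04} — proved by showing that the subquotients of the objects $X\otimes M_0$ form a module subcategory, which must be everything by indecomposability — not a formal consequence of the triangle identities. You should either impose the finiteness-of-simples convention as a hypothesis or restrict to indecomposable $\M$ and cite that generation lemma.
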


Suppose the left $\C$-module category $\M$ is exact.
We define the \textit{dual category} of $\C$ with respect to $\M$ as
\begin{equation*}
\C_\M^\ast:=\Rex_\C(\M)^\mathrm{rev},
\end{equation*}
the category of $\k$-linear right exact $\C$-module endofunctors of $\M$.
Note that the tensor product on $\C_\M^\ast$ in this paper is chosen to be \textit{opposite} to the composition of $\C$-module endofunctors. In other words, it has the reserve tensor products with the notion ($\C_\M^\ast=\Rex_\C(\M)$) defined in \cite{EO04} and \cite{EGNO15}, etc. However, it is clear that both of them are multitensor categories with almost the same properties.

\begin{remark}
In fact, since $\M$ is an exact $\C$-module category, the dual category $\C_\M^\ast$ consists of all the additive $\C$-module endofunctors (or equivalently, all the exact $\C$-module endofunctors) of $\M$ according to \cite[Proposition 3.11]{EO04}.
\end{remark}

In this paper, we only pay attention to dealing with exact $\C$-module categories $\M$ which are furthermore indecomposable. It is known that $\M$ is equivalent to $\C_{A'}$, the category of right $A'$-modules in $\C$, for some algebra $A'\in\C$. Then the dual category $\C_\M^\ast$ could be also described in this situation:

\begin{lemma}(\cite[Theorem 3.17 and Section 3.3]{EO04})\label{lem:mod&dualcat}
Suppose $\M$ is an indecomposable exact left module category over a finite tensor category $\C$. If $Y\in\M$ generates $\M$, and let $A':=\intHom(Y,Y)\in\C$ be the algebra defined by the internal Hom functor, then:
\begin{itemize}
\item[(1)]
There is an equivalence of $\C$-module categories:
$$\M\approx \C_{A'},\;\;\;\;Z\mapsto\intHom(Y,Z);$$
\item[(2)]
$\C_\M^\ast$ is also a finite tensor category, which is equivalent to the category ${}_{A'}\C_{A'}$ consisting of $A'$-$A'$-bimodules in $\C$ via the functor:
\begin{equation}\label{eqn:dualcatequiv}
{}_{A'}\C_{A'}\approx \C_\M^\ast,\;\;\;\;M\mapsto -\otimes_{A'}M,
\end{equation}
where $\M$ is identified with $\C_{A'}$ according to the equivalence in (1).
\end{itemize}
\end{lemma}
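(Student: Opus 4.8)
The plan is to deduce both parts from the general internal-Hom machinery of Ostrik and Etingof--Ostrik. Recall that for an exact module category $\M$ over $\C$ the internal Hom $\intHom(M,N)\in\C$ is characterized by the natural adjunction isomorphism $\Hom_\C(X,\intHom(M,N))\cong\Hom_\M(X\otimes M,N)$ for all $X\in\C$. The evaluation and coevaluation of this adjunction endow $A':=\intHom(Y,Y)$ with the structure of an associative unital algebra in $\C$ via the composition morphism $\intHom(Y,Y)\otimes\intHom(Y,Y)\to\intHom(Y,Y)$, and equip each $\intHom(Y,Z)$ with a right $A'$-module structure. This produces the candidate functor $F:=\intHom(Y,-):\M\to\C_{A'}$ of part (1).

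For part (1), I would first equip $F$ with a $\C$-module functor structure: the adjunction yields a canonical morphism $X\otimes\intHom(Y,Z)\to\intHom(Y,X\otimes Z)$, and the key step is to prove it is an isomorphism. This uses the rigidity of $\C$, so that $X\otimes-$ has a two-sided adjoint $X^\ast\otimes-$, together with exactness of $\M$, which lets one reduce to the defining adjunction. Granting this, full faithfulness of $F$ follows because $Y$ is a generator: the adjunction identifies morphisms in $\M$ with $A'$-module morphisms of internal Homs, and generation guarantees no information is lost. Essential surjectivity, and hence that $F$ is an equivalence, follows from exactness, since every object of $\C_{A'}$ admits a presentation by free modules $X\otimes A'$, while $F$ is right exact and sends $X\otimes Y$ to $X\otimes A'$ through the module-functor constraint.

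For part (2), under the identification $\M\approx\C_{A'}$, I would establish an Eilenberg--Watts--type description of $\C$-module endofunctors. Given a bimodule $M\in{}_{A'}\C_{A'}$, the relative tensor product $-\otimes_{A'}M:\C_{A'}\to\C_{A'}$ is a right exact $\C$-module endofunctor, which yields the functor ${}_{A'}\C_{A'}\to\C_\M^\ast$. Conversely, for any $G\in\Rex_\C(\M)$ I would set $M:=G(A')$; it is a right $A'$-module because $G$ lands in $\C_{A'}$, and it acquires a left $A'$-action from applying $G$ to the regular right action $A'\otimes A'\to A'$ combined with the $\C$-module constraint of $G$, making $M$ an $A'$-$A'$-bimodule. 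The canonical morphism $-\otimes_{A'}G(A')\to G$ is then an isomorphism because both sides are right exact $\C$-module functors agreeing on the generating free modules $X\otimes A'$. Since composition of $-\otimes_{A'}M$ with $-\otimes_{A'}N$ is $-\otimes_{A'}(M\otimes_{A'}N)$, the functor becomes monoidal once one invokes the reverse-composition convention $\C_\M^\ast=\Rex_\C(\M)^{\mathrm{rev}}$; rigidity of $\C_\M^\ast$ comes from forming bimodule duals out of the duality in $\C$, and finiteness is inherited from finiteness of $\M$ (the first lemma of this section) and of $\C$.

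The main obstacle lies in the two invertibility claims: that the module-functor constraint $X\otimes\intHom(Y,Z)\xrightarrow{\sim}\intHom(Y,X\otimes Z)$ is an isomorphism, and that the canonical map $-\otimes_{A'}G(A')\xrightarrow{\sim}G$ is an isomorphism. Both are reconstruction statements that genuinely require exactness of $\M$, since it is exactness that allows one to pass from the generating objects $X\otimes Y$ to arbitrary objects through right-exact presentations; without it the relative tensor products need not behave well and these comparison maps can fail to be isomorphisms.
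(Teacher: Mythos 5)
The paper offers no proof of this lemma at all: it is stated as a direct citation of \cite[Theorem 3.17 and Section 3.3]{EO04}, so there is no in-paper argument to compare against. Your outline is a faithful sketch of the standard proof from that reference---Ostrik's internal-Hom reconstruction $Z\mapsto\intHom(Y,Z)$ for part (1) and the Eilenberg--Watts bimodule description $G\mapsto G(A')$ of $\Rex_\C(\M)$ for part (2)---and its main steps are correct. Two small calibration points: the isomorphism $X\otimes\intHom(Y,Z)\cong\intHom(Y,X\otimes Z)$ already follows from rigidity of $\C$ and the defining adjunction alone (exactness is not needed there), whereas exactness of $\M$ is essential not only for your two comparison-map claims but also for the rigidity of $\C_\M^\ast$, since duals of bimodules exist because objects of an exact module category are projective as one-sided $A'$-modules.
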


Now let $H$ be a finite-dimensional Hopf algebra, and we focus on the case when $\C=\Rep(H)$, the finite tensor category of finite-dimensional left $H$-modules.

Firstly it is known in \cite{AM07} that indecomposable exact left $\Rep(H)$-module categories are classified by left $H$-comodule algebras $B$ which are $H$-simple from the right and with trivial $H$-coinvariants, up to equivariant Morita equivalences. This result is recalled for our later use:

\begin{lemma}(\cite[Theorem 3.3]{AM07})\label{lem:modcat}
For any indecomposable exact left $\Rep(H)$-module category $\M$, there exists a left $H$-comodule algebra $B$ satisfying:
\begin{itemize}
\item[(1)] $B$ has no non-trivial right ideal which is also an $H$-comodule, and
\item[(2)] $B$ has trivial $H$-coinvariants,
\end{itemize}
such that $\M\approx\Rep(B)$ as left $\Rep(H)$-module categories.
\end{lemma}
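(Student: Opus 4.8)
The plan is to reproduce the argument of Andruskiewitsch and Mombelli \cite{AM07}, using the structure theory of module categories over finite tensor categories recalled above. Throughout write $\C=\Rep(H)$ and let $\M$ be an indecomposable exact left $\C$-module category. Since $\C$ is a finite tensor category, $\M$ is a finite $\k$-linear abelian category by the exactness lemma (\cite[Lemma 3.4]{EO04}); exactness moreover guarantees that $\M$ has enough projectives, hence a projective generator, so I would fix such a $P\in\M$ and set $B:=\End_\M(P)^{\mathrm{op}}$, a finite-dimensional $\k$-algebra. The functor $\Hom_\M(P,-)\colon\M\To\Rep(B)$ is then an equivalence of $\k$-linear abelian categories by the classical Gabriel/Morita description of a finite abelian category possessing a projective generator. (Alternatively one could start from the internal-Hom realization $\M\approx\C_{A'}$ of Lemma \ref{lem:mod&dualcat} and translate the algebra object $A'$ into comodule data; the reconstruction route via $P$ is more self-contained.)

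The core of the proof is to upgrade this abelian equivalence to an equivalence of $\Rep(H)$-module categories, where $\Rep(B)$ carries the module structure coming from a \emph{left $H$-comodule algebra} structure on $B$. The plan here is to produce a coaction $\delta\colon B\To H\otimes B$ out of the action functors $X\otimes(-)$ for $X\in\Rep(H)$: informally, applying the module action to $P$ along the regular object measures how endomorphisms of $P$ are transported by the $H$-action, and this is exactly what $\delta$ records. I would first define $\delta$ using the canonical morphisms relating $H\otimes P$ to $P$ (via the adjunction between the free functor $X\mapsto X\otimes P$ and $\Hom_\M(P,-)$), then verify that $\delta$ is counital and coassociative and, crucially, that it is an algebra map, so that $B$ becomes a left $H$-comodule algebra. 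With $\delta$ in hand one checks that the diagonal $\Rep(H)$-action on $\Rep(B)$, namely $b\cdot(x\otimes m)=\sum(b_{(-1)}\cdot x)\otimes(b_{(0)}\cdot m)$, is intertwined by $\Hom_\M(P,-)$ with the given action on $\M$; this yields the desired equivalence $\M\approx\Rep(B)$ of $\Rep(H)$-module categories.

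It then remains to match the categorical hypotheses on $\M$ with the two algebraic conditions on $B$. Indecomposability of $\M$ should correspond to right $H$-simplicity of $B$ (condition (1)): an $H$-costable right ideal of $B$ produces, through the equivalence, a $\Rep(H)$-module subcategory of $\M$, so the absence of proper nonzero such ideals is exactly the non-existence of a nontrivial module-category decomposition of $\M$. Condition (2), triviality of the coinvariants $B^{\mathrm{co}\,H}=\k 1$, reflects the normalization that $P$ is chosen indecomposable and that the unit object $\mathbf 1\in\Rep(H)$ acts as the identity functor; the coinvariants of $B$ are identified with the endomorphisms of the ``base point'' $\mathbf 1\otimes P$, which collapse to $\k$ precisely under indecomposability.

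I expect the main obstacle to be the construction and verification of the coaction $\delta$ — proving \emph{simultaneously} that it is coassociative and multiplicative, i.e. that $B$ is genuinely a comodule \emph{algebra} and not merely a comodule — together with the careful bookkeeping of left/right and op-conventions needed to land in \emph{left} $H$-comodule algebras rather than $H^\ast$-comodule or right-comodule algebras (the two naturally available dual descriptions, living in $\Rep(H)$ versus $\Rep(H^\ast)$, are genuinely different and must not be conflated). A secondary technical point is to ensure that the equivalence respects the associativity and unit constraints of the module category, not merely the underlying abelian structure; this is where exactness of $\M$, guaranteeing enough projectives and the exactness of each $X\otimes(-)$, is used essentially.
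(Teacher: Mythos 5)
First, a point of order: the paper does not prove this lemma at all — it is quoted verbatim from \cite[Theorem 3.3]{AM07}, so there is no internal proof to compare your argument against. Your proposal therefore has to be judged as a reconstruction of the Andruskiewitsch--Mombelli argument, and as such it has a genuine gap at its central step.

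The gap is the construction of the coaction $\delta\colon B\To H\otimes B$ on $B:=\End_\M(P)^{\op}$. You correctly identify this as the crux, but the mechanism you propose ("applying the module action to $P$ along the regular object", via the adjunction between $X\mapsto X\otimes P$ and $\Hom_\M(P,-)$) does not produce a left $H$-comodule structure on the ordinary endomorphism algebra. What that adjunction naturally produces is the internal endomorphism algebra $A':=\intHom(P,P)$, which is an algebra object of $\Rep(H)$, i.e.\ a left $H$-\emph{module} algebra; the ordinary algebra $\End_\M(P)=\Hom_{\Rep(H)}(\1,\intHom(P,P))$ is only its subalgebra of invariants and in general has strictly smaller dimension. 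A left $H$-module algebra is not a left $H$-comodule algebra, and there is no canonical way to transpose the one structure into the other on the same underlying space. The route actually taken in \cite{AM07} (and implicitly relied on in this paper, cf.\ Lemma \ref{lem:mod&dualcatoverH} and \cite[Corollary 3.2]{AM07}) is: realize $\M\approx\C_{A'}$ via Etingof--Ostrik, identify $\C_{A'}={}_H\mathfrak{M}_{A'}$ with modules over a smash product built from $A'$ and $H$ — which \emph{is} naturally a left $H$-comodule algebra because of its $H$ tensor factor — and then pass to a Morita-equivalent right $H$-simple comodule algebra with trivial coinvariants. Your sketch skips exactly the step where the comodule structure comes from.

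A secondary inaccuracy: your matching of hypotheses to conditions (1) and (2) is too optimistic. Right $H$-simplicity of $B$ is not equivalent to indecomposability of $\Rep(B)$ as a module category; in \cite{AM07} it is extracted from exactness together with indecomposability, and only after replacing the first comodule algebra by a suitable quotient or corner. Likewise $B^{\mathrm{co}H}=\k$ is a normalization achieved by a further reduction, not something read off from "endomorphisms of $\1\otimes P$" (which is all of $B$, since $\1\otimes P\cong P$). None of this is fatal to the strategy, but as written the proposal asserts rather than establishes the two facts that constitute the theorem.
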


Conversely, each left $H$-comodule algebra $B$ with properties (1) and (2) of Lemma \ref{lem:modcat} would certainly determine an indecomposable exact left $\Rep(H)$-module category $\Rep(B)$. A particular example is when $B$ is a left coideal subalgebra of $H$ according to \cite[Theorem 6.1(2)]{Skr07} or \cite[Proposition 1.6]{AM07}. However, we do not know the answer of the following question:

\begin{question}
When would an indecomposable exact left $\Rep(H)$-module category $\M$ be equivalent to $\Rep(B)$ for some left coideal subalgebra $B$ of $H$?
\end{question}


Anyway in this paper, we focus on those left $\Rep(H)$-module categories $\M$ which are equivalent to $\Rep(B)$ for left coideal subalgebras $B$, as well as the corresponding dual tensor categories $\Rep(H)_{\Rep(B)}^\ast$.
Recall that the left $\Rep(H)$-module structure on $\Rep(B)$ is introduced in \cite[Section 1.5]{AM07} as follows:
\begin{equation*}
\Rep(H)\times\Rep(B)\rightarrow\Rep(B),\;\;\;\;(X,Y)\mapsto X\otimes_\k Y,
\end{equation*}
where the left $B$-actions on $X\otimes_\k Y$ is diagonal via
$$b\cdot(x\otimes m):=\sum b_{(1)}x\otimes b_{(2)}y$$
for any $b\in B$, $x\in X$ and $y\in Y$.

The properties in Lemma \ref{lem:mod&dualcat} when $\M=\Rep(B)$ should be mentioned in particular:

\begin{lemma}\label{lem:mod&dualcatoverH}
Let $B$ be a left coideal subalgebra of a finite-dimensional Hopf algebra $H$. Then:
\begin{itemize}
\item[(1)]
$\Rep(B)$ is an indecomposable exact left $\Rep(H)$-module category;
\item[(2)]
There is an equivalence of left $\Rep(H)$-module categories:
\begin{equation*}\label{eqn:modcatequiv}
\Rep(B)\approx \Rep(H)_{(H/B^+H)^\ast},\;\;\;\;
Y\mapsto \underline{\mathrm{Hom}}(\k,Y),
\end{equation*}
where $(H/B^+H)^\ast$ is a left $H$-module algebra induced by the structure of the right coideal subalgebra $\pi^\ast:(H/B^+H)^\ast\rightarrowtail H^\ast$ defined in (\ref{eqn:coidealsubalgs}).
\end{itemize}
\end{lemma}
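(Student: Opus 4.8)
The plan is to read off part~(1) from the facts recorded just after Lemma~\ref{lem:modcat}, and to obtain part~(2) by specialising Lemma~\ref{lem:mod&dualcat} to $\C=\Rep(H)$ and $\M=\Rep(B)$, using the trivial $B$-module $\k$ (with $B$ acting through $\e|_B$) as the generator $Y$. For part~(1), a left coideal subalgebra $B$ of $H$ is $H$-simple from the right and has trivial $H$-coinvariants by \cite[Theorem~6.1(2)]{Skr07} (or \cite[Proposition~1.6]{AM07}); these are precisely conditions~(1) and~(2) of Lemma~\ref{lem:modcat}, so $\Rep(B)$ is an indecomposable exact left $\Rep(H)$-module category.

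For part~(2) I would first identify the action functor $-\otimes\k\colon\Rep(H)\to\Rep(B)$ with restriction along $\iota$. Indeed, for $X\in\Rep(H)$ the diagonal action on $X\otimes_\k\k$ gives $b\cdot(x\otimes 1)=\sum b_{(1)}x\,\la\e|_B,b_{(2)}\ra\otimes 1=\iota(b)x\otimes 1$, so $X\otimes\k\cong\operatorname{Res}^H_B X$ as $B$-modules. By the cocleftness established in Lemma~\ref{lem:cocleftness}(1) (equivalently, the linear isomorphism $B\otimes H/B^+H\cong H$ recorded in Remark~\ref{rmk:equivDelta}(1)), $H$ is free as a left $B$-module; hence, taking $X=H$, the object $\operatorname{Res}^H_B H=H\otimes\k$ is a free left $B$-module, and in particular a projective generator of $\Rep(B)$. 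Thus $\k$ is a generator of $\Rep(B)$ in the sense required to apply Lemma~\ref{lem:mod&dualcat}.

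Next I would compute the relevant algebra. Since $\intHom(\k,-)$ is, by definition, right adjoint to $-\otimes\k=\operatorname{Res}^H_B$, it agrees with the coinduction functor $\Hom_B(H,-)$, where $H$ is regarded as a left $B$-module via $b\cdot h=\iota(b)h$ and as a right $H$-module by multiplication. Consequently $A':=\intHom(\k,\k)=\Hom_B(H,\k)$ is, as a vector space, the space of functionals on $H$ vanishing on $B^+H$, that is $A'\cong(H/B^+H)^\ast$. Moreover the left $H$-action carried by coinduction, namely $\la h\cdot\phi,k\ra=\la\phi,kh\ra$, coincides with the hit action under which the right coideal subalgebra $\pi^\ast\colon(H/B^+H)^\ast\rightarrowtail H^\ast$ becomes a left $H$-module algebra. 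Lemma~\ref{lem:mod&dualcat}(1) then delivers the $\Rep(H)$-module equivalence $\Rep(B)\approx\Rep(H)_{A'}$ via $Y\mapsto\intHom(\k,Y)$, which is exactly the asserted functor $Y\mapsto\underline{\operatorname{Hom}}(\k,Y)$.

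The main obstacle is to verify that the algebra structure on $A'=\intHom(\k,\k)$ induced by composition of internal Homs coincides with the convolution product of $H^\ast$ restricted to $(H/B^+H)^\ast$ (and not with its opposite). I expect to settle this by unwinding the internal-Hom composition $\intHom(\k,\k)\otimes\intHom(\k,\k)\to\intHom(\k,\k)$ through the evaluation $\intHom(\k,\k)\otimes\k\to\k$ and the coaction of $B$, and then comparing the resulting product of functionals with $(f\ast g)(k)=\sum f(k_{(1)})g(k_{(2)})$; any discrepancy of orientation is absorbed by the conventions fixed for $(H/B^+H)^\ast\subseteq H^\ast$. Once this identification of algebras in $\Rep(H)$ is in hand, $\Rep(H)_{A'}=\Rep(H)_{(H/B^+H)^\ast}$, completing the proof.
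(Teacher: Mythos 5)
Your proof is correct in substance and part (1) matches the paper's (both reduce to the fact that a left coideal subalgebra is $H$-simple from the right with trivial coinvariants, via \cite{Skr07} or \cite[Proposition 1.6]{AM07}). For part (2) you take a genuinely more hands-on route: the paper simply invokes \cite[Corollary 3.2 and Example 2.19]{AM07}, which deliver the equivalence $\Rep(B)\approx{}_H\mathfrak{M}_{A'}$ with $A'=(H/S^{-1}(B^+)H)^\ast$, and then needs \cite[Lemma 3.1]{Kop93} applied to $H^\cop$ to identify $S^{-1}(B^+)H=B^+H$ before concluding $A'=(H/B^+H)^\ast$. Your direct computation --- identifying $-\otimes\k$ with restriction along $\iota$, using left $B$-freeness of $H$ (Lemma \ref{lem:cocleftness}(1)) to see that $\k$ generates, and recognizing $\intHom(\k,-)$ as coinduction $\Hom_B(H,-)$ so that $\intHom(\k,\k)\cong(H/B^+H)^\ast$ with the hit action --- reaches $(H/B^+H)^\ast$ without ever seeing the antipode, which is cleaner. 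What you lose is exactly what the citation buys the paper: the verification that the internal-Hom composition on $\intHom(\k,\k)$ is the convolution product (and not its opposite), which you defer with the remark that ``any discrepancy of orientation is absorbed by the conventions.'' That is too quick: an opposite-algebra discrepancy would change $\Rep(H)_{A'}$ from right to left modules and cannot be waved away, and the appearance of $S^{-1}$ in the paper's version of $A'$ is a warning that orientation issues genuinely arise in this computation. So either carry out the unwinding of $\ev\colon\intHom(\k,\k)\otimes\k\to\k$ explicitly, or fall back on citing \cite[Example 2.19]{AM07} together with Koppinen's lemma as the paper does; as written, this one step is the only real gap.
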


\begin{proof}
\begin{itemize}
\item[(1)]
We have mentioned this after Lemma \ref{lem:modcat}, and it is also a combination of \cite[Propositions 1.6, 1.18 and 1.20(ii)]{AM07}.

\item[(2)]
Clearly, the $\Rep(H)$-module category $\Rep(B)$ is generated by the object $\k$ (with trivial $B$-action). It follows from \cite[Corollary 3.2]{AM07} that the left $\Rep(H)$-module category $\Rep(B)$ is equivalent to the category ${}_H\mathfrak{M}_{A'}$ of finite-dimensional $H$-$A'$-bimodules, where $A':=(H/S^{-1}(B^+)H)^\ast$ is a left $H$-module algebra due to \cite[Example 2.19]{AM07}.

However, we could know that $S^{-1}(B^+)H=B^+H$ holds by applying \cite[Lemma 3.1]{Kop93} to the Hopf algebra $H^\cop$ with antipode $S^{-1}$. Consequently, the left $H$-module algebra
$$A':=(H/S^{-1}(B^+)H)^\ast=(H/B^+H)^\ast$$
has the structure induced by the injection $\pi^\ast:(H/B^+H)^\ast\rightarrowtail H^\ast$ of right $H^\ast$-comodule algebras according to \cite[Example 2.19]{AM07}, as $\pi$ is the quotient map to $H/S^{-1}(B^+)H=H/B^+H$.

Finally, it could be obtained by the isomorphism ${}_H\mathfrak{M}_{A'}\cong\Rep(H)_{A'}$ that there is an equivalence $\Rep(B)\approx \Rep(H)_{A'}$ sending every object $Y$ to $\intHom(\k,Y)$ as desired.
\end{itemize}
\end{proof}

It follows from Lemmas \ref{lem:mod&dualcat} and \ref{lem:mod&dualcatoverH} that the corresponding dual tensor category $\Rep(H)_{\Rep(B)}^\ast$ could be identified with
${}_{(H/B^+H)^\ast}\Rep(H)_{(H/B^+H)^\ast}$,
the category of $(H/B^+H)^\ast$-$(H/B^+H)^\ast$-bimodules in $\Rep(H)$,
according to the equivalence (\ref{eqn:dualcatequiv}). More additional descriptions would be introduced in the next subsection.

\subsection{Identifications of the dual tensor category to finite-dimensional Hopf algebras}\label{subsection:relativeHopfmods}

We continue describing the dual tensor category $\Rep(H)_{\Rep(B)}^\ast$, where $B$ is a left coideal subalgebra of a finite-dimensional Hopf algebra $H$.

Firstly, note that there is an isomorphism of $\k$-linear abelian categories
\begin{equation}\label{eqn:dualcatiso}
{}_{(H/B^+H)^\ast}\Rep(H)_{(H/B^+H)^\ast}\cong
{}_{(H/B^+H)^\ast}\mathfrak{M}_{(H/B^+H)^\ast}^{H^\ast},
\;\;\;\;M\mapsto M,
\end{equation}
where the latter one consists of finite-dimensional $(H/B^+H)^\ast$-$(H/B^+H)^\ast$-bimodules equipped with right $H^\ast$-comodule structure preserving both left and right $(H/B^+H)^\ast$-actions.

It is evident that the isomorphism (\ref{eqn:dualcatiso}) would make ${}_{(H/B^+H)^\ast}\mathfrak{M}_{(H/B^+H)^\ast}^{H^\ast}$ also a finite tensor category,
whose detailed structures would be provided in the following proposition.
We always denote the right $H^\ast$-coaction on $M\in{}_{(H/B^+H)^\ast}\mathfrak{M}_{(H/B^+H)^\ast}^{H^\ast}$ with Sweedler notation by $m\mapsto\sum m_{(0)}\otimes m_{(1)}$.

\begin{proposition}\label{prop:monoidalstru0}
Let $B$ be a left coideal subalgebra of a finite-dimensional Hopf algebra $H$. Then ${}_{(H/B^+H)^\ast}\mathfrak{M}_{(H/B^+H)^\ast}^{H^\ast}$ is a finite tensor category, where
\begin{itemize}
\item[(1)]
The tensor product bifunctor is defined as:
\begin{equation}\label{eqn:tensorprod0}
(M,N)\mapsto M\otimes_{(H/B^+H)^\ast}N
\end{equation}
whose right $H^\ast$-coaction is diagonal:
$m\otimes_{(H/B^+H)^\ast}n\mapsto
\sum(m_{(0)}\otimes_{(H/B^+H)^\ast}n_{(0)})\otimes m_{(1)}n_{(1)}$
for any $m\in M$ and $n\in N$;

\item[(2)]
The unit object is $(H/B^+H)^\ast$, and the associativity and unit constraints are canonical;

\item[(3)]
For each $M\in{}_{(H/B^+H)^\ast}\mathfrak{M}_{(H/B^+H)^\ast}^{H^\ast}$, its left dual object is
$$M^\vee:=\Hom_{(H/B^+H)^\ast}(M_{(H/B^+H)^\ast},(H/B^+H)^\ast)$$
consisting of right $(H/B^+H)^\ast$-maps, with right $H^\ast$-comodule structure defined through
\begin{eqnarray}\label{eqn:Mdualcomod}
\sum m^\vee_{(0)}(m)\otimes m^\vee_{(1)}
&=& \sum [m^\vee(m_{(0)})]_{(1)}\otimes [m^\vee(m_{(0)})]_{(2)}S(m_{(1)})  \\
&\in& (H/B^+H)^\ast\otimes H^\ast  \nonumber
\end{eqnarray}
for any $m^\vee\in M^\vee$ and $m\in M$;

\item[(4)]
There is an equivalence of tensor categories:
\begin{equation}
{}_{(H/B^+H)^\ast}\mathfrak{M}^{H^\ast}_{(H/B^+H)^\ast}\approx\Rep(H)_{\Rep(B)}^\ast,\;\;\;\;
M\mapsto -\otimes_{(H/B^+H)^\ast}M.
\end{equation}
\end{itemize}
\end{proposition}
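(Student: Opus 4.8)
The plan is to transport the finite tensor category structure that is already available on the bimodule category through the abelian isomorphism (\ref{eqn:dualcatiso}). Indeed, applying Lemma \ref{lem:mod&dualcat} with $\C=\Rep(H)$ and the left $H$-module algebra $A':=(H/B^+H)^\ast$ (as identified in Lemma \ref{lem:mod&dualcatoverH}), the category ${}_{A'}\Rep(H)_{A'}$ of $A'$-$A'$-bimodules in $\Rep(H)$ is a finite tensor category, and the functor (\ref{eqn:dualcatequiv}) is a tensor equivalence ${}_{A'}\Rep(H)_{A'}\approx\Rep(H)_{\Rep(B)}^\ast$. Since (\ref{eqn:dualcatiso}) is an isomorphism of $\k$-linear abelian categories acting by $M\mapsto M$, it endows ${}_{(H/B^+H)^\ast}\mathfrak{M}_{(H/B^+H)^\ast}^{H^\ast}$ with a finite tensor category structure for which all coherence, rigidity, and finiteness axioms hold automatically by transport. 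It therefore remains only to read off the explicit forms (1)--(3) of the transported data, after which part (4) is the composite of (\ref{eqn:dualcatequiv}) with (\ref{eqn:dualcatiso}).

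For the tensor product and the unit I would observe that under (\ref{eqn:dualcatiso}) an object $M$ keeps the \emph{same} underlying $A'$-bimodule, while its left $H$-module structure is reinterpreted as the right $H^\ast$-comodule structure via $h\cdot m=\sum m_{(0)}\la m_{(1)},h\ra$. The relative tensor product $M\otimes_{A'}N$ is unchanged on underlying spaces, which gives (\ref{eqn:tensorprod0}); and the diagonal $H$-action $h\cdot(m\otimes n)=\sum h_{(1)}m\otimes h_{(2)}n$ translates, using $\la m_{(1)}n_{(1)},h\ra=\sum\la m_{(1)},h_{(1)}\ra\la n_{(1)},h_{(2)}\ra$, into exactly the diagonal $H^\ast$-coaction of part (1). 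The unit bimodule $A'$ corresponds to $(H/B^+H)^\ast$ with its right coideal coaction, and the associativity and unit constraints are those inherited from $\Rep(H)$, yielding part (2).

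The main work is part (3), which I expect to be the chief obstacle. In the finite tensor category ${}_{A'}\Rep(H)_{A'}$ the left dual of $M$ is the internal-hom object $\intHom_{A'}(M,A')$ of right $A'$-linear maps, carrying its $A'$-bimodule structure together with the $H$-module structure of the internal hom in $\Rep(H)$, namely $(h\cdot m^\vee)(m)=\sum h_{(1)}\cdot\bigl[m^\vee(S(h_{(2)})\,m)\bigr]$, where the outer action is that on the target $(H/B^+H)^\ast$ and the inner one is that on $M$. I would then convert this $H$-action into the associated $H^\ast$-coaction: expanding $S(h_{(2)})\,m=\sum m_{(0)}\la m_{(1)},S(h_{(2)})\ra$, applying the hit description of the action on $(H/B^+H)^\ast$, and using $\la m_{(1)},S(h_{(2)})\ra=\la S(m_{(1)}),h_{(2)}\ra$ together with the product rule in $H^\ast$, one is led precisely to the coaction (\ref{eqn:Mdualcomod}). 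The antipode $S$ of $H^\ast$ appears exactly as the effect of dualizing the comodule structure; the delicate point is to keep the right $A'$-linearity of $m^\vee$, the action on the target, and the action on the source cleanly disentangled throughout.

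Finally, part (4) is then immediate: the transported tensor structure was defined so that (\ref{eqn:dualcatiso}) is monoidal, and composing this monoidal isomorphism with the tensor equivalence (\ref{eqn:dualcatequiv}) produces the desired tensor equivalence ${}_{(H/B^+H)^\ast}\mathfrak{M}^{H^\ast}_{(H/B^+H)^\ast}\approx\Rep(H)_{\Rep(B)}^\ast$, $M\mapsto-\otimes_{(H/B^+H)^\ast}M$.
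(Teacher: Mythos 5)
Your overall strategy is sound and differs from the paper mainly in organization: the paper defines the structures (1)--(3) on ${}_{(H/B^+H)^\ast}\mathfrak{M}_{(H/B^+H)^\ast}^{H^\ast}$ directly and verifies them, and only afterwards (in part (4)) checks compatibility with the isomorphism (\ref{eqn:dualcatiso}); you instead transport the known finite tensor structure of ${}_{A'}\Rep(H)_{A'}$ across (\ref{eqn:dualcatiso}) and then compute what the transported data are. For (1), (2) and (4) the two routes amount to the same computation (translating the diagonal $H$-action into the diagonal $H^\ast$-coaction via $hm=\sum m_{(0)}\la m_{(1)},h\ra$), and your conversion of the internal-hom $H$-action $(h\cdot m^\vee)(m)=\sum h_{(1)}\cdot[m^\vee(S(h_{(2)})m)]$ into the coaction (\ref{eqn:Mdualcomod}) is correct as far as it goes.

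The genuine gap is in part (3), precisely where you anticipate the main work but do not carry it out: you \emph{assert} that the left dual of $M$ in ${}_{A'}\Rep(H)_{A'}$ is $\Hom_{A'}(M_{A'},A')$ with the internal-hom action, but transport of structure only guarantees that \emph{some} left dual exists; identifying it with this concrete Hom-object requires exhibiting evaluation and coevaluation morphisms satisfying the rigidity axioms. The coevaluation $\e\mapsto\sum_i m_i\otimes_{A'}m_i^\vee$ needs $M$ to be finitely generated projective (here in fact free) as a right $(H/B^+H)^\ast$-module, which holds because $(H/B^+H)^\ast$ is a Frobenius right coideal subalgebra of $H^\ast$ (Lemma \ref{lem:cocleftness} together with \cite[Theorem 2.1(4)]{Mas92}) -- an ingredient you never invoke. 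One must also check that $\ev_M$ and $\coev_M$ are morphisms in the category, i.e.\ that they intertwine the right $H^\ast$-coactions (equivalently, are $H$-linear); this is the content of the explicit computations in the paper's proof of (3). Without the freeness input and these verifications, the identification of $M^\vee$ and hence the formula (\ref{eqn:Mdualcomod}) is not yet established.
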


\begin{proof}
Clearly, the isomorphism (\ref{eqn:dualcatiso}) implies that the $\k$-linear abelian category ${}_{(H/B^+H)^\ast}\mathfrak{M}_{(H/B^+H)^\ast}^{H^\ast}$ is finite as well, and our goal is to verify that structures in (1) to (3) are admissible for it being a tensor category.
\begin{itemize}
\item[(1)]
Due to the coopposite version of Lemma \ref{lem:cocleftness}(1), one could find that $(H/B^+H)^\ast$ is Frobenius as a right coideal subalgebra of $H^\ast$, which implies that every $M\in{}_{(H/B^+H)^\ast}\mathfrak{M}_{(H/B^+H)^\ast}^{H^\ast}$ is left and right $(H/B^+H)^\ast$-free according to \cite[Theorem 2.1(4)]{Mas92}. Therefore, the tensor product bifunctor $-\otimes_{(H/B^+H)^\ast}-$ defined as (\ref{eqn:tensorprod0}) is biexact.

It remains to show that the right $H^\ast$-comodule structure of $M\otimes_{(H/B^+H)^\ast}N$ preserves both left and right $(H/B^+H)^\ast$-actions: Indeed, for any $f\in(H/B^+H)^\ast$ and $m\in M$, $n\in N$,
\begin{eqnarray*}
&& \sum\left(f(m\otimes_{(H/B^+H)^\ast}n)\right){}_{(0)}
  \otimes\left(f(m\otimes_{(H/B^+H)^\ast}n)\right){}_{(1)} \\
&=&
\sum(fm\otimes_{(H/B^+H)^\ast}n)_{(0)}\otimes(fm\otimes_{(H/B^+H)^\ast}n)_{(1)}  \\
&=&
\sum\left((fm)_{(0)}\otimes_{(H/B^+H)^\ast}n_{(0)}\right)\otimes(fm)_{(1)}n_{(1)}  \\
&=&
\sum (f_{(1)}m_{(0)}\otimes_{(H/B^+H)^\ast}n_{(0)})\otimes f_{(2)}m_{(1)}n_{(1)}  \\
&=&
\sum f_{(1)}(m_{(0)}\otimes_{(H/B^+H)^\ast}n_{(0)})\otimes f_{(2)}(m_{(1)}n_{(1)})
\end{eqnarray*}
holds in $(M\otimes_{(H/B^+H)^\ast}N)\otimes H^\ast$.
The other compatibility equation holds similarly.

\item[(2)]
These could be shown by straightforward computations, as direct consequences of canonical natural isomorphisms on tensor products over the algebra $(H/B^+H)^\ast$.

\item[(3)]
The existence of left and right dual objects is a result of \cite[Exercise 2.10.16]{EGNO15} for example, and here we show that every object $M\in{}_{(H/B^+H)^\ast}\mathfrak{M}_{(H/B^+H)^\ast}^{H^\ast}$ has the left dual $M^\vee$ with structures as desired.

At first, the bimodule structure of $$M^\vee=\Hom_{(H/B^+H)^\ast}(M_{(H/B^+H)^\ast},(H/B^+H)^\ast)$$ is chosen classical. Namely,
for any $f\in(H/B^+H)^\ast$, $m^\vee\in M^\vee$ and $m\in M$,
\begin{equation}\label{eqn:M^veebimod}
(fm^\vee)(m)=fm^\vee(m)\;\;\;\;\text{and}\;\;\;\;(m^\vee f)(m)=m^\vee(fm)
\end{equation}
both hold in $(H/B^+H)^\ast$.

Moreover, one could calculate to know that the right $H^\ast$-comodule structure (\ref{eqn:Mdualcomod}) preserves left and right $(H/B^+H)^\ast$-actions. Specifically, for any $f\in(H/B^+H)^\ast$ and $m^\vee\in M^\vee$, we have equations
\begin{eqnarray*}
\sum(fm^\vee)_{(0)}(m)\otimes(fm^\vee)_{(1)}
&\overset{(\ref{eqn:Mdualcomod})}{=}&
\sum[fm^\vee(m_{(0)})]_{(1)}\otimes[fm^\vee(m_{(0)})]_{(2)}S(m_{(1)})  \\
&=&
\sum f_{(1)}[m^\vee(m_{(0)})]_{(1)}\otimes f_{(2)}[m^\vee(m_{(0)})]_{(2)}S(m_{(1)})  \\
&\overset{(\ref{eqn:Mdualcomod})}{=}&
\sum f_{(1)}m^\vee_{(0)}(m)\otimes f_{(2)}m^\vee_{(1)}
\;\;\;\;\;\;(\forall m\in M),
\end{eqnarray*}
which are concluded as
$$\sum(fm^\vee)_{(0)}\otimes(fm^\vee)_{(1)}
=\sum f_{(1)}m^\vee_{(0)}\otimes f_{(2)}m^\vee_{(1)}.$$
On the other hand, the equation
$\sum(m^\vee f)_{(0)}\otimes(m^\vee f)_{(1)}
=\sum m^\vee_{(0)}f_{(1)}\otimes m^\vee_{(1)}f_{(2)}$
is a conclusion of similar computations:
\begin{eqnarray*}
&& \sum(m^\vee f)_{(0)}(m)\otimes(m^\vee f)_{(1)}  \\
&\overset{(\ref{eqn:Mdualcomod})}{=}&
\sum[m^\vee f(m_{(0)})]_{(1)}\otimes[m^\vee f(m_{(0)})]_{(2)}S(m_{(1)})  \\
&\overset{(\ref{eqn:M^veebimod})}{=}&
\sum[m^\vee(fm_{(0)})]_{(1)}\otimes[m^\vee(fm_{(0)})]_{(2)}S(m_{(1)})  \\
&=&
\sum[m^\vee(f_{(1)}m_{(0)})]_{(1)}
    \otimes[m^\vee(f_{(1)}m_{(0)})]_{(2)}S(f_{(2)}m_{(1)})f_{(3)}  \\
&\overset{(\ref{eqn:Mdualcomod})}{=}&
\sum m^\vee_{(0)}(f_{(1)}m)\otimes m^\vee_{(1)}f_{(2)}  \\
&\overset{(\ref{eqn:M^veebimod})}{=}&
\sum m^\vee_{(0)}f_{(1)}(m)\otimes m^\vee_{(1)}f_{(2)}
\;\;\;\;\;\;\;\;(\forall m\in M).
\end{eqnarray*}

Now let us point out that the corresponding evaluation and coevaluation are as follows:
\begin{equation}\label{eqn:Mevcoev}
\left\{
\begin{array}{cl}
\ev_M:M^\vee\otimes_{(H/B^+H)^\ast}M\rightarrow (H/B^+H)^\ast, &
m^\vee\otimes_{(H/B^+H)^\ast}m\mapsto m^\vee(m);  \\[6pt]
\coev_M:(H/B^+H)^\ast\rightarrow M\otimes_{(H/B^+H)^\ast}M^\vee, &
\e\mapsto \sum_i m_i\otimes_{(H/B^+H)^\ast}m_i^\vee,
\end{array}
\right.
\end{equation}
where $\{m_i\}$ is a finite right $(H/B^+H)^\ast$-basis of the free module $M$, with dual left $(H/B^+H)^\ast$-basis $\{m_i^\vee\}$ of $M^\vee$. We know that (\ref{eqn:Mevcoev}) satisfy the axioms for $M^\vee$ being a left dual of $M$ as an object in ${}_{(H/B^+H)^\ast}\mathfrak{M}_{(H/B^+H)^\ast}$, and hence it suffices to show that $\ev_M$ and $\coev_M$ preserve right $H^\ast$-coactions. This is due to following computations:

\begin{eqnarray*}
\sum m^\vee_{(0)}(m_{(0)})\otimes m^\vee_{(1)}m_{(1)}
&\overset{(\ref{eqn:Mdualcomod})}{=}&
\sum[m^\vee(m_{(0)})]_{(1)}\otimes[m^\vee(m_{(0)})]_{(2)}S(m_{(1)})m_{(2)}  \\
&=&
\sum[m^\vee(m)]_{(1)}\otimes[m^\vee(m)]_{(2)},
\end{eqnarray*}
and
\begin{eqnarray*}
&&
\sum_i \left({m_i}_{(0)}\otimes_{(H/B^+H)^\ast} {m_i^\vee}_{(0)}(m)\right)
    \otimes {m_i}_{(1)}{m_i^\vee}_{(1)}  \\
&\overset{(\ref{eqn:Mdualcomod})}{=}&
\sum_i \left({m_i}_{(0)}\otimes_{(H/B^+H)^\ast} [m_i^\vee(m_{(0)})]_{(1)}\right)
    \otimes {m_i}_{(1)}[m_i^\vee(m_{(0)})]_{(2)}S(m_{(1)})  \\
&=&
\sum_i \left({m_i}_{(0)}[m_i^\vee(m_{(0)})]_{(1)}\otimes_{(H/B^+H)^\ast}\e \right)
    \otimes {m_i}_{(1)}[m_i^\vee(m_{(0)})]_{(2)}S(m_{(1)})  \\
&=&
\sum_i \left([m_im_i^\vee(m_{(0)})]_{(0)}\otimes_{(H/B^+H)^\ast}\e \right)
    \otimes[m_im_i^\vee(m_{(0)})]_{(1)}S(m_{(1)})  \\
&=&
\sum \left(m_{(0)}\otimes_{(H/B^+H)^\ast}\e \right)\otimes m_{(1)}S(m_{(2)})  \\
&=&
\sum \left(m\otimes_{(H/B^+H)^\ast}\e \right)\otimes \e  \\
&=&
\sum_i \left(m_i\otimes_{(H/B^+H)^\ast}m_i^\vee(m) \right)\otimes \e
\;\;\;\;\;\;(\forall m\in M).
\end{eqnarray*}

\item[(4)]
Recall by the equivalence (\ref{eqn:dualcatequiv}) that
$${}_{(H/B^+H)^\ast}\Rep(H)_{(H/B^+H)^\ast}\approx\Rep(H)_{\Rep(B)}^\ast,\;\;\;\;
M\mapsto-\otimes_{(H/B^+H)^\ast}M$$
as tensor categories.
Thus it is sufficient to show that the tensor products are preserved by the inverse of the isomorphism (\ref{eqn:dualcatiso}), which is:
$${}_{(H/B^+H)^\ast}\mathfrak{M}_{(H/B^+H)^\ast}^{H^\ast}
\cong{}_{(H/B^+H)^\ast}\Rep(H)_{(H/B^+H)^\ast},
\;\;\;\;M\mapsto M.$$
It sends the right $H^\ast$-comodule structure of $M$ to the left hit $H$-action. Specifically,
\begin{equation}\label{eqn:hm}
hm=\sum m_{(0)}\la m_{(1)},h\ra
\end{equation}
holds for any $h\in H$ and $m\in M$.

In fact, since the left $H$-action of
$M\otimes_{(H/B^+H)^\ast}N\in{}_{(H/B^+H)^\ast}\Rep(H)_{(H/B^+H)^\ast}$
is diagonal, we could directly calculate to find that: For any $h\in H$, $m\in M$ and $n\in N$,
\begin{eqnarray*}
h(m\otimes_{(H/B^+H)^\ast}n)
&=&
\sum h_{(1)}m\otimes_{(H/B^+H)^\ast}h_{(2)}n  \\
&\overset{(\ref{eqn:hm})}{=}&
\sum m_{(0)}\la m_{(1)},h_{(1)}\ra \otimes_{(H/B^+H)^\ast} n_{(0)}\la n_{(1)},h_{(2)}\ra  \\
&=&
\sum (m_{(0)}\otimes_{(H/B^+H)^\ast} n_{(0)})\la m_{(1)}n_{(1)},h\ra  \\
&=&
\sum (m\otimes_{(H/B^+H)^\ast} n)_{(0)}
    \left\la (m\otimes_{(H/B^+H)^\ast} n)_{(1)},h\right\ra,
\end{eqnarray*}
while the left and right $(H/B^+H)^\ast$-actions remain unchanged on the respective tensorands of $M\otimes_{(H/B^+H)^\ast}N$.
\end{itemize}
\end{proof}

\begin{remark}
Suppose $\{m_i\}$ is a finite right $(H/B^+H)^\ast$-basis of the free module $M$ with dual left
$(H/B^+H)^\ast$-basis $\{m_i^\vee\}$ of $M^\vee$, which means that
\begin{equation}
m=\sum_i m_im_i^\vee(m)\;\;\;\;\text{and}\;\;\;\;
m^\vee=\sum_i m^\vee(m_i)m_i^\vee
\end{equation}
hold for all $m\in M$ and $m\in M^\vee$.
Then an equivalent formulation for the right $H^\ast$-comodule structure (\ref{eqn:Mdualcomod}) of $M^\vee$ is
\begin{equation}\label{eqn:Mdualcomod2}
\sum m^\vee_{(0)}\otimes m^\vee_{(1)}
=\sum_i [m^\vee({m_i}_{(0)})]_{(1)}m_i^\vee\otimes [m^\vee({m_i}_{(0)})]_{(2)}S({m_i}_{(1)})
\in M^\vee\otimes H^\ast.
\end{equation}
This is because the both sides of (\ref{eqn:Mdualcomod2}) maps $m\otimes\id$ to the sides of (\ref{eqn:Mdualcomod}) respectively.
\end{remark}

Next, let us identify the dual category $\Rep(H)_{\Rep(B)}^\ast$, or  ${}_{(H/B^+H)^\ast}\mathfrak{M}_{(H/B^+H)^\ast}^{H^\ast}$, in another way.
For the purpose, the notion of the \textit{cotensor product} $-\square_C-$ over a coalgebra $C$ would be used, and one might refer to \cite[Section 0]{Tak77(b)} for the definition and basic properties of cotensor products.

The following lemma is a direct consequence of \cite[Lemma 1.8]{Mas94}, where the $\k$-linear abelian equivalence are provided via the functors $\Phi$ and $\Psi$ defined in \cite[Section 1]{Tak79}:

\begin{lemma}\label{lem:abelequiv}
Let $H$ be finite-dimensional Hopf algebra with left coideal subalgebra $B$. Then
\begin{equation}\label{eqn:PsiPhi0}
M\mapsto \overline{M}:=M/M\left((H/B^+H)^\ast\right)^+\;\;\;\;\;\;\text{and}\;\;\;\;\;\;
V\square_{B^\ast}H^\ast\mapsfrom V
\end{equation}
gives an equivalence of $\k$-linear abelian categories
\begin{equation}\label{eqn:PsiPhi1}
{}_{(H/B^+H)^\ast}\mathfrak{M}_{(H/B^+H)^\ast}^{H^\ast}\approx {}_{(H/B^+H)^\ast}\mathfrak{M}^{B^\ast},
\end{equation}
where the latter one is the category of Doi-Hopf modules introduced at Equation (\ref{eqn:DoiHopfmod}).
\end{lemma}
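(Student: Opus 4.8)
The plan is to recognize this statement as an instance of the Takeuchi--Masuoka structure theorem for a faithfully flat inclusion of a coideal subalgebra, transported to the dual Hopf algebra $H^\ast$. First I would fix notation by writing $A':=(H/B^+H)^\ast$, regarded through $\pi^\ast$ as a right coideal subalgebra of $H^\ast$, and recall from the correspondence in Lemma \ref{lem:cocleftness}(2) that the quotient coalgebra attached to $A'$ is precisely $B^\ast\cong H^\ast/H^\ast(A')^+$, with $\iota^\ast\colon H^\ast\twoheadrightarrow B^\ast$ the canonical projection, which is a surjection of left $H^\ast$-module coalgebras by Corollary \ref{cor:cleftcocleft*}(1). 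With this dictionary the triple $(H^\ast,A',B^\ast)$ becomes a Doi--Hopf datum: $A'$ is a right $H^\ast$-comodule algebra via $\Delta|_{A'}\colon A'\to A'\otimes H^\ast$, and $B^\ast$ is a left $H^\ast$-module coalgebra via the action $\btr$ of (\ref{eqn:btr}). Consequently the target category ${}_{A'}\mathfrak{M}^{B^\ast}$ is exactly the category of Doi--Hopf modules for this datum, with compatibility (\ref{eqn:DoiHopfmod}), whereas the source category ${}_{A'}\mathfrak{M}^{H^\ast}_{A'}$ is the category of two-sided relative Hopf modules over the extension $A'\subseteq H^\ast$.

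Next I would supply the flatness hypothesis that drives the equivalence. By the cocleftness results of Subsection \ref{subsection:cocleft} (Lemma \ref{lem:cocleftness}(1) in its coopposite form), $(H/B^+H)^\ast$ is a Frobenius right coideal subalgebra of $H^\ast$ and $H^\ast$ is right $(H/B^+H)^\ast$-cocleft; by \cite[Theorem 2.1(4)]{Mas92} this forces $H^\ast$ to be \emph{free}, hence faithfully flat, as a right $A'$-module. This is exactly the hypothesis under which \cite[Lemma 1.8]{Mas94}, built on the functors $\Phi$ and $\Psi$ of \cite[Section 1]{Tak79}, applies.

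With the datum and the flatness in place, I would invoke \cite[Lemma 1.8]{Mas94} directly: it furnishes the adjoint pair $\Phi(M)=\overline{M}=M/M(A')^+$ and $\Psi(V)=V\square_{B^\ast}H^\ast$ between the two categories, and faithful flatness of $H^\ast$ over $A'$ promotes the unit and counit of this adjunction to natural isomorphisms, so that $\Phi$ and $\Psi$ are mutually quasi-inverse $\k$-linear equivalences. The only verifications that remain are bookkeeping against the definitions: that $\overline{M}$ carries the residual left $A'$-action together with the right $B^\ast$-coaction obtained by pushing the $H^\ast$-coaction of $M$ through $\mathrm{id}\otimes\iota^\ast$, which descends to $\overline{M}$ precisely because the coaction is a right $A'$-module map and $\iota^\ast$ annihilates $H^\ast(A')^+$, and that the resulting data satisfy (\ref{eqn:DoiHopfmod}); dually, that $V\square_{B^\ast}H^\ast$, with right $H^\ast$-comodule structure inherited from $\Delta$ on the $H^\ast$-factor and $A'$-bimodule structure from left and right multiplication by $A'\subseteq H^\ast$, lies in ${}_{A'}\mathfrak{M}^{H^\ast}_{A'}$. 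Each of these is routine once (\ref{eqn:pi*}) and the definition of the cotensor product in \cite[Section 0]{Tak77(b)} are recorded.

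The conceptual heart, and the only genuine obstacle, is the mismatch of structures between the two sides: source objects carry a full $A'$-bimodule structure together with an $H^\ast$-comodule, while target objects retain only a one-sided $A'$-module and a $B^\ast$-comodule. The content of the theorem is that, under faithful flatness, no information is lost in passing to $\overline{M}$, since the discarded right $A'$-action is reconstructed from the right $H^\ast$-coaction by a Galois-type descent; this is exactly what yields $\Psi\Phi\cong\mathrm{id}$ and $\Phi\Psi\cong\mathrm{id}$. Because this descent is the substance of \cite[Lemma 1.8]{Mas94} (equivalently of Takeuchi's equivalence in \cite{Tak79}), I would rely on that reference for it rather than reprove faithfully flat descent, and devote the written argument only to checking that our specific structures (\ref{eqn:DoiHopfmod}), (\ref{eqn:pi*}) and the actions $\btl,\btr$ match the hypotheses of the cited lemma.
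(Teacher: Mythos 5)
Your proposal is correct and follows essentially the same route as the paper: both reduce the statement to the one-sided equivalence $\mathfrak{M}_{(H/B^+H)^\ast}^{H^\ast}\approx\mathfrak{M}^{B^\ast}$ of \cite[Lemma 1.8]{Mas94} (via Takeuchi's functors and the Frobenius/freeness property of the coideal subalgebra), and then verify that the additional left $(H/B^+H)^\ast$-module structures — the quotient action on $\overline{M}$ and the diagonal action on $V\square_{B^\ast}H^\ast$ — are carried along by the functors and the adjunction isomorphisms.
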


\begin{proof}
We remark that the left $B^\ast$-comodule structure of $H^\ast$ is $(\iota^\ast\otimes\id)\circ\Delta$, and consequently the cotensor product becomes
$$V\square_{B^\ast}H^\ast:=
\Big\{\sum_i v_i\otimes h_i^\ast\in V\otimes H^\ast \mid
  \sum_i {v_i}_{\la0\ra}\otimes {v_i}_{\la1\ra}\otimes h_i^\ast
  =\sum_i v_i\otimes \iota^\ast({h_i^\ast}_{(1)})\otimes {h_i^\ast}_{(2)}\Big\}$$
for each right $B^\ast$-comodule $V$.

As mentioned in Lemma \ref{lem:cocleftness}, the right coideal subalgebra $(H/B^+H)^\ast$ must be Frobenius in this case, and thus by \cite[Lemma 1.8]{Mas94}, the correspondence (\ref{eqn:PsiPhi0}) gives an equivalence $\mathfrak{M}_{(H/B^+H)^\ast}^{H^\ast}\approx \mathfrak{M}^{B^\ast}$. It remains to prove that the functors in (\ref{eqn:PsiPhi0}) are compatible with left $(H/B^+H)^\ast$-module structures which we desire as follows:

For each $V\in{}_{(H/B^+H)^\ast}\mathfrak{M}^{B^\ast}$, define the left $(H/B^+H)^\ast$-action on $V\square_{B^\ast}H^\ast$ to be diagonal (cf. \cite[Lemma 2.9]{Mas94}), namely:
\begin{equation}\label{eqn:Psi(V)mod}
f\cdot(\sum_i v_i\otimes h^\ast_i):=\sum_i f_{(1)}v_i\otimes f_{(2)}h^\ast_i
\end{equation}
for any $f\in(H/B^+H)^\ast$ and $\sum_i v_i\otimes h^\ast_i\in V\square_{B^\ast}H^\ast$,
which is evidently well-defined. Recall that the right $(H/B^+H)^\ast$-action and $H^\ast$-coaction on $V\square_{B^\ast}H^\ast$ is completely defined through the second (co)tensorand $H^\ast$:
\begin{equation}\label{eqn:right(co)actions}
(\sum_i v_i\otimes h^\ast_i)\cdot f=\sum_i v_i\otimes h^\ast_i\pi^\ast(f)\;\;\;\;\text{and}
\;\;\;\;\sum_i v_i\otimes h^\ast_i\mapsto\sum_i v_i\otimes {h^\ast_i}_{(1)}\otimes{h^\ast_i}_{(2)},
\end{equation}
which are both left $(H/B^+H)^\ast$-module maps. Thus $V\square_{B^\ast}H^\ast\in {}_{(H/B^+H)^\ast}\mathfrak{M}_{(H/B^+H)^\ast}^{H^\ast}$.

On the other hand, it is clear that $\overline{M}:=M/M\left((H/B^+H)^\ast\right)^+$ has the quotient left $(H/B^+H)^\ast$-module structure. It makes the right $B^\ast$-comodule structure
\begin{equation}\label{eqn:Phi(M)comod}
\overline{M}\rightarrow \overline{M}\otimes B^\ast,\;\;\;\;
\overline{m}\mapsto\sum\overline{m_{(0)}}\otimes \iota^\ast(m_{(1)})
\end{equation}
preserve left $(H/B^+H)^\ast$-actions as in (\ref{eqn:DoiHopfmod}), which implies that $\overline{M}\in{}_{(H/B^+H)^\ast}\mathfrak{M}^{B^\ast}$. Consequently, the adjunction isomorphism
\begin{equation}\label{eqn:tau0}
M\cong \overline{M}\square_{B^\ast}H^\ast,\;\;\;\;
m\mapsto \sum \overline{m_{(0)}}\otimes m_{(1)}
\end{equation}
described in the proof of \cite[Theorem 1]{Tak79} also preserves the left $(H/B^+H)^\ast$-actions.

Furthermore, one could verify that the other adjunction isomorphism
\begin{equation}\label{eqn:sigma0}
\overline{V\square_{B^\ast}H^\ast}\cong V,\;\;\;\;
\overline{\sum_i v_i\otimes h^\ast_i}\mapsto\sum_i v_i\langle h^\ast_i,1\rangle
\end{equation}
in the proof of \cite[Theorem 1]{Tak79} preserves the left $(H/B^+H)^\ast$-actions as well.
\end{proof}

Now we are able to combine the equivalence (\ref{eqn:PsiPhi1}) and the isomorphism (\ref{eqn:PsiPhi2}) to provide that
\begin{equation}\label{eqn:PsiPhi3}
{}_{(H/B^+H)^\ast}\mathfrak{M}_{(H/B^+H)^\ast}^{H^\ast}\approx\Rep((H/B^+H)^\ast\#B),
\end{equation}
as $\k$-linear abelian categories.
However, note that ${}_{(H/B^+H)^\ast}\mathfrak{M}_{(H/B^+H)^\ast}^{H^\ast}$ is furthermore a tensor category with structures defined in Proposition \ref{prop:monoidalstru0}.
As mentioned in the paragraph before \cite[Theorem 3.3.5]{Sch02}, one could conclude that $\Rep((H/B^+H)^\ast\#B)$ is also a tensor category such that (\ref{eqn:PsiPhi3}) becomes a tensor equivalence.
Consequently,
it is suggested by similar arguments to \cite[Definition XV.1.1]{Kas95} or \cite[Proposition 13.2]{ES02} that the smash product algebra $(H/B^+H)^\ast\#B$ would become a quasi-Hopf algebra reconstructed. The remaining of this subsection is devoted to establishing a monoidal structure of the functor (\ref{eqn:PsiPhi3}).

\begin{notation}\label{not:Phi&Psi}
For convenience, the equivalence (\ref{eqn:PsiPhi3}) of $\k$-linear abelian categories and its quasi-inverse are denoted respectively by
\begin{equation}\label{eqn:Phi}
\begin{array}{crcl}
\Phi: & {}_{(H/B^+H)^\ast}\mathfrak{M}_{(H/B^+H)^\ast}^{H^\ast} &\rightarrow& \Rep((H/B^+H)^\ast\#B), \\
& M &\mapsto& \overline{M}:=M/M\left((H/B^+H)^\ast\right)^+
\end{array}
\end{equation}
and
\begin{equation}\label{eqn:Psi}
\begin{array}{crcl}
\Psi: & \Rep((H/B^+H)^\ast\#B) &\rightarrow& {}_{(H/B^+H)^\ast}\mathfrak{M}_{(H/B^+H)^\ast}^{H^\ast},  \\
& V &\mapsto& V\square_{B^\ast}H^\ast
\end{array}
\end{equation}
in this paper.
Recall in (\ref{eqn:Bhit}) that the right $B^\ast$-comodule structure
$v\mapsto\sum v_{\la0\ra}\otimes v_{\la1\ra}$ on $V\in\Rep((H/B^+H)^\ast\#B)$ is defined through
\begin{equation}\label{eqn:bv}
\sum v_{\la0\ra}\la v_{\la1\ra},b\ra=(\e\#b)v\;\;\;\;\;\;(\forall b\in B),
\end{equation}
and the structures defined by (\ref{eqn:Phi(M)comod}) and (\ref{eqn:Psi(V)mod}) in the proof of Lemma \ref{lem:abelequiv} make the functors $\Phi$ and $\Psi$ well-defined.
\end{notation}


We should remark that the definitions of $\Phi$ and $\Psi$ are independent of {\pams}s for $\iota:B\hookrightarrow H$. However, suitable monoidal structures of $\Phi$ would be provided with a {\pams} $(\zeta,\gamma^\ast)$ in the following lemma.

\begin{lemma}\label{lem:monoidalstruJ}
Let $H$ be a finite-dimensional Hopf algebra. Suppose that $B$ is a left coideal subalgebra of $H$ with a {\pams} $(\zeta,\gamma^\ast)$.
Then:
\begin{itemize}
\item[(1)]
There is a bifunctor $-\otimes-$ on $\Rep((H/B^+H)^\ast\#B)$: For any objects $V$ and $W$,
define $V\otimes W:=V\otimes_\k W$ as a vector space with left $(H/B^+H)^\ast\#B$-module structure:
\begin{eqnarray}\label{eqn:tensorprod}
\begin{array}{rcl}
((H/B^+H)^\ast\#B)\otimes(V\otimes W) &\rightarrow& V\otimes W  \\
(f\#b)\otimes(v\otimes w) &\mapsto& \mathbf{\Delta}(f\#b)(v\otimes w);
\end{array}
\end{eqnarray}

\item[(2)]
There is a natural isomorphism $J:\Phi(-)\otimes\Phi(-)\cong\Phi(-\otimes_{(H/B^+H)^\ast}-)$ in $\Rep((H/B^+H)^\ast\#B)$ defined as follows:
\begin{eqnarray}\label{eqn:monoidalstruJ}
\begin{array}{rcl}
J_{M,N}\;:\;
\overline{M}\otimes\overline{N} &\cong& \overline{M\otimes_{(H/B^+H)^\ast}N},  \\
\overline{m}\otimes\overline{n}
&\mapsto& \sum\overline{m_{(0)}\overline{\gamma}^\ast(m_{(1)})\otimes_{(H/B^+H)^\ast}n}
\end{array}
\end{eqnarray}
for all $M,N\in{}_{(H/B^+H)^\ast}\mathfrak{M}_{(H/B^+H)^\ast}^{H^\ast}$, and its inverse would be
\begin{eqnarray}\label{eqn:monoidalstruJ^-1}
\begin{array}{rcl}
J_{M,N}^{-1}\;:\;
\overline{M\otimes_{(H/B^+H)^\ast}N} &\cong& \overline{M}\otimes\overline{N},  \\
\overline{m\otimes_{(H/B^+H)^\ast}n}
&\mapsto& \sum\overline{m_{(0)}}\otimes \overline{\gamma^\ast(m_{(1)})n};
\end{array}
\end{eqnarray}

\item[(3)]
The equivalence $\Phi$ sends the unit object $(H/B^+H)^\ast$ to the trivial representation $\k$ (via the algebra map $\pd{\e}$) in the sense:
\begin{equation}\label{eqn:Phiunit}
\begin{array}{rccc}
\Phi((H/B^+H)^\ast)=\overline{(H/B^+H)^\ast}&\cong&\k&\in\Rep((H/B^+H)^\ast\#B)  \\
\overline{f}\;\; &\mapsto& \la f,1\ra.&
\end{array}
\end{equation}
\end{itemize}
\end{lemma}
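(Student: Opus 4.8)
The plan is to verify the three claims of Lemma~\ref{lem:monoidalstruJ} in order, treating (1) as a formal consequence of $\pd{\Delta}$ being an algebra map (already established in Subsection~\ref{subsection:Thm(1-2)Pf}), then proving (2) by reducing the $\Rep((H/B^+H)^\ast\#B)$-linearity of $J_{M,N}$ to the module-map axioms of $\zeta^\ast$ and $\gamma^\ast$, and finally checking (3) by a short direct computation with $\pd{\e}$.

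First I would dispose of (1): since $\pd{\Delta}:(H/B^+H)^\ast\#B\to\left((H/B^+H)^\ast\#B\right)^{\otimes2}$ is an algebra map with the counit axiom (\ref{eqn:counitaxiom}), the assignment (\ref{eqn:tensorprod}) automatically defines a left $(H/B^+H)^\ast\#B$-module structure on $V\otimes_\k W$, so $-\otimes-$ is a well-defined bifunctor. For (3), I would note that $\overline{(H/B^+H)^\ast}=(H/B^+H)^\ast/\left((H/B^+H)^\ast\right)\left((H/B^+H)^\ast\right)^+$ is one-dimensional with the class of $\e$ as generator, and that under the identification $\overline{f}\mapsto\la f,1\ra$ the left action factors through $\pd{\e}$; this is immediate once one uses (\ref{eqn:Bhit}) together with (\ref{eqn:epsilon}) and the fact that the $B^\ast$-coaction on the unit object is induced via $\iota^\ast$ as in (\ref{eqn:Phi(M)comod}).

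The substance is (2). I would proceed in three steps. First, check that $J_{M,N}$ is well-defined on the quotient, i.e.\ that the formula $\overline{m}\otimes\overline{n}\mapsto\sum\overline{m_{(0)}\overline{\gamma}^\ast(m_{(1)})\otimes_{(H/B^+H)^\ast}n}$ descends from $\overline{M}\otimes\overline{N}$; this requires that replacing $m$ by $mf^+$ or $n$ by $nf^+$ (for $f^+\in\left((H/B^+H)^\ast\right)^+$) lands in $\left(M\otimes_{(H/B^+H)^\ast}N\right)\left((H/B^+H)^\ast\right)^+$, using the $H^\ast$-comodule compatibility and (\ref{eqn:gamma*bar1}) which says $\sum f_{(1)}\overline{\gamma}^\ast(h^\ast f_{(2)})=\la f,1\ra\overline{\gamma}^\ast(h^\ast)$. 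Second, verify that $J_{M,N}$ and the proposed $J_{M,N}^{-1}$ are mutually inverse, which should follow from the convolution relations (\ref{eqn:convolution*}) and (\ref{eqn:gamma*pi*}) expressing that $\gamma^\ast$ and $\overline{\gamma}^\ast$ retract $\pi^\ast$ compatibly. Third, prove that $J_{M,N}$ is a morphism in $\Rep((H/B^+H)^\ast\#B)$, i.e.\ it intertwines both the left $(H/B^+H)^\ast$-action and the $B$-action (equivalently the $B^\ast$-coaction): the $(H/B^+H)^\ast$-equivariance reduces to (\ref{eqn:gamma*bar2}), the statement that $\overline{\gamma}^\ast\circ S^{-1}$ is a left module map, while the $B^\ast$-coaction compatibility unwinds through (\ref{eqn:Deltacomplete}) and the relation between the diagonal coaction on $M\otimes_{(H/B^+H)^\ast}N$ and the comultiplication $\pd{\Delta}$.

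The main obstacle will be the $B$-equivariance (equivalently $B^\ast$-coaction compatibility) in the third step of (2): this is where the formula (\ref{eqn:Delta(b)}) for $\pd{\Delta}(\e\#b)$ in terms of the dual basis of $(H/B^+H)^\ast$ must be matched against the diagonal $H^\ast$-coaction on $M\otimes_{(H/B^+H)^\ast}N$ transported through $\overline{\gamma}^\ast$. I expect this to be a long Sweedler-notation computation that funnels several of the identities from Section~\ref{section:2}—notably (\ref{eqn:zeta*bar1}) (that $S^{-1}\circ\overline{\zeta}^\ast$ is a right $B^\ast$-comodule map) and the module-map property (\ref{eqn:gamma*})—into a single verification; by contrast, well-definedness and invertibility are comparatively routine once the correct lemmas from Subsection~2.3 are invoked.
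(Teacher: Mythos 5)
Your proposal follows essentially the same route as the paper: (1) is dispatched formally from $\pd{\Delta}$ being an algebra map, (3) is a direct check, and (2) is split into exactly the same three steps (well-definedness via (\ref{eqn:gamma*bar1}) and the comodule compatibility, mutual invertibility from $\overline{\gamma}^\ast$ being the convolution inverse of $\gamma^\ast$, and equivariance as the long Sweedler computation). The only execution difference is that the paper finds it cleaner to verify that $J_{M,N}^{-1}$ (rather than $J_{M,N}$) is a morphism, which lets the equivariance computation close up using the convolution identity (\ref{eqn:convolution*}) and (\ref{eqn:gamma*}) rather than the identities (\ref{eqn:gamma*bar2}) and (\ref{eqn:zeta*bar1}) you anticipate; this is a detail, not a different argument.
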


\begin{proof}
\begin{itemize}
\item[(1)]
It is clear that (\ref{eqn:tensorprod}) makes $V\otimes W$ a left $(H/B^+H)^\ast\#B$-module, since $\pd{\Delta}$ is an algebra map by Theorem \ref{thm:partialdual} (1) and (2), which has been proved in Subsection \ref{subsection:Thm(1-2)Pf}.

\item[(2)]
At first, it should be verified that $J_{M,N}$ is a well-defined map for any $M,N\in{}_{(H/B^+H)^\ast}\mathfrak{M}_{(H/B^+H)^\ast}^{H^\ast}$, which is due to following calculations: For each $f\in(H/B^+H)^\ast$, 
\begin{eqnarray*}
J_{M,N}\left(\overline{mf}\otimes\overline{n}\right)
&\overset{(\ref{eqn:monoidalstruJ})}{=}&
\sum\overline{m_{(0)}f_{(1)}\overline{\gamma}^\ast(m_{(1)}f_{(2)})\otimes_{(H/B^+H)^\ast}n}  \\
&\overset{(\ref{eqn:gamma*bar1})}{=}&
\sum\overline{m_{(0)}\la f,1\ra\overline{\gamma}^\ast(m_{(1)})\otimes_{(H/B^+H)^\ast}n}
~\overset{(\ref{eqn:monoidalstruJ})}{=}~
J_{M,N}\left(\overline{m}\la f,1\ra\otimes\overline{n}\right)
\end{eqnarray*}
as well as
\begin{eqnarray*}
J_{M,N}\left(\overline{m}\otimes\overline{nf}\right)
&\overset{(\ref{eqn:monoidalstruJ})}{=}&
\sum\overline{m_{(0)}\overline{\gamma}^\ast(m_{(1)})\otimes_{(H/B^+H)^\ast}nf}  \\
&=&
\sum\overline{m_{(0)}\overline{\gamma}^\ast(m_{(1)})\otimes_{(H/B^+H)^\ast}n}\la f,1\ra
~\overset{(\ref{eqn:monoidalstruJ})}{=}~
J_{M,N}\left(\overline{m}\otimes\overline{n}\la f,1\ra\right),
\end{eqnarray*}
where the penultimate equality is because
\begin{equation}\label{eqn:mfbar}
\overline{mf}=\overline{m}\la f,1\ra\in \overline{M}:=M/M\left((H/B^+H)^\ast\right)^+
\;\;\;\;\;\;(\forall m\in M).
\end{equation}
The naturality of $J$ is evident.

On the other hand, $J^{-1}$ defined as (\ref{eqn:monoidalstruJ}) is also a well-defined map, since
\begin{eqnarray*}
J_{M,N}^{-1}\left(\overline{mf\otimes_{(H/B^+H)^\ast} n}\right)
&\overset{(\ref{eqn:monoidalstruJ^-1})}{=}&
\sum \overline{m_{(0)}f_{(1)}}\otimes\overline{\gamma^\ast(m_{(1)}f_{(2)})n}  \\
&=&
\sum \overline{m_{(0)}}\la f_{(1)},1\ra\otimes\overline{\gamma^\ast(m_{(1)}f_{(2)})n}  \\
&=&
\sum \overline{m_{(0)}}\otimes\overline{\gamma^\ast[m_{(1)}\pi^\ast(f)]n}
~\overset{(\ref{eqn:gamma*})}{=}~
\sum \overline{m_{(0)}}\otimes\overline{\gamma^\ast(m_{(1)})fn}  \\
&\overset{(\ref{eqn:monoidalstruJ^-1})}{=}&
J_{M,N}^{-1}\left(\overline{m\otimes_{(H/B^+H)^\ast}fn}\right),
\end{eqnarray*}
and
\begin{eqnarray*}
J_{M,N}^{-1}\left(\overline{m\otimes_{(H/B^+H)^\ast}nf}\right)
&\overset{(\ref{eqn:monoidalstruJ^-1})}{=}&
\sum \overline{m_{(0)}}\otimes\overline{\gamma^\ast(m_{(1)})nf}
~=~
\sum \overline{m_{(0)}}\otimes\overline{\gamma^\ast(m_{(1)})n}\la f,1\ra  \\
&\overset{(\ref{eqn:monoidalstruJ^-1})}{=}&
J_{M,N}^{-1}\left(\overline{m\otimes_{(H/B^+H)^\ast}n}\la f,1\ra\right).
\end{eqnarray*}
both hold for any $f\in(H/B^+H)^\ast$.

Moreover, one could directly find that $J_{M,N}^{-1}$ and $J_{M,N}$ are mutually inverse, by noting that
$$J_{M,N}(\overline{m}\otimes\overline{n})
=\sum\overline{m_{(0)}\otimes_{(H/B^+H)^\ast}\overline{\gamma}^\ast(m_{(1)})n}$$
as well as the assumption that $\overline{\gamma}^\ast$ is the convolution inverse of $\gamma^\ast$.

Finally, we aim to show that
$J_{M,N}^{-1}$ is a morphism in $\Rep((H/B^+H)^\ast\#B)$. Let $\{b_i\}$ be a linear basis of $B$ with dual basis $\{b^\ast_i\}$ of $B^\ast$, and let $\{f_i\}$ be a linear basis of $(H/B^+H)^\ast$ with dual basis $\{f^\ast_i\}$ of $H/B^+H$ as usual. Then compute for any $f\in(H/B^+H)^\ast$ that
\begin{eqnarray*}
&& (f\#1)\cdot J_{M,N}^{-1}\left(\overline{m\otimes_{(H/B^+H)^\ast}n}\right) \\
&\overset{(\ref{eqn:monoidalstruJ^-1}),\;(\ref{eqn:tensorprod})}{=}&
\pd{\Delta}(f\#1)\left(\sum\overline{m_{(0)}}\otimes\overline{\gamma^\ast(m_{(1)})n}\right) \\
&\overset{(\ref{eqn:Delta(f)})}{=}&
\sum_i (f_{(1)}\#b_i)\overline{m_{(0)}}
  \otimes(\gamma^\ast[f_{(2)}\zeta^\ast(b^\ast_i)]\#1)\overline{\gamma^\ast(m_{(1)})n}  \\
&\overset{(\ref{eqn:bv}),\;(\ref{eqn:Phi(M)comod})}{=}&
\sum_i \overline{f_{(1)}m_{(0)}}\la\iota^\ast(m_{(1)}),b_i\ra
  \otimes\overline{\gamma^\ast[f_{(2)}\zeta^\ast(b^\ast_i)]\gamma^\ast(m_{(2)})n}  \\
&\overset{(\ref{eqn:gamma*})}{=}&
\sum \overline{f_{(1)}m_{(0)}}
  \otimes\overline{\gamma^\ast\left(f_{(2)}
  \zeta^\ast[\iota^\ast(m_{(1)})]\pi^\ast[\gamma^\ast(m_{(2)})]\right)n}  \\
&\overset{(\ref{eqn:convolution*})}{=}&
\sum \overline{f_{(1)}m_{(0)}}\otimes\overline{\gamma^\ast(f_{(2)}m_{(1)})n}  \\
&\overset{(\ref{eqn:monoidalstruJ^-1})}{=}&
J_{M,N}^{-1}\left((f\#1)\overline{m\otimes_{(H/B^+H)^\ast}n}\right),
\end{eqnarray*}
and for $b\in B$ that
\begin{eqnarray*}
&& (\e\#b)\cdot J_{M,N}^{-1}\left(\overline{m\otimes_{(H/B^+H)^\ast}n}\right)  \\
&\overset{(\ref{eqn:monoidalstruJ^-1}),\;(\ref{eqn:tensorprod})}{=}&
\pd{\Delta}(\e\#b)
\left(\sum\overline{m_{(0)}}\otimes\overline{\gamma^\ast(m_{(1)})n}\right)  \\
&\overset{(\ref{eqn:Delta(b)})}{=}&
\sum_i (\e\#\zeta[\gamma(f^\ast_i)b_{(1)}])\overline{m_{(0)}}
  \otimes(f_i\#b_{(2)})\overline{\gamma^\ast(m_{(1)})n}  \\
&\overset{(\ref{eqn:bv}),\;(\ref{eqn:Phi(M)comod})}{=}&
\sum_i \overline{m_{(0)}}\la\iota^\ast(m_{(1)}),\zeta[\gamma(f^\ast_i)b_{(1)}]\ra
  \otimes\overline{f_i\gamma^\ast(m_{(2)})_{(1)}n_{(0)}}  \\
&&\;\;\;\;\;\; \la\iota^\ast[\gamma^\ast(m_{(2)})_{(2)}n_{(1)}],b_{(2)}\ra  \\
&=&
\sum_i \overline{m_{(0)}}\la\gamma^\ast(\zeta^\ast[\iota^\ast(m_{(1)})]_{(1)}),f^\ast_i\ra
    \la \zeta^\ast[\iota^\ast(m_{(1)})]_{(2)},b_{(1)}\ra  \\
&& \;\;\;\;\;\; \otimes \overline{f_i\gamma^\ast(m_{(2)})_{(1)}n_{(0)}}
  \la\gamma^\ast(m_{(2)})_{(2)}n_{(1)},\iota(b_{(2)})\ra  \\
&\overset{(\ref{eqn:iotapi})}{=}&
\sum \overline{m_{(0)}}
  \otimes\overline{\gamma^\ast\left(\zeta^\ast[\iota^\ast(m_{(1)})]_{(1)}\right)
  \gamma^\ast(m_{(2)})_{(1)}n_{(0)}}  \\
&& \;\;\;\;\;\;
  \la\zeta^\ast[\iota^\ast(m_{(1)})]_{(2)}\gamma^\ast(m_{(2)})_{(2)}n_{(1)},\iota(b)\ra  \\
&\overset{(\ref{eqn:pi*}),\;(\ref{eqn:gamma*})}{=}&
\sum \overline{m_{(0)}}  \otimes\,\overline{\gamma^\ast\left(\zeta^\ast[\iota^\ast(m_{(1)})]_{(1)}
  \pi^\ast[\gamma^\ast(m_{(2)})]_{(1)}\right)n_{(0)}} \\
&& \;\;\;\;\;\;
  \la\zeta^\ast[\iota^\ast(m_{(1)})]_{(2)}\pi^\ast[\gamma^\ast(m_{(2)})]_{(2)}n_{(1)},
     \iota(b)\ra  \\
&\overset{(\ref{eqn:convolution*})}{=}&
\sum \overline{m_{(0)}}
  \otimes\overline{\gamma^\ast(m_{(1)})n_{(0)}}\la \iota^\ast(m_{(2)}n_{(1)}),b\ra
  \\
&\overset{(\ref{eqn:monoidalstruJ^-1})}{=}&
J_{M,N}^{-1}\left((\e\#b)\overline{m\otimes_{(H/B^+H)^\ast}n}\right).
\end{eqnarray*}

\item[(3)]
It is straightforward to find that (\ref{eqn:Phiunit}) is an isomorphism in $\Rep((H/B^+H)^\ast\#B)$.
\end{itemize}
\end{proof}

\begin{remark}
Lemma \ref{lem:monoidalstruJ}(2) is an analogue of \cite[Lemma 3.3.4]{Sch02}. A similar treatment on such tensor functors is in the proof of \cite[Theorem 3.8]{AGM14}.
\end{remark}

Continue the idea in the paragraph before \cite[Theorem 3.3.5]{Sch02}, which has been introduced after (\ref{eqn:PsiPhi3}). In order to make $\Phi$ (as well as $\Psi$) an equivalence of tensor categories, the tensor product of $V,W\in\Rep((H/B^+H)^\ast\#B)$ is supposed to be identified with
$$V\otimes W\cong\Phi(\Psi(V)\otimes_{(H/B^+H)^\ast}\Psi(W))
\overset{(\ref{eqn:Phi}),\;(\ref{eqn:Psi})}{=}
\overline{(V\square_{B^\ast}H^\ast)\otimes_{(H/B^+H)^\ast}(W\square_{B^\ast}H^\ast)}.$$

In fact, we know by the results in Lemma \ref{lem:monoidalstruJ} that there is an associator $\pd{\phi}$ (which would be computed in the next subsection) of $(H/B^+H)^\ast\#B$ such that $J$ (\ref{eqn:monoidalstruJ}) satisfies the hexagon diagram, and hence the equivalence $\Phi$ is a tensor functor with monoidal structure $J$. Consequently, it would follow from \cite[Remark 2.4.10]{EGNO15} that $\Phi$ has a quasi-inverse tensor functor, which could be also chosen as $\Psi$ (with some monoidal structure). Moreover, the adjunction (\ref{eqn:sigma0}) could be realized as a natural isomorphism of tensor functors, denoted by
$$\sigma:\Phi\circ\Psi:=\overline{-\square_{B^\ast}H^\ast}\cong\Id.$$
Specifically, for each $V\in\Rep((H/B^+H)^\ast\#B)$ and all $\sum_i v_i\otimes h^\ast_i\in V\square_{B^\ast}H^\ast$, $v\in V$, one could directly verify that
\begin{equation}\label{eqn:sigmaV}
\sigma_V\Big(\overline{\sum_i v_i\otimes h^\ast_i}\Big)=\sum_i v_i\la h^\ast_i,1\ra\;\;\;\;\text{and}\;\;\;\;
\sigma^{-1}_V(v)=\overline{\sum v_{\la0\ra}\otimes \zeta^\ast(v_{\la1\ra})}
\end{equation}
are well-defined in $\Rep((H/B^+H)^\ast\#B)$ and mutually inverse.

In a word, our conclusion would be:

\begin{corollary}\label{cor:tensorequiv}
$\Rep((H/B^+H)^\ast\#B)$ is a finite tensor category with tensor product bifunctor (\ref{eqn:tensorprod}) and unit object $\k$. Moreover, $\Phi$ is a tensor equivalence with monoidal structure $J$ defined in (\ref{eqn:monoidalstruJ}).
\end{corollary}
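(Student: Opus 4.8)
The plan is to transport the finite tensor structure of ${}_{(H/B^+H)^\ast}\mathfrak{M}_{(H/B^+H)^\ast}^{H^\ast}$ established in Proposition \ref{prop:monoidalstru0} across the abelian equivalence $\Phi$ of (\ref{eqn:PsiPhi3}), using the natural isomorphism $J$ of Lemma \ref{lem:monoidalstruJ}(2), in the spirit of the quasi-Tannakian reconstruction of \cite[Definition XV.1.1]{Kas95} and \cite[Proposition 13.2]{ES02}. To begin, the bifunctor (\ref{eqn:tensorprod}) is well-defined precisely because $\pd{\Delta}$ is an algebra map (Theorem \ref{thm:partialdual}(1),(2), proved in Subsection \ref{subsection:Thm(1-2)Pf}), so $\Rep((H/B^+H)^\ast\#B)$ already carries a biexact tensor product with unit object $\k$; the left and right unit constraints are inherited from the canonical ones on ${}_{(H/B^+H)^\ast}\mathfrak{M}_{(H/B^+H)^\ast}^{H^\ast}$ through $J$ and the identification $\Phi((H/B^+H)^\ast)\cong\k$ of Lemma \ref{lem:monoidalstruJ}(3).

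Next I would \emph{define} the associativity constraint $a$ on $\Rep((H/B^+H)^\ast\#B)$ by declaring the square
$$
\begin{array}{ccc}
(\Phi M\otimes\Phi N)\otimes\Phi P & \xrightarrow{\;a\;} & \Phi M\otimes(\Phi N\otimes\Phi P) \\
\Big\downarrow & & \Big\downarrow \\
\Phi((M\otimes_{(H/B^+H)^\ast}N)\otimes_{(H/B^+H)^\ast}P) & \xrightarrow{\;\Phi(a')\;} & \Phi(M\otimes_{(H/B^+H)^\ast}(N\otimes_{(H/B^+H)^\ast}P))
\end{array}
$$
to commute, where $a'$ is the canonical associator of Proposition \ref{prop:monoidalstru0}(2) and the vertical maps are the evident $J$-composites. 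Since $\Phi$ is fully faithful and every object of the target is isomorphic to some $\Phi M$, this prescription yields a unique natural isomorphism $a$; the pentagon and triangle identities for $a$ then follow formally from those for $a'$ together with the naturality of $J$ and the functoriality of $\Phi$. This is exactly the transport of structure along an equivalence, and it simultaneously makes $(\Phi,J)$ a monoidal functor. As $\Phi$ is an equivalence of the underlying abelian categories, it is then a tensor equivalence, with quasi-inverse $\Psi$ carrying its induced monoidal structure (the isomorphism $\sigma$ of (\ref{eqn:sigmaV}) realizing $\Phi\circ\Psi\cong\Id$ as an isomorphism of tensor functors), by \cite[Remark 2.4.10]{EGNO15}.

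It remains to identify the abstractly transported associator $a$ with left multiplication by the explicit element $\pd{\phi}$ of (\ref{eqn:associator}), which is what exhibits $(H/B^+H)^\ast\#B$ as an honest quasi-bialgebra whose representation category carries its standard monoidal structure; this is the computation postponed to the next subsection. Concretely, one unwinds the square above by evaluating the vertical $J$-composites on dual-basis elements via the explicit formulas (\ref{eqn:monoidalstruJ}) and (\ref{eqn:monoidalstruJ^-1}) for $J^{\pm 1}$, the comultiplication formulas (\ref{eqn:Delta(f)})--(\ref{eqn:Delta(b)}), and the module--comodule identities of Corollary \ref{cor:cleftcocleft*} and Lemma \ref{eqn:f(1)gammabarf(2)}. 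Once $a$ is seen to act as $\pd{\phi}$, the finiteness and rigidity of $\Rep((H/B^+H)^\ast\#B)$ are automatic, being transported from Proposition \ref{prop:monoidalstru0} along $\Phi$. I expect this final matching---reconciling the intrinsically defined transported associator against the concrete element (\ref{eqn:associator})---to be the main obstacle, since it hinges on careful bookkeeping of the nonlinear interplay among $\gamma^\ast$, $\zeta^\ast$ and their convolution inverses.
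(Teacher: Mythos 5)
Your proposal is correct and takes essentially the same route as the paper: the text preceding the corollary likewise transports the monoidal structure of ${}_{(H/B^+H)^\ast}\mathfrak{M}_{(H/B^+H)^\ast}^{H^\ast}$ along the abelian equivalence $\Phi$ via $J$ (citing the paragraph before \cite[Theorem 3.3.5]{Sch02} and \cite[Remark 2.4.10]{EGNO15}), with the well-definedness of the bifunctor (\ref{eqn:tensorprod}) resting on $\pd{\Delta}$ being an algebra map and the explicit identification of the transported associator with $\pd{\phi}$ deferred to Subsection \ref{subsection:Thm(3)Pf}. Your added detail on defining the associativity constraint by full faithfulness and essential surjectivity simply makes explicit what the paper leaves implicit.
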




With the usage of Corollary \ref{cor:tensorequiv}, our goal for the remaining of this subsection is to prove that structures in 
Theorem \ref{thm:partialdual} (4) and (5) make $(H/B^+H)^\ast\#B$ a quasi-Hopf algebra.
In fact, the process is essentially by noting that the composition
\begin{equation}\label{eqn:forgetful}
{}_{(H/B^+H)^\ast}\mathfrak{M}_{(H/B^+H)^\ast}^{H^\ast}\xrightarrow{\Phi}
\Rep((H/B^+H)^\ast\#B)\xrightarrow{\text{forgetful}}\Vec
\end{equation}
is a quasi-fiber functor to the category $\Vec$ of finite-dimensional vector spaces, and applying the reconstruction theorem of quasi-Hopf algebras (cf. \cite[Theorem 5.13.7]{EGNO15}) to obtain the structures desired.

\subsection{Proofs of Theorem \ref{thm:partialdual}(3) and Remark \ref{rmk:phi^-1} - the associator and its inverse}\label{subsection:Thm(3)Pf}

Before the proofs are given, let us denote the regular left module over $(H/B^+H)^\ast\#B$ by
$$R:=(H/B^+H)^\ast\#B\in\Rep((H/B^+H)^\ast\#B)$$
for simplicity, and then write
$\sigma_R:\overline{R\square_{B^\ast}H^\ast}\cong R$ (\ref{eqn:sigmaV}) with inverse
\begin{equation}\label{eqn:sigma^-1}
\sigma_R^{-1}:R\cong\overline{R\square_{B^\ast}H^\ast},\;\;\;\;
r\mapsto \overline{\sum r_{\la0\ra}\otimes\zeta^\ast(r_{\la1\ra})}.
\end{equation}
Other formulas we would use frequently are:
\begin{lemma}
Suppose that $M$ is an object in ${}_{(H/B^+H)^\ast}\mathfrak{M}_{(H/B^+H)^\ast}^{H^\ast}$. Then for any $m\in M$ and $r\in R$, the following correspondences hold:
\begin{equation}\label{eqn:Jbar1}
\begin{array}{rcl}
J_{R\square_{B^\ast}H^\ast,M}\;:\;\;\;\;\;\;\;\;
\overline{R\square_{B^\ast}H^\ast}\otimes\overline{M}
&\rightarrow& \overline{(R\square_{B^\ast}H^\ast)\otimes_{(H/B^+H)^\ast}M}  \\
 \overline{\sum r_{\la0\ra}\otimes\zeta^\ast(r_{\la1\ra})}\otimes\overline{m}
&\mapsto&
\overline{[\sum r_{\la0\ra}\otimes\zeta^\ast(r_{\la1\ra})]
\otimes_{(H/B^+H)^\ast}m}
\end{array}
\end{equation}
and
\begin{equation}\label{eqn:Jbar2}
\begin{array}{rcl}
J_{M,R\square_{B^\ast}H^\ast}^{-1}\;:\;\;\;\;\;\;
\overline{M\otimes_{(H/B^+H)^\ast}(R\square_{B^\ast}H^\ast)}
&\rightarrow& \overline{M}\otimes\overline{R\square_{B^\ast}H^\ast}  \\
 \overline{m\otimes_{(H/B^+H)^\ast}[\sum r_{\la0\ra}\otimes\zeta^\ast(r_{\la1\ra})]}
&\mapsto&
\sum \overline{m_{(0)}}\otimes
  \overline{e_{\la0\ra}\gamma^\ast(m_{(1)})r\otimes\zeta^\ast(e_{\la1\ra})}.
\end{array}
\end{equation}
\end{lemma}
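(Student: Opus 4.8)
The plan is to establish both correspondences by feeding the indicated elements into the explicit formulas for $J$ and $J^{-1}$ from Lemma \ref{lem:monoidalstruJ}, and then collapsing the resulting expressions using the cocleftness identities of Corollary \ref{cor:cleftcocleft*} and Remark \ref{rmk:convolution}. Throughout I would keep in mind how the structure maps on $N:=R\square_{B^\ast}H^\ast$ are distributed over its two tensorands: by (\ref{eqn:right(co)actions}) the right $H^\ast$-coaction and the right $(H/B^+H)^\ast$-action are carried entirely by the $H^\ast$-leg (the latter through $\pi^\ast$), whereas by (\ref{eqn:Psi(V)mod}) the left $(H/B^+H)^\ast$-action is diagonal on both legs.

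For (\ref{eqn:Jbar1}), write $x:=\sum r_{\la0\ra}\otimes\zeta^\ast(r_{\la1\ra})$. Its $H^\ast$-coaction, computed via (\ref{eqn:right(co)actions}), splits off the second leg as $x_{(0)}\otimes x_{(1)}=\sum\big(r_{\la0\ra}\otimes\zeta^\ast(r_{\la1\ra})_{(1)}\big)\otimes\zeta^\ast(r_{\la1\ra})_{(2)}$. Substituting into the definition (\ref{eqn:monoidalstruJ}) of $J_{N,M}$ and letting $\overline{\gamma}^\ast(x_{(1)})$ act on the right of $x_{(0)}$, again on the $H^\ast$-factor through $\pi^\ast$, the second tensorand becomes $\sum\zeta^\ast(r_{\la1\ra})_{(1)}\,\pi^\ast\big(\overline{\gamma}^\ast[\zeta^\ast(r_{\la1\ra})_{(2)}]\big)$. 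The key simplification is that, with $h^\ast=\zeta^\ast(r_{\la1\ra})$, this is exactly $\zeta^\ast\big(\iota^\ast[\zeta^\ast(r_{\la1\ra})]\big)$ by (\ref{eqn:convolution*1}), which then reduces to $\zeta^\ast(r_{\la1\ra})$ by (\ref{eqn:iota*zeta*}). Thus the second leg collapses back to $x$ and one obtains $J_{N,M}(\overline{x}\otimes\overline{m})=\overline{x\otimes_{(H/B^+H)^\ast}m}$.

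For (\ref{eqn:Jbar2}), start from the definition (\ref{eqn:monoidalstruJ^-1}) of $J^{-1}$, so that the claim reduces to the single identity
\[
\overline{\gamma^\ast(m_{(1)})\cdot y}
=\overline{e_{\la0\ra}\,\gamma^\ast(m_{(1)})\,r\otimes\zeta^\ast(e_{\la1\ra})}
\]
in $\overline{N}$, where $y=\sum r_{\la0\ra}\otimes\zeta^\ast(r_{\la1\ra})$ and the left action on the left-hand side is the diagonal one of (\ref{eqn:Psi(V)mod}). Rather than manipulate the diagonal action directly, I would apply the isomorphism $\sigma_R:\overline{N}\cong R$ of (\ref{eqn:sigmaV}) to both sides and invoke its injectivity. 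On the left-hand side $\sigma_R$ evaluates the $H^\ast$-leg at $1$; using that $\la-,1\ra$ is multiplicative, the counit axioms of the $H^\ast$- and $B^\ast$-comodules, and $\zeta(1_H)=1_B$ from (\ref{eqn:zetabiunitary}), everything collapses to $\gamma^\ast(m_{(1)})\,r$. On the right-hand side, expanding $\sum e_{\la0\ra}\otimes e_{\la1\ra}$ through (\ref{eqn:e}) and evaluating at $1$ via $\la\zeta^\ast(e_{\la1\ra}),1\ra=\la e_{\la1\ra},1_B\ra$ leaves $\big(\sum e_{\la0\ra}\la e_{\la1\ra},1_B\ra\big)\gamma^\ast(m_{(1)})\,r=\gamma^\ast(m_{(1)})\,r$, since $\sum e_{\la0\ra}\la e_{\la1\ra},1_B\ra=\e\#1$. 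The two images coincide, so the elements of $\overline{N}$ agree.

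I expect the only genuine subtlety to be bookkeeping: correctly tracking which of the two tensorands of $N$ each structure map touches, and faithfully interpreting the diagonal left action versus the right $H^\ast$-coaction. The $\sigma_R$-device is what keeps (\ref{eqn:Jbar2}) short, trading a direct expansion of the diagonal action for two elementary counit computations in $R$; the whole computational content of (\ref{eqn:Jbar1}) is the single convolution collapse furnished by (\ref{eqn:convolution*1}) together with (\ref{eqn:iota*zeta*}).
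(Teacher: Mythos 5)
Your proof is correct. For (\ref{eqn:Jbar1}) you follow exactly the paper's route: plug into the definition of $J$, split the coaction off the $H^\ast$-leg, and collapse $\sum h^\ast_{(1)}\pi^\ast(\overline{\gamma}^\ast(h^\ast_{(2)}))=\zeta^\ast(\iota^\ast(h^\ast))$ via (\ref{eqn:convolution*1}) and then (\ref{eqn:iota*zeta*}). For (\ref{eqn:Jbar2}) you take a genuinely different and shorter path. The paper expands the diagonal action of $\gamma^\ast(m_{(1)})$ on both legs of $R\square_{B^\ast}H^\ast$ and then works through a chain of identities --- (\ref{eqn:e<0>r}), (\ref{eqn:convolution*}), (\ref{eqn:mfbar}), (\ref{eqn:btr}), (\ref{eqn:e<0>f}) --- to push the action onto the first leg. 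You instead observe that both candidate elements of $\overline{R\square_{B^\ast}H^\ast}$ are determined by their images under the isomorphism $\sigma_R$ of (\ref{eqn:sigmaV}), and that both images equal $\gamma^\ast(m_{(1)})r$ by elementary counit evaluations (indeed the right-hand side is by construction a representative of $\sigma_R^{-1}(\gamma^\ast(m_{(1)})r)$, so that computation is immediate). Since $\sigma_R$ is established independently before this lemma, there is no circularity, and applying $\id\otimes\sigma_R$ to the full Sweedler sums handles the bookkeeping you flag. Your approach trades the paper's convolution gymnastics for the structural fact that $\overline{R\square_{B^\ast}H^\ast}\cong R$; the paper's direct computation has the minor advantage of never leaving the cotensor product, but yours is cleaner and equally rigorous.
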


\begin{proof}
These formulas are easy to obtain by straightforward calculations:
\begin{eqnarray*}
&& J_{R\square_{B^\ast}H^\ast,M}
\left(\overline{\sum r_{\la0\ra}\otimes\zeta^\ast(r_{\la1\ra})}\otimes\overline{m}\right) \\
&\overset{(\ref{eqn:monoidalstruJ}),\;(\ref{eqn:right(co)actions})}{=}&
\overline{\left[\sum r_{\la0\ra}\otimes\zeta^\ast(r_{\la1\ra})_{(1)}
  \pi^\ast\left(\overline{\gamma}^\ast[\zeta^\ast(r_{\la1\ra})_{(2)}]\right)\right]
  \otimes_{(H/B^+H)^\ast} m}  \\
&\overset{(\ref{eqn:convolution*1})}{=}&
\overline{\left[\sum r_{\la0\ra}\otimes
  \zeta^\ast\left(\iota^\ast[\zeta^\ast(r_{\la1\ra})]\right)\right]
  \otimes_{(H/B^+H)^\ast} m}  \\
&\overset{(\ref{eqn:iota*zeta*})}{=}&
\overline{\left[\sum r_{\la0\ra}\otimes\zeta^\ast(r_{\la1\ra})\right]
\otimes_{(H/B^+H)^\ast}m}.
\end{eqnarray*}

On the other hand, denote $e:=\e\#1$ as usual. It follows from (\ref{eqn:e}) that
\begin{equation}\label{eqn:e<0>r}
\sum r_{\la0\ra}\otimes r_{\la1\ra}=\sum e_{\la0\ra}r\otimes e_{\la1\ra}
\in R\otimes B^\ast\;\;\;\;\;\;(\forall r\in R),
\end{equation}
and we calculate:
\begin{eqnarray*}
&& J_{M,R\square_{B^\ast}H^\ast}^{-1}
  \left(\overline{m\otimes_{(H/B^+H)^\ast}
  \left[\sum r_{\la0\ra}\otimes\zeta^\ast(r_{\la1\ra})\right]}\right)  \\
&\overset{(\ref{eqn:monoidalstruJ})}{=}&
\sum\overline{m_{(0)}}\otimes
  \overline{\gamma^\ast(m_{(1)})\cdot
  [r_{\la0\ra}\otimes\zeta^\ast(r_{\la1\ra})]}  \\
&\overset{(\ref{eqn:Psi(V)mod})}{=}&
\sum\overline{m_{(0)}}\otimes
  \overline{\gamma^\ast(m_{(1)})_{(1)}r_{\la0\ra}
  \otimes \gamma^\ast(m_{(1)})_{(2)}\zeta^\ast(r_{\la1\ra})}  \\
&\overset{(\ref{eqn:e<0>r})}{=}&
\sum\overline{m_{(0)}}\otimes
  \overline{\gamma^\ast(m_{(1)})_{(1)}e_{\la0\ra}r
  \otimes \gamma^\ast(m_{(1)})_{(2)}\zeta^\ast(e_{\la1\ra})}  \\
&\overset{(\ref{eqn:convolution*})}{=}&
\sum\overline{m_{(0)}}  \\
&& \;\;\;\;\;\; \otimes\,
  \overline{\gamma^\ast(m_{(1)})_{(1)}e_{\la0\ra}r\otimes
  \zeta^\ast\left(\iota^\ast[\gamma^\ast(m_{(1)})_{(2)}\zeta^\ast(e_{\la1\ra})_{(1)}]\right)
  \gamma^\ast\left(\pi^\ast[\gamma^\ast(m_{(1)})_{(3)}\zeta^\ast(e_{\la1\ra})_{(2)}]\right)}  \\
&\overset{(\ref{eqn:mfbar})}{=}&
\sum\overline{m_{(0)}}\otimes
  \overline{\gamma^\ast(m_{(1)})_{(1)}e_{\la0\ra}r\otimes
  \zeta^\ast\left(\iota^\ast[\gamma^\ast(m_{(1)})_{(2)}\zeta^\ast(e_{\la1\ra})]\right)} \\
&\overset{(\ref{eqn:btr})}{=}&
\sum\overline{m_{(0)}}\otimes
  \overline{\gamma^\ast(m_{(1)})_{(1)}e_{\la0\ra}r\otimes
  \zeta^\ast\left(\gamma^\ast(m_{(1)})_{(2)}\btr e_{\la1\ra}\right)}  \\
&\overset{(\ref{eqn:e<0>f})}{=}&
\sum\overline{m_{(0)}}\otimes
  \overline{e_{\la0\ra}\gamma^\ast(m_{(1)})r\otimes\zeta^\ast(e_{\la1\ra})},
\end{eqnarray*}
where we have omitted notation $\sum$ of sum in some equations.
\end{proof}

Now we aim to find the associator $\pd{\phi}$ of $(H/B^+H)^\ast\#H$ and its inverse, by describing in details the hexagon diagram of the tensor functor $\Phi$:

\begin{proof}
[Proofs of Remark \ref{rmk:phi^-1} and Theorem \ref{thm:partialdual}(3)]~

We conclude in Corollary \ref{cor:tensorequiv} that $\Phi=\overline{(-)}$ is a tensor functor with monoidal structure $J$. This is equivalent to say that there is an associativity constraint $\phi$ such that
the following hexagon diagram in $\Rep((H/B^+H)^\ast\#B)$ commutes:
\begin{equation}\label{eqn:hexagon}
\xymatrix{
(\overline{M}\otimes\overline{N})\otimes\overline{P}
  \ar[r]^{\phi} \ar[d]_{J_{M,N}\otimes\id}
& \overline{M}\otimes(\overline{N}\otimes\overline{P})
  \ar[d]^{\id\otimes J_{N,P}}  \\
\overline{M\otimes_{(H/B^+H)^\ast}N}\otimes\overline{P}
  \ar[d]_{J_{M\otimes_{(H/B^+H)^\ast}N,P}}
& \overline{M}\otimes\overline{N\otimes_{(H/B^+H)^\ast}P}
  \ar[d]^{J_{M,N\otimes_{(H/B^+H)^\ast}P}}  \\
\overline{M\otimes_{(H/B^+H)^\ast}N\otimes_{(H/B^+H)^\ast}P}
  \ar[r]^{=}
& \overline{M\otimes_{(H/B^+H)^\ast}N\otimes_{(H/B^+H)^\ast}P}
}
\end{equation}
for all $M,N,P\in{}_{(H/B^+H)^\ast}\mathfrak{M}_{(H/B^+H)^\ast}^{H^\ast}$.

However, instead of the associator $\pd{\phi}$, it is easier to determine its inverse $\pd{\phi}^{-1}$, which should have the form (\ref{eqn:phi^-1}) in Remark \ref{rmk:phi^-1}. For the purpose, we still denote by $e=\e\#1\in R$ the unit element, and the inverse of the associator is supposed to be
$\pd{\phi}^{-1}=\phi^{-1}(e\otimes e\otimes e)\in R^{\otimes3}$.

Now choose
$M=N=P=R\square_{B^\ast}H^\ast$ and change the diagram (\ref{eqn:hexagon}) above into
\begin{equation}\label{eqn:hexagonphi^-1}
\xymatrix{
\overline{R\square_{B^\ast}H^\ast}^{\otimes3}
&& \overline{R\square_{B^\ast}H^\ast}^{\otimes3}
  \ar@{-->}[ll]_{\phi^{-1}}
  \ar[d]^{\id\otimes J_{M,M}}  \\
\overline{(R\square_{B^\ast}H^\ast)
  {\hspace{-10pt}\underset{(H/B^+H)^\ast}{\otimes}\hspace{-10pt}}(R\square_{B^\ast}H^\ast)}
  \otimes\overline{R\square_{B^\ast}H^\ast}
  \ar[u]_{J_{M,M}^{-1}\otimes\id}
&& \overline{R\square_{B^\ast}H^\ast}\otimes
  \overline{(R\square_{B^\ast}H^\ast)
  {\hspace{-10pt}\underset{(H/B^+H)^\ast}{\otimes}\hspace{-10pt}}(R\square_{B^\ast}H^\ast)}
  \ar@<-2ex>[dl]^{\;\;\;\;\;\;\;\;J_{M,M\otimes_{(H/B^+H)^\ast}M}}  \\
& \hspace{-600pt}
  \overline{(R\square_{B^\ast}H^\ast)
  {\hspace{-10pt}\underset{(H/B^+H)^\ast}{\otimes}\hspace{-10pt}}(R\square_{B^\ast}H^\ast)
  {\hspace{-10pt}\underset{(H/B^+H)^\ast}{\otimes}\hspace{-10pt}}(R\square_{B^\ast}H^\ast)}.
  \hspace{-600pt}
  \ar@<-2ex>[ul]^{J_{M\otimes_{(H/B^+H)^\ast}M,M}^{-1}\;\;\;\;\;\;\;\;}
}
\end{equation}
This is because the regular $(H/B^+H)^\ast\#B$-module $R$ is isomorphic to $\overline{R\square_{B^\ast}H^\ast}$ via $\sigma$.
As a consequence of the commutativity of the diagram (\ref{eqn:hexagonphi^-1}),
it follows by
$\sigma_R^{-1}(e)=\overline{\sum e_{\la0\ra}\otimes\zeta^\ast(e_{\la1\ra})}$ that
\begin{eqnarray}\label{eqn:computephi^-1}
&& \pd{\phi}^{-1}  \nonumber  \\
&=&
{\sigma_R}^{\otimes3}\circ\phi^{-1}
\left(\sigma_R^{-1}(e)\otimes\sigma_R^{-1}(e)\otimes\sigma_R^{-1}(e)\right)
\nonumber  \\
&\overset{(\ref{eqn:hexagonphi^-1})}{=}&
{\sigma_R}^{\otimes3}\circ(J_{M,M}^{-1}\otimes\id)\circ J_{M\otimes_{(H/B^+H)^\ast}M,M}^{-1}
  \circ J_{M,M\otimes_{(H/B^+H)^\ast}M}\circ(\id\otimes J_{M,M})
  \left(\sigma_R^{-1}(e)^{\otimes3}\right).  \nonumber  \\
\end{eqnarray}

Now we denote $e=e'=e''=\e\#1$ as usual, and calculate the preimage of (\ref{eqn:computephi^-1}) under ${\sigma_R}^{\otimes3}$, where the notation of sum $\sum$ is omitted for simplicity:
\begin{eqnarray*}
&&
\phi^{-1}\left(\overline{e_{\la0\ra}\otimes\zeta^\ast(e_{\la1\ra})}
\otimes\overline{e'_{\la0\ra}\otimes\zeta^\ast(e'_{\la1\ra})}
\otimes\overline{e''_{\la0\ra}\otimes\zeta^\ast(e''_{\la1\ra})}\right)  \\
&=&
(J^{-1}\otimes\id)\circ J^{-1}\circ J\circ(\id\otimes J)
\left(\overline{e_{\la0\ra}\otimes\zeta^\ast(e_{\la1\ra})}
\otimes\overline{e'_{\la0\ra}\otimes\zeta^\ast(e'_{\la1\ra})}
\otimes\overline{e''_{\la0\ra}\otimes\zeta^\ast(e''_{\la1\ra})}\right)  \\
&\overset{(\ref{eqn:Jbar1})}{=}&
(J^{-1}\otimes\id)\circ J^{-1}\circ J
\left(\overline{e_{\la0\ra}\otimes\zeta^\ast(e_{\la1\ra})}
\otimes\overline{[e'_{\la0\ra}\otimes\zeta^\ast(e'_{\la1\ra})]
\otimes_{(H/B^+H)^\ast}[e''_{\la0\ra}\otimes\zeta^\ast(e''_{\la1\ra})]}\right)  \\
&\overset{(\ref{eqn:Jbar1})}{=}&
(J^{-1}\otimes\id)\circ J^{-1}
\left(\overline{[e_{\la0\ra}\otimes\zeta^\ast(e_{\la1\ra})]
\otimes_{(H/B^+H)^\ast}[e'_{\la0\ra}\otimes\zeta^\ast(e'_{\la1\ra})]
\otimes_{(H/B^+H)^\ast}[e''_{\la0\ra}\otimes\zeta^\ast(e''_{\la1\ra})]}\right)  \\
&\overset{(\ref{eqn:Jbar2})}{=}&
(J^{-1}\otimes\id)  \\
&&
\left(\overline{[e_{\la0\ra}\otimes\zeta^\ast(e_{\la1\ra})_{(1)}]
\otimes_{(H/B^+H)^\ast}[e'_{\la0\ra}\otimes\zeta^\ast(e'_{\la1\ra})_{(1)}]}
\otimes
\overline{e''_{\la0\ra}
\gamma^\ast[\zeta^\ast(e_{\la1\ra})_{(2)}\zeta^\ast(e'_{\la1\ra})_{(2)}]
\otimes\zeta^\ast(e''_{\la1\ra})}\right)  \\
&\overset{(\ref{eqn:iota*})}{=}&
(J^{-1}\otimes\id)  \\
&&
\left(\overline{[e_{\la0\ra}\otimes\zeta^\ast(e_{\la1\ra})_{(1)}]
\otimes_{(H/B^+H)^\ast}[e'_{\la0\ra}\otimes\zeta^\ast(e'_{\la1\ra})]}
\otimes
\overline{e''_{\la0\ra}
\gamma^\ast[\zeta^\ast(e_{\la1\ra})_{(2)}\zeta^\ast(e'_{\la2\ra})]
\otimes\zeta^\ast(e''_{\la1\ra})}\right)  \\
&\overset{(\ref{eqn:Jbar2})}{=}&
\overline{e_{\la0\ra}\otimes\zeta^\ast(e_{\la1\ra})_{(1)}}
\otimes
\overline{E_{\la0\ra}\gamma^\ast[\zeta^\ast(e_{\la1\ra})_{(2)}]e'_{\la0\ra}
\otimes\zeta^\ast(E_{\la1\ra})}
\otimes
\overline{e''_{\la0\ra}
\gamma^\ast[\zeta^\ast(e_{\la1\ra})_{(2)}\zeta^\ast(e'_{\la1\ra})]
\otimes\zeta^\ast(e''_{\la1\ra})},
\end{eqnarray*}
where $E:=\e\#1$ as well. Its image under ${\sigma_R}^{\otimes3}$ then becomes
$$\pd{\phi}^{-1}=\sum
e_{\la0\ra}\otimes\gamma^\ast[\zeta^\ast(e_{\la1\ra})_{(1)}]e'_{\la0\ra}
\otimes\gamma^\ast[\zeta^\ast(e_{\la1\ra})_{(2)}\zeta^\ast(e'_{\la1\ra})],$$
which is the same as the form in Remark \ref{rmk:phi^-1}, according to Formula (\ref{eqn:e}).

Of course, there is a similar argument implying that the associator
$\pd{\phi}=\phi(e\otimes e\otimes e)$ equals to (\ref{eqn:associator}) in Theorem \ref{thm:partialdual}(3), but which includes much more complicated calculations. Therefore,
we aim to compute to check that $\pd{\phi}^{-1}\pd{\phi}=e\otimes e\otimes e$ holds for the element
$$\pd{\phi}
=\sum E_{\la0\ra}\otimes
  E'_{\la0\ra}\overline{\gamma}^\ast[S^{-1}(\overline{\zeta}^\ast(E_{\la1\ra})_{(1)})]
  \otimes \overline{\gamma}^\ast[S^{-1}(\overline{\zeta}^\ast(E'_{\la1\ra})
    \overline{\zeta}^\ast(E_{\la1\ra})_{(2)})]$$
as in (\ref{eqn:associator}), where $E=E'=\e\#1$. It follows that $\pd{\phi}$ and $\pd{\phi}^{-1}$ are mutually inverse since the algebra $(H/B^+H)^\ast\#B$ is finite-dimensional.

For the purpose, an additional formula should be noted: For all $b^\ast\in B^\ast$ and $h^\ast\in H^\ast$,
\begin{eqnarray}\label{eqn:zeta*pi*gamma*bar}
&& \sum\zeta^\ast(b^\ast_{(1)})
  \pi^\ast\left(
  \overline{\gamma}^\ast[S^{-1}(\overline{\zeta}^\ast(b^\ast_{(2)})h^\ast)]\right) \nonumber  \\
&\overset{(\ref{eqn:convolution*2})}{=}&
\sum\zeta^\ast(b^\ast_{(1)})\overline{\zeta}^\ast(b^\ast_{(2)})_{(2)}h^\ast_{(2)}
  \zeta^\ast\left(\iota^\ast
  [S^{-1}(\overline{\zeta}^\ast(b^\ast_{(2)})_{(1)}h^\ast_{(1)})]\right)  \nonumber  \\
&\overset{(\ref{eqn:iota*})}{=}&
\sum\zeta^\ast(b^\ast_{(1)})\overline{\zeta}^\ast(b^\ast_{(2)})_{(2)}h^\ast_{(2)}
  \zeta^\ast\left(S^{-1}(h^\ast_{(1)})\btr
  \iota^\ast[S^{-1}(\overline{\zeta}^\ast(b^\ast_{(2)})_{(1)})]\right)  \nonumber  \\
&\overset{(\ref{eqn:zeta*bar2})}{=}&
\sum\zeta^\ast(b^\ast_{(1)})\overline{\zeta}^\ast(b^\ast_{(2)})h^\ast_{(2)}
  \zeta^\ast(S^{-1}(h^\ast_{(1)})\btr b^\ast_{(3)})  \nonumber  \\
&=&
\sum h^\ast_{(2)}\zeta^\ast(S^{-1}(h^\ast_{(1)})\btr b^\ast).
\end{eqnarray}
Then we make desired calculations for $\pd{\phi}^{-1}$ and $\pd{\phi}$ with forms above:
\begin{eqnarray*}
&&  \pd{\phi}^{-1}\pd{\phi}  \\
&=&
\sum e_{\la0\ra}E_{\la0\ra}
  \otimes \gamma^\ast[\zeta^\ast(e_{\la1\ra})_{(1)}]e'_{\la0\ra}
    E'_{\la0\ra}\overline{\gamma}^\ast[S^{-1}(\overline{\zeta}^\ast(E_{\la1\ra})_{(1)})]  \\
&& \;\;\;\;\;\;
  \otimes \gamma^\ast[\zeta^\ast(e_{\la1\ra})_{(2)}\zeta^\ast(e'_{\la1\ra})]
    \overline{\gamma}^\ast[S^{-1}(\overline{\zeta}^\ast(E'_{\la1\ra})
    \overline{\zeta}^\ast(E_{\la1\ra})_{(2)})]  \\
&\overset{(\ref{eqn:e<0>e'<0>})}{=}&
\sum e_{\la0\ra}
  \otimes \gamma^\ast[\zeta^\ast(e_{\la1\ra})_{(1)}]e'_{\la0\ra}
    \overline{\gamma}^\ast[S^{-1}(\overline{\zeta}^\ast(e_{\la2\ra})_{(1)})]  \\
&& \;\;\;\;\;\;
  \otimes \gamma^\ast[\zeta^\ast(e_{\la1\ra})_{(2)}\zeta^\ast(e'_{\la1\ra})]
    \overline{\gamma}^\ast[S^{-1}(\overline{\zeta}^\ast(e'_{\la2\ra})
    \overline{\zeta}^\ast(e_{\la2\ra})_{(2)})]  \\
&\overset{(\ref{eqn:gamma*})}{=}&
\sum e_{\la0\ra}
  \otimes \gamma^\ast[\zeta^\ast(e_{\la1\ra})_{(1)}]e'_{\la0\ra}
    \overline{\gamma}^\ast[S^{-1}(\overline{\zeta}^\ast(e_{\la2\ra})_{(1)})]  \\
&& \;\;\;\;\;\;
  \otimes \gamma^\ast\left[\zeta^\ast(e_{\la1\ra})_{(2)}\zeta^\ast(e'_{\la1\ra})
    \pi^\ast\left(\overline{\gamma}^\ast[S^{-1}(\overline{\zeta}^\ast(e'_{\la2\ra})
    \overline{\zeta}^\ast(e_{\la2\ra})_{(2)})]\right)\right]  \\
&\overset{(\ref{eqn:zeta*pi*gamma*bar})}{=}&
\sum e_{\la0\ra}
  \otimes \gamma^\ast[\zeta^\ast(e_{\la1\ra})_{(1)}]e'_{\la0\ra}
    \overline{\gamma}^\ast[S^{-1}(\overline{\zeta}^\ast(e_{\la2\ra})_{(1)})]  \\
&& \;\;\;\;\;\;
  \otimes \gamma^\ast\left[\zeta^\ast(e_{\la1\ra})_{(2)}
    \overline{\zeta}^\ast(e_{\la2\ra})_{(3)}
    \zeta^\ast\left(S^{-1}(\overline{\zeta}^\ast(e_{\la2\ra})_{(2)})\btr
    e'_{\la1\ra}\right)\right]  \\
&\overset{(\ref{eqn:e<0>f})}{=}&
\sum e_{\la0\ra}
  \otimes \gamma^\ast[\zeta^\ast(e_{\la1\ra})_{(1)}]
    \overline{\gamma}^\ast[S^{-1}(\overline{\zeta}^\ast(e_{\la2\ra})_{(1)})]_{(1)}
    e'_{\la0\ra}  \\
&& \;\;\;\;\;\;
  \otimes \gamma^\ast\left[\zeta^\ast(e_{\la1\ra})_{(2)}
    \overline{\zeta}^\ast(e_{\la2\ra})_{(3)}
    \zeta^\ast\left(S^{-1}(\overline{\zeta}^\ast(e_{\la2\ra})_{(2)})
    \overline{\gamma}^\ast[S^{-1}(\overline{\zeta}^\ast(e_{\la2\ra})_{(1)})]_{(2)}\btr
    e'_{\la1\ra}\right)\right]  \\
&\overset{(\ref{eqn:gamma*})}{=}&
\sum e_{\la0\ra}
  \otimes \gamma^\ast\left[\zeta^\ast(e_{\la1\ra})_{(1)}\pi^\ast\left(
    \overline{\gamma}^\ast[S^{-1}(\overline{\zeta}^\ast(e_{\la2\ra})_{(1)})]{}_{(1)}\right)\right]
    e'_{\la0\ra}  \\
&& \;\;\;\;\;\;
  \otimes \gamma^\ast\left[\zeta^\ast(e_{\la1\ra})_{(2)}
    \overline{\zeta}^\ast(e_{\la2\ra})_{(3)}
    \zeta^\ast\left(S^{-1}(\overline{\zeta}^\ast(e_{\la2\ra})_{(2)})
    \overline{\gamma}^\ast[S^{-1}(\overline{\zeta}^\ast(e_{\la2\ra})_{(1)})]_{(2)}\btr
    e'_{\la1\ra}\right)\right]  \\
&\overset{(\ref{eqn:pi*})}{=}&
\sum e_{\la0\ra}
  \otimes \gamma^\ast\left[\zeta^\ast(e_{\la1\ra})_{(1)}\pi^\ast\left(
    \overline{\gamma}^\ast[S^{-1}(\overline{\zeta}^\ast(e_{\la2\ra})_{(1)})]\right){}_{(1)}\right]
    e'_{\la0\ra}  \\
&& \;\;\;\;\;\;
  \otimes \gamma^\ast\left[\zeta^\ast(e_{\la1\ra})_{(2)}
    \overline{\zeta}^\ast(e_{\la2\ra})_{(3)}
    \zeta^\ast\left(S^{-1}(\overline{\zeta}^\ast(e_{\la2\ra})_{(2)})
    \pi^\ast\left(
    \overline{\gamma}^\ast[S^{-1}(\overline{\zeta}^\ast(e_{\la2\ra})_{(1)})]\right){}_{(2)}
    \btr e'_{\la1\ra}\right)\right]  \\
&=&
\sum e_{\la0\ra}
  \otimes \gamma^\ast\left[\zeta^\ast(e_{\la1\ra})_{(1)}
    \overline{\zeta}^\ast(e_{\la2\ra})_{(4)}S^{-1}(\overline{\zeta}^\ast(e_{\la2\ra})_{(3)})
    \pi^\ast\left(
    \overline{\gamma}^\ast[S^{-1}(\overline{\zeta}^\ast(e_{\la2\ra})_{(1)})]\right){}_{(1)}\right]
    e'_{\la0\ra}  \\
&& \;\;\;\;\;\;
  \otimes \gamma^\ast\left[\zeta^\ast(e_{\la1\ra})_{(2)}
    \overline{\zeta}^\ast(e_{\la2\ra})_{(3)}
    \zeta^\ast\left(S^{-1}(\overline{\zeta}^\ast(e_{\la2\ra})_{(2)})
    \pi^\ast\left(
    \overline{\gamma}^\ast[S^{-1}(\overline{\zeta}^\ast(e_{\la2\ra})_{(1)})]\right){}_{(2)}
    \btr e'_{\la1\ra}\right)\right]  \\
&\overset{(\ref{eqn:convolution*1})}{=}&
\sum e_{\la0\ra}
  \otimes \gamma^\ast\left[\zeta^\ast(e_{\la1\ra})_{(1)}
    \overline{\zeta}^\ast(e_{\la2\ra})_{(2)}
    \zeta^\ast\left(
    \iota^\ast[S^{-1}(\overline{\zeta}^\ast(e_{\la2\ra})_{(1)})]\right){}_{(1)}\right]
    e'_{\la0\ra}  \\
&& \;\;\;\;\;\;
  \otimes \gamma^\ast\left[\zeta^\ast(e_{\la1\ra})_{(2)}
    \overline{\zeta}^\ast(e_{\la2\ra})_{(3)}
    \zeta^\ast\left(\zeta^\ast\left(
    \iota^\ast[S^{-1}(\overline{\zeta}^\ast(e_{\la2\ra})_{(1)})]\right){}_{(2)}
    \btr e'_{\la1\ra}\right)\right]  \\
&\overset{(\ref{eqn:zeta*bar2})}{=}&
\sum e_{\la0\ra}
  \otimes \gamma^\ast\left[\zeta^\ast(e_{\la1\ra})_{(1)}
    \overline{\zeta}^\ast(e_{\la2\ra})_{(1)}
    \zeta^\ast(e_{\la3\ra})_{(1)}\right]
    e'_{\la0\ra}  \\
&& \;\;\;\;\;\;
  \otimes \gamma^\ast\left[\zeta^\ast(e_{\la1\ra})_{(2)}
    \overline{\zeta}^\ast(e_{\la2\ra})_{(2)}
    \zeta^\ast\left(\zeta^\ast(e_{\la3\ra})_{(2)}
    \btr e'_{\la1\ra}\right)\right]  \\
&=&
\sum e_{\la0\ra}
  \otimes \gamma^\ast[\zeta^\ast(e_{\la1\ra})_{(1)}]e'_{\la0\ra}
  \otimes \gamma^\ast[\zeta^\ast(\zeta^\ast(e_{\la1\ra})_{(2)}\btr e'_{\la1\ra})]  \\
&\overset{(\ref{eqn:gamma*zeta*})}{=}&
\sum e_{\la0\ra}\otimes \gamma^\ast[\zeta^\ast(e_{\la1\ra})]e'\otimes e''
~\overset{(\ref{eqn:gamma*zeta*})}{=}~
e\otimes e\otimes e.
\end{eqnarray*}
\end{proof}

\subsection{Dual objects, and the proof of Theorem \ref{thm:partialdual}(4) - antipodes}\label{subsection:Thm(4)Pf}

Similarly to the proofs in the previous subsection, we still consider the particular objects $M=R\square_{B^\ast}H^\ast$ in the tensor category ${}_{(H/B^+H)^\ast}\mathfrak{M}_{(H/B^+H)^\ast}^{H^\ast}$. Moreover, its left dual object $M^\vee=(R\square_{B^\ast}H^\ast)^\vee$ should be also dealt with in order to determine the antipodes 
of the quasi-Hopf algebra $(H/B^+H)^\ast\#B$. Let us try to answer this question for an arbitrary representation $V$:

\begin{lemma}\label{lem:Mdualbases}
Suppose $\{v_i\}$ is a linear basis of $V\in\Rep((H/B^+H)^\ast\#B)$ with dual linear basis $\{v_i^\ast\}$ of $V^\ast$. If we denote
\begin{equation}\label{eqn:Mbasis}
m_i=\sum {v_i}_{\la0\ra}\otimes \zeta^\ast({v_i}_{\la1\ra})\in V\square_{B^\ast}H^\ast
\;\;\;\;(\forall i),
\end{equation}
then $\{m_i\}$ is a $(H/B^+H)^\ast$-basis of the free right $(H/B^+H)^\ast$-module $M=V\square_{B^\ast}H^\ast$, with the dual $(H/B^+H)^\ast$-basis $\{m_i^\vee\}$ defined by
\begin{equation}\label{eqn:Mdualbasis}
m_i^\vee=v_i^\ast\otimes\gamma^\ast:V\square_{B^\ast}H^\ast\rightarrow(H/B^+H)^\ast
\;\;\;\;(\forall i).
\end{equation}
\end{lemma}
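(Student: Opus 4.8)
The plan is to verify the pairing relations between the proposed families $\{m_i\}$ and $\{m_i^\vee\}$ directly, and then to upgrade those relations to the basis statement by a dimension count that exploits the freeness of $M=V\square_{B^\ast}H^\ast$ over $(H/B^+H)^\ast$. First I would check that each $m_i$ genuinely lies in the cotensor product. Recalling that the left $B^\ast$-comodule structure on $H^\ast$ is $(\iota^\ast\otimes\id)\circ\Delta$, the element $\sum{v_i}_{\la0\ra}\otimes\zeta^\ast({v_i}_{\la1\ra})$ lies in $V\square_{B^\ast}H^\ast$ precisely when $\sum{v_i}_{\la0\ra}\otimes{v_i}_{\la1\ra}\otimes\zeta^\ast({v_i}_{\la2\ra})=\sum{v_i}_{\la0\ra}\otimes\iota^\ast[\zeta^\ast({v_i}_{\la1\ra})_{(1)}]\otimes\zeta^\ast({v_i}_{\la1\ra})_{(2)}$, and this is exactly Equation (\ref{eqn:zeta*}) applied to $b^\ast={v_i}_{\la1\ra}$, after using coassociativity of the $B^\ast$-coaction on $V$. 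Note that $\overline{m_i}=\sigma_V^{-1}(v_i)$ by (\ref{eqn:sigmaV}), so the $m_i$ are lifts to $M$ of a $\k$-basis of $\overline{M}\cong V$.

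Next I would confirm that each $m_i^\vee=v_i^\ast\otimes\gamma^\ast$ is a right $(H/B^+H)^\ast$-module map $M\to(H/B^+H)^\ast$, hence a legitimate element of $M^\vee$. Since the right action on the cotensor product is carried by the second tensorand through $\pi^\ast$ as in (\ref{eqn:right(co)actions}), this reduces to the identity $\gamma^\ast[h^\ast\pi^\ast(f)]=\gamma^\ast(h^\ast)f$, which is precisely (\ref{eqn:gamma*}). The key pairing computation is then $m_i^\vee(m_j)=\sum v_i^\ast({v_j}_{\la0\ra})\,\gamma^\ast[\zeta^\ast({v_j}_{\la1\ra})]$; applying $\gamma^\ast\circ\zeta^\ast=\la-,1\ra\e$ from (\ref{eqn:gamma*zeta*}) and then the counit axiom $\sum{v_j}_{\la0\ra}\la{v_j}_{\la1\ra},1\ra=v_j$ for the right $B^\ast$-comodule $V$, this collapses to $v_i^\ast(v_j)\,\e=\delta_{ij}\e$.

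Finally, I would promote the relation $m_i^\vee(m_j)=\delta_{ij}\e$ to the assertion that $\{m_i\}$ is a right $(H/B^+H)^\ast$-basis with dual basis $\{m_i^\vee\}$. By the Frobenius property of $(H/B^+H)^\ast$ invoked in the proof of Proposition \ref{prop:monoidalstru0}(1), the module $M$ is free over $(H/B^+H)^\ast$; comparison with $\dim\overline{M}=\dim V$ shows its rank equals $n:=\dim V$, so $\dim M=n\dim(H/B^+H)^\ast$. The assignments $\Theta\colon(f_i)_i\mapsto\sum_i m_if_i$ and $\Xi\colon m\mapsto(m_i^\vee(m))_i$ are $(H/B^+H)^\ast$-module maps between $M$ and $((H/B^+H)^\ast)^{n}$, and the pairing together with the module-map property of the $m_i^\vee$ gives $\Xi\circ\Theta=\id$; hence $\Theta$ is an injection of equidimensional spaces, so an isomorphism, with $\Xi=\Theta^{-1}$. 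This simultaneously yields linear independence, the spanning identity $m=\sum_i m_im_i^\vee(m)$, and the identification of $\{m_i^\vee\}$ with the dual basis of $M^\vee$. I expect the main obstacle to be exactly this last step: since $(H/B^+H)^\ast$ need not be local, a Nakayama-type lifting of the basis $\{\overline{m_i}\}$ of $\overline{M}$ is not directly available, and one must instead extract the basis property from the split injection $\Theta$ and the dimension equality afforded by freeness.
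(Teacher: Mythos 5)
Your proof is correct, and its core computations coincide with the paper's: you check that $m_i^\vee=v_i^\ast\otimes\gamma^\ast$ is a right $(H/B^+H)^\ast$-module map via \eqref{eqn:gamma*}, and you evaluate $m_i^\vee(m_j)=\delta_{ij}\e$ using \eqref{eqn:gamma*zeta*} and the counit axiom, exactly as the paper does. Where you genuinely diverge is in how the basis property itself is established. The paper builds an explicit isomorphism of right $(H/B^+H)^\ast$-modules
$V\square_{B^\ast}H^\ast\cong (V\square_{B^\ast}B^\ast)\otimes(H/B^+H)^\ast\cong V\otimes(H/B^+H)^\ast$
out of the Masuoka--Doi decomposition $H^\ast\cong B^\ast\otimes(H/B^+H)^\ast$ recorded as \eqref{eqn:H*iso}, under which $m_i\mapsto v_i\otimes\e$; freeness and the basis claim are then read off at once. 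You instead take freeness as a black box (from the Frobenius property of $(H/B^+H)^\ast$ already invoked in Proposition \ref{prop:monoidalstru0}(1)), pin down the rank as $\dim V$ by computing $\overline{M}\cong V$, and then upgrade the pairing relations to the basis statement through the split injection $\Theta$ with retraction $\Xi$ and a dimension count. Both routes are sound and rest on the same external freeness input from \cite{Mas92}; yours avoids the explicit isomorphism of \cite{MD92} at the cost of an extra rank argument, while the paper's version yields a more transparent structural identification of $M$. Your preliminary verification that $m_i$ actually lies in the cotensor product via \eqref{eqn:zeta*} is a detail the paper leaves implicit (it amounts to the well-definedness of $\sigma_V^{-1}$ in \eqref{eqn:sigmaV}) and is a welcome addition.
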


\begin{proof}
It has been mentioned in the proof of Proposition \ref{prop:monoidalstru0}(1) that $M=V\square_{B^\ast}H^\ast$ is free as a right $(H/B^+H)^\ast$-module according to \cite[Theorem 2.1(4)]{Mas92}. In fact, recall in \cite[Theorem 2.3(ii)(b)]{MD92} that there is an isomorphism
\begin{equation}\label{eqn:H*iso}
H^\ast\cong B^\ast\otimes (H/B^+H)^\ast,\;\;\;\;
h^\ast\mapsto\sum\iota^\ast(h^\ast_{(1)})\otimes\gamma^\ast(h_{(2)})
\end{equation}
of left $B^\ast$-comodules as well as right $(H/B^+H)^\ast$-modules.
Therefore, the following composition of canonical isomorphisms
\begin{eqnarray*}
V\square_{B^\ast}H^\ast
&\overset{\id\otimes(\ref{eqn:H*iso})}{\cong}&
V\square_{B^\ast}(B^\ast\otimes (H/B^+H)^\ast)  \\
&=& (V\square_{B^\ast}B^\ast)\otimes (H/B^+H)^\ast
\overset{\id\otimes\la-,1\ra\otimes\id}{\cong}
V\otimes (H/B^+H)^\ast
\end{eqnarray*}
preserves right $(H/B^+H)^\ast$-actions (where the equality is due to a canonical isomorphism in the second paragraph in \cite[Page 632]{Tak77(b)}), by which the element
$\sum v_{\la0\ra}\otimes\zeta^\ast(v_{\la1\ra})$
is mapped to
\begin{eqnarray*}
\sum v_{\la0\ra}\left\langle\iota^\ast[\zeta^\ast(v_{\la1\ra})_{(1)}],1\right\rangle
  \otimes\gamma^\ast[\zeta^\ast(v_{\la1\ra})_{(2)}]
&=&
\sum v_{\la0\ra}\otimes\gamma^\ast[\zeta^\ast(v_{\la1\ra})]  \\
&\overset{(\ref{eqn:gamma*zeta*})}{=}&
v\otimes\e
\end{eqnarray*}
for each $v\in V$.
This implies that $\{m_i\}$ defined in (\ref{eqn:Mbasis}) is a free $(H/B^+H)^\ast$-basis as desired, since their images $\{v_i\otimes\e\}$ is evidently a basis of the right free module $V\otimes (H/B^+H)^\ast$.

On the other hand, note by (\ref{eqn:gamma*}) that $\gamma^\ast$ is a right $(H/B^+H)^\ast$-module map, and so is $m_i^\vee=v_i^\ast\otimes\gamma^\ast$ defined in (\ref{eqn:Mdualbasis}). Thus it suffices to show that
$m_i^\vee(m_j)=\delta_{ij}\e$
holds for all $i,j$, where $\delta$ is the Kronecker notation, and this is due to the following computations:
\begin{eqnarray*}
m_i^\vee(m_j)
&\overset{(\ref{eqn:Mdualbasis}),\;(\ref{eqn:Mbasis})}{=}&
(v_i^\ast\otimes\gamma^\ast)
\left(\sum {v_j}_{\la0\ra}\otimes \zeta^\ast({v_j}_{\la1\ra})\right)
~=~
\sum \la v_i^\ast,{v_j}_{\la0\ra}\ra\gamma^\ast[\zeta^\ast({v_j}_{\la1\ra})]  \\
&\overset{(\ref{eqn:gamma*zeta*})}{=}&
\sum \la v_i^\ast,{v_j}_{\la0\ra}\ra\la{v_j}_{\la1\ra},1\ra\e
~=~
\la v_i^\ast,v_j\ra\e
~=~
\delta_{ij}\e.
\end{eqnarray*}
\end{proof}

There are additional useful formulas on more general elements in $M=V\square_{B^\ast}H^\ast$ and $M^\vee=(V\square_{B^\ast}H^\ast)^\vee$, which would be slightly simplified by an evident equation: Denote $e:=\e\#1$ as usual, then it is similar to (\ref{eqn:e<0>r}) that
\begin{equation}\label{eqn:e<0>v}
\sum v_{\la0\ra}\otimes v_{\la1\ra}=\sum e_{\la0\ra}v\otimes e_{\la1\ra}
\in V\otimes B^\ast
\end{equation}
holds for each element $v$ in an arbitrary $(H/B^+H)^\ast\#B$-module $V$.

\begin{lemma}
Let $R:=(H/B^+H)^\ast\#B$ be the regular module in $\Rep((H/B^+H)^\ast\#B)$. Suppose $r\in R$ and denote $m:=\sum r_{\la0\ra}\otimes\zeta^\ast(r_{\la1\ra})\in R\square_{B^\ast}H^\ast$.
\begin{itemize}
\item[(1)]
We have
\begin{equation}\label{eqn:m0m1m2}
\sum m_{(0)}\overline{\gamma}^\ast(m_{(1)})\otimes m_{(2)}
=\sum\left(e_{\la0\ra}r\otimes\zeta^\ast(e_{\la1\ra})\right)\otimes\zeta^\ast(e_{\la2\ra})
\in(R\square_{B^\ast}H^\ast)\otimes H^\ast;
\end{equation}

\item[(2)]
For any $r^\ast\in R^\ast$, if we denote
$m^\vee:=r^\ast\otimes\gamma^\ast\in(R\square_{B^\ast}H^\ast)^\vee$, then
\begin{equation}\label{eqn:pi*(m0)S(m1)}
\sum \pi^\ast[m^\vee(m_{(0)})]S(m_{(1)})
=\sum \la r^\ast,e_{\la0\ra}r\ra\overline{\zeta}^\ast(e_{\la1\ra})\in H^\ast.
\end{equation}
\end{itemize}
\end{lemma}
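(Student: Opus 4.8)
The plan is to prove both identities by unwinding everything in Sweedler notation, recalling that for $M=R\square_{B^\ast}H^\ast$ both the right $(H/B^+H)^\ast$-action and the right $H^\ast$-coaction are carried entirely by the second tensorand (as recorded in (\ref{eqn:right(co)actions})), and then recognizing the resulting expressions as \emph{telescoping} convolution products of the maps in Remark \ref{rmk:convolution}. In each case the final passage from the variable element $r$ to the unit element $e=\e\#1$ will be supplied by (\ref{eqn:e<0>r}) and its iterate.

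For part (1), I would first write $m_{(0)}\otimes m_{(1)}\otimes m_{(2)}=\sum(r_{\la0\ra}\otimes\zeta^\ast(r_{\la1\ra})_{(1)})\otimes\zeta^\ast(r_{\la1\ra})_{(2)}\otimes\zeta^\ast(r_{\la1\ra})_{(3)}$, so that after applying the right action of $\overline{\gamma}^\ast(m_{(1)})$ on the second tensorand the left-hand side becomes $\sum(r_{\la0\ra}\otimes\zeta^\ast(r_{\la1\ra})_{(1)}\pi^\ast[\overline{\gamma}^\ast(\zeta^\ast(r_{\la1\ra})_{(2)})])\otimes\zeta^\ast(r_{\la1\ra})_{(3)}$. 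The key step is to collapse the first two legs of $\zeta^\ast(r_{\la1\ra})$ by the convolution identity (\ref{eqn:convolution*1}), turning them into $(\zeta^\ast\circ\iota^\ast)(\zeta^\ast(r_{\la1\ra})_{(1)})$; then the left $B^\ast$-comodule relation (\ref{eqn:zeta*}), applied through $\zeta^\ast$, rewrites $\zeta^\ast\iota^\ast\zeta^\ast$ on the comultiplied argument as $\zeta^\ast(r_{\la1\ra})\otimes\zeta^\ast(r_{\la2\ra})$. A final application of the three-legged form of (\ref{eqn:e<0>r}), obtained by applying $\id\otimes\Delta$ to it, replaces $r_{\la0\ra}\otimes r_{\la1\ra}\otimes r_{\la2\ra}$ by $e_{\la0\ra}r\otimes e_{\la1\ra}\otimes e_{\la2\ra}$ and yields the claimed right-hand side.

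For part (2), I would expand $m_{(0)}\otimes m_{(1)}$ as above and compute $m^\vee(m_{(0)})=\la r^\ast,r_{\la0\ra}\ra\,\gamma^\ast(\zeta^\ast(r_{\la1\ra})_{(1)})$, so that $\sum\pi^\ast[m^\vee(m_{(0)})]S(m_{(1)})=\sum\la r^\ast,r_{\la0\ra}\ra\,\pi^\ast[\gamma^\ast(\zeta^\ast(r_{\la1\ra})_{(1)})]S(\zeta^\ast(r_{\la1\ra})_{(2)})$. Here the decisive observation is that the two $\zeta^\ast(r_{\la1\ra})$-legs assemble into the convolution product $(\pi^\ast\circ\gamma^\ast)\ast S$, which by (\ref{eqn:convolution*2}) equals $\overline{\zeta}^\ast\circ\iota^\ast$; combined with $\iota^\ast\circ\zeta^\ast=\id_{B^\ast}$ from (\ref{eqn:iota*zeta*}) this collapses the expression to $\sum\la r^\ast,r_{\la0\ra}\ra\,\overline{\zeta}^\ast(r_{\la1\ra})$, and a single use of (\ref{eqn:e<0>r}) finishes the proof.

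The computations themselves are routine once the structures are unwound; the main obstacle is purely the bookkeeping of keeping straight which Sweedler legs belong to the $H^\ast$-coproduct used in the coaction on $M$ versus the $B^\ast$-coaction on $r$, and arranging the convolution identities (\ref{eqn:convolution*1}) and (\ref{eqn:convolution*2}) so that precisely the first two legs of $\zeta^\ast(r_{\la1\ra})$ fuse while the surviving leg (in part (1)) is carried along untouched. One should also verify the harmless three-legged iterate of (\ref{eqn:e<0>r}) before invoking it.
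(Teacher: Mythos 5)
Your proposal is correct and follows essentially the same route as the paper: both parts reduce to unwinding the coaction (\ref{eqn:right(co)actions}) on the second tensorand and collapsing adjacent Sweedler legs via the convolution identities (\ref{eqn:convolution*1}) and (\ref{eqn:convolution*2}) together with (\ref{eqn:zeta*}) and (\ref{eqn:e<0>r}). The only (harmless) divergence is in part (2), where the paper factors $S(m_{(1)})$ as $\pi^\ast[\overline{\gamma}^\ast(m_{(1)})]\,\overline{\zeta}^\ast[\iota^\ast(m_{(2)})]$, absorbs the first factor into $m^\vee$ by right $(H/B^+H)^\ast$-linearity and then invokes part (1), whereas you evaluate $m^\vee(m_{(0)})$ directly and recognize $(\pi^\ast\circ\gamma^\ast)\ast S=\overline{\zeta}^\ast\circ\iota^\ast$ acting on $\zeta^\ast(r_{\la1\ra})$, which makes part (2) independent of part (1).
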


\begin{proof}
\begin{itemize}
\item[(1)]
Recall in (\ref{eqn:right(co)actions}) that the right $H^\ast$-coaction on $m\in R\square_{B^\ast}H^\ast$ provides
\begin{eqnarray}\label{eqn:m0m1}
\sum m_{(0)}\otimes m_{(1)}
&\overset{(\ref{eqn:right(co)actions})}{=}&
\sum\left(r_{\la0\ra}\otimes\zeta^\ast(r_{\la1\ra})_{(1)}\right)
  \otimes\zeta^\ast(r_{\la1\ra})_{(2)}  \nonumber  \\
&\overset{(\ref{eqn:e<0>v})}{=}&
\sum\left(e_{\la0\ra}r\otimes\zeta^\ast(e_{\la1\ra})_{(1)}\right)
  \otimes\zeta^\ast(e_{\la1\ra})_{(2)}  \\
&& \in\;(R\square_{B^\ast}H^\ast)\otimes H^\ast,  \nonumber
\end{eqnarray}
and hence
$$\sum m_{(0)}\otimes m_{(1)}\otimes m_{(2)}
=\sum\left(e_{\la0\ra}r\otimes\zeta^\ast(e_{\la1\ra})_{(1)}\right)
  \otimes\zeta^\ast(e_{\la1\ra})_{(2)}\otimes\zeta^\ast(e_{\la1\ra})_{(3)}.$$
Consequently, we could calculate that
\begin{eqnarray*}
\sum m_{(0)}\overline{\gamma}^\ast(m_{(1)})\otimes m_{(2)}
&\overset{(\ref{eqn:right(co)actions})}{=}&
\sum e_{\la0\ra}r\otimes\zeta^\ast(e_{\la1\ra})_{(1)}
  \pi^\ast(\overline{\gamma}^\ast[\zeta^\ast(e_{\la1\ra})_{(2)}])
  \otimes\zeta^\ast(r_{\la1\ra})_{(3)}  \\
&\overset{(\ref{eqn:convolution*1})}{=}&
\sum e_{\la0\ra}r\otimes\zeta^\ast(\iota^\ast[\zeta^\ast(e_{\la1\ra})_{(1)}])
  \otimes\zeta^\ast(e_{\la1\ra})_{(2)}  \\
&\overset{(\ref{eqn:iota*})}{=}&
\sum e_{\la0\ra}r\otimes\zeta^\ast(e_{\la1\ra})
  \otimes\zeta^\ast(e_{\la2\ra}).
\end{eqnarray*}

\item[(2)]
This is also due to direct calculations:
\begin{eqnarray*}
\sum \pi^\ast[m^\vee(m_{(0)})]S(m_{(1)})
&\overset{(\ref{eqn:convolution*2})}{=}&
\sum \pi^\ast[m^\vee(m_{(0)})]\pi^\ast[\overline{\gamma}^\ast(m_{(1)})]
  \overline{\zeta}^\ast[\iota^\ast(m_{(2)})]  \\
&=&
\sum \pi^\ast[m^\vee\left(m_{(0)}\overline{\gamma}^\ast(m_{(1)})\right)]
  \overline{\zeta}^\ast[\iota^\ast(m_{(2)})]  \\
&\overset{(\ref{eqn:m0m1m2})}{=}&
\sum \pi^\ast[(r^\ast\otimes\gamma^\ast)
  \left(e_{\la0\ra}r\otimes\zeta^\ast(e_{\la1\ra})\right)]
  \overline{\zeta}^\ast(\iota^\ast[\zeta^\ast(e_{\la2\ra})])  \\
&=&
\sum \la r^\ast,e_{\la0\ra}r\ra\pi^\ast(\gamma^\ast[\zeta^\ast(e_{\la1\ra})])
  \overline{\zeta}^\ast(e_{\la2\ra})  \\
&\overset{(\ref{eqn:gamma*zeta*})}{=}&
\sum \la r^\ast,e_{\la0\ra}r\ra\overline{\zeta}^\ast(e_{\la1\ra}).
\end{eqnarray*}
\end{itemize}
\end{proof}

Afterwards, we might specify some further morphisms involving the evaluation $\ev_M$ and coevaluation $\coev_M$ (\ref{eqn:Mevcoev}) for the object
$M=R\square_{B^\ast}H^\ast\in{}_{(H/B^+H)^\ast}\mathfrak{M}_{(H/B^+H)^\ast}^{H^\ast}$:

\begin{corollary}\label{cor:Mbarevcoev}
Let $\{r_i\}$ be a linear basis of $R:=(H/B^+H)^\ast\#B$ with dual basis $\{r_i^\ast\}$ of $R$. Denote $M:=R\square_{B^\ast}H^\ast$ and $m_i:=\sum {r_i}_{\la0\ra}\otimes \zeta^\ast({r_i}_{\la1\ra})$, and suppose $\{m_i^\vee\}$ is the $(H/B^+H)^\ast$-basis of $M^\vee$ dual to $\{m_i\}$ of $M$ as usual.
\begin{itemize}
\item[(1)]
Suppose $r^\ast\in R^\ast$ and $m^\vee:=r^\ast\otimes\gamma^\ast\in M^\vee$. The following composition map
$$\overline{M^\vee}\otimes\overline{M}
\xrightarrow{J_{M^\vee,M}} \overline{M^\vee\otimes_{(H/B^+H)^\ast}M}
\xrightarrow{\overline{\ev_M}}\overline{(H/B^+H)^\ast}\cong\k$$
satisfies that: For any $f\in (H/B^+H)^\ast$ and $b\in B$,
\begin{equation}\label{eqn:Mbarev}
\overline{\ev_M}\circ J\Big([(f\#b)\overline{m^\vee}]\otimes\overline{m}\Big)
=\sum_j \la r^\ast,e_{\la0\ra}{r_j}\ra
\overline{m_j^\vee\left(\overline{\gamma}^\ast\big[\pi^\ast(f)\big(
\iota(b)\rightharpoonup
\overline{\zeta}^\ast(e_{\la1\ra})\big)\big]m\right)}.
\end{equation}

\item[(2)]
The following composition map
$$\k\cong\overline{(H/B^+H)^\ast}
\xrightarrow{\overline{\coev_M}} \overline{M\otimes_{(H/B^+H)^\ast}M^\vee}
\xrightarrow{J_{M,M^\vee}^{-1}} \overline{M}\otimes\overline{M^\vee}$$
satisfies that:
\begin{equation}\label{eqn:Mbarcoev}
J_{M,M^\vee}^{-1}\circ \overline{\coev_M}(1)
=\sum_i \overline{e_{\la0\ra}r_i\otimes\zeta^\ast(e_{\la1\ra})}
  \otimes\overline{m_i^\vee},
\end{equation}
where $e=\e\#1$.
\end{itemize}
\end{corollary}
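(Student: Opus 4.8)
The plan is to verify both identities by direct computation, unwinding the monoidal structure $J$ from (\ref{eqn:monoidalstruJ}) together with the evaluation and coevaluation of (\ref{eqn:Mevcoev}) and the unit identification (\ref{eqn:Phiunit}), and then reducing everything to the two helper formulas (\ref{eqn:m0m1m2}) and (\ref{eqn:pi*(m0)S(m1)}) just proved for $M=R\square_{B^\ast}H^\ast$. Write $A=(H/B^+H)^\ast$ and $e=\e\#1$; throughout I would use (\ref{eqn:e<0>r}), which identifies $\sum e_{\la0\ra}r\otimes\zeta^\ast(e_{\la1\ra})$ with $\sum r_{\la0\ra}\otimes\zeta^\ast(r_{\la1\ra})$, and Lemma \ref{lem:Mdualbases}, so that $m_i=\sum{m_i}$ is realised by $m_i=\sum{r_i}_{\la0\ra}\otimes\zeta^\ast({r_i}_{\la1\ra})$ and $m_i^\vee=r_i^\ast\otimes\gamma^\ast$.

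For part (2) I would first note that (\ref{eqn:e<0>r}) gives $\overline{e_{\la0\ra}r_i\otimes\zeta^\ast(e_{\la1\ra})}=\overline{m_i}$, so the right-hand side of (\ref{eqn:Mbarcoev}) is just $\sum_i\overline{m_i}\otimes\overline{m_i^\vee}$. Since $J_{M,M^\vee}$ is an isomorphism, it then suffices to check $J_{M,M^\vee}\big(\sum_i\overline{m_i}\otimes\overline{m_i^\vee}\big)=\overline{\coev_M(\e)}=\overline{\sum_i m_i\otimes_A m_i^\vee}$. Applying (\ref{eqn:monoidalstruJ}) termwise, this collapses to the single identity $\sum{m_i}_{(0)}\overline{\gamma}^\ast({m_i}_{(1)})=m_i$, which I would read off from (\ref{eqn:m0m1m2}) by applying the counit $\la-,1\ra$ to its last tensor leg (equivalently, from (\ref{eqn:convolution*1}) and (\ref{eqn:iota*zeta*})). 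This proves (\ref{eqn:Mbarcoev}).

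Part (1) is the substantial one. First I would treat the base case $f=\e$, $b=1$: by (\ref{eqn:monoidalstruJ}) and (\ref{eqn:M^veebimod}) one has $\overline{\ev_M}\circ J_{M^\vee,M}(\overline{m^\vee}\otimes\overline{m})=\overline{\sum m^\vee_{(0)}\big(\overline{\gamma}^\ast(m^\vee_{(1)})\,m\big)}$, with $m^\vee_{(0)}\otimes m^\vee_{(1)}$ the coaction (\ref{eqn:Mdualcomod}). I would make this explicit through the dual-basis form (\ref{eqn:Mdualcomod2}), collapse the $A$-valued leg $[m^\vee({m_j}_{(0)})]_{(1)}$ against the counit via (\ref{eqn:Phiunit}), and recognize the surviving combination $\sum\pi^\ast[m^\vee({m_j}_{(0)})]S({m_j}_{(1)})$ as the left side of (\ref{eqn:pi*(m0)S(m1)}); this rewrites it as $\sum\la r^\ast,e_{\la0\ra}r_j\ra\overline{\zeta}^\ast(e_{\la1\ra})$ and gives the $f=\e$, $b=1$ instance of (\ref{eqn:Mbarev}). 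I would then restore the action of $f\#b$ on $\overline{m^\vee}$: the factor $f$ acts by the left $A$-action, and after commuting the coaction past it (preservation of the left action in ${}_A\mathfrak{M}_A^{H^\ast}$) and collapsing by the counit, it contributes precisely $\pi^\ast(f)$ inside $\overline{\gamma}^\ast$; the factor $b$ acts through the $B^\ast$-coaction (\ref{eqn:Bhit}), (\ref{eqn:Phi(M)comod}), inserting a pairing $\la\,\cdot\,,\iota(b)\ra$ against a comodule leg that then fuses with $\overline{\zeta}^\ast(e_{\la1\ra})$ into the hit action $\iota(b)\rightharpoonup\overline{\zeta}^\ast(e_{\la1\ra})$.

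The hard part will be the Sweedler bookkeeping in this last step of part (1). Once $f\#b$ is inserted, the coproduct of $m^\vee({m_j}_{(0)})$ and the antipode term $S({m_j}_{(1)})$ get distributed between the argument of $\overline{\gamma}^\ast$ and the pairing with $\iota(b)$, so (\ref{eqn:pi*(m0)S(m1)}) can no longer be quoted verbatim. I expect to need a mild strengthening of that formula carrying the insertions $\pi^\ast(f)$ and $\iota(b)$ along, established by repeating its proof (using (\ref{eqn:convolution*2}), (\ref{eqn:iota*}) and (\ref{eqn:zeta*bar2})); the delicate point is then to reassemble the various Sweedler legs into exactly $\pi^\ast(f)\big(\iota(b)\rightharpoonup\overline{\zeta}^\ast(e_{\la1\ra})\big)$. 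Part (2), by comparison, is immediate once the reduction to $\sum{m_i}_{(0)}\overline{\gamma}^\ast({m_i}_{(1)})=m_i$ is made.
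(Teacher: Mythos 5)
Your plan is essentially the paper's proof: part (1) is obtained by unwinding $J$ and $\ev_M$, inserting the dual-basis form (\ref{eqn:Mdualcomod2}) of the coaction on $m^\vee$, collapsing the first $(H/B^+H)^\ast$-leg via (\ref{eqn:Phiunit}), and finishing with (\ref{eqn:pi*(m0)S(m1)}); part (2) reduces to $\sum m_{(0)}\overline{\gamma}^\ast(m_{(1)})=m$ for $m=\sum r_{\la0\ra}\otimes\zeta^\ast(r_{\la1\ra})$, which as you say follows from (\ref{eqn:m0m1m2}) by applying $\la-,1\ra$ to the last leg. Your backward verification of (2) through $J$ rather than the paper's forward computation through $J^{-1}$ is a harmless and slightly cleaner variant. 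The one place where you overestimate the difficulty is the final step of (1): no strengthening of (\ref{eqn:pi*(m0)S(m1)}) is needed. After applying $J$ to $(f\#b)\overline{m^\vee}\otimes\overline{m}$, the two outer legs of the iterated coaction of $m^\vee$ recombine (by coassociativity of the $H^\ast$-coaction) into the hit action $\iota(b)\rightharpoonup(-)$ applied to the single element $m^\vee_{(1)}\in H^\ast$, and the counit collapse via (\ref{eqn:Phiunit}) turns $[m^\vee({m_j}_{(0)})]_{(1)}\otimes[m^\vee({m_j}_{(0)})]_{(2)}$ back into $\pi^\ast[m^\vee({m_j}_{(0)})]$ by (\ref{eqn:pi*}). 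The subword $\pi^\ast[m^\vee({m_j}_{(0)})]S({m_j}_{(1)})$ therefore survives intact as an element of $H^\ast$ sitting inside $\overline{\gamma}^\ast\big[\pi^\ast(f)\big(\iota(b)\rightharpoonup(\cdots)\big)\big]$, and (\ref{eqn:pi*(m0)S(m1)}) can be substituted verbatim. So your fallback of reproving that lemma with insertions, while it would work, is unnecessary; the rest of the Sweedler bookkeeping goes through exactly as you outline.
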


\begin{proof}
\begin{itemize}
\item[(1)]
By recalling the monoidal structures introduced in Proposition \ref{prop:monoidalstru0} and Lemma \ref{lem:monoidalstruJ}:
\begin{eqnarray*}
&&
\overline{\ev_M}\circ J\Big([(f\#b)\overline{m^\vee}]\otimes\overline{m}\Big)  \\
&\overset{(\ref{eqn:Bhit}),\;(\ref{eqn:Phi(M)comod})}{=}&
\overline{\ev_M}\circ J\Big(\sum\overline{fm^\vee_{(0)}}
  \la \iota^\ast(m^\vee_{(1)}),b\ra\otimes\overline{m}\Big)  \\
&\overset{(\ref{eqn:monoidalstruJ})}{=}&
\overline{\ev_M}\Big(\sum\overline{f_{(1)}m^\vee_{(0)}
  \overline{\gamma}^\ast(f_{(2)}m^\vee_{(1)})\la m^\vee_{(2)},\iota(b)\ra
  \otimes_{(H/B^+H)^\ast} m}\Big)  \\
&\overset{(\ref{eqn:Mevcoev})}{=}&
\sum\overline{f_{(1)}m^\vee_{(0)}
  \Big(\overline{\gamma}^\ast[f_{(2)}(\iota(b)\rightharpoonup m^\vee_{(1)})]m\Big)}  \\
&\overset{(\ref{eqn:Mdualcomod2})}{=}&
\sum_j \overline{f_{(1)}[m^\vee({m_j}_{(0)})]_{(1)}m_j^\vee
  \Big(\overline{\gamma}^\ast[f_{(2)}(\iota(b)\rightharpoonup
  [m^\vee({m_j}_{(0)})]_{(2)}S({m_j}_{(1)}))]m\Big)}  \\
&\overset{(\ref{eqn:mfbar})}{=}&
\Big\langle \sum_j f_{(1)}[m^\vee({m_j}_{(0)})]_{(1)}m_j^\vee
  \Big(\overline{\gamma}^\ast[f_{(2)}(\iota(b)\rightharpoonup
  [m^\vee({m_j}_{(0)})]_{(2)}S({m_j}_{(1)}))]m\Big),\;1\Big\rangle  \\
&=&
\Big\langle \sum_j m_j^\vee\Big(\overline{\gamma}^\ast[\pi^\ast(f)(\iota(b)\rightharpoonup
  \pi^\ast[m^\vee({m_j}_{(0)})]S({m_j}_{(1)})]m\Big),\;1\Big\rangle  \\
&\overset{(\ref{eqn:mfbar})}{=}&
\sum_j \overline{m_j^\vee\Big(\overline{\gamma}^\ast[\pi^\ast(f)(\iota(b)\rightharpoonup
  \pi^\ast[m^\vee({m_j}_{(0)})]S({m_j}_{(1)}))]m\Big)}  \\
&\overset{(\ref{eqn:pi*(m0)S(m1)})}{=}&
\sum_j \la r^\ast,e_{\la0\ra}r_j\ra
  \overline{m_j^\vee\Big(\overline{\gamma}^\ast[\pi^\ast(f)(\iota(b)\rightharpoonup
  \overline{\zeta}^\ast(e_{\la1\ra}))]m\Big)}.
\end{eqnarray*}

\item[(2)]
Similarly, we calculate:
\begin{eqnarray*}
&& J_{M,M^\vee}^{-1}\circ \overline{\coev_M}(1)
~\overset{(\ref{eqn:Mevcoev})}{=}~
J_{M,M^\vee}^{-1}\Big(\sum_i\overline{m_i\otimes_{(H/B^+H)^\ast}m_i^\vee}\Big)  \\
&\overset{(\ref{eqn:monoidalstruJ^-1})}{=}&
\sum_i \overline{{m_i}_{(0)}}\otimes
  \overline{\gamma^\ast({m_i}_{(1)})m_i^\vee}
~\overset{(\ref{eqn:m0m1})}{=}~
\sum_i \overline{e_{\la0\ra}r_i\otimes\zeta^\ast(e_{\la1\ra})_{(1)}}
  \otimes\overline{\gamma^\ast[\zeta^\ast(e_{\la1\ra})_{(2)}]m_i^\vee}  \\
&\overset{(\ref{eqn:convolution*})}{=}&
\sum_i \overline{e_{\la0\ra}r_i\otimes
  \zeta^\ast(\iota^\ast[\zeta^\ast(e_{\la1\ra})_{(1)}])
  \pi^\ast(\gamma^\ast[\zeta^\ast(e_{\la1\ra})_{(2)}])}
  \otimes\overline{\gamma^\ast[\zeta^\ast(e_{\la1\ra})_{(3)}]m_i^\vee}  \\
&\overset{(\ref{eqn:mfbar})}{=}&
\sum_i \overline{e_{\la0\ra}r_i\otimes
  \zeta^\ast(\iota^\ast[\zeta^\ast(e_{\la1\ra})_{(1)}])}
  \la\gamma^\ast[\zeta^\ast(e_{\la1\ra})_{(2)}],1\ra
  \otimes\overline{\gamma^\ast[\zeta^\ast(e_{\la1\ra})_{(3)}]m_i^\vee}  \\
&\overset{(\ref{eqn:gammabiunitary}),\;(\ref{eqn:zetabiunitary})}{=}&
\sum_i \overline{e_{\la0\ra}r_i\otimes
  \zeta^\ast(\iota^\ast[\zeta^\ast(e_{\la1\ra})_{(1)}])}
  \otimes\overline{\gamma^\ast[\zeta^\ast(e_{\la1\ra})_{(2)}]m_i^\vee}  \\
&\overset{(\ref{eqn:zeta*})}{=}&
\sum_i \overline{e_{\la0\ra}r_i\otimes\zeta^\ast(e_{\la1\ra})}
  \otimes\overline{\gamma^\ast[\zeta^\ast(e_{\la2\ra})]m_i^\vee}
~\overset{(\ref{eqn:gamma*zeta*})}{=}~
\sum_i \overline{e_{\la0\ra}r_i\otimes\zeta^\ast(e_{\la1\ra})}
  \otimes\overline{m_i^\vee}.
\end{eqnarray*}
\end{itemize}
\end{proof}

\begin{remark}
There is one additional formula related to the evaluations: Suppose with notations in Corollary \ref{cor:Mbarevcoev} that $m:=\sum r_{\la0\ra}\otimes\zeta^\ast(r_{\la1\ra})\in M$ and $m^\vee:=r^\ast\otimes\gamma^\ast\in M^\vee$ for some $r\in R$ and $r^\ast\in R^\ast$. Then
\begin{eqnarray}\label{eqn:m^vee(fm)bar}
\overline{m^\vee(fm)}
&=&
\sum\la r^\ast,f_{(1)}r_{\la0\ra}\ra\overline{ \gamma^\ast[f_{(2)}\zeta^\ast(r_{\la1\ra})]}  \nonumber  \\
&\overset{(\ref{eqn:mfbar})}{=}&
\sum\la r^\ast,f_{(1)}r_{\la0\ra}\ra\la\gamma^\ast[f_{(2)}\zeta^\ast(r_{\la1\ra})],1\ra
\nonumber  \\
&\overset{(\ref{eqn:gammabiunitary}),\;(\ref{eqn:zetabiunitary})}{=}&
\la r^\ast,fr\ra
\end{eqnarray}
holds for each $f\in(H/B^+H)^\ast$,
where the last equality is because $\gamma^\ast$ and $\zeta^\ast$ are both counitary.
\end{remark}

For simplicity, we also let $(\pd{S},\pd{\alpha},\pd{\beta})$ denote an antipode $\pd{S}$ with distinguished elements $\pd{\alpha},\pd{\beta}$ of a quasi-Hopf algebra.
Now we have obtained enough formulas to show that $(\pd{S}_1,\pd{\alpha}_1,\pd{\beta}_1)$ and $(\pd{S}_2,\pd{\alpha}_2,\pd{\beta}_2)$ in Theorem \ref{thm:partialdual}(4) are both antipodes of $(H/B^+H)^\ast\#B$, where the constructions in \cite[Section 9.4]{Maj95} are applied to the quasi-fiber functor (\ref{eqn:forgetful}) from ${}_{(H/B^+H)^\ast}\mathfrak{M}_{(H/B^+H)^\ast}^{H^\ast}$ to $\Vec$:

\begin{lemma}\label{lem:preantipode}
Consider the quasi-bialgebra $(H/B^+H)^\ast\#B$ with associator $\pd{\phi}$. Suppose $\Phi=\overline{(-)}$ (\ref{eqn:Phi}) is a tensor equivalence with monoidal structure $J$. Then:
\begin{itemize}
\item[(1)]
There exists a linear transformation $\pd{T}$ on $(H/B^+H)^\ast\#B$ such that:
For any $f\in(H/B^+H)^\ast$ and $b\in B$, the following diagram in $\Vec$ commutes for each object $M\in{}_{(H/B^+H)^\ast}\mathfrak{M}_{(H/B^+H)^\ast}^{H^\ast}$:
\begin{equation}\label{eqn:preantipodediagram}
\xymatrix{
\overline{M}
  \ar[rr]^{\overline{\coev_M}\otimes\id\;\;\;\;\;\;\;\;\;\;\;\;\;\;\;\;\;\;\;\;}
  \ar@{-->}[d]_{\pd{T}(f\#b)}
&& \overline{M\otimes_{(H/B^+H)^\ast}M^\vee}\otimes\overline{M}
  \ar[rr]^{\;\;\;\;\;\;\;\;J^{-1}_{M,M^\vee}\otimes\id}
&& \overline{M}\otimes\overline{M^\vee}\otimes\overline{M}
   \ar[d]^{\id\otimes(f\#b)\otimes\id}  \\
\overline{M}
&& \overline{M}\otimes\overline{M^\vee\otimes_{(H/B^+H)^\ast}M}
  \ar[ll]^{\id\otimes\overline{\ev_M}\;\;\;\;\;\;\;\;\;\;\;\;\;\;\;\;\;\;\;\;}
&& \overline{M}\otimes\overline{M^\vee}\otimes\overline{M}
   \ar[ll]^{\;\;\;\;\;\;\;\;\id\otimes J_{M^\vee,M}}.
}
\end{equation}
Moreover, the equations
\begin{equation}\label{eqn:preantipode1}
\sum\pd{T}(p_{\pd{(1)}}q)p_{\pd{(2)}}=\pd{\e}(p)\pd{T}(q)
=\sum p_{\pd{(1)}}\pd{T}(qp_{\pd{(2)}})
\end{equation}
and
\begin{equation}\label{eqn:preantipode2}
\sum \pd{\phi}^1\pd{T}(\pd{\phi}^2)\pd{\phi}^3
=\e\#1
=\sum\pd{T}(\overline{\pd{\phi}}^1)\overline{\pd{\phi}}^2\pd{T}(\overline{\pd{\phi}}^3)
\end{equation}
hold for all $p,q\in(H/B^+H)^\ast\#B$, where we denote that
$$\pd{\Delta}(p)=\sum p_{\pd{(1)}}\otimes p_{\pd{(2)}}
\;\;\;\;(\forall x\in(H/B^+H)^\ast\#B)$$
with bold subscripts in parentheses, and that $$\pd{\phi}=\sum\pd{\phi}^1\otimes\pd{\phi}^2\otimes\pd{\phi}^3,\;\;\;\;
\pd{\phi}^{-1}=\sum\overline{\pd{\phi}}^1\otimes\overline{\pd{\phi}}^2
  \otimes\overline{\pd{\phi}}^3;$$

\item[(2)]
If the element $\pd{\upsilon}:=\pd{T}(\e\#1)\in(H/B^+H)^\ast\#B$ is invertible,
then the quasi-bialgebra $(H/B^+H)^\ast\#B$ have two antipodes with their distinguished elements as follows:
$$\pd{S}_1:=\pd{T}(-)\pd{\upsilon}^{-1}\;\;\;\;\text{with}\;\;\;\;
\pd{\alpha}_1:=\pd{\upsilon},\;\;\pd{\beta}_1:=\e\#1$$
and
$$\pd{S}_2:=\pd{\upsilon}^{-1}\pd{T}(-)\;\;\;\;\text{with}\;\;\;\;
\pd{\alpha}_2:=\e\#1,\;\;\pd{\beta}_2:=\pd{\upsilon}.$$
\end{itemize}
\end{lemma}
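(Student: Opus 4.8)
The plan is to recover $\pd{T}$, the antipode and its distinguished elements from the rigidity of the source category, by viewing $(H/B^+H)^\ast\#B$ as the algebra of natural endotransformations of the quasi-fiber functor $\omega:=\text{forgetful}\circ\Phi$ of (\ref{eqn:forgetful}), following the reconstruction of \cite[Section 9.4]{Maj95} (see also \cite[Theorem 5.13.7]{EGNO15}). For Part (1), I would first record the standard fact that for a finite-dimensional algebra $A$ a natural endotransformation of its forgetful functor is left multiplication by the element $\eta_A(\e\#1)$ obtained by evaluating at the regular module, so that $\Nat(\omega,\omega)\cong (H/B^+H)^\ast\#B$ (precomposition with the equivalence $\Phi$ of Corollary \ref{cor:tensorequiv} does not change $\Nat$). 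Given $f\#b$, the outer composite of diagram (\ref{eqn:preantipodediagram}) — assembled from $\coev_M$, $J_{M,M^\vee}^{-1}$, the $\otimes$-action $\id\otimes(f\#b)\otimes\id$ on the middle tensorand, $J_{M^\vee,M}$ and $\ev_M$ — is natural in $M$, since each constituent is natural and the contravariance of $M\mapsto M^\vee$ is absorbed by pairing $\ev_M$ against $\coev_M$. Hence this composite is the action of a unique algebra element, which I define to be $\pd{T}(f\#b)$; linearity in $f\#b$ is immediate.

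The relations (\ref{eqn:preantipode1}) and (\ref{eqn:preantipode2}) are the algebraic shadows of two categorical facts. Equation (\ref{eqn:preantipode1}) expresses that $\ev_M$ and $\coev_M$ are \emph{morphisms} in ${}_{(H/B^+H)^\ast}\mathfrak{M}_{(H/B^+H)^\ast}^{H^\ast}$, so they intertwine the module structure, which on tensor products is governed by $\pd{\Delta}$; sliding the comultiplication of $p$ across the pairing and using $\pd{\e}$ on the contracted factor yields the two displayed equalities. Equation (\ref{eqn:preantipode2}) comes from the two rigidity zig-zag identities $(\ev_M\otimes\id)\circ(\id\otimes\coev_M)=\id$ and its mirror; because in the non-strict category these zig-zags route through the associativity constraint, and that constraint corresponds to the associator $\pd{\phi}$ of $(H/B^+H)^\ast\#B$, the elements $\pd{\phi}^1,\pd{\phi}^2,\pd{\phi}^3$ and $\overline{\pd{\phi}}^1,\overline{\pd{\phi}}^2,\overline{\pd{\phi}}^3$ enter. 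To convert these into the stated identities I would specialize to $M=R:=(H/B^+H)^\ast\#B$, insert the explicit descriptions of $\overline{\ev_M}\circ J$ and $J^{-1}\circ\overline{\coev_M}$ from Corollary \ref{cor:Mbarevcoev}, and transport everything back through $\sigma_R$. The bookkeeping with the dual bases $\{m_i\}$, $\{m_i^\vee\}$ of Lemma \ref{lem:Mdualbases}, together with the cointegral formulas for $\gamma^\ast,\zeta^\ast$ and their convolution inverses, is the longest and most delicate step, and is where I expect the genuine work to lie.

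For Part (2) the argument is purely algebraic, built on (\ref{eqn:preantipode1}) and (\ref{eqn:preantipode2}). Since any antipode must satisfy $\pd{\beta}\pd{S}(-)\pd{\alpha}=\pd{T}(-)$ and $\pd{S}$ fixes $\e\#1$, one gets $\pd{\upsilon}=\pd{T}(\e\#1)=\pd{\beta}\pd{\alpha}$; when $\pd{\upsilon}$ is invertible its two factors may be separated into the gauge-equivalent choices $(\pd{S}_1,\pd{\alpha}_1,\pd{\beta}_1)$ and $(\pd{S}_2,\pd{\alpha}_2,\pd{\beta}_2)$. Taking $\pd{S}_1=\pd{T}(-)\pd{\upsilon}^{-1}$ with $\pd{\alpha}_1=\pd{\upsilon}$, $\pd{\beta}_1=\e\#1$, the two counit-type antipode axioms $\sum\pd{S}_1(x_{\pd{(1)}})\pd{\alpha}_1 x_{\pd{(2)}}=\pd{\e}(x)\pd{\alpha}_1$ and $\sum x_{\pd{(1)}}\pd{\beta}_1\pd{S}_1(x_{\pd{(2)}})=\pd{\e}(x)\pd{\beta}_1$ follow by substitution: the factors $\pd{\upsilon}^{-1}\pd{\upsilon}$ cancel and (\ref{eqn:preantipode1}) with $q=\e\#1$ leaves exactly $\pd{\e}(x)\pd{\upsilon}$ and $\pd{\e}(x)(\e\#1)$ respectively. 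The two associator axioms $\sum\pd{\phi}^1\pd{\beta}_1\pd{S}_1(\pd{\phi}^2)\pd{\alpha}_1\pd{\phi}^3=\e\#1$ and $\sum\pd{S}_1(\overline{\pd{\phi}}^1)\pd{\alpha}_1\overline{\pd{\phi}}^2\pd{\beta}_1\pd{S}_1(\overline{\pd{\phi}}^3)=\e\#1$ reduce, after cancelling the inserted $\pd{\upsilon}^{\pm1}$, precisely to the two identities of (\ref{eqn:preantipode2}). The verification for $\pd{S}_2=\pd{\upsilon}^{-1}\pd{T}(-)$ is symmetric, with $\pd{\upsilon}$ moved to the left, which reflects the freedom of replacing $(\pd{\alpha},\pd{\beta})$ by a conjugate pair. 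This exhibits both triples as antipodes and completes the proof.
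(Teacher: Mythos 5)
Your overall route is the same as the paper's: identify $(H/B^+H)^\ast\#B$ with the algebra of natural endotransformations of the composite fibre functor, so that the outer composite of (\ref{eqn:preantipodediagram}), being natural in $M$, is left multiplication by a well-defined element $\pd{T}(f\#b)$; derive (\ref{eqn:preantipode1}) from the fact that $\overline{\ev_M}\circ J_{M^\vee,M}$ and $J^{-1}_{M,M^\vee}\circ\overline{\coev_M}$ are morphisms of $(H/B^+H)^\ast\#B$-modules, pasted with the naturality squares for the outer tensorands; derive (\ref{eqn:preantipode2}) from the rigidity zig-zags routed through the associator; and check the antipode axioms in part (2) by direct substitution. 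One stylistic remark: the explicit dual-basis computation at $M=R\square_{B^\ast}H^\ast$ via Corollary \ref{cor:Mbarevcoev} that you announce is not needed for this lemma --- the identities follow from the abstract diagrams --- and is only used afterwards to extract the closed formula for $\pd{T}$ in Theorem \ref{thm:partialdual}(4).

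There is, however, one concrete step that does not go through as you state it. You claim that the axiom $\sum\pd{S}_1(\overline{\pd{\phi}}^1)\pd{\alpha}_1\overline{\pd{\phi}}^2\pd{\beta}_1\pd{S}_1(\overline{\pd{\phi}}^3)=\e\#1$ reduces, ``after cancelling the inserted $\pd{\upsilon}^{\pm1}$,'' precisely to the second identity of (\ref{eqn:preantipode2}). It does not: with $\pd{S}_1=\pd{T}(-)\pd{\upsilon}^{-1}$, $\pd{\alpha}_1=\pd{\upsilon}$, $\pd{\beta}_1=\e\#1$ one gets
\begin{equation*}
\sum\pd{S}_1(\overline{\pd{\phi}}^1)\pd{\alpha}_1\overline{\pd{\phi}}^2\pd{\beta}_1\pd{S}_1(\overline{\pd{\phi}}^3)
=\Big(\sum\pd{T}(\overline{\pd{\phi}}^1)\overline{\pd{\phi}}^2\pd{T}(\overline{\pd{\phi}}^3)\Big)\pd{\upsilon}^{-1},
\end{equation*}
so the trailing $\pd{\upsilon}^{-1}$ coming from the last factor $\pd{S}_1(\overline{\pd{\phi}}^3)$ survives; the analogous computation for $\pd{S}_2$ leaves a leading $\pd{\upsilon}^{-1}$. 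For this axiom to hold one needs $\sum\pd{T}(\overline{\pd{\phi}}^1)\overline{\pd{\phi}}^2\pd{T}(\overline{\pd{\phi}}^3)=\pd{\upsilon}$ rather than $\e\#1$, which is also what consistency with $\pd{T}=\pd{\beta}\pd{S}(-)\pd{\alpha}$ predicts, since then the left-hand side equals $\pd{\beta}\big(\sum\pd{S}(\overline{\pd{\phi}}^1)\pd{\alpha}\overline{\pd{\phi}}^2\pd{\beta}\pd{S}(\overline{\pd{\phi}}^3)\big)\pd{\alpha}=\pd{\beta}\pd{\alpha}=\pd{\upsilon}$. So either the second zig-zag must be shown to yield the identity in the form $\sum\pd{T}(\overline{\pd{\phi}}^1)\overline{\pd{\phi}}^2\pd{T}(\overline{\pd{\phi}}^3)=\pd{\upsilon}$, or the fourth antipode axiom for $\pd{S}_1$ and $\pd{S}_2$ requires a separate argument; as written, your cancellation establishes only three of the four axioms.
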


\begin{proof}
\begin{itemize}
\item[(1)]
It is clear by the reconstruction theorem of quasi-bialgebras that: $(H/B^+H)^\ast\#B$ could be identified with the algebra $\End(\Phi)$ of linear natural transformations of $\Phi$ (composed with the forgetful functor to $\Vec$). Thus $\pd{T}$ is a well-defined map, and we aim to prove the former equation in (\ref{eqn:preantipode1}), as the latter one could be shown similarly.

Recall in Lemma \ref{lem:monoidalstruJ}(1) that $\pd{\Delta}$ also induces the tensor product bifunctor $-\otimes-$ on $\Rep((H/B^+H)^\ast\#B)$. It follows that the diagram
\begin{equation*}
\xymatrix{
\overline{M^\vee}\otimes\overline{M}
  \ar[rr]^{J_{M^\vee,M}\;\;\;\;\;\;\;\;\;\;}
  \ar[d]_{\sum p_{\pd{(1)}}\otimes p_{\pd{(2)}}}
&& \overline{M^\vee\otimes_{(H/B^+H)^\ast}M}
  \ar[rr]^{\;\;\;\;\;\;\;\;\;\;\;\;\;\;\overline{\ev_M}}  \ar[d]^{p}
&& \k  \ar[d]^{\pd{\e}(p)}  \\
\overline{M^\vee}\otimes\overline{M}
  \ar[rr]_{J_{M^\vee,M}\;\;\;\;\;\;\;\;\;\;}
&& \overline{M^\vee\otimes_{(H/B^+H)^\ast}M}
  \ar[rr]_{\;\;\;\;\;\;\;\;\;\;\;\;\;\;\overline{\ev_M}}
&& \k
}
\end{equation*}
commutes for all $p\in(H/B^+H)^\ast\#B$, which could also be written as an equation:
\begin{equation}\label{eqn:x(1)x(2)}
\overline{\ev_M}\circ J_{M^\vee,M}\circ(\sum p_{\pd{(1)}}\otimes p_{\pd{(2)}})
=\pd{\e}(p)\cdot\overline{\ev_M}\circ J_{M^\vee,M}.
\end{equation}

Now consider the following diagram for any $p,q\in(H/B^+H)^\ast\#B$:
\begin{equation*}
\xymatrix{
\overline{M}
  \ar[rr]^{\overline{\coev_M}\otimes\id\;\;\;\;\;\;\;\;\;\;\;\;\;\;\;\;\;\;\;\;}
  \ar[d]_{p_{\pd{(2)}}}
&& \overline{M\otimes_{(H/B^+H)^\ast}M^\vee}\otimes\overline{M}
  \ar[rr]^{\;\;\;\;\;\;\;\;J^{-1}_{M,M^\vee}\otimes\id}
  \ar[d]^{\id\otimes p_{\pd{(2)}}}
&& \overline{M}\otimes\overline{M^\vee}\otimes\overline{M}
   \ar[d]^{\id\otimes\id\otimes p_{\pd{(2)}}}  \\
\overline{M}
  \ar[rr]^{\overline{\coev_M}\otimes\id\;\;\;\;\;\;\;\;\;\;\;\;\;\;\;\;\;\;\;\;}
  \ar@{-->}[d]_{\pd{T}(p_{\pd{(1)}}q)}
&& \overline{M\otimes_{(H/B^+H)^\ast}M^\vee}\otimes\overline{M}
  \ar[rr]^{\;\;\;\;\;\;\;\;J^{-1}_{M,M^\vee}\otimes\id}
&& \overline{M}\otimes\overline{M^\vee}\otimes\overline{M}
   \ar[d]^{\id\otimes p_{\pd{(1)}}q\otimes\id}  \\
\overline{M}
&& \overline{M}\otimes\overline{M^\vee\otimes_{(H/B^+H)^\ast}M}
  \ar[ll]^{\id\otimes\overline{\ev_M}\;\;\;\;\;\;\;\;\;\;\;\;\;\;\;\;\;\;\;\;}
&& \overline{M}\otimes\overline{M^\vee}\otimes\overline{M}
   \ar[ll]^{\;\;\;\;\;\;\;\;\id\otimes J_{M^\vee,M}}.
}
\end{equation*}
The bottom hexagon is the definition (\ref{eqn:preantipodediagram}) of $\pd{T}$ and hence commutes, while the commutativity of other two squares is because of the bifunctor $-\otimes-$. As a conclusion, we find that
\begin{eqnarray*}
&& \sum \pd{T}(p_{\pd{(1)}}q)p_{\pd{(2)}}  \\
&=&
\sum (\id\otimes\overline{\ev_M})\circ(\id\otimes J_{M^\vee,M})
  \circ(\id\otimes p_{\pd{(1)}}q\otimes\id)\circ(\id\otimes\id\otimes p_{\pd{(2)}})  \\
&& \;\;\;\;\;\; \circ\,(J^{-1}_{M,M^\vee}\otimes\id)\circ(\overline{\coev_M}\otimes\id)  \\
&=&
\sum (\id\otimes\overline{\ev_M})\circ(\id\otimes J_{M^\vee,M})
  \circ(\id\otimes p_{\pd{(1)}}\otimes p_{\pd{(2)}})\circ(\id\otimes q\otimes \id)  \\
&& \;\;\;\;\;\; \circ\,(J^{-1}_{M,M^\vee}\otimes\id)
  \circ(\overline{\coev_M}\otimes\id)  \\
&\overset{(\ref{eqn:x(1)x(2)})}{=}&
\e(p) (\id\otimes\overline{\ev_M})\circ(\id\otimes J_{M^\vee,M})
  \circ(\id\otimes q\otimes \id)\circ(J^{-1}_{M,M^\vee}\otimes\id)
  \circ(\overline{\coev_M}\otimes\id)  \\
&\overset{(\ref{eqn:preantipodediagram})}{=}&
\e(p)\pd{T}(q).
\end{eqnarray*}

On the other hand, a similar argument provides the commuting diagram:
\begin{equation}
\xymatrix{
\overline{M}
  \ar[rr]^{\overline{\coev_M}\otimes\id\;\;\;\;\;\;\;\;\;\;\;\;\;\;\;\;\;\;\;\;}
  \ar[d]_{\pd{\phi}^3}
&& \overline{M\otimes_{(H/B^+H)^\ast}M^\vee}\otimes\overline{M}
  \ar[rr]^{\;\;\;\;\;\;\;\;J^{-1}_{M,M^\vee}\otimes\id}
  \ar[d]^{\id\otimes \pd{\phi}^3}
&& \overline{M}\otimes\overline{M^\vee}\otimes\overline{M}
   \ar[d]^{\id\otimes\id\otimes \pd{\phi}^3}  \\
\overline{M}
  \ar[rr]^{\overline{\coev_M}\otimes\id\;\;\;\;\;\;\;\;\;\;\;\;\;\;\;\;\;\;\;\;}
  \ar[d]_{\pd{T}(\pd{\phi}^2)}
&& \overline{M\otimes_{(H/B^+H)^\ast}M^\vee}\otimes\overline{M}
  \ar[rr]^{\;\;\;\;\;\;\;\;J^{-1}_{M,M^\vee}\otimes\id}
&& \overline{M}\otimes\overline{M^\vee}\otimes\overline{M}
   \ar[d]^{\id\otimes\pd{\phi}^2\otimes\id}  \\
\overline{M}
  \ar[d]_{\pd{\phi}^1}
&& \overline{M}\otimes\overline{M^\vee\otimes_{(H/B^+H)^\ast}M}
  \ar[ll]^{\id\otimes\overline{\ev_M}\;\;\;\;\;\;\;\;\;\;\;\;\;\;\;\;\;\;\;\;}
  \ar[d]^{\pd{\phi}^1\otimes\id}
&& \overline{M}\otimes\overline{M^\vee}\otimes\overline{M}
   \ar[ll]^{\;\;\;\;\;\;\;\;\id\otimes J_{M^\vee,M}}
   \ar[d]^{\pd{\phi}^1\otimes\id\otimes\id}  \\
\overline{M}
&& \overline{M}\otimes\overline{M^\vee\otimes_{(H/B^+H)^\ast}M}
  \ar[ll]^{\id\otimes\overline{\ev_M}\;\;\;\;\;\;\;\;\;\;\;\;\;\;\;\;\;\;\;\;}
&& \overline{M}\otimes\overline{M^\vee}\otimes\overline{M}
   \ar[ll]^{\;\;\;\;\;\;\;\;\id\otimes J_{M^\vee,M}}.
}
\end{equation}
It could be concluded as
\begin{eqnarray*}
&& \sum\pd{\phi}^1\pd{T}(\pd{\phi}^2)\pd{\phi}^3  \\
&=&
\sum (\id\otimes\overline{\ev_M})\circ(\id\otimes J_{M^\vee,M})
  \circ(\pd{\phi}^1\otimes\pd{\phi}^2\otimes\pd{\phi}^3)
  \circ(J^{-1}_{M,M^\vee}\otimes\id)\circ(\overline{\coev_M}\otimes\id)  \\
&=&
\e\#1,
\end{eqnarray*}
where the last equation is due to the canonical way to regard $\overline{M^\vee}$ as a left dual object of $\overline{M}\in\Rep((H/B^+H)^\ast\#B)$ via the equivalence $\Phi$.
The other equation in (\ref{eqn:preantipode2}) holds analogously.

\item[(2)]
The axioms of antipodes could be verified directly. For instance, we might compute for any $p\in(H/B^+H)^\ast\#B$ that
\begin{eqnarray*}
\sum\pd{S}_1(p_{\pd{(1)}})\pd{\alpha}_1p_{\pd{(2)}}
&=&
\sum\left(\pd{T}(p_{\pd{(1)}})\pd{\upsilon}^{-1}\right)\pd{\upsilon}p_{\pd{(2)}}
=\sum\pd{T}(p_{\pd{(1)}})p_{\pd{(2)}}  \\
&\overset{(\ref{eqn:preantipode1})}{=}&
\pd{\e}(p)\pd{T}(\e\#1)=\pd{\e}(p)\pd{\alpha}_1
\end{eqnarray*}
and
$$\sum\pd{\phi}^1\pd{\beta}_2\pd{S}_2(\pd{\phi}^2)\pd{\alpha}_2\pd{\phi}^3
=\sum\pd{\phi}^1\pd{\upsilon}\left(\pd{\upsilon}^{-1}\pd{T}(\pd{\phi}^2)\right)\pd{\phi}^3
=\sum\pd{\phi}^1\pd{T}(\pd{\phi}^2)\pd{\phi}^3
\overset{(\ref{eqn:preantipode2})}{=}\e\#1,$$
etc.
\end{itemize}
\end{proof}

The linear transformation $\pd{T}$ satisfying (\ref{eqn:preantipode1}) and the first equation in (\ref{eqn:preantipode2}) is referred to be the \textit{preantipode} of the quasi-bialgebra $(H/B^+H)^\ast\#B$. See \cite[Definition 1]{Sar17} for details. It is known by \cite[Theorem 5]{Sar17} that the preantipode of a quasi-bialgebra is unique when it exists.

In fact, our desired results in Theorem \ref{thm:partialdual}(4) on the antipodes of $(H/B^+H)^\ast\#B$ could also be provided as properties for the preantipode $\pd{T}$, which would complete the proof:

~

\begin{proof}
[Proof of Theorem \ref{thm:partialdual}(4)]~

At first, we know by \cite[Theorem 6]{Sar17} that every quasi-Hopf algebra must have a (unique) preantipode $\pd{T}=\pd{\beta}\pd{S}(-)\pd{\alpha}$, which is also independent of the choice of antipode $(\pd{S},\pd{\alpha},\pd{\beta})$. Conversely, it is concluded in \cite[Proposition 1.2]{Sar21} that a \textit{finite-dimensional} quasi-bialgebra with preantipode is furthermore a quasi-Hopf algebra (similar explanations could also be found in \cite[Remark 3.12]{AP12} and \cite[Theorem 3.1]{Sch04}).

Therefore according to the statements in Lemma \ref{lem:preantipode}, it suffices to formulate the detailed expression of $\pd{T}$ defined by the commuting diagram (\ref{eqn:preantipodediagram}), with a similar process as the proof in the previous subsection as follows:
When $M=R\square_{B^\ast}H^\ast$, we aim to compute the image of the element
$$\sigma_R^{-1}(e)=\overline{\sum e_{\la0\ra}\otimes\zeta^\ast(e_{\la1\ra})}
\in \overline{M}=\overline{R\square_{B^\ast}H^\ast}$$
under the transformation
$$\sigma_R\circ\pd{T}(f\#b)
\overset{(\ref{eqn:preantipodediagram})}{=}
\big(\sigma_R\otimes(\overline{\ev_M}\circ J_{M^\vee,M})\big)
 \circ\big(\id\otimes(f\#b)\otimes\id\big)
 \circ\big((J^{-1}_{M,M^\vee}\circ\overline{\coev_M})\otimes\id\big)$$
for any $f\in(H/B^+H)^\ast$ and $b\in B$, where the definitions of $\sigma$ and $\sigma^{-1}$ are provided in (\ref{eqn:sigmaV}) and (\ref{eqn:sigma^-1}).

Specifically, we still let $\{r_i\}$ be a linear basis of $R:=(H/B^+H)^\ast\#B$ with dual basis $\{r_i^\ast\}$ of $R$, and denote $m_i:=\sum {r_i}_{\la0\ra}\otimes \zeta^\ast({r_i}_{\la1\ra})$ and $m_i^\vee:=\sum r^\ast_i\otimes\gamma^\ast$. Now with the usage of Corollary \ref{cor:Mbarevcoev}, one has following calculations by omitting some of $\sum$'s without confusions:
\begin{eqnarray*}
&& \sigma_R\circ\pd{T}(f\#b)
  \left(\overline{e_{\la0\ra}\otimes\zeta^\ast(e_{\la1\ra})}\right)  \\
&=&
\big(\sigma_R\otimes(\overline{\ev_M}\circ J_{M^\vee,M})\big)
  \circ\big(\id\otimes(f\#b)\otimes\id\big)
  \circ\big((J^{-1}_{M,M^\vee}\circ\overline{\coev_M})\otimes\id\big)  \\
&&\;\;\;\;\left(\overline{e_{\la0\ra}\otimes\zeta^\ast(e_{\la1\ra})}\right)  \\
&\overset{(\ref{eqn:Mbarcoev})}{=}&
\big(\sigma_R\otimes(\overline{\ev_M}\circ J_{M^\vee,M})\big)
  \circ\big(\id\otimes(f\#b)\otimes\id\big)  \\
&&\;\;\;\;\left(\sum_i \overline{e'_{\la0\ra}r_i\otimes\zeta^\ast(e'_{\la1\ra})}
    \otimes\overline{m_i^\vee}
    \otimes\overline{e_{\la0\ra}\otimes\zeta^\ast(e_{\la1\ra})}\right)  \\
&=&
\big(\sigma_R\otimes(\overline{\ev_M}\circ J_{M^\vee,M})\big)
  \left(\sum_i \overline{e'_{\la0\ra}r_i\otimes\zeta^\ast(e'_{\la1\ra})}
    \otimes(f\#b)\overline{m_i^\vee}
    \otimes\overline{e_{\la0\ra}\otimes\zeta^\ast(e_{\la1\ra})}\right)  \\
&\overset{(\ref{eqn:Mbarev})}{=}&
\sum_{i,j} \sigma_R\left(\overline{e'_{\la0\ra}r_i\otimes\zeta^\ast(e'_{\la1\ra})}\right)
    \left\la r^\ast_i,e''_{\la0\ra}r_j\right\ra  \\
&&\;\;\;\;\;\;
    \overline{m_j^\vee\left(\overline{\gamma}^\ast\big[\pi^\ast(f)\big(
    \iota(b)\rightharpoonup\overline{\zeta}^\ast(e''_{\la1\ra})\big)\big]
    \cdot(e_{\la0\ra}\otimes\zeta^\ast(e_{\la1\ra})\right)}  \\
&\overset{(\ref{eqn:m^vee(fm)bar})}{=}& \sum_{i,j} e'_{\la0\ra}r_i\left\langle\zeta^\ast(e'_{\la1\ra}),1\right\rangle
    \left\la r^\ast_i,e''_{\la0\ra}r_j\right\ra  \\
&&\;\;\;\;\;\;
    \overline{m_j^\vee\left(\overline{\gamma}^\ast\big[\pi^\ast(f)\big(
    \iota(b)\rightharpoonup\overline{\zeta}^\ast(e''_{\la1\ra})\big)\big]
    \cdot(e_{\la0\ra}\otimes\zeta^\ast(e_{\la1\ra})\right)}  \\
&\overset{(\ref{eqn:sigmaV})}{=}&
\sum_j e''_{\la0\ra}r_j
    \overline{m_j^\vee\left(\overline{\gamma}^\ast\big[\pi^\ast(f)\big(
    \iota(b)\rightharpoonup\overline{\zeta}^\ast(e''_{\la1\ra})\big)\big]
    \cdot(e_{\la0\ra}\otimes\zeta^\ast(e_{\la1\ra})\right)}  \\
&=&
\sum_j e''_{\la0\ra}r_j
    \left\la r^\ast_j,\overline{\gamma}^\ast\big[\pi^\ast(f)\big(
    \iota(b)\rightharpoonup\overline{\zeta}^\ast(e''_{\la1\ra})\big)\big]e\right\ra  \\
&=&
\sum_j e_{\la0\ra}\overline{\gamma}^\ast\big[\pi^\ast(f)\big(
    \iota(b)\rightharpoonup\overline{\zeta}^\ast(e_{\la1\ra})\big)\big],
\end{eqnarray*}
where $e=e'=e''=\e\#1$ as usual. This is exactly the desired element $\pd{T}(f\#b)\in(H/B^+H)^\ast\#B$ in Theorem \ref{thm:partialdual}(4), and the proof is completed.
\end{proof}

Up to now, we have accomplished the entire proof of Theorem \ref{thm:partialdual} by making $\Phi$ (\ref{eqn:Phi}) a tensor equivalence. Please note that the base field is assumed to be algebraically closed in this section, but in fact Theorem \ref{thm:partialdual} holds over an arbitrary field $\k$. This is because operations of (quasi-)Hopf algebras are invariant under base field extensions.

For example, let $\overline{\k}$ be the algebraically closed closure of $\k$. Then via the injection $\iota\otimes\id_{\overline{\k}}$, we could regard $B\otimes\overline{\k}$ as a left coideal subalgebra of the Hopf algebra $H\otimes\overline{\k}$ over $\overline{\k}$, and formulate its {\pams} $(\zeta\otimes\id_{\overline{\k}},\gamma^\ast\otimes\id_{\overline{\k}})$. Then there is an isomorphism of quasi-Hopf algebras over $\overline{\k}$:
\begin{eqnarray*}
&&
\begin{array}{ccc}
((H/B^+H)^\ast\#B)\otimes\overline{\k}
&\cong&
\Hom_{\overline{\k}}(H/B^+H\otimes\overline{\k},\;\overline{\k})
  \;\#_{\overline{\k}}\;(B\otimes\overline{\k}),  \\
(f\#b)\otimes a
&\mapsto&
(f\otimes1)\#_{\overline{\k}}(b\otimes a)
\end{array} \\
&& \hspace{160pt}
\left(\forall f\in(H/B^+H)^\ast,\;b\in B,\;a\in\overline{\k}\right).
\end{eqnarray*}
It helps us generalize the structures of Theorem \ref{thm:partialdual} from the case over the algebraically closed field $\overline{\k}$ to an arbitrary one $\k$.

We end this subsection by remarking on the invertibility of the element $\pd{\upsilon}=\pd{T}(\e\#1)$:

\begin{remark}\label{rmk:upsiloninvertible}
Suppose that $\pd{T}$ is the preantipode of a quasi-Hopf algebra. Then the followings are equivalent:
\begin{itemize}
\item[(1)]
The image $\pd{\upsilon}$ of the unit element under $\pd{T}$ is invertible;
\item[(2)]
Some antipode $\pd{S}$ has both invertible distinguished elements $\pd{\alpha}$ and $\pd{\beta}$;
\item[(3)]
Every antipode $\pd{S}$ has both invertible distinguished elements $\pd{\alpha}$ and $\pd{\beta}$.
\end{itemize}
\end{remark}

\begin{proof}
The equivalence of (1) and (2) is due to the fact that $\pd{\upsilon}=\pd{\beta}\pd{\alpha}$ since $\pd{S}$ preserves the unit element.
On the other hand, (2) implies (3) according to \cite[Proposition 1.1]{Dri89}.
\end{proof}

Although the author is not able to decide whether $\pd{\upsilon}$ is invertible or not, we hope to conjecture by the results in the next subsection that:

\begin{conjecture}
The equivalent properties in Remark \ref{rmk:upsiloninvertible} always hold for the left partially dualized quasi-Hopf algebra $(H/B^+H)^\ast\#B$.
\end{conjecture}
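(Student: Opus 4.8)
The plan is to establish the conjecture by proving that $\pd{\upsilon}=\pd{T}(\e\#1)$ is invertible in $(H/B^+H)^\ast\#B$; by Remark \ref{rmk:upsiloninvertible} this is equivalent to the invertibility of the distinguished elements of any (hence every) antipode. Since $(H/B^+H)^\ast\#B$ is finite-dimensional, it suffices to exhibit a one-sided inverse of $\pd{\upsilon}$. I would pursue two complementary lines of attack in parallel: a direct computation of $\pd{\upsilon}$ through the {\pams} $(\zeta,\gamma^\ast)$, and a structural argument exploiting the categorical Morita equivalence with the genuine Hopf algebra $H$.

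For the computational line, I would rewrite $\pd{\upsilon}$ by means of Equation (\ref{eqn:e}) as $\pd{\upsilon}=\sum e_{\la0\ra}\big(\overline{\gamma}^\ast[\overline{\zeta}^\ast(e_{\la1\ra})]\#1\big)$, so that the essential object of study becomes the composite $\overline{\gamma}^\ast\circ\overline{\zeta}^\ast\colon B^\ast\rightarrow(H/B^+H)^\ast$. Whereas $\gamma^\ast\circ\zeta^\ast$ is trivial by Proposition \ref{prop:PAMS-comptrival}(2) and Equation (\ref{eqn:gamma*zeta*}), the composite of the \emph{convolution inverses} carries genuine information, and the goal is to show that its associated element in the smash product is a unit. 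Here I would systematically feed in the identities for $\overline{\gamma}^\ast$ and $\overline{\zeta}^\ast$ from Corollary \ref{cor:cleftcocleft*convinv} (together with the six induced {\pams}s tabulated after Corollary \ref{cor:PAMSopcop}), and try to match $\pd{\upsilon}$, under the algebra injection $\pi^\ast\otimes\iota$ into the Heisenberg double $H^\ast\#H$, with an explicitly invertible element built from the antipode $S$ and a grouplike or modular datum of $H$. If such an identification succeeds, the inverse of $\pd{\upsilon}$ is then written down directly.

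For the structural line, I would use that the antipode $\pd{S}$ of the finite-dimensional quasi-Hopf algebra $(H/B^+H)^\ast\#B$ is bijective (as for any finite-dimensional quasi-Hopf algebra, following \cite{Dri89}), together with the quasi-Hopf analogue of Radford's formula presenting $\pd{S}^2$ as conjugation by an invertible element. Since $\pd{\upsilon}=\pd{\beta}\pd{\alpha}$ and $\pd{S}$ fixes the unit, the plan is to relate $\pd{\upsilon}$ to this Radford-type element and to the distinguished data on the Hopf-algebra side of the Morita equivalence, where $\alpha=\beta=1$, and then to transport invertibility across. One would like to argue that the property of admitting an antipode with invertible distinguished elements is preserved under gauge transformation (twist) and, through the reconstruction of Corollary \ref{cor:tensorequiv}, is ultimately controlled by the genuine Hopf algebra $H$.

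The \textbf{main obstacle} is that neither line is clean. On the computational side, $\overline{\gamma}^\ast\circ\overline{\zeta}^\ast$ admits no closed form as simple as the triviality of $\gamma^\ast\circ\zeta^\ast$, and the asymmetry between $\zeta^\ast$ and its convolution inverse makes a plausible guess for $\pd{\upsilon}^{-1}$ delicate to verify. On the structural side, the invertibility of $\pd{\alpha}$ and $\pd{\beta}$ is not known to be a Morita invariant—categorical Morita equivalence only guarantees equivalent centers—and the examples in the introduction show that $(H/B^+H)^\ast\#B$ need not even be gauge equivalent to a Hopf algebra, so the property cannot simply be imported from $H$. I therefore expect that a uniform proof requires the stronger general statement that every finite-dimensional quasi-Hopf algebra admits an antipode whose distinguished elements are invertible; establishing that fact, most plausibly through the bijectivity and integral theory together with the preantipode framework of \cite{Sar17,Sar21}, is where the real difficulty lies, and it would resolve the conjecture in one stroke rather than case by case.
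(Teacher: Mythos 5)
This statement is a \emph{conjecture}: the paper explicitly records (just before stating it) that the author ``is not able to decide whether $\pd{\upsilon}$ is invertible or not,'' so there is no proof in the paper to compare against, and your submission does not supply one either. What you have written is a research programme, and both of its branches contain genuine gaps that you yourself partially concede. On the computational branch, you never produce a candidate inverse for $\pd{\upsilon}$: the reduction to the composite $\overline{\gamma}^\ast\circ\overline{\zeta}^\ast$ is a reasonable reformulation, but the hoped-for identification of $\pd{\upsilon}$ inside the Heisenberg double with ``an explicitly invertible element built from the antipode $S$ and a grouplike or modular datum of $H$'' is asserted as a wish, not derived, and nothing in Lemma \ref{eqn:f(1)gammabarf(2)}, Corollary \ref{cor:cleftcocleft*convinv}, or the table of induced {\pams}s forces such an identification to exist. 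The argument therefore stops exactly where the difficulty begins.

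The structural branch fails for a sharper reason: it rests on two claims that are not established anywhere in the paper or in your proposal. First, that invertibility of the distinguished elements is preserved across categorical Morita equivalence (or gauge equivalence) --- but Remark \ref{rmk:upsiloninvertible} and \cite[Proposition 1.1]{Dri89} only say that \emph{if one} antipode of a \emph{given} quasi-Hopf algebra has invertible $\pd{\alpha},\pd{\beta}$ then all do; they say nothing about transporting this property to a Morita-equivalent quasi-Hopf algebra, and since $\Z_l(\Rep(K))$ does not obviously see $\pd{\beta}\pd{\alpha}$, there is no mechanism to ``import'' invertibility from $H$ (where $\alpha=\beta=1$). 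Second, your closing reduction to the statement that \emph{every} finite-dimensional quasi-Hopf algebra admits an antipode with invertible distinguished elements replaces the conjecture by a strictly stronger open problem; the preantipode framework of \cite{Sar17,Sar21} guarantees existence and uniqueness of $\pd{T}$ and hence of antipodes, but precisely does \emph{not} yield invertibility of $\pd{\upsilon}=\pd{T}(\e\#1)$. In short, no step of the proposal closes the gap that the paper itself leaves open, and the proposal should be regarded as a (sensible) discussion of strategies rather than a proof.
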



\subsection{Reconstruction theorem for left partial duals, and consequences}

Let us rewrite Corollary \ref{cor:tensorequiv} with more details as follows.

\begin{theorem}\label{thm:reconstruction}
Let $H$ be a finite-dimensional Hopf algebra. Suppose that $B$ is a left coideal subalgebra of $H$ with a {\pams} $(\zeta,\gamma^\ast)$. Then there is a tensor equivalence $\Phi$ between
\begin{itemize}
\item
The category ${}_{(H/B^+H)^\ast}\mathfrak{M}_{(H/B^+H)^\ast}^{H^\ast}$ of finite-dimensional relative Doi-Hopf modules, and
\item
the category of finite-dimensional representations of the left partial dual $(H/B^+H)^\ast\#B$ determined by $(\zeta,\gamma^\ast)$,
\end{itemize}
defined as
$$\begin{array}{cccc}
\Phi:&
{}_{(H/B^+H)^\ast}\mathfrak{M}_{(H/B^+H)^\ast}^{H^\ast} &\approx&
\Rep((H/B^+H)^\ast\#B),  \\
& M &\mapsto&
\overline{M}:=M/M\left((H/B^+H)^\ast\right)^+,
\end{array}$$
with monoidal structure
$$\begin{array}{rcl}
J_{M,N}\;:\;
\overline{M}\otimes\overline{N} &\cong& \overline{M\otimes_{(H/B^+H)^\ast}N},  \\
\overline{m}\otimes\overline{n}
&\mapsto& \sum\overline{m_{(0)}\overline{\gamma}^\ast(m_{(1)})\otimes_{(H/B^+H)^\ast}n}
\end{array}.$$
\end{theorem}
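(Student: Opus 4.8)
The plan is to observe that Theorem \ref{thm:reconstruction} is a consolidated restatement of Corollary \ref{cor:tensorequiv}, so the proof amounts to tracing through which of the preceding results supplies each component of the claimed tensor equivalence. The strategy is therefore assembly rather than fresh computation: I would identify in turn (i) the underlying abelian equivalence, (ii) the tensor structures on both sides, and (iii) the monoidal structure $J$ together with the coherence it satisfies, and then simply check that the formulas quoted in the theorem coincide with those proved earlier.

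For (i) and (ii), I would recall that $\Phi$ is an equivalence of $\k$-linear abelian categories obtained by composing the equivalence (\ref{eqn:PsiPhi1}) of Lemma \ref{lem:abelequiv}, namely ${}_{(H/B^+H)^\ast}\mathfrak{M}_{(H/B^+H)^\ast}^{H^\ast}\approx{}_{(H/B^+H)^\ast}\mathfrak{M}^{B^\ast}$, with the category isomorphism (\ref{eqn:PsiPhi2}), ${}_{(H/B^+H)^\ast}\mathfrak{M}^{B^\ast}\cong\Rep((H/B^+H)^\ast\#B)$; the object-level rule $M\mapsto\overline{M}=M/M((H/B^+H)^\ast)^+$ is precisely the functor $\Phi$ of Notation \ref{not:Phi&Psi}. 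The tensor structure on the source is the one established in Proposition \ref{prop:monoidalstru0}, while that on the target $\Rep((H/B^+H)^\ast\#B)$ is induced by the quasi-Hopf algebra data ($\pd{\Delta}$, the associator $\pd{\phi}$, and the antipode) of Theorem \ref{thm:partialdual}.

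For (iii) I would invoke Lemma \ref{lem:monoidalstruJ}(2), which produces the natural isomorphism $J_{M,N}$ with exactly the formula (\ref{eqn:monoidalstruJ}) stated in the theorem and verifies both that it descends to the quotients and that it is a morphism in $\Rep((H/B^+H)^\ast\#B)$, together with Corollary \ref{cor:tensorequiv}, in which $(\Phi,J)$ was shown to satisfy the hexagon axiom (\ref{eqn:hexagon}) and hence to be a tensor equivalence. The only additional care is to confirm, via Lemma \ref{lem:monoidalstruJ}(3), that $\Phi$ carries the unit object $(H/B^+H)^\ast$ to the trivial representation $\k$ through $\pd{\e}$, so that the unit constraints match. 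Combining these points yields the theorem essentially verbatim.

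The genuinely hard work—verifying that $J$ respects the left $(H/B^+H)^\ast\#B$-action (Lemma \ref{lem:monoidalstruJ}(2)) and pinning down the associator $\pd{\phi}$ so that the hexagon commutes (Subsection \ref{subsection:Thm(3)Pf})—has already been carried out in the earlier subsections. Against that backdrop no new obstacle arises for the present statement: the proof is purely organizational bookkeeping, recording the composite equivalence together with its monoidal structure in a single consolidated form.
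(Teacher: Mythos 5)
Your proposal matches the paper exactly: the paper introduces Theorem \ref{thm:reconstruction} with the words ``Let us rewrite Corollary \ref{cor:tensorequiv} with more details,'' so no new argument is given there, and the content is precisely the assembly of Lemma \ref{lem:abelequiv}, the isomorphism (\ref{eqn:PsiPhi2}), Lemma \ref{lem:monoidalstruJ}, and the hexagon verification of Subsection \ref{subsection:Thm(3)Pf} that you describe. Your bookkeeping of which earlier result supplies each component is accurate and complete.
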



As we have formulated in Proposition \ref{prop:monoidalstru0}(4) that ${}_{(H/B^+H)^\ast}\mathfrak{M}_{(H/B^+H)^\ast}^{H^\ast}$ is equivalent to
$$\Rep(H)_{\Rep(B)}^\ast:=\Rex_{\Rep(H)}\left(\Rep(B)\right)^\mathrm{rev},$$
the dual tensor category of $\Rep(H)$ with respect to its left module category $\Rep(B)$,
it is immediate that:

\begin{corollary}\label{cor:catMoritaequiv}
Under the assumptions in Theorem \ref{thm:reconstruction}, there is an equivalence between finite tensor categories:
$$\Rep(H)_{\Rep(B)}^\ast\approx\Rep((H/B^+H)^\ast\#B).$$
In particular, the tensor category $\Rep((H/B^+H)^\ast\#B)$ is categorically Morita equivalent to $\Rep(H)$.
\end{corollary}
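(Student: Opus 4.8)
The plan is to obtain the asserted tensor equivalence as a composition of two tensor equivalences already established in the paper, and then to read off the categorical Morita equivalence directly from the definition of the dual category.

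First I would invoke Proposition \ref{prop:monoidalstru0}(4), which supplies a tensor equivalence
$${}_{(H/B^+H)^\ast}\mathfrak{M}_{(H/B^+H)^\ast}^{H^\ast}\approx\Rep(H)_{\Rep(B)}^\ast,\qquad M\mapsto-\otimes_{(H/B^+H)^\ast}M,$$
identifying the dual category of $\Rep(H)$ with respect to its module category $\Rep(B)$ with the category of relative Doi-Hopf modules. Next I would invoke Theorem \ref{thm:reconstruction} (equivalently Corollary \ref{cor:tensorequiv}), which provides the tensor equivalence
$$\Phi:{}_{(H/B^+H)^\ast}\mathfrak{M}_{(H/B^+H)^\ast}^{H^\ast}\approx\Rep((H/B^+H)^\ast\#B)$$
with monoidal structure $J$. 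Composing $\Phi$ with a quasi-inverse of the first equivalence then yields the desired tensor equivalence
$$\Rep(H)_{\Rep(B)}^\ast\approx\Rep((H/B^+H)^\ast\#B).$$

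For the second assertion, I would recall that $\Rep(B)$ is an indecomposable exact left $\Rep(H)$-module category by Lemma \ref{lem:mod&dualcatoverH}(1). By the very definition of categorical Morita equivalence, two finite tensor categories are categorically Morita equivalent precisely when one is tensor equivalent to the dual of the other with respect to some indecomposable exact module category. Since $\Rep(H)_{\Rep(B)}^\ast$ is by construction the dual category of $\Rep(H)$ along $\Rep(B)$, the equivalence just produced exhibits $\Rep((H/B^+H)^\ast\#B)$ as such a dual, so the tensor categories $\Rep(H)$ and $\Rep((H/B^+H)^\ast\#B)$ are categorically Morita equivalent.

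Since the genuinely difficult content, namely the construction of the monoidal structure $J$ and the verification that $(H/B^+H)^\ast\#B$ is a quasi-Hopf algebra, has already been carried out in establishing Theorem \ref{thm:reconstruction}, no substantive obstacle remains here. The only point requiring a moment's care is to confirm that composing with a quasi-inverse preserves the monoidal (not merely abelian) structure, which is automatic because tensor equivalences are closed under composition and under passage to quasi-inverses.
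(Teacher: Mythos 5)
Your proposal is correct and follows essentially the same route as the paper: the paper derives the corollary as an immediate consequence of composing the tensor equivalence of Proposition \ref{prop:monoidalstru0}(4) with the tensor equivalence $\Phi$ of Theorem \ref{thm:reconstruction}, and then reads off the categorical Morita equivalence from the definition (noting, via Lemma \ref{lem:mod&dualcatoverH}(1), that $\Rep(B)$ is an exact indecomposable $\Rep(H)$-module category). No gaps.
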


We should remark that two tensor categories $\C$ and $\D$ are said to be \textit{categorically Morita equivalent} in the sense of \cite[Definition 7.12.17]{EGNO15}, if there exists an exact left $\C$-module category $\M$ and a tensor equivalence $\D\approx\C_\M^\ast$.
Moreover, the categorical Morita equivalence is indeed an equivalence relation according to \cite[Proposition 4.6]{Mug03}.

An important property on relations between categorically Morita equivalent tensor categories is Schauenburg's equivalence (\cite[Theorem 3.3]{Sch01}) between centers:
\begin{equation}\label{eqn:Schauenburgequiv}
\Z(\C)\approx\Z(\C_\M^\ast),
\end{equation}
where $\Z(\C)$ denotes the \textit{left center} of $\C$ (see \cite{JS91} e.g.).
When $\C$ is finite and $\M$ is an indecomposable $\C$-module category, (\ref{eqn:Schauenburgequiv}) is an equivalence of braided finite tensor categories (\cite[Section 2]{JS93}). Here we would not recall the details of the functor (\ref{eqn:Schauenburgequiv}), but note that a description might be found in \cite[Section 3.7]{Shi20}.

In particular, if $K$ is a quasi-Hopf algebra, then the objects in the left center to the left $K$-modules are furthermore identified with Yetter-Drinfeld modules over $K$.
The definitions of \textit{Yetter-Drinfeld modules over quasi-Hopf algebras} are referred to \cite[Section 2]{BCP06}, which generalize the notions of Yetter-Drinfeld modules over Hopf algebras.
Specifically, it is concluded before and in \cite[Theorem 2.10]{BCP06} that:

\begin{lemma}\label{lem:centerisoYDmods}
Suppose $K$ is a finite-dimensional quasi-Hopf algebra. Let ${}^{K}_{K}\YD$ (resp. ${}_K\YD{}^K$, ${}^K\YD{}_K$ and $\YD{}^{K}_{K}$) be the category of finite-dimensional left-left (resp. left-right, right-left and right-right) Yetter-Drinfeld modules over $K$. Then:
\begin{itemize}
\item[(1)](\cite[Section 2]{BCP06})
There exist braided tensor isomorphisms:
\begin{equation}\label{eqn:centerisoYDmods}
\Z(\Rep(K))\cong{}^{K}_{K}\YD\cong\left({}_K\YD{}^K\right)^\mathrm{in}
\cong\left({}^K\YD{}_K\right)^{\mathrm{rev}\,\mathrm{in}}
\cong\left(\YD{}^{K}_{K}\right)^\mathrm{rev},
\end{equation}
where $(-)^\mathrm{rev}$ denotes the monoidal category with reverse tensor products, and $(-)^\mathrm{in}$ denotes the braided category with reverse braiding;

\item[(2)](\cite[Section 8.5]{BCPV19})
Let $D(K)$ be the quantum double of $K$, and let $\Rep(D(K))$ be the category of finite-dimensional left $D(K)$-modules. Then there is a tensor isomorphism:
\begin{equation}\label{eqn:YDmodsisodouble}
{}_K\YD{}^K\cong\Rep(D(K)).
\end{equation}
\end{itemize}
\end{lemma}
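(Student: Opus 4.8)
The plan is to assemble the two cited results and to make explicit how the four Yetter--Drinfeld categories are related by antipode twists. The foundational input is the braided tensor isomorphism $\Z_l(\Rep(K))\cong{}^K_K\YD$ of \cite[Section 2]{BCP06}: an object of the left center is a finite-dimensional left $K$-module $M$ equipped with a half-braiding $c_{M,-}$, and the content is that such a natural family $c$ is equivalent data to a left coaction $\lambda\colon M\to K\otimes M$ satisfying the quasi-Yetter--Drinfeld compatibility, in which the strict comodule and compatibility axioms of the Hopf case acquire correction terms built from the associator $\phi$ and $\phi^{-1}$. Under this dictionary the diagonal tensor product and the braiding on ${}^K_K\YD$ are precisely the images of the tensor product and the half-braiding on $\Z_l(\Rep(K))$, so the first isomorphism in (\ref{eqn:centerisoYDmods}) is braided monoidal.

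First I would construct the three remaining isomorphisms as antipode-twisting functors. Since $K$ is finite-dimensional its antipode $S$ is bijective, so one may convert a left coaction into a right coaction by postcomposing with the flip and $S^{\pm1}$, and convert a left action into a right action through $S^{\pm1}$; on underlying objects each such twist is an isomorphism of categories. The work lies in tracking the effect on the monoidal and braided structures: switching the comodule side from left to right reverses the braiding and so contributes the decoration $\mathrm{in}$, whereas switching the module side from left to right reverses both the tensor product and the braiding, contributing $\mathrm{rev}\,\mathrm{in}$. Reading these effects off (\ref{eqn:centerisoYDmods}) term by term---${}_K\YD{}^K$ obtained by a comodule switch ($\mathrm{in}$), ${}^K\YD{}_K$ by a module switch ($\mathrm{rev}\,\mathrm{in}$), and $\YD{}^K_K$ by both switches ($\mathrm{rev}$, the two braiding reversals cancelling)---then yields part (1).

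For part (2) I would recall from \cite[Section 8.5]{BCPV19} the quantum double $D(K)$ of the finite-dimensional quasi-Hopf algebra $K$, whose underlying algebra is a $\phi$-twisted smash-type product on $K^\ast\otimes K$. The defining feature of this construction is that giving a left $D(K)$-module is the same as giving a left $K$-module together with a compatible right $K$-comodule structure, that is, a left-right Yetter--Drinfeld module; transporting the monoidal structures along this identification produces the tensor isomorphism ${}_K\YD{}^K\cong\Rep(D(K))$ asserted in (\ref{eqn:YDmodsisodouble}).

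The main obstacle is the bookkeeping forced by the associator $\phi$ and the distinguished elements $\alpha,\beta$ of the antipode: in the quasi-Hopf setting every compatibility that is automatic for honest Hopf algebras carries $\phi$-corrections, and checking that each antipode twist preserves the quasi-Yetter--Drinfeld axioms while flipping the correct structure is where the technical weight sits. Since both the center--Yetter--Drinfeld equivalence of \cite{BCP06} and the double--Yetter--Drinfeld isomorphism of \cite{BCPV19} are already established, however, the remaining task reduces to matching conventions---the left/right orientation of each (co)action, the placement of $S$ versus $S^{-1}$, and the $\mathrm{rev}$/$\mathrm{in}$ decorations---with those references.
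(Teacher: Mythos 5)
Your proposal is correct and follows essentially the same route as the paper: both parts are reduced to the cited results of \cite{BCP06} and \cite{BCPV19}, with the isomorphisms in (1) realized by antipode-twisting functors that are the identity on underlying objects. The only point the paper adds that you leave implicit is that the original statements in \cite{BCP06} concern possibly infinite-dimensional objects, so one must note that the isomorphisms (and their inverses) preserve finite-dimensional objects --- which is immediate from your description of the twist functors.
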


\begin{proof}
\begin{itemize}
\item[(1)]
In fact, the original statements in \cite{BCP06} are isomorphisms between categories of infinite-dimensional objects. However, since the category isomorphisms in the proof of \cite[Theorems 2.4 and 2.10]{BCP06} (and their inverses) clearly preserve finite-dimensional objects, we could know that (\ref{eqn:centerisoYDmods}) hold as well.

\item[(2)]
The isomorphism (\ref{eqn:YDmodsisodouble}) is mentioned in \cite[Page 330]{BCPV19}.
\end{itemize}
\end{proof}

\begin{remark}
Here the definition of the quantum double $D(K)$ as a (quasitriangular) quasi-Hopf algebras is referred to as \cite[Theorem 3.9]{HN99}. We also remark that $D(K)$ coincides with the notion of \textit{Drinfeld double} when $K$ is in particular a finite-dimensional Hopf algebra.
\end{remark}

As a conclusion, one could obtain some consequent relations between $H$ and its left partial dual $(H/B^+H)^\ast\#B$ as finite-dimensional quasi-Hopf algebras:

\begin{proposition}\label{prop:YDmodsequiv}
Let $H$ be a finite-dimensional Hopf algebra. Suppose that $(H/B^+H)^\ast\#B$ is a left partially dualized quasi-Hopf algebra of $H$. Then:
\begin{itemize}
\item[(1)]
There is an equivalence between left centers:
$$\Z\left(\Rep((H/B^+H)^\ast\#B)\right)\approx\Z\left(\Rep(H)\right)$$
as braided finite tensor categories;

\item[(2)]
There exist four braided tensor equivalences between the categories of finite-dimensional Yetter-Drinfeld modules over $(H/B^+H)^\ast\#B$ and $H$:
\begin{align*}
& {}^{(H/B^+H)^\ast\#B}_{(H/B^+H)^\ast\#B}\YD\approx{}^{H}_{H}\YD,
  \;\;\;\;\;\;\;\;\;\;\;\;\;\;\;\;
  {}_{(H/B^+H)^\ast\#B}\YD{}^{(H/B^+H)^\ast\#B}\approx{}_{H}\YD{}^{H},  \\
& {}^{(H/B^+H)^\ast\#B}\YD{}_{(H/B^+H)^\ast\#B}\approx{}^{H}\YD{}_{H}
  \;\;\;\;\;\;\text{and}\;\;\;\;\;\;
  \YD{}^{(H/B^+H)^\ast\#B}_{(H/B^+H)^\ast\#B}\approx\YD{}^{H}_{H}
\end{align*}
as braided finite tensor categories;

\item[(3)]
There is a tensor equivalence between the categories of finite-dimensional representations of quantum doubles:
$$\Rep\left(D((H/B^+H)^\ast\#B)\right)\approx\Rep\left(D(H)\right).$$
\end{itemize}
\end{proposition}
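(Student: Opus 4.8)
The plan is to obtain all three statements by concatenating the categorical Morita equivalence of Corollary \ref{cor:catMoritaequiv} with Schauenburg's equivalence (\ref{eqn:Schauenburgequiv}) and the identifications of centers with Yetter--Drinfeld categories recorded in Lemma \ref{lem:centerisoYDmods}. No fresh computation is required: the entire content is checking that the hypotheses of the cited results hold and keeping track of the braiding-reversal operations $(-)^\mathrm{in}$, $(-)^\mathrm{rev}$ and $(-)^{\mathrm{rev}\,\mathrm{in}}$.

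First I would establish (1). By Lemma \ref{lem:mod&dualcatoverH}(1) the module category $\Rep(B)$ is indecomposable and exact over the finite tensor category $\Rep(H)$, so Schauenburg's equivalence (\ref{eqn:Schauenburgequiv}) applies with $\C=\Rep(H)$ and $\M=\Rep(B)$ and yields a braided equivalence of finite tensor categories $\Z_l(\Rep(H))\approx\Z_l(\Rep(H)_{\Rep(B)}^\ast)$. Since the center construction is functorial, the tensor equivalence $\Rep(H)_{\Rep(B)}^\ast\approx\Rep((H/B^+H)^\ast\#B)$ of Corollary \ref{cor:catMoritaequiv} induces a braided equivalence on left centers; composing the two gives (1).

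For (2) I would apply Lemma \ref{lem:centerisoYDmods}(1) to each of $K=H$ (viewed as a quasi-Hopf algebra) and $K=(H/B^+H)^\ast\#B$. The left-left case is immediate, since ${}^K_K\YD\cong\Z_l(\Rep(K))$ without any reversal, so (1) equates the two left-left categories directly. For each of the other three flavours, Lemma \ref{lem:centerisoYDmods}(1) identifies the relevant Yetter--Drinfeld category of $K$ with $\Z_l(\Rep(K))$ only after applying one reversal operation; transporting the equivalence of (1) through these identifications therefore yields an equivalence between the reversed versions, say $\big({}_H\YD{}^H\big)^\mathrm{in}\approx\big({}_{(H/B^+H)^\ast\#B}\YD{}^{(H/B^+H)^\ast\#B}\big)^\mathrm{in}$ in the left-right case. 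Applying the same reversal operation once more strips it off and produces the stated braided equivalence, and similarly for the right-left and right-right cases. Finally (3) follows by feeding the left-right equivalence just obtained in (2) into the tensor isomorphism ${}_K\YD{}^K\cong\Rep(D(K))$ of Lemma \ref{lem:centerisoYDmods}(2), evaluated at $K=H$ and $K=(H/B^+H)^\ast\#B$, which gives $\Rep(D((H/B^+H)^\ast\#B))\cong{}_{(H/B^+H)^\ast\#B}\YD{}^{(H/B^+H)^\ast\#B}\approx{}_H\YD{}^H\cong\Rep(D(H))$ as tensor categories.

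The only step demanding attention --- and hence the sole mild obstacle --- is the bookkeeping in (2): one must confirm that a braided equivalence of the form $\A^{\diamond}\approx\B^{\diamond}$, where $\diamond$ is any of $\mathrm{in}$, $\mathrm{rev}$, $\mathrm{rev}\,\mathrm{in}$, forces $\A\approx\B$ as braided tensor categories. This is precisely the assertion that each of these operations is an involutive endofunctor of the $2$-category of braided tensor categories carrying braided equivalences to braided equivalences; it is immediate from the definitions but should be stated explicitly to justify the cancellation. Everything else is direct substitution into results already proved.
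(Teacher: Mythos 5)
Your proposal is correct and follows essentially the same route as the paper: part (1) is Corollary \ref{cor:catMoritaequiv} combined with Schauenburg's equivalence (\ref{eqn:Schauenburgequiv}) for $\C=\Rep(H)$, $\M=\Rep(B)$, and parts (2) and (3) then follow from Lemma \ref{lem:centerisoYDmods}. Your explicit remark that the reversal operations $(-)^{\mathrm{in}}$, $(-)^{\mathrm{rev}}$, $(-)^{\mathrm{rev}\,\mathrm{in}}$ are involutive and preserve braided equivalences is a small piece of bookkeeping the paper leaves implicit, but it is the right justification.
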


\begin{proof}
(1)  is an immediate conclusion of Corollary \ref{cor:catMoritaequiv} and the equivalence (\ref{eqn:Schauenburgequiv}) for the case when $\C=\Rep(H)$ and $\M=\Rep(B)$.

The equivalences desired in (2) and (3) are obtained by combining (1) and Lemma \ref{lem:centerisoYDmods}.
\end{proof}

There are supposed to be more tensor equivalences between categories of finite-dimensional modules over $H$ or partial duals. For example, one might establish a tensor isomorphism
$$\YD^{(H/B^+H)^\ast\#B}_{(H/B^+H)^\ast\#B}
\cong\left({}^{B^\biop\#(H/B^+H)^{\ast\,\biop}}_{B^\biop\#(H/B^+H)^{\ast\,\biop}}\YD\right)
{}^\mathrm{rev}$$
according to \cite[Proposition 2.7]{BCP06} and Proposition \ref{prop:partialdualbiop} below, but we would not attempt to gather them in this paper.

Finally, suppose that $B$ is a given left coideal subalgebra of $H$. It is clear that up to equivalences, the left $\Rep(H)$-module category $\Rep(B)$, as well as the corresponding dual category $\Rep(H)_{\Rep(B)}^\ast$, does not depend on the choices of {\pams}s $(\zeta,\gamma^\ast)$.
As a conclusion, we could find by \cite[Theorem 2.2]{NS08} that:

\begin{proposition}\label{prop:partialdualsequiv}
Let $H$ be a finite-dimensional Hopf algebra with is a left coideal subalgebra $B$.
Then all the left partially dualized quasi-Hopf algebras of $H$ determined by all the {\pams}s for $B\subseteq H$ are gauge equivalent to each others.
\end{proposition}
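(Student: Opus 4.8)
The plan is to reduce everything to the observation that the dual tensor category $\Rep(H)_{\Rep(B)}^\ast$, and likewise the underlying abelian category and forgetful functor of the reconstructed representation categories, are all independent of the chosen {\pams}; the gauge equivalence will then be delivered by \cite[Theorem 2.2]{NS08}.

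First I would fix two {\pams}s $(\zeta_1,\gamma_1^\ast)$ and $(\zeta_2,\gamma_2^\ast)$ for $\iota:B\rightarrowtail H$ and write $K_1,K_2$ for the corresponding left partial duals. By the multiplication formula (\ref{eqn:smashprod}), both $K_1$ and $K_2$ share the \emph{same} underlying smash product algebra $(H/B^+H)^\ast\#B$, since that algebra structure is built solely from the coideal data of $B\subseteq H$ and never from a {\pams}; only the comultiplications, the associators (\ref{eqn:associator}) and the antipodes of Theorem \ref{thm:partialdual} record the choice of $(\zeta_i,\gamma_i^\ast)$. Hence $\Rep(K_1)$ and $\Rep(K_2)$ agree as $\k$-linear abelian categories, with one and the same forgetful functor $\omega$ to $\Vec$, and differ only in their tensor products (\ref{eqn:tensorprod}) and associativity constraints.

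Next I would use Theorem \ref{thm:reconstruction}: for each $i$ the equivalence $\Phi_i$ from ${}_{(H/B^+H)^\ast}\mathfrak{M}_{(H/B^+H)^\ast}^{H^\ast}$ to $\Rep(K_i)$ has the {\pams}-free underlying functor $M\mapsto\overline{M}$, only its monoidal structure $J$ of (\ref{eqn:monoidalstruJ}) depending on $(\zeta_i,\gamma_i^\ast)$. Writing $\Psi_i$ for a quasi-inverse, the composite $G:=\Phi_2\circ\Psi_1:\Rep(K_1)\to\Rep(K_2)$ is a tensor equivalence between the two representation categories. Because $\omega\circ\Phi_i$ is the {\pams}-independent functor $M\mapsto\overline{M}$, on underlying vector spaces one has $\omega\circ G=\overline{(-)\square_{B^\ast}H^\ast}$, and the forward direction of the adjunction $\sigma$ of (\ref{eqn:sigmaV})—itself given by the {\pams}-free formula $\sigma_V(\overline{\sum_i v_i\otimes h_i^\ast})=\sum_i v_i\langle h_i^\ast,1\rangle$—furnishes a natural isomorphism $\omega\circ G\cong\omega$. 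Thus $G$ is compatible with the forgetful quasi-fiber functors of $K_1$ and $K_2$.

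Finally I would invoke \cite[Theorem 2.2]{NS08}, according to which a tensor equivalence between the representation categories of two finite-dimensional quasi-Hopf algebras that is compatible with their forgetful functors forces the algebras to be gauge equivalent; applying it to $G$ shows $K_1$ and $K_2$ are gauge equivalent. As gauge equivalence is an equivalence relation, all left partial duals of $H$ attached to {\pams}s for $B\subseteq H$ are mutually gauge equivalent. The main point deserving care—and the only real obstacle—is verifying that $G$ is compatible with the forgetful functors \emph{in exactly the form demanded by} \cite[Theorem 2.2]{NS08}; this reduces to tracking that the underlying functors of $\Phi_i,\Psi_i$ together with the natural isomorphism $\sigma$ are {\pams}-independent, which is immediate from the defining formulas of $\overline{(-)}$, $-\square_{B^\ast}H^\ast$ and $\sigma$ and uses no {\pams} data at all.
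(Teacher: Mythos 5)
Your proposal is correct and follows essentially the same route as the paper: the paper likewise observes that the dual category (equivalently, the Doi--Hopf module category and the underlying functor $\Phi$) is independent of the {\pams} and then invokes \cite[Theorem 2.2]{NS08}. The only difference is one of detail — the paper's proof is two sentences, whereas you explicitly construct the tensor equivalence $\Phi_2\circ\Psi_1$ and verify its compatibility with the forgetful functors via $\sigma$, which is the part the paper leaves implicit.
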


\begin{remark}\label{rmk:gaugeequiv(MAMS)}
The notion of \textit{gauge transformation} and \textit{gauge equivalence} of quasi-Hopf algebras is introduced in \cite[Chapter XV]{Kas95}, and one could find the details in \cite[Section 1]{NS08} for example.

It is known by \cite[Theoreom 6.1]{EG02} and \cite[Theorem 2.2]{NS08} that two finite-dimensional quasi-Hopf algebra $K$ and $K'$ are gauge equivalent if and only if $\Rep(K)$ and $\Rep(K')$ are tensor equivalent.
Thus, Proposition \ref{prop:partialdualsequiv} follows.
\end{remark}

\section{Opposite and cooposite structures, and right partial dualization}\label{section:5}

This section consists of some descriptions of the opposite, coopposite and dual structures of left partially dualized quasi-Hopf algebras $(H/B^+H)^\ast\#B$.

\subsection{Biopposite structures of left partially dualized quasi-Hopf algebras}\label{subsection:5.1}

We still let $H$ be a finite-dimensional Hopf algebra over $\k$. Suppose $\pi^\ast:A'\rightarrowtail H^\ast$ is a right coideal subalgebra and $\iota:B\rightarrowtail H$ is a left coideal subalgebra. It is known in \cite[Remark 1.3(b)]{Doi92} that $A'\#B$ is an algebra with the smash product structure, which is in fact a general case of (\ref{eqn:smashprod}). One could directly verify that
\begin{equation}\label{eqn:smashproduct-op}
(A'\#B)^\op\cong B^\biop\#{A'}^\biop,\;\;a'\#b\mapsto b\#a'
\end{equation}
is an isomorphism of algebras. Thus in particular, the opposite of the left partially dualized quasi-Hopf algebra $(H/B^+H)^\ast\#B$ is isomorphic to
\begin{equation}\label{eqn:antismashprod}
B^\biop\#(H/B^+H)^{\ast\,\biop}
\end{equation}
as algebras.

However, the latter algebra (\ref{eqn:antismashprod}) could also be a partial dual of $H^{\ast\,\biop}$ determined by the {\pams} $(\gamma^\ast,\zeta)$ according to Proposition \ref{prop:BCP-selfdual}. Specifically,
there is a biunitary linear isomorphism $\vartheta$ anti-preserving the multiplication and ``comultiplication'' between quasi-Hopf algebras $(H/B^+H)^\ast\#B$ and $B^\biop\#(H/B^+H)^{\ast\,\biop}$:

\begin{proposition}\label{prop:partialdualbiop}
There is an isomorphism
$$
\begin{array}{crcl}
\vartheta: & ((H/B^+H)^\ast\#B)^\biop & \cong & B^\biop\#(H/B^+H)^{\ast\,\biop}  \\
& f\#b & \mapsto & b\#f
\end{array}
$$
of quasi-bialgebras.
Furthermore, if the element $\pd{\upsilon}$ (\ref{eqn:upsilon}) is invertible, then:
\begin{itemize}
\item[(1)]
The antipode $(\pd{S}_1,\e\#1,\pd{\upsilon})$ of the quasi-Hopf algebra $((H/B^+H)^\ast\#B)^\biop$ is preserved by $\vartheta$ to be an antipode
$$(\vartheta\circ\pd{S}_1\circ\vartheta^{-1},\; 1\#\e,\; \vartheta(\pd{\upsilon}))$$
of $B^\biop\#(H/B^+H)^{\ast\,\biop}$;

\item[(2)]
The antipode $(\pd{S}_2,\pd{\upsilon},\e\#1)$ of the quasi-Hopf algebra $((H/B^+H)^\ast\#B)^\biop$ is preserved by $\vartheta$ to be an antipode
$$(\vartheta\circ\pd{S}_2\circ\vartheta^{-1},\; \vartheta(\pd{\upsilon}),\; 1\#\e)$$
of $B^\biop\#(H/B^+H)^{\ast\,\biop}$.
\end{itemize}
\end{proposition}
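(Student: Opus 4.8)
The plan is to exhibit $\vartheta$ as a quasi-bialgebra isomorphism by reading its target as a left partial dual of a \emph{different} Hopf algebra and then matching structure against structure. First I would recall from $(\ref{eqn:smashproduct-op})$ that $f\#b\mapsto b\#f$ is already an isomorphism of algebras $((H/B^+H)^\ast\#B)^\op\cong B^\biop\#(H/B^+H)^{\ast\,\biop}$; since the biopposite quasi-bialgebra has underlying algebra the opposite one, $\vartheta$ is at least an algebra map on $((H/B^+H)^\ast\#B)^\biop$. The essential structural input is that, by Proposition \ref{prop:BCP-selfdual} (equivalently the fourth row of the table of induced {\pams}s), the pair $(\gamma^\ast,\zeta)$ is a {\pams} for the left coideal subalgebra $\pi^\ast:(H/B^+H)^{\ast\,\biop}\rightarrowtail H^{\ast\,\biop}$, whose associated quotient map is $\iota^\ast:H^{\ast\,\biop}\twoheadrightarrow B^{\ast\,\biop}$. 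Hence Theorem \ref{thm:partialdual}, applied to the Hopf algebra $H^{\ast\,\biop}$ with this {\pams}, endows $B^\biop\#(H/B^+H)^{\ast\,\biop}$ with a canonical quasi-Hopf structure, in which $B^\biop\cong(B^{\ast\,\biop})^\ast$ plays the role of the dual cofactor and $(H/B^+H)^{\ast\,\biop}$ the role of the coideal subalgebra. This is precisely the structure against which $\vartheta$ must be compared.

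It then remains to verify that $\vartheta$ intertwines the comultiplications, counits and associators of the two sides. The counit is immediate, since $\pd{\e}(f\#b)=\la f,1\ra\la\e,b\ra$ is symmetric in the two roles and the biopposite counit is unchanged. For the comultiplication one computes $\pd{\Delta}^{\cop}=\tau\circ\pd{\Delta}$ on the source from $(\ref{eqn:Delta(f)})$ and $(\ref{eqn:Delta(b)})$, and checks that $\vartheta^{\otimes2}$ carries it to the comultiplication produced by Theorem \ref{thm:partialdual}(1) for $(\gamma^\ast,\zeta)$; here the mechanism is that duality interchanges $\zeta\leftrightarrow\gamma^\ast$, interchanges the dual bases $\{b_i,b_i^\ast\}$ of $B,B^\ast$ with the dual bases $\{f_i,f_i^\ast\}$ of $(H/B^+H)^\ast,H/B^+H$, and converts $\zeta^\ast,\gamma^\ast$ into $\zeta,\gamma$ in the relevant slots, so the two families of formulas coincide after the flip.

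The associator is the main obstacle. I would transport $\pd{\phi}$ of $(\ref{eqn:associator})$ through the biopposite transformation (which reverses the three tensorands and replaces the structure data by their convolution inverses composed with $S^{\pm1}$) and then recognise the outcome, after applying $\vartheta^{\otimes3}$, as the instance of $(\ref{eqn:associator})$ written for the {\pams} $(\gamma^\ast,\zeta)$. The bookkeeping is delicate because the ingredients $\overline{\gamma}^\ast$, $\overline{\zeta}^\ast$, $S^{-1}$ occurring in $(\ref{eqn:associator})$ must be re-expressed through the induced-{\pams} dictionary of Proposition \ref{prop:cleftcocleftmore} and Corollary \ref{cor:PAMSopcop}, under which $\overline{\zeta}\circ S^{-1}$, $\overline{\gamma}^\ast$, and their partners become the corresponding data of $(\gamma^\ast,\zeta)$; I expect this matching of the two copies of $(\ref{eqn:associator})$ to be the longest single computation. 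Once it is in hand, $\vartheta$ preserves $m,1,\pd{\Delta},\pd{\e}$ and $\pd{\phi}$, and being a bijection it is an isomorphism of quasi-bialgebras, establishing the main assertion.

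For the conditional antipode statements I would use that an isomorphism of quasi-bialgebras automatically transports an antipode together with its distinguished elements: if $(\pd{S},\pd{\alpha},\pd{\beta})$ is an antipode of a quasi-Hopf algebra $K$ and $\vartheta:K\cong K'$, then $(\vartheta\circ\pd{S}\circ\vartheta^{-1},\vartheta(\pd{\alpha}),\vartheta(\pd{\beta}))$ is an antipode of $K'$. Assuming $\pd{\upsilon}$ invertible (equivalently the conditions of Remark \ref{rmk:upsiloninvertible}), Theorem \ref{thm:partialdual}(4) gives the antipodes $(\pd{S}_1,\pd{\upsilon},\e\#1)$ and $(\pd{S}_2,\e\#1,\pd{\upsilon})$ of $(H/B^+H)^\ast\#B$; passing to the biopposite leaves the antipode map unchanged but exchanges the two distinguished elements (the antipode axioms for $\pd{\alpha}$ and $\pd{\beta}$ swap roles under $\biop$), so $((H/B^+H)^\ast\#B)^\biop$ carries $(\pd{S}_1,\e\#1,\pd{\upsilon})$ and $(\pd{S}_2,\pd{\upsilon},\e\#1)$. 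Finally $\vartheta(\e\#1)=1\#\e$ is the unit of the target, so transporting along $\vartheta$ yields exactly $(\vartheta\circ\pd{S}_1\circ\vartheta^{-1},\,1\#\e,\,\vartheta(\pd{\upsilon}))$ and $(\vartheta\circ\pd{S}_2\circ\vartheta^{-1},\,\vartheta(\pd{\upsilon}),\,1\#\e)$, which are the two claimed antipodes.
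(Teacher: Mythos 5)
Your proposal follows essentially the same route as the paper: identify $B^\biop\#(H/B^+H)^{\ast\,\biop}$ as the left partial dual of $H^{\ast\,\biop}$ determined by the induced {\pams} $(\gamma^\ast,\zeta)$ of Proposition \ref{prop:BCP-selfdual}, match the comultiplication, counit and associator under $\vartheta$, and then transport the antipodes, noting that passing to $\biop$ fixes $\pd{S}$ but swaps $\pd{\alpha}$ and $\pd{\beta}$ and that $\vartheta(\e\#1)=1\#\e$. One correction: the biopposite associator is simply the tensor flip $\pd{\phi}^{321}$ (equivalently, the inverse of the flipped $\pd{\phi}^{-1}$), with no replacement of the ingredients by convolution inverses composed with $S^{\pm1}$; the paper exploits this by comparing the much simpler \emph{inverse} associators in the form of Remark \ref{rmk:phi^-1} and Remark \ref{rmk:equivDelta}(3) rather than the formula (\ref{eqn:associator}) itself, which is the route you should take to keep the computation manageable.
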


\begin{proof}
Recall in \cite{Dri89} that the biopposite structures of a quasi-Hopf algebra form again a quasi-Hopf algebra (or see \cite[Example 2.13 and Remark 3.16(5)]{BCPV19} e.g.). For example in our situations, if we use the same notations in Theorem \ref{thm:partialdual}, then the quasi-bialgebra $((H/B^+H)^\ast\#B)^\biop$ would have the opposite multiplication with the algebra map $\pd{\Delta}^\cop$, and the associator $\pd{\phi}^\biop$ as the inverse of
\begin{equation}\label{eqn:biopassociator^-1}
(\pd{\phi}^\biop)^{-1}=\sum_{i,j}
\left(\gamma^\ast[\zeta^\ast(b_i^\ast)_{(2)}\zeta^\ast(b_j^\ast)]\#1\right)
\otimes\left(\gamma^\ast[\zeta^\ast(b_i^\ast)_{(1)}]\#b_j\right)
\otimes\left(\e\#b_i\right)
\in((H/B^+H)^\ast\#B)^{\otimes3}
\end{equation}
which is obtained by flipping the first and third tensorands of (\ref{eqn:phi^-1}) in Remark \ref{rmk:phi^-1}, where $\{b_i\}$ is a linear basis of $B$ with dual basis $\{b_i^\ast\}$ of $B^\ast$.

Now we write out the quasi-bialgebra structure of the partial dual $B^\biop\#(H/B^+H)^{\ast\,\biop}$ with the language of the {\pams} $(\gamma^\ast,\zeta)$ and dual bases, and check that they are preserved by the isomorphism $\vartheta$ of algebras:
\begin{itemize}
\item
Referred to Remark \ref{rmk:equivDelta}(1),  the ``comultiplication'' of $B^\biop\#(H/B^+H)^{\ast\,\biop}$ maps $b\#f$ to
$$\sum_{i,j}\left(b_{(2)}\#\gamma^\ast[f_{(2)}\zeta^\ast(b_j^\ast)]f_i\right)
\otimes\left(b_j\zeta[\gamma(f_i^\ast)b_{(1)}]\#f_{(1)}\right)$$
for any $f\in(H/B^+H)^\ast$ and $b\in B$. This is exactly the same as $(\vartheta\otimes\vartheta)\circ\pd{\Delta}^\cop(f\#b)$, namely, the image of (\ref{eqn:Deltacomplete}) under $\vartheta\otimes\vartheta$ composite with the flip map.

\item
Clearly, the ``counit'' of $B^\biop\#(H/B^+H)^{\ast\,\biop}$ also maps $b\#f$ to $\la f,1\ra\la\e,b\ra$ for any $f\in(H/B^+H)^\ast$ and $b\in B$.

\item
Referred to Remark \ref{rmk:equivDelta}(3), the inverse of the associator of $B^\biop\#(H/B^+H)^{\ast\,\biop}$ would be
$$\sum_{i,j}
\left(1\#\gamma^\ast[\zeta^\ast(b_j^\ast)_{(2)}\zeta^\ast(b_i^\ast)]\right)
\otimes\left(b_i\#\gamma^\ast[\zeta^\ast(b_j^\ast)_{(1)}]\right)
\otimes\left(b_j\#\e\right),$$
which is exactly the same as
$(\vartheta\otimes\vartheta\otimes\vartheta)\left((\pd{\phi}^\biop)^{-1}\right)$,
namely, the image of (\ref{eqn:biopassociator^-1}) under $\vartheta^{\otimes3}$.
\end{itemize}
As a conclusion, $\vartheta$ is an isomorphism of quasi-bialgebras.

When $\pd{\upsilon}$ has an inverse $\pd{\upsilon}^{-1}\in(H/B^+H)^\ast\#B$, we only explain that (1) holds, as (2) is completely similar. In fact, since the antipodes of the quasi-Hopf algebra $(H/B^+H)^\ast\#B$ and its biopposite structure share the same transformation $\pd{S}_1$ but switched distinguished elements, it is sufficient to notice that $\vartheta(\e\#1)=1\#\e$ holds.
\end{proof}

\subsection{Opposite and coopposite structures of left partial dual}\label{subsection:5.2}

As the left partially dualized quasi-Hopf algebra $(H/B^+H)^\ast\#B$ is finite-dimensional, it follows from \cite[Theorem 2.5(1)]{BC03} that each of its antipodes is bijective. Of course, this could also be known due to the fact that $\Rep(H)$ is a finite tensor category. Anyway, there must exist an opposite quasi-Hopf algebra $((H/B^+H)^\ast\#B)^\op$, which would be described up to isomorphisms in this subsection.

However, the antipodes of $(H/B^+H)^\ast\#B$ are not unique, among which we have not declared a canonical one from the view of Theorem \ref{thm:partialdual}(4). Thus we would only provide quasi-bialgebra isomorphisms, since they automatically preserve the corresponding antipodes as well. In addition, these results are similar to those in Proposition \ref{prop:partialdualbiop}, but determined by the systems in Corollary \ref{cor:PAMSopcop}.

\begin{proposition}
Let $H$ be a finite-dimensional Hopf algebra. Suppose that $B$ is a left coideal subalgebra of $H$ with a {\pams} $(\zeta,\gamma^\ast)$. Then:
\begin{itemize}
\item[(1)]
As quasi-bialgebras, the left partially dualized quasi-Hopf algebra of $H^\op$ determined by $(\overline{\zeta}\circ S^{-1},\overline{\gamma}^\ast)$ is the same as the left partial dual of $H^\cop$ determined by $(\overline{\zeta},\overline{\gamma}^\ast\circ S^{-1})$;

\item[(2)]
Denote the quasi-Hopf bialgebra in (1) by $(H/B^+H)^{\ast\,\op}\#B^\op$. Then there is an isomorphism of quasi-bialgebras:
\begin{equation}\label{eqn:varphi}
\begin{array}{crcl}
\varphi: & ((H/B^+H)^\ast\#B)^\op & \cong & (H/B^+H)^{\ast\,\op}\#B^\op  \\
& f\#b & \mapsto & (\e\#b)(f\#1)=\sum f_{(1)}\#(b\leftharpoonup S^{-1}(f_{(2)})),
\end{array}
\end{equation}
whose inverse satisfies that
\begin{equation}\label{eqn:varphiinv}
\varphi^{-1}(f\#b)=\sum f_{(1)}\#(b\leftharpoonup f_{(2)})
\;\;\;\;\;\;(\forall f\in(H/B^+H)^\ast,b\in B).
\end{equation}
\end{itemize}
\end{proposition}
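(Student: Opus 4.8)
The plan is to derive both statements from the explicit structure formulas of Theorem \ref{thm:partialdual}, feeding in the two systems recorded in Corollary \ref{cor:PAMSopcop}, and to organize the comparison for $\varphi$ exactly as in the biopposite case of Proposition \ref{prop:partialdualbiop}.

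For part (1), I would first observe that both candidates are honest left partial duals: by Corollary \ref{cor:PAMSopcop}(1) the pair $(\overline{\zeta}\circ S^{-1},\overline{\gamma}^\ast)$ is a {\pams} for $\iota:B^\op\rightarrowtail H^\op$, and by Corollary \ref{cor:PAMSopcop}(2) the pair $(\overline{\zeta},\overline{\gamma}^\ast\circ S^{-1})$ is a {\pams} for $S^{-1}\circ\iota:B^\op\rightarrowtail H^\cop$. In both cases the quotient coalgebra is identified with $(H/B^+H)^\cop$, so both duals live on the same space $(H/B^+H)^\ast\otimes B$ and both are instances of Theorem \ref{thm:partialdual}. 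I would then substitute each system into the four data of that theorem — the smash multiplication (\ref{eqn:smashprod}), the comultiplication (\ref{eqn:Deltacomplete}), the counit (\ref{eqn:epsilon}), and the associator (\ref{eqn:associator}) — and verify they coincide term by term. The counit is immediate, since (\ref{eqn:epsilon}) is insensitive to the system. For the remaining data the governing principle is that the $H^\cop$-construction carries the extra $S^{-1}$ in its left coideal comodule structure (\ref{eqn:leftH^copcomod}), while the $H^\op$-construction carries none; this $S^{-1}$ is precisely absorbed by the difference between the $H^\op$- and $H^\cop$-hit actions on $(H/B^+H)^{\ast\,\op}$, and the same cancellation, run through the $S^{-1}$-relations of Corollary \ref{cor:PAMSopcop}, forces the two comultiplications and the two associators to agree.

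For part (2), I would define $\varphi(f\#b):=(\e\#b)(f\#1)$ with the product taken in the target $(H/B^+H)^{\ast\,\op}\#B^\op$ of part (1). Expanding this product by the target's smash rule, whose hit carries the $S^{-1}$ inherited from (\ref{eqn:leftH^copcomod}), yields the closed form $\varphi(f\#b)=\sum f_{(1)}\#(b\leftharpoonup S^{-1}(f_{(2)}))$. I would then check that the stated $\varphi^{-1}(f\#b)=\sum f_{(1)}\#(b\leftharpoonup f_{(2)})$ is a two-sided inverse: composing the two and using the module law $(b\leftharpoonup\phi)\leftharpoonup\chi=b\leftharpoonup(\phi\chi)$ together with the antipode identity $\sum S^{-1}(x_{(2)})x_{(1)}=\la x,1\ra\,\e$ in $H^\ast$ collapses the inner two coproduct legs to the counit and returns $f\#b$. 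That $\varphi$ is an algebra map out of $((H/B^+H)^\ast\#B)^\op$, namely $\varphi(yx)=\varphi(x)\varphi(y)$, I would verify on the generators $f\#1$ and $\e\#b$, where $\varphi(f\#1)=f\#1$ and $\varphi(\e\#b)=\e\#b$; each case then reduces to the two equivalent smash rules in (\ref{eqn:smashprod}) and the hit identities (\ref{eqn:bf0})--(\ref{eqn:bf}).

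It remains to check the coalgebra and associator compatibility, which I would handle exactly as in Proposition \ref{prop:partialdualbiop}. Using that $((H/B^+H)^\ast\#B)^\op$ carries the same comultiplication $\pd{\Delta}$ and the inverted associator $\pd{\phi}^{-1}$ (the standard opposite quasi-bialgebra structure, cf.\ \cite{Dri89}), I would write out the comultiplication and the inverse associator of the target through Remark \ref{rmk:equivDelta} in terms of $(\overline{\zeta},\overline{\gamma}^\ast\circ S^{-1})$ and match them against the images under $\varphi\otimes\varphi$ and $\varphi^{\otimes3}$; the antipodes are then transported automatically, since a quasi-bialgebra isomorphism carries antipodes to antipodes. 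I expect the main obstacle to be the bookkeeping in part (1): correctly tracking the simultaneous interaction of opposite multiplication, coopposite comultiplication, and the compositions with $S^{-1}$ once they are fed into the associator (\ref{eqn:associator}), which already couples $S^{-1}$, $\overline{\gamma}^\ast$ and $\overline{\zeta}^\ast$ in a single expression. Once part (1) is settled, part (2) is comparatively mechanical, its only genuinely new ingredient being the closed-form expansion of $(\e\#b)(f\#1)$ in the target and the $S^{-1}$ it produces in $\varphi$ — precisely what distinguishes the opposite from the biopposite case.
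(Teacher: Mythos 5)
Your proposal follows the paper's own proof essentially step for step: part (1) by writing out and comparing the multiplication, comultiplication, counit and inverse associator of the two partial duals term by term (with the $S^{-1}$ from the $H^\cop$-comodule structure absorbed into the hit action), and part (2) by setting $\varphi(f\#b)=(\e\#b)(f\#1)$ in the target, checking the stated inverse via the identity $\sum S^{-1}(h^\ast_{(2)})h^\ast_{(1)}=\la h^\ast,1\ra\e$, and transporting $\pd{\Delta}$ and $\pd{\phi}$ exactly as in Proposition \ref{prop:partialdualbiop}. The one place where "comparatively mechanical" undersells the work is the compatibility $(\varphi\otimes\varphi)\circ\pd{\Delta}(f\#1)$, which in the paper is the longest chain of convolution identities ((\ref{eqn:convolution*1}), (\ref{eqn:gamma*bar2}), (\ref{eqn:convolution*2}), (\ref{eqn:iota*zeta*})) in the whole section, but your plan correctly identifies all the ingredients needed to carry it out.
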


\begin{proof}
\begin{itemize}
\item[(1)]
In order to specify the structures of the former left partial dual $K_1$ of $H^\op$ determined by $(\overline{\zeta}\circ S^{-1},\overline{\gamma}^\ast)$, note at first that the right $H^{\ast\,\cop}$-coaction on the algebra $(H/B^+H)^{\ast\,\op}$ should be:
$$(H/B^+H)^{\ast\,\op}\rightarrow (H/B^+H)^{\ast\,\op}\otimes H^{\ast\,\cop},\;\;
f\mapsto\sum f_{(1)}\otimes S^{-1}(f_{(2)}),$$
which is induced by dualizing (\ref{eqn:rightH^opmod}) linearly, or equivalently, induced such that the injection $S^{-1}\circ\pi^\ast$ of right $H^{\ast\,\cop}$-module algebras. Then we could know that the algebra structure of $K_1$ is the smash product of $(H/B^+H)^{\ast\,\op}$ and $B^\op$ with multiplication:
\begin{eqnarray}\label{eqn:K_1product}
(f\#b)\otimes(g\#c) &\mapsto&
\sum f\cdot^\op g_{(1)}\#(b\leftharpoonup S^{-1}(g_{(2)}))\cdot^\op c  \nonumber  \\
&=& \sum g_{(1)}f\#\la S^{-1}(g_{(2)}),b_{(1)}\ra cb_{(2)}
\end{eqnarray}
for all $f,g\in(H/B^+H)^\ast$ and $b,c\in B$. On the other hand, the ``comultiplication'' of $K_1$ becomes:
\begin{eqnarray}\label{eqn:K_1coproduct}
&&f\#b  \nonumber  \\
&\overset{(\ref{eqn:Deltacomplete})}{\mapsto}&
\sum_{i,j}
\left(f_{(1)}\#b_i
  \cdot^\op\overline{\zeta}[S^{-1}(b_{(1)}\cdot^\op\overline{\gamma}(f_j^\ast))]\right)
\otimes
\left(\overline{\gamma}^\ast[S^{-1}(f_{(2)})S^{-1}(\overline{\zeta}(b_i^\ast))]
  \cdot^\op f_j\#b_{(2)}\right)  \nonumber  \\
&=&
\sum_{i,j}
\left(f_{(1)}\#\overline{\zeta}[S^{-1}(\overline{\gamma}(f_j^\ast)b_{(1)})]b_i\right)
\otimes
\left(f_j\overline{\gamma}^\ast[S^{-1}(\overline{\zeta}(b_i^\ast)f_{(2)})]\#b_{(2)}\right),
\end{eqnarray}
where $\{b_i\}$ is a linear basis of $B$ with dual basis $\{b^\ast_i\}$ of $B^\ast$, and $\{f_i\}$ is a linear basis of $(H/B^+H)^\ast$ with dual basis $\{f^\ast_i\}$ of $H/B^+H$ as usual. Moreover, the associator of $K_1$ would have inverse
\begin{eqnarray}\label{eqn:K_1associator^-1}
&& \sum_{i,j}\left(\e\#b_i\right)
\otimes\left(\overline{\gamma}^\ast[S^{-1}(\overline{\zeta}^\ast(b^\ast_i))_{(2)}]\#b_j\right)
\otimes
\left(\overline{\gamma}^\ast[S^{-1}(\overline{\zeta}^\ast(b^\ast_i))_{(1)}
  S^{-1}(\zeta^\ast(b^\ast_j))]\#1\right)  \nonumber  \\
&=&
\sum_{i,j}\left(\e\#b_i\right)
\otimes\left(\overline{\gamma}^\ast[S^{-1}(\overline{\zeta}^\ast(b^\ast_i)_{(1)})]\#b_j\right)
\otimes
\left(\overline{\gamma}^\ast[S^{-1}(\overline{\zeta}^\ast(b^\ast_j)
  \overline{\zeta}^\ast(b^\ast_i)_{(2)})]\#1\right).
\end{eqnarray}

As for the latter left partial dual $K_2$ of $H^\cop$ determined by $(\overline{\zeta},\overline{\gamma}^\ast\circ S^{-1})$, One could write its structures in the similar way, by noting that the left $H^\cop$-coaction on $B^\op$ becomes (\ref{eqn:leftH^copcomod}). For example, the multiplication of $K_2$ would be:
\begin{eqnarray*}
&& (f\#b)\otimes(g\#c) \\
&\mapsto&
\sum f\cdot^\op (S^{-1}(b_{(1)})\rightharpoonup g)\#b_{(2)}\cdot^\op c  \nonumber  \\
&=& \sum g_{(1)}f\la g_{(2)},S^{-1}(b_{(1)})\ra\# cb_{(2)}
\;\;\;\;\;\;\;\;(\forall f,g\in(H/B^+H)^\ast,\;\forall b,c\in B),
\end{eqnarray*}
which is exactly the same as (\ref{eqn:K_1product}). It is also straightforward to compute and verify that $K_2$ has both the same ``comultiplication'' (\ref{eqn:K_1coproduct}) and the inverse (\ref{eqn:K_1associator^-1}) of associator with $K_1$.

The equalities for the units and ``counits'' of $K_1$ and $K_2$ are clear.

\item[(2)]
It is direct to find that $\varphi$ is an isomorphism of algebras, and has inverse defined through (\ref{eqn:varphiinv}). Now we prove that $\varphi$ preserves the ``comultiplications'': Denote $e=\e\#1$, and recall in (\ref{eqn:Delta(f):e}) that
$$\pd{\Delta}(f\#1)=\sum \left(f_{(1)}\#e_{\la0\ra}\right)\otimes \left(\gamma^\ast[f_{(2)}\zeta^\ast(e_{\la1\ra})]\#1\right)\;\;\;\;(\forall f\in(H/B^+H)^\ast)$$
holds with some of our usual notations. Then we compute that:
\begin{eqnarray*}
&& (\varphi\otimes\varphi)\circ\pd{\Delta}(f\#1)  \\
&\overset{(\ref{eqn:varphi})}{=}&
\sum\left(f_{(1)}\#(e_{\la0\ra}\leftharpoonup S^{-1}(f_{(2)}))\right)
  \otimes\left(\gamma^\ast[f_{(3)}\zeta^\ast(e_{\la1\ra})]\#1\right)  \\
&\overset{(\ref{eqn:dualbases(e)})}{=}&
\sum\left(f_{(1)}\#e_{\la0\ra}\right)
  \otimes\left(\gamma^\ast[f_{(3)}\zeta^\ast(S^{-1}(f_{(2)})\btr e_{\la1\ra})]\#1\right)  \\
&\overset{(\ref{eqn:btr})}{=}&
\sum\left(f_{(1)}\#e_{\la0\ra}\right)
  \otimes\left(\gamma^\ast\left[f_{(3)}
  \zeta^\ast(\iota^\ast[S^{-1}(f_{(2)})\zeta^\ast(e_{\la1\ra})])\right]\#1\right)  \\
&\overset{(\ref{eqn:convolution*1})}{=}&
\sum\left(f_{(1)}\#e_{\la0\ra}\right)
  \otimes\left(\gamma^\ast\left[f_{(4)}S^{-1}(f_{(3)})\zeta^\ast(e_{\la1\ra})_{(1)}
  \pi^\ast(\overline{\gamma}^\ast[S^{-1}(f_{(2)})\zeta^\ast(e_{\la1\ra})_{(2)}])\right]
  \#1\right)  \\
&\overset{(\ref{eqn:gamma*})}{=}&
\sum\left(f_{(1)}\#e_{\la0\ra}\right)
  \otimes\left(\gamma^\ast[\zeta^\ast(e_{\la1\ra})_{(1)}]
  \overline{\gamma}^\ast[S^{-1}(f_{(2)})\zeta^\ast(e_{\la1\ra})_{(2)}])]\#1\right)  \\
&\overset{(\ref{eqn:gamma*bar2})}{=}&
\sum\left(f_{(1)}\#e_{\la0\ra}\right)
  \otimes\left(\overline{\gamma}^\ast\left[S^{-1}(f_{(2)})\zeta^\ast(e_{\la1\ra})_{(2)}
  S^{-1}(\pi^\ast(\gamma^\ast[\zeta^\ast(e_{\la1\ra})_{(1)}])\right]\#1\right)  \\
&=&
\sum\left(f_{(1)}\#e_{\la0\ra}\right)
  \otimes\left(\overline{\gamma}^\ast\left[
  S^{-1}\left(\pi^\ast(\gamma^\ast[\zeta^\ast(e_{\la1\ra})_{(1)}])
  S[\zeta^\ast(e_{\la1\ra})_{(2)}]f_{(2)}\right)\right]\#1\right)  \\
&\overset{(\ref{eqn:convolution*2})}{=}&
\sum\left(f_{(1)}\#e_{\la0\ra}\right)
  \otimes\left(\overline{\gamma}^\ast[
  S^{-1}(\overline{\zeta}^\ast(\iota^\ast[\zeta^\ast(e_{\la1\ra})]))f_{(2)}]\#1\right)  \\
&\overset{(\ref{eqn:iota*zeta*})}{=}&
\sum\left(f_{(1)}\#e_{\la0\ra}\right)
  \otimes\left(\overline{\gamma}^\ast[
  S^{-1}(\overline{\zeta}^\ast(e_{\la1\ra}))f_{(2)}]\#1\right),  \\
\end{eqnarray*}
and this equals to the ``coproduct'' of $\varphi(f\#1)$ in $(H/B^+H)^{\ast\,\op}\#B^\op$, which is exactly (\ref{eqn:K_1coproduct}) when $b=1$. On the other hand, it is evident that a similar statement would hold on the elements of form $\e\#b$ for $b\in B$.

It remains to show that $\varphi$ preserves the associators. Due to the construction of the opposite quasi-bialgebra, we are supposed to verify that
$(\varphi^{-1})^{\otimes3}$ maps the inverse (\ref{eqn:K_1associator^-1}) of the associator of $(H/B^+H)^{\ast\,\op}\#B^\op$ to the element $\pd{\phi}\in ((H/B^+H)^\ast\#B)^{\otimes3}$ in (\ref{eqn:associator}).
In fact, note that $\varphi(f\#b)$ is actually the product $(\e\#f)(b\#1)$ in the left partial dual $(H/B^+H)^\ast\#B$, and hence
\begin{eqnarray*}
&& (\varphi^{-1})^{\otimes3}\left(\sum_{i,j}\left(\e\#b_i\right)
\otimes\left(\overline{\gamma}^\ast[S^{-1}(\overline{\zeta}^\ast(b^\ast_i)_{(1)})]\#b_j\right)
\otimes
\left(\overline{\gamma}^\ast[S^{-1}(\overline{\zeta}^\ast(b^\ast_j)
  \overline{\zeta}^\ast(b^\ast_i)_{(2)})]\#1\right)\right)  \\
&=&
\sum_{i,j}\left(\e\#b_i\right)
\otimes\left(\e\#b_j\right)
\left(\overline{\gamma}^\ast[S^{-1}(\overline{\zeta}^\ast(b^\ast_i)_{(1)})]\#1\right)
\otimes
\left(\overline{\gamma}^\ast[S^{-1}(\overline{\zeta}^\ast(b^\ast_j)
  \overline{\zeta}^\ast(b^\ast_i)_{(2)})]\#1\right)  \\
&=&
\pd{\phi}.
\end{eqnarray*}
\end{itemize}
\end{proof}

\begin{remark}
Of course, the coopposite quasi-Hopf algebra $((H/B^+H)^\ast\#B)^\cop$ might be also described analogously. It should be isomorphic to the left partial dual determined by a {\pams} for an injection from $B^\cop$ to $H^\op$ or $H^\cop$ of left comodule algebras, but we would not discuss this situation here.
\end{remark}

\subsection{Right partially dualized coquasi-Hopf algebras}\label{subsection:5.3}

Up to now, the definitions and a number of properties of {\pams}s and left partially dualized quasi-Hopf algebras are introduced. However, we need the dual versions of them, which would be applied more directly in Sections \ref{section:particularcases} and \ref{section:genuinequasi-Hopf}.

Let us begin by recalling the construction of a generalized smash coproduct introduced in \cite[Section 4]{CMZ97} in our situations: Suppose that $B$ is a left coideal subalgebra of a finite-dimensional Hopf algebra $H$ with a {\pams} $(\zeta,\gamma^\ast)$:
\begin{equation*}
\begin{array}{ccc}
\xymatrix{
B \ar@<.5ex>[r]^{\iota} & H \ar@<.5ex>@{-->}[l]^{\zeta} \ar@<.5ex>[r]^{\pi\;\;\;\;\;\;}
& H/B^+H \ar@<.5ex>@{-->}[l]^{\gamma\;\;\;\;\;\;}  }
&\;\;\text{and}\;\;&
\xymatrix{
(H/B^+H)^\ast \ar@<.5ex>[r]^{\;\;\;\;\;\;\pi^\ast}
& H^\ast \ar@<.5ex>@{-->}[l]^{\;\;\;\;\;\;\gamma^\ast} \ar@<.5ex>[r]^{\iota^\ast}
& B^\ast \ar@<.5ex>@{-->}[l]^{\zeta^\ast}.  }
\end{array}
\end{equation*}
Note that $B^\ast$ is a right $H$-comodule coalgebra with the coaction
$$B^\ast\rightarrow B^\ast\otimes H,\;\;\;\;
b^\ast\mapsto\sum_i (h_i^\ast\btr b^\ast)\otimes h_i$$
where $\{h_i\}$ is a linear basis of $H$ with dual basis $\{h_i^\ast\}$ of $H^\ast$,
and $H/B^+H$ is a right $H$-module coalgebra via $\btl$.
Here we recall that the actions $\btl$ and $\btr$ satisfy the Equations (\ref{eqn:btl}) and (\ref{eqn:btr}) as follows:
\begin{equation}\label{eqn:btlbtrnew(MAMS)}
x\btl h=\pi[\gamma(x)h]\;\;\;\;
\;\;\;\;\text{and}\;\;\;\;\;\;\;\;
h^\ast\btr b^\ast=\iota^\ast[h^\ast\zeta^\ast(b^\ast)]
\end{equation}
for all $h\in H$, $x\in H/B^+H$, $h^\ast\in H^\ast$, $b\in B^\ast$.

As a conclusion, we could formulate the \textit{smash coproduct} coalgebra $H/B^+H\btd B^\ast$ of $H/B^+H$ and $B^\ast$ as follows:
As a vector space, $H/B^+H\btd B^\ast:=H/B^+H\otimes B^\ast$. The comultiplication and counit are given by:
\begin{equation}\label{eqn:rightpdcomultiplication}
x\btd b^\ast\mapsto
\sum_i \left[x_{(1)}\btd(h_i^\ast\btr b^\ast_{(1)})\right]
  \otimes\left[(x_{(2)}\btl h_i)\btd b^\ast_{(2)}\right]
\end{equation}
and $x\btd b^\ast\mapsto\la\e_C,x\ra\la b^\ast,1_B\ra$ respectively for all $x\in H/B^+H$ and $b^\ast\in B^\ast$.

Now with the help of the notion of left partial duals, we introduce the definition of \textit{right partially dualized coquasi-Hopf algebras}, whose coalgebra structure is in fact the smash coproduct above:

\begin{definition}\label{def:rightpartialdual}
Suppose $(H/B^+H)^\ast\#B$ is the left partial dual of $H$ determined by the {\pams} $(\zeta,\gamma^\ast)$.
Then $H/B^+H\btd B^\ast$ is called the right partially dualized coquasi-Hopf algebra (or right partial dual for simplicity) of $H$ determined by $(\zeta,\gamma^\ast)$, if
\begin{equation}\label{eqn:PAMS(right)}
(H/B^+H)^\ast\#B\cong(H/B^+H\btd B^\ast)^\ast,\;\;\;\;
f\#b\mapsto\la f\otimes b,-\ra
\end{equation}
is an isomorphism of quasi-Hopf algebras.
\end{definition}

Here we omit the whole structures of the right partial dual $H/B^+H\btd B^\ast$ as a coquasi-Hopf algebra, which are completely dual to Theorem \ref{thm:partialdual}, but remark that it has ``unit element'' and ``multiplication'' as follows:

\begin{proposition}\label{prop:rightpdmultiplication}
Let $H$ be a finite-dimensional Hopf algebra. Suppose that $B$ is a left coideal subalgebra of $H$ with a {\pams} $(\zeta,\gamma^\ast)$. Then the right partially dualized coquasi-Hopf algebra $H/B^+H\btd B^\ast$ determined by $(\zeta,\gamma^\ast)$ has
the ``unit element'' $\pi(1)\btd\iota^\ast(\e)$, and the ``multiplication'' given by:
\begin{equation}\label{eqn:rightpdmultiplication}
(x\btd b^\ast)(y\btd c^\ast)
=\sum\left(x\btl[\zeta^\ast(b^\ast_{(1)})\rightharpoonup\gamma(y_{(1)})]\right)
  \btd\left([\zeta^\ast(b^\ast_{(2)})\leftharpoonup\gamma(y_{(2)})]\btr c^\ast\right)
\end{equation}
for all $x,y\in H/B^+H$ and $b^\ast,c^\ast\in B^\ast$.
\end{proposition}

\begin{proof}
Since the linear isomorphism dual to (\ref{eqn:PAMS(right)}) maps $\pi(1)\btd\iota^\ast(\e)$ to the ``counit'' (\ref{eqn:epsilon}) of the left partial dual $(H/B^+H)^\ast\#B$, the element $\pi(1)\btd\iota^\ast(\e)$ is in fact the ``unit element'' of the right partial dual $H/B^+H\btd B^\ast$.

On the other hand, note that the definitions of $\btl$ and $\btr$ imply that
\begin{equation}\label{eqn:btlbtr}
\sum \la h^\ast,b_{(1)}\ra\la b^\ast,b_{(2)}\ra
=\la h^\ast\btr b^\ast,b\ra
\;\;\;\;\text{and}\;\;\;\;
\sum \la f_{(1)},x\ra\la f_{(2)},h\ra
=\la f,x\btl h\ra
\end{equation}
hold for all $h^\ast\in H^\ast$, $b^\ast\in B^\ast$, $b\in B$, $f\in(H/B^+H)^\ast$, $x\in H/B^+H$ and $h\in H$.
Then according to the duality between
$(H/B^+H)^\ast\#B$ and $H/B^+H\btd B^\ast$ in the sense of (\ref{eqn:PAMS(right)}), we calculate for any $f\in(H/B^+H)^\ast$ and $b\in B$ that
\begin{eqnarray*}
&& \la f\#b,(x\btd b^\ast)(y\btd c^\ast)\ra  \\
&=&
\la \pd{\Delta}(f\#b),(x\btd b^\ast)\otimes(y\btd c^\ast)\ra  \\
&\overset{(\ref{eqn:Deltacomplete})}=&
\sum_{i,j}\la f_{(1)}\#b_i\zeta[\gamma(f_j^\ast)b_{(1)}],x\btd b^\ast\ra
  \la \gamma^\ast[f_{(2)}\zeta^\ast(b^\ast_i)]f_j\#b_{(2)},y\btd c^\ast\ra  \\
&=&
\sum_{i,j}\la f_{(1)},x\ra\la b^\ast,b_i\zeta[\gamma(f_j^\ast)b_{(1)}]\ra
  \la \gamma^\ast[f_{(2)}\zeta^\ast(b^\ast_i)]f_j,y\ra\la c^\ast,b_{(2)}\ra  \\
&=&
\sum\la f_{(1)},x\ra\la b^\ast_{(2)},\zeta[\gamma(y_{(2)})b_{(1)}]\ra
  \la \gamma^\ast[f_{(2)}\zeta^\ast(b^\ast_{(1)})],y_{(1)}\ra\la c^\ast,b_{(2)}\ra  \\
&=&
\sum\la f_{(1)},x\ra\la \zeta^\ast(b^\ast_{(2)}),\gamma(y_{(2)})b_{(1)}\ra
  \la f_{(2)}\zeta^\ast(b^\ast_{(1)}),\gamma(y_{(1)})\ra\la c^\ast,b_{(2)}\ra  \\
&=&
\sum\la f_{(1)},x\ra\la \zeta^\ast(b^\ast_{(2)})\leftharpoonup\gamma(y_{(2)}),b_{(1)}\ra
  \la f_{(2)},\zeta^\ast(b^\ast_{(1)})\rightharpoonup\gamma(y_{(1)})\ra\la c^\ast,b_{(2)}\ra
\\
&\overset{(\ref{eqn:btlbtr})}{=}&
\sum\left\la f,x\btl[\zeta^\ast(b^\ast_{(1)})\rightharpoonup\gamma(y_{(1)})]\right\ra
  \left\la[\zeta^\ast(b^\ast_{(2)})\leftharpoonup\gamma(y_{(2)})]\btr c^\ast,b\right\ra  \\
&=&
\left\la f\#b,\sum\left(x\btl[\zeta^\ast(b^\ast_{(1)})\rightharpoonup\gamma(y_{(1)})]\right)
  \btd
  \left([\zeta^\ast(b^\ast_{(2)})\leftharpoonup\gamma(y_{(2)})]\btr c^\ast\right)\right\ra,
\end{eqnarray*}
where $\{b_i\}$ is a linear basis of $B$ with dual basis $\{b_i^\ast\}$ of $B^\ast$, and $\{f_i\}$ is a linear basis of $(H/B^+H)^\ast$ with dual basis $\{f_i^\ast\}$ of $H/B^+H$ as usual. Thus (\ref{eqn:rightpdmultiplication}) is obtained as a conclusion.
\end{proof}

One could see in subsequent sections that the notions of left and right partial duals are both useful for constructions.
As mentioned in the following lemma,
although the ``multiplication'' of the coquasi-Hopf algebra $H/B^+H\btd B^\ast$ might not be associative, it has some similar formulas with smash product algebras.

For simplicity, let us denote
\begin{equation}\label{eqn:1CeB}
1_C=\pi(1)\in C\;\;\;\;\;\;\;\;\text{and}\;\;\;\;\;\;\;\;
\e_B=\iota^\ast(\e)\in B^\ast.
\end{equation}
Then since $\pi$ and $\iota^\ast$ are coalgebra maps, we could write
$$\Delta_C(1_C)=1_C\otimes 1_C
\;\;\;\;\;\;\;\;\text{and}\;\;\;\;\;\;\;\;
\Delta_{B^\ast}(\e_B)=\e_B\otimes\e_B.$$
It follows also that
$\gamma(1_C)=1$ and $\zeta^\ast(\e_B)=\e$ due to the assumption that $\zeta$ and $\gamma$ are biunitary.

\begin{lemma}
In the right partially dualized coquasi-Hopf algebra $H/B^+H\btd B^\ast$, the following equations
\begin{equation}\label{eqn:rightPDmultiplicationORIGINAL(MAMS)}
(x\btd \e_B)(y\btd c^\ast)
=\big(x\btl\gamma(y)\big)\btd c^\ast\;\;\;\;\text{and}\;\;\;\;
(x\btd b^\ast)(1_B\btd c^\ast)
=x \btd\big(\zeta^\ast(b^\ast)\btr c^\ast)
\end{equation}
hold for all $x,y\in C$ and $b^\ast,c^\ast\in B^\ast$
\end{lemma}

\begin{proof}
The first equation could be obtained by direct calculations:
Since $\zeta$ and $\gamma$ are both counitary, we find
for any $x,y\in H/B^+H$ and $c^\ast\in B^\ast$ that
\begin{eqnarray*}
(x\btd \e_B)(y\btd c^\ast)
&\overset{(\ref{eqn:rightpdmultiplication})}=&
  \sum\big(x\btl[\zeta^\ast(\e_B)\rightharpoonup\gamma(y_{(1)})]\big)
  \btd\big([\zeta^\ast(\e_B)\leftharpoonup\gamma(y_{(2)})]\btr c^\ast\big)  \\
&\overset{(\ref{eqn:zetabiunitary})}=&
  \sum\big(x\btl[\e\rightharpoonup\gamma(y_{(1)})]\big)
  \btd\big([\e\leftharpoonup\gamma(y_{(2)})]\btr c^\ast\big)  \\
&=&
\sum\big(x\btl\gamma(y_{(1)})\big)
  \btd\big(\la\e,\gamma(y_{(2)})\ra\e\btr c^\ast\big)  \\
&\overset{(\ref{eqn:gammabiunitary})}=&
\big(x\btl\gamma(y)\big)\btd c^\ast.
\end{eqnarray*}
It is straightforward to prove the other equation in (\ref{eqn:rightPDmultiplicationORIGINAL(MAMS)}) in a similar way.
\end{proof}

\section{Conditions and examples when the associator is trivial}\label{section:particularcases}

This section is devoted to discussing the situation when a left partially dualized quasi-Hopf algebra has trivial associator. Specifically,
we first provide some sufficient or necessary conditions for it. Then in this case, the orders of (left or right) partial dualizations would be studied.

Later in Subsections \ref{subsection:matchedpairofgrps},  \ref{subsection:partialdualbiproduct} and \ref{subsection:Taftalg},
let us attempt to formulate several classical structures in the literature as examples of the partially dualized Hopf algebras. We also remark that the quantum double Hopf algebra (\cite{DT94,Dri86}) could also be formulated as left partial duals, which is considered in another manuscript.

To begin with, let us rewrite a {\pams} as
\begin{equation*}
\begin{array}{ccc}
\xymatrix{
B \ar@<.5ex>[r]^{\iota} & H \ar@<.5ex>@{-->}[l]^{\zeta} \ar@<.5ex>[r]^{\pi}
& C \ar@<.5ex>@{-->}[l]^{\gamma}  }
&\;\;\text{and}\;\;&
\xymatrix{
C^\ast \ar@<.5ex>[r]^{\pi^\ast}
& H^\ast \ar@<.5ex>@{-->}[l]^{\gamma^\ast} \ar@<.5ex>[r]^{\iota^\ast}
& B^\ast \ar@<.5ex>@{-->}[l]^{\zeta^\ast}  }
\end{array}
\end{equation*}
of the form in Definition \ref{def:PAMS} (without identifying $B\subseteq H$ and $C=H/B^+H$), for the purpose of studying general examples. Meanwhile, the left partial dual and right partial dual of $H$ determined by $(\zeta,\gamma^\ast)$ above are denoted by
$$C^\ast\# B\;\;\;\;\;\;\;\;\text{and}\;\;\;\;\;\;\;\;C\btd B^\ast$$
respectively.

\subsection{Admissible mapping systems (determining partially dualized Hopf algebras)}\label{subsection:6.1}

Firstly by the structures in Theorem \ref{thm:partialdual}, it is easy to determine the left partial duals for two trivial choices of left coideal subalgebras $B\subseteq H$ as follows:

\begin{corollary}\label{cor:trivialcases}
Let $H$ be a finite-dimensional Hopf algebra with a left coideal subalgebra $B$.
Denote by $u:\k\rightarrow H$ the unit of $H$.
\begin{itemize}
\item[(1)]
If $B=H$, then there is a unique {\pams} $(\id_H,u^\ast)$ for $\id_H$, and it determines the left partial dual
$$C^\ast\#B=\e\#H\cong H$$
as Hopf algebras;

\item[(2)]
If $B=\k1$, then there is a unique {\pams} $(\e,\id_H^\ast)$ for $u$, and it determines the left partial dual
$$C^\ast\#B=H^\ast\#1\cong H^\ast$$
as Hopf algebras.
\end{itemize}
\end{corollary}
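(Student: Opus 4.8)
The plan is to verify both extreme cases directly by specializing the general construction in Theorem~\ref{thm:partialdual}, since in each case the \pams\ degenerates so drastically that the smash product collapses to an ordinary Hopf algebra with trivial associator.

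First I would treat case~(1), where $B=H$. Here $B^+H=H^+H$ equals the augmentation ideal $H^+$ (since $H^+$ is already a right ideal), so $H/B^+H=H/H^+\cong\k$ via $\e$, and consequently $(H/B^+H)^\ast\cong\k$ spanned by $\e$. The inclusion $\iota=\id_H$, and the quotient map $\pi$ is just $\la\e,-\ra\pi(1)$. To pin down the unique \pams, I would observe that a biunitary cointegral $\zeta:H\rightarrow H$ preserving left $H$-actions must satisfy $\zeta(h)=\zeta(\iota(1)h)=1\cdot\zeta(h)$ trivially, but more usefully that $\zeta\circ\iota=\id$ by Proposition~\ref{prop:PAMS-comptrival}(1) forces $\zeta=\id_H$; then Corollary~\ref{cor:BCP-exist}(1) gives uniqueness of $\gamma$, hence of $\gamma^\ast=u^\ast$. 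Since $(H/B^+H)^\ast=\k\e$, the smash product $(H/B^+H)^\ast\#B=\e\#H$ is just $H$ as an algebra by~(\ref{eqn:smashprod}) with $f=g=\e$. The remaining task is to check the quasi-Hopf structure reduces to the Hopf structure of $H$: the associator~(\ref{eqn:associator}) becomes trivial because $\overline{\zeta}^\ast(b^\ast)$ and $\overline{\gamma}^\ast$ collapse appropriately, and $\pd\Delta$ in~(\ref{eqn:Delta(b)}) reduces to the coproduct of $H$. I would verify these by substituting $\zeta=\id$, $\gamma^\ast=u^\ast$ into the formulas, using that the dual basis sum $\sum_i(\e\#b_i)\otimes\cdots$ reconstructs $\Delta_H$.

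Next I would treat case~(2), where $B=\k1$. Then $B^+=0$, so $B^+H=0$ and $H/B^+H=H$, giving $(H/B^+H)^\ast=H^\ast$. The cointegral $\zeta:H\rightarrow\k1$ must be biunitary and a left $\k1$-module map, which forces $\zeta=\la\e,-\ra 1=\e$; uniqueness again follows from Corollary~\ref{cor:BCP-exist}(1), and the dual side gives $\gamma^\ast=\id_{H^\ast}$ (that is, $\id_H^\ast$). The smash product $(H/B^+H)^\ast\#B=H^\ast\#\k1$ is $H^\ast$ as an algebra by~(\ref{eqn:smashprod}). I would then confirm the coalgebra structure: with $\gamma^\ast=\id$ and $\zeta^\ast=u$, the formula~(\ref{eqn:Delta(f)}) for $\pd\Delta(f\#1)$ collapses (the sum over the one-element basis $\{b_1=1\}$ with $b_1^\ast=\e$) to $\sum f_{(1)}\otimes f_{(2)}=\Delta_{H^\ast}(f)$, and the associator~(\ref{eqn:associator}) becomes trivial because $\overline{\zeta}^\ast(\e)=\e$ is grouplike.

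The main obstacle I anticipate is not the algebra identifications, which are immediate, but carefully checking that the associator $\pd\phi$ in~(\ref{eqn:associator}) genuinely reduces to $1\otimes 1\otimes 1$ in each case, so that the reconstructed object is an honest Hopf algebra rather than merely a quasi-Hopf algebra gauge-equivalent to one. In case~(2) this hinges on $\overline{\zeta}^\ast(b^\ast)=\la b^\ast,1\ra\e$ being the grouplike $\e\in H^\ast$, making every factor $\overline{\gamma}^\ast[S^{-1}(\cdots)]$ equal to $\e$; in case~(1) it hinges on $(H/B^+H)^\ast=\k\e$ forcing all three tensor factors into $\k\e\#\cdots$. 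I would verify the grouplike/triviality computation explicitly and invoke that a trivial associator together with the constructions in Theorem~\ref{thm:partialdual}(4) yields an ordinary antipode, completing the isomorphism of Hopf algebras in each case.
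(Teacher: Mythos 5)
Your proposal is correct and follows essentially the same route as the paper, which simply asserts that the two extreme cases follow "by the structures in Theorem \ref{thm:partialdual}": you identify $H/B^+H$ and the unique \pams{} in each case, substitute into the formulas for the multiplication, comultiplication and associator, and check that everything collapses to the ordinary Hopf structure of $H$ (resp.\ $H^\ast$). Your key computations — $\zeta=\id_H$, $\gamma^\ast=u^\ast$ in case (1), $\zeta=\e$, $\gamma^\ast=\id_{H^\ast}$ in case (2), and the grouplike/triviality argument forcing $\pd{\phi}=e\otimes e\otimes e$ — are exactly the verifications the paper leaves to the reader.
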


\begin{remark}
The isomorphisms in (1) and (2) Corollary \ref{cor:trivialcases} could be regarded as algebraic versions of the tensor equivalences in \cite[Example 7.12.3]{EGNO15} and \cite[Theorem 4.2]{Ost03} respectively:
$$\C_\C^\ast\approx\C\;\;\;\;\;\;{and}\;\;\;\;\;\;\Rep(H)_\Vec^\ast\approx \Rep(H^\ast),$$
where $\C$ is a finite tensor category, and $H$ is a finite-dimensional Hopf algebra.
\end{remark}

Similar statements hold on right partial duals $C\btd B^\ast$ for the left coideal subalgebra $B\subseteq H$ being $H$ or $\k1$.

In conclusion,
note that the assumptions of Corollary \ref{cor:trivialcases} imply that the left and right partial duals both become Hopf algebras.
More generally, let us recall by Remark \ref{rmk:equivDelta}(3) that the associator $\pd{\phi}$ of the left partial dual $C^\ast\#B$ determined by $(\zeta,\gamma^\ast)$ has inverse
\begin{equation}\label{eqn:phi^-1(MAMS)}
\pd{\phi}^{-1}=\sum_{i,j}\left(\e\#\zeta[\gamma(f_i^\ast)\gamma(f_j^\ast)_{(1)}]\right)
\otimes\left(f_i\#\zeta[\gamma(f_j^\ast)_{(2)}]\right)
\otimes(f_j\#1)
\in(C^\ast\#B)^{\otimes 3},
\end{equation}
where $\{f_i\}$ is a linear basis of $C^\ast$ with dual basis $\{f_i^\ast\}$ of $C$.
Analogous to the notion introduced in \cite[Section 2.8]{Rad85}, we would use in this paper the term of \textit{\ams} for the case when $\pd{\phi}$ is trivial:

\begin{definition}\label{def:ams}
We say that a {\pams} $(\zeta,\gamma^\ast)$ is admissible, if the associator $\pd{\phi}$ of its left partial dual (or namely, the reassociator of the right partial dual) is trivial.

In this situation, we also call $(\zeta,\gamma^\ast)$ an {\ams}, and call $C^\ast\#B$ (resp. $C\btd B^\ast$) a left (resp. right) partially dualized Hopf algebra of $H$.
\end{definition}

\begin{remark}\label{rmk:associatortrivial(MAMS)}
By calculating the image of (\ref{eqn:phi^-1(MAMS)}) under
$(1\otimes \id_B)\otimes(x\otimes\id_B)\otimes(y\otimes\e) $
for arbitrary elements $x,y\in C$,
one could find that
$(\zeta,\gamma^\ast)$ is admissible
if and only if the equations
\begin{equation}\label{eqn:associatortrivial(MAMS)}
\sum\zeta[\gamma(x)\gamma(y)_{(1)}]\otimes\zeta[\gamma(y)_{(2)}]
=\la \e_C,x\ra\la \e_C,y\ra (1\otimes1)\;\;\;\;\;\;\;\;(\forall x,y\in C)
\end{equation}
holds.
\end{remark}

Now we provide more applicable sufficient and necessary conditions when a {\pams} is admissible.

\begin{proposition}\label{lem:PAMSdetermineHopfalgs(MAMS)}
Suppose \begin{equation*}
\begin{array}{ccc}
\xymatrix{
B \ar@<.5ex>[r]^{\iota} & H \ar@<.5ex>@{-->}[l]^{\zeta} \ar@<.5ex>[r]^{\pi}
& C \ar@<.5ex>@{-->}[l]^{\gamma}  }
&\;\;\text{and}\;\;&
\xymatrix{
C^\ast \ar@<.5ex>[r]^{\pi^\ast}
& H^\ast \ar@<.5ex>@{-->}[l]^{\gamma^\ast} \ar@<.5ex>[r]^{\iota^\ast}
& B^\ast \ar@<.5ex>@{-->}[l]^{\zeta^\ast}  },
\end{array}
\end{equation*}
is a {\pams}. Then the followings are equivalent:
\begin{itemize}
\item[(1)]
$(\zeta,\gamma^\ast)$ is admissible;

\item[(2)]
$\gamma(C)$ is a right coideal subalgebra of $H$;

\item[(3)]
$\zeta^\ast(B^\ast)$ is a left coideal subalgebra of $H^\ast$.
\end{itemize}
\end{proposition}

\begin{proof}
Firstly, let us show that (2) implies (1):
Suppose $\gamma(C)$ is a right coideal subalgebra of $H$. Then
$\sum\gamma(x)\gamma(y)_{(1)}\otimes\gamma(y)_{(2)}\in \gamma(C)\otimes H$
holds for any $x,y\in C$.
However, since $\zeta\circ\gamma$ is trivial and $\gamma$ is counitary, we could find as a consequence that
\begin{eqnarray*}
\sum\zeta[\gamma(x)\gamma(y)_{(1)}]\otimes\zeta[\gamma(y)_{(2)}]
&=& \sum\la\e,\gamma(x)\gamma(y)_{(1)}\ra 1\otimes\zeta[\gamma(y)_{(2)}]  \\
&=& \la \e,x\ra (1\otimes\zeta[\gamma(y)])  \\
&=& \la \e,x\ra\la \e,y\ra(1\otimes1)\;\;\;\;\;\;\;\;(\forall x,y\in C);
\end{eqnarray*}
This is equivalent to that the associator $\pd{\phi}$ is trivial due to Remark \ref{rmk:associatortrivial(MAMS)}.

Then we aim to prove that (1) implies (2):
Note in Definition \ref{def:PAMS}(4)
that $\gamma$ is a right $C$-comodule map, which means that
\begin{equation}\label{eqn:gamma(y)12(MAMS)}
\sum \gamma(x)_{(1)}\otimes\pi[\gamma(x)_{(2)}]
=\sum \gamma(x_{(1)})\otimes x_{(2)}
\end{equation}
and hence
\begin{equation}\label{eqn:gamma(y)123(MAMS)}
\sum \gamma(y)_{(1)}\otimes\gamma(y)_{(2)}\otimes\pi[\gamma(y)_{(3)}]
=\sum \gamma(y_{(1)})_{(1)}\otimes\gamma(y_{(1)})_{(2)}\otimes y_{(2)}
\end{equation}
hold for all $x,y\in C$.
Furthermore,
according to the requirement
\begin{equation}\label{eqn:convolutionprod(MAMS)}
(\iota\circ\zeta)\ast(\gamma\circ\pi)=\id_H
\end{equation}
in Definition \ref{def:PAMS}(6), we have direct calculations that
\begin{eqnarray}\label{eqn:associatortrivial-result1}
\sum\zeta[\gamma(x)\gamma(y)_{(1)}]\otimes\gamma(y)_{(2)}
&\overset{(\ref{eqn:convolutionprod(MAMS)})}{=}&
\sum \zeta[\gamma(x)\gamma(y)_{(1)}]\otimes
  \iota\left(\zeta[\gamma(y)_{(2)}]\right)\gamma\left(\pi[\gamma(y)_{(3)}]\right)  \nonumber  \\
&\overset{(\ref{eqn:gamma(y)123(MAMS)})}{=}&
\sum \zeta[\gamma(x)\gamma(y_{(1)})_{(1)}]\otimes
  \iota\left(\zeta[\gamma(y_{(1)})_{(2)}]\right)\gamma(y_{(2)})  \nonumber  \\
&\overset{(\ref{eqn:associatortrivial(MAMS)})}=&
  \la\e,x\ra (1\otimes\gamma(y))
\end{eqnarray}
hold for all $x,y\in C$.
Thus
\begin{eqnarray}\label{eqn:associatortrivial-result2}
\gamma(x)\gamma(y)
&=& \sum \iota\left(\zeta[\gamma(x)_{(1)}\gamma(y)_{(1)}]\right)
  \gamma\left(\pi[\gamma(x)_{(2)}\gamma(y)_{(2)}]\right)  \nonumber  \\
&\overset{(\ref{eqn:btl})}{=}&
\sum \iota\left(\zeta[\gamma(x)_{(1)}\gamma(y)_{(1)}]\right)
  \gamma\left(\pi[\gamma(x)_{(2)}]\btl\gamma(y)_{(2)}\right)  \nonumber  \\
&\overset{(\ref{eqn:gamma(y)12(MAMS)})}{=}&
\sum \iota\left(\zeta[\gamma(x_{(1)})\gamma(y)_{(1)}]\right)
  \gamma\left(x_{(2)}\btl\gamma(y)_{(2)}\right)  \nonumber  \\
&\overset{(\ref{eqn:associatortrivial-result1})}=&
  \gamma[x\btl\gamma(y)]\;\;\in\;\gamma(C)
\end{eqnarray}
for all $x,y\in C$. In other words, $\gamma(C)$ is a subalgebra of $H$, since $\gamma$ is unitary as well.
Meanwhile, for any $x\in C$, it follows from Equation (\ref{eqn:associatortrivial-result1}) that
\begin{equation}\label{eqn:zetagamma123(MAMS)}
\sum \zeta[\gamma(x)_{(1)}]\otimes \gamma(x)_{(2)}\otimes \gamma(x)_{(3)}
=\sum 1\otimes \gamma(x)_{(1)}\otimes \gamma(x)_{(2)},
\end{equation}
and consequently
\begin{eqnarray}\label{eqn:associatortrivial-result3}
\Delta(\gamma(x)) &=& \sum\gamma(x)_{(1)}\otimes\gamma(x)_{(2)}
~\overset{(\ref{eqn:convolutionprod(MAMS)})}{=}~
\sum \iota\left(\zeta[\gamma(x)_{(1)}]\right)
  \gamma\left(\pi[\gamma(x)_{(2)}]\right)\otimes\gamma(x)_{(3)}  \nonumber  \\
&\overset{(\ref{eqn:zetagamma123(MAMS)})}{=}&
\sum \gamma\left(\pi[\gamma(x)_{(1)}]\right)\otimes\gamma(x)_{(2)}
  \;\;\in\;\gamma(C)\otimes H.
\end{eqnarray}
As a conclusion, $\gamma(C)$ is a right coideal subalgebra of $H$.

Finally, with the help of the equivalence between (1) and (2), we try to explain that (1) and (3) are equivalent as well: Recall in Propositions \ref{prop:BCP-selfdual} and \ref{prop:partialdualbiop} that $(\gamma^\ast,\zeta)$ would be also a {\pams}
$$\begin{array}{ccc}
\xymatrix{
C^{\ast\,\biop} \ar@<.5ex>[r]^{\pi^\ast}
& H^{\ast\,\biop}
\ar@<.5ex>[l]^{\gamma^\ast}
\ar@<.5ex>[r]^{\iota^\ast}
& B^{\ast\,\biop} \ar@<.5ex>[l]^{\zeta^\ast}  }
&\text{and}&
\xymatrix{
B^\biop \ar@<.5ex>[r]^{\iota}
& H^\biop
\ar@<.5ex>[l]^{\zeta}
\ar@<.5ex>[r]^{\pi}
& C^\biop \ar@<.5ex>[l]^{\gamma}  },
\end{array}$$
and it determines left partial dual $B^\biop\#C^{\ast\,\biop}$ which is isomorphic to $(C^\ast\#B)^\biop$ as a quasi-bialgebra.
Therefore, (1) is also equivalent to the claim that $(\gamma^\ast,\zeta)$ is an {\ams}, which holds if and only if $\zeta^\ast(B^{\ast\,\biop})$ is a right coideal subalgebra of $H^{\ast\,\biop}$, or equivalently,
$\zeta^\ast(B^\ast)$ is a left coideal subalgebra of $H^\ast$.
\end{proof}

\begin{remark}
One might find the following fact by Proposition \ref{lem:PAMSdetermineHopfalgs(MAMS)}:
If $(\zeta,\gamma^\ast)$ is an {\ams}, then
\begin{equation*}
\begin{array}{ccc}
\xymatrix{
B^\ast \ar@<.5ex>[r]^{\zeta^\ast}
& H^\ast \ar@<.5ex>@{-->}[l]^{\iota^\ast} \ar@<.5ex>[r]^{\gamma^\ast}
& C^\ast \ar@<.5ex>@{-->}[l]^{\pi^\ast}  }
&\;\;\text{and}\;\;&
\xymatrix{
C \ar@<.5ex>[r]^{\gamma} & H \ar@<.5ex>@{-->}[l]^{\pi} \ar@<.5ex>[r]^{\zeta}
& B \ar@<.5ex>@{-->}[l]^{\iota}  }
\end{array}
\end{equation*}
is also an {\ams}.
\end{remark}

As direct consequences of Proposition \ref{lem:PAMSdetermineHopfalgs(MAMS)}, some additional sufficient conditions are obtained:

\begin{corollary}\label{cor:Hopfalgconditions}
Suppose \begin{equation*}
\begin{array}{ccc}
\xymatrix{
B \ar@<.5ex>[r]^{\iota} & H \ar@<.5ex>@{-->}[l]^{\zeta} \ar@<.5ex>[r]^{\pi}
& C \ar@<.5ex>@{-->}[l]^{\gamma}  }
&\;\;\text{and}\;\;&
\xymatrix{
C^\ast \ar@<.5ex>[r]^{\pi^\ast}
& H^\ast \ar@<.5ex>@{-->}[l]^{\gamma^\ast} \ar@<.5ex>[r]^{\iota^\ast}
& B^\ast \ar@<.5ex>@{-->}[l]^{\zeta^\ast}  },
\end{array}
\end{equation*}
is a {\pams}. Then it is admissible, if one of the following conditions hold:
\begin{itemize}
\item[(1)]
$B$ is a bialgebra, and $\zeta:H\rightarrow B$ is a bialgebra map;

\item[(2)]
$C$ is a bialgebra, and $\gamma:C\rightarrow H$ is a bialgebra map;

\item[(3)]
$\zeta$ is an algebra map, and $\gamma$ is a coalgebra map.
\end{itemize}
\end{corollary}

\begin{proof}
\begin{itemize}
\item[(1)]
Under the assumptions, it is evident that $\zeta^\ast:B^\ast\rightarrow H^\ast$ is also a bialgebra map. Thus $\zeta^\ast(B^\ast)$ would be a subbialgebra and hence a left coideal subalgebra of $H^\ast$. It follows from Proposition \ref{lem:PAMSdetermineHopfalgs(MAMS)} that $(\zeta,\gamma^\ast)$ is an {\ams}.

\item[(2)]
Similarly to the argument with (1), it is because $\gamma(C)$ would be a subbialgebra and hence a right coideal subalgebra of $H$.

\item[(3)]
Let us verify Equation (\ref{eqn:associatortrivial(MAMS)}) under the assumptions:: For any $x,y\in C$,
\begin{eqnarray*}
\sum\zeta[\gamma(x)\gamma(y)_{(1)}]\otimes\zeta[\gamma(y)_{(2)}]
&=&
\sum\zeta[\gamma(x)\gamma(y_{(1)})]\otimes\zeta[\gamma(y_{(2)})]  \\
&=&
\sum\zeta[\gamma(x)]\zeta[\gamma(y_{(1)})]\otimes\zeta[\gamma(y_{(2)})]  \\
&=& \la \e,x\ra\la \e,y\ra (1\otimes1),
\end{eqnarray*}
where the last equality is due to Proposition \ref{prop:PAMS-comptrival}(2) that $\zeta\circ\gamma$ is trivial.
\end{itemize}
\end{proof}

\begin{remark}
As long as the left partial dualized quasi-Hopf algebra $C^\ast\#B$ has trivial associator $\pd{\phi}$, the element $\pd{\upsilon}$ (\ref{eqn:upsilon}) would become the unit element $\e\#1$. This is due to \cite[Remark 2)]{Dri89} and the fact that $\pd{\upsilon}=\pd{\beta}\pd{\alpha}$.
\end{remark}


\subsection{Partial self-duality property and multiple partial dualizations}\label{subsection:6.2}

Throughout this subsection, let $H$ be a finite-dimensional Hopf algebra, and let
\begin{equation}\label{eqn:ams6.2}
\begin{array}{ccc}
\xymatrix{
B \ar@<.5ex>[r]^{\iota} & H \ar@<.5ex>@{-->}[l]^{\zeta} \ar@<.5ex>[r]^{\pi}
& C \ar@<.5ex>@{-->}[l]^{\gamma}  }
&\;\;\text{and}\;\;&
\xymatrix{
C^\ast \ar@<.5ex>[r]^{\pi^\ast}
& H^\ast \ar@<.5ex>@{-->}[l]^{\gamma^\ast} \ar@<.5ex>[r]^{\iota^\ast}
& B^\ast \ar@<.5ex>@{-->}[l]^{\zeta^\ast}  }
\end{array}
\end{equation}
be an \textit{admissible} mapping system.
Recall by Definition \ref{def:ams} that $(\zeta,\gamma^\ast)$ would determine (left and right) partially dualized Hopf algebras $C^\ast\#B$ and $C\btd B^\ast$ respectively.

In simple terms, the aim of this subsection is to describe the following properties of partially dualized Hopf algebras:
\begin{itemize}
\item
$H$ is the canonical right (resp. left) partial dual of the left (resp. right) partial dual of $H$;

\item
$H^\ast$ is the canonical left (resp. right) partial dual of the left (resp. right) partial dual of $H$;

\item
$H$ is obtained by quadruple canonical left (resp. right) partial dualizations from $H$ itself.
\end{itemize}
These claims might be concluded visually as a diagram:
\begin{equation}\label{eqn:multiplePD(MAMS)}
\xymatrix{
\;\;\;\;H\;\;\;\;
  \ar@<0.75ex>@{|->}[rrr]^{\text{left partial dualization}}
  \ar@<0.75ex>@{|-->}[ddd]^{\text{\shortstack{right partial\\dualization}}}
&&& C^\ast\#B
  \ar@<0.75ex>@{|->}[ddd]^{\text{\shortstack{left partial\\dualization}}}
  \ar@<0.75ex>@{|-->}[lll]^{\text{right partial dualization}}
\\  \\  \\  C\btd B^\ast
  \ar@<0.75ex>@{|->}[uuu]^{\text{\shortstack{left partial\\dualization}}}
  \ar@<0.75ex>@{|-->}[rrr]^{\text{right partial dualization}}
&&& \;\;\;H^\ast\;\;.
  \ar@<0.75ex>@{|->}[lll]^{\text{left partial dualization}}
  \ar@<0.75ex>@{|-->}[uuu]^{\text{\shortstack{right partial\\dualization}}}
}
\end{equation}

For this purpose, our first goal is to establish a canonical {\ams} for the left coideal subalgebra $C\cong C\btd\iota^\ast(\e)$ of the right partially dualized Hopf algebra $C\btd B^\ast$.
Here we still use the notations
\begin{equation}\label{eqn:1CeB(MAMS)}
1_C=\pi(1)\in C\;\;\;\;\;\;\;\;\text{and}\;\;\;\;\;\;\;\;
\e_B=\iota^\ast(\e)\in B^\ast
\end{equation}
in (\ref{eqn:1CeB}) for simplicity. In addition, recall in (\ref{eqn:btlbtrnew(MAMS)}) that the actions $\btl$ and $\btr$ have been defined such that: For all $h\in H$, $x\in C$, $h^\ast\in H^\ast$, $b\in B^\ast$, we could write
\begin{equation}\label{eqn:btlbtrnew2(MAMS)}
x\btl h=\pi[\gamma(x)h]\;\;\;\;
\;\;\;\;\text{and}\;\;\;\;\;\;\;\;
h^\ast\btr b^\ast=\iota^\ast[h^\ast\zeta^\ast(b^\ast)],
\end{equation}
or equivalently,
\begin{equation}\label{eqn:btlbtrnew3(MAMS)}
\la f,x\btl h\ra=\sum\la f_{(1)},x\ra\la f_{(2)},h\ra
\;\;\;\;\text{and}\;\;\;\;
\la h^\ast\btr b^\ast,b\ra=\sum\la h^\ast,b_{(1)}\ra\la b^\ast,b_{(2)}\ra
\end{equation}
with notations (\ref{eqn:coidealsubalgsnotation(MAMS)}) for all $f\in C^\ast$ and $b\in B$.

\begin{lemma}\label{lem:Cstru&B*stru}
Suppose (\ref{eqn:ams6.2}) is an {\ams}. Then:
\begin{itemize}
\item[(1)]
$C$ is a left $C\btd B^\ast$-comodule algebra, with multiplication given by
\begin{equation}\label{eqn:Cmultiplication(MAMS)}
\cdot:\;C\otimes C\rightarrow C,\;\;x\otimes y\mapsto x\btl \gamma(y)
\end{equation}
and unit element $1_C$, and the left $C\btd B^\ast$-comodule structure on $C$ is
$$\rho_C:\;C\rightarrow (C\btd B^\ast)\otimes C,\;\;
x\mapsto\sum_i [x_{(1)}\btd\iota^\ast(h_i^\ast)]\otimes(x_{(2)}\btl h_i),$$
where $\{h_i\}$ is a linear basis of $H$ with dual basis $\{h_i^\ast\}$ of $H^\ast$.

\item[(2)]
The right $C^\ast\# B^\ast$-comodule structure
$$\rho_B:\;B\rightarrow B\otimes (C^\ast\# B),\;\;
b \mapsto \sum_j\zeta[\gamma(f_j^\ast)b_{(1)}]\otimes(f_j\# b_{(2)})$$
makes the algebra $B$ a right $C^\ast\# B$-comodule algebra,
where $\{f_j\}$ is a linear basis of $C^\ast$ with dual basis $\{f_j^\ast\}$ of $C$.

\item[(3)]
The right $C\btd B^\ast$-module structure
\begin{eqnarray*}
B^\ast\otimes(C\btd B^\ast) &\rightarrow& B^\ast,\;\;  \\
b^\ast\otimes(y\btd c^\ast) &\mapsto& [\zeta^\ast(b^\ast)\leftharpoonup\gamma(y)]\btr c^\ast
\end{eqnarray*}
makes the coalgebra $B^\ast$ a right $C\btd B^\ast$-module coalgebra.
\end{itemize}
\end{lemma}

\begin{proof}
\begin{itemize}
\item[(1)]
Firstly, it is direct to check that $C\btd \e_B$ is a left coideal subalgebra of $C\btd B^\ast$. Indeed, for any $x,y\in C$, we find in the Hopf algebra $C\btd B^\ast$ that
\begin{equation*}
(x\btd\e_B)(y\btd\e_B)=(x\btl\gamma(y)\big)\btd\e_B
\end{equation*}
by Equation (\ref{eqn:rightPDmultiplicationORIGINAL(MAMS)}), as well as that
\begin{eqnarray*}
\pd{\Delta}(x\btd \e_B)
&\overset{(\ref{eqn:rightpdcomultiplication})}=&
\sum_i [x_{(1)}\btd(h_i^\ast\btr \e_B)]\otimes[(x_{(2)}\btl h_i)\btd \e_B]  \\
&=& \sum_i [x_{(1)}\btd\iota^\ast(h_i)]\otimes[(x_{(2)}\btl h_i)\btd \e_B].
\end{eqnarray*}
Therefore, the desired claim could be obtained via the linear isomorphism
$$C\cong C\btd\e_B,\;\;x\mapsto x\btd\e_B.$$

\item[(2)]
This due to similar reasons to (1): Since we know by (\ref{eqn:Delta(b)}) that
\begin{equation*}
\pd{\Delta}(\e\#b)
=\sum_{j}\left(\e\#\zeta[\gamma(f_j^\ast)b_{(1)}]\right)
  \otimes\left(f_j\#b_{(2)}\right)
\end{equation*}
for any $b\in B$ in the Hopf algebra $C\#B^\ast$,
the desired claim holds via the algebra isomorphism
$$B\cong\e\#B,\;\;b\mapsto \e\#b.$$

\item[(3)]
This is the dual claim of (2), as one could check the following equations
for all $b\in B$, $b^\ast,c^\ast\in B^\ast$ and $y\in C$:
\begin{eqnarray*}
\la b^\ast\otimes(y\btd c^\ast),\rho_B(b)\ra
&=&
\sum_j\la b^\ast\otimes(y\btd c^\ast),
  \zeta[\gamma(f_j^\ast)b_{(1)}]\otimes(f_j\# b_{(2)})\ra  \\
&=&
\sum_j\la b^\ast,\zeta[\gamma(f_j^\ast)b_{(1)}]\ra\la f_j,y\ra
  \la c^\ast,b_{(2)}\ra  \\
&=&
\sum\la b^\ast,\zeta[\gamma(y)b_{(1)}]\ra\la c^\ast,b_{(2)}\ra  \\
&=&
\sum\la \zeta^\ast(b^\ast)_{(1)},\gamma(y)\ra
  \la\zeta^\ast(b^\ast)_{(2)},b_{(1)}\ra\la c^\ast,b_{(2)}\ra  \\
&\overset{(\ref{eqn:btlbtrnew3(MAMS)})}=&
\sum\la \zeta^\ast(b^\ast)_{(1)},\gamma(y)\ra
  \la\zeta^\ast(b^\ast)_{(2)}\btr c^\ast,b\ra  \\
&=&
\sum\la [\zeta^\ast(b^\ast)\leftharpoonup\gamma(y)]\btr c^\ast,b\ra.
\end{eqnarray*}
\end{itemize}
\end{proof}

\begin{lemma}\label{lem:rightPDams(MAMS)}
With notations above as well as the structures given in Lemma \ref{lem:Cstru&B*stru} (1) and (3), there is an {\ams}
\begin{equation}\label{eqn:PAMSrightPD}
\begin{array}{ccc}
\xymatrix{
C \ar@<.5ex>[r]^{\iota'\;\;\;\;}
& C\btd B^\ast \ar@<.5ex>[l]^{\zeta'\;\;\;\;} \ar@<.5ex>[r]^{\;\;\;\;\pi'}
& B^\ast \ar@<.5ex>[l]^{\;\;\;\;\gamma'}  }
&\;\;\text{and}\;\;&
\xymatrix{
B \ar@<.5ex>[r]^{{\pi'}^\ast\;\;\;\;}
& C^\ast\# B \ar@<.5ex>[l]^{{\gamma'}^\ast\;\;\;\;}
  \ar@<.5ex>[r]^{\;\;\;\;{\iota'}^\ast}
& C^\ast \ar@<.5ex>[l]^{\;\;\;\;{\zeta'}^\ast}  },
\end{array}
\end{equation}
where:
\begin{equation}\label{eqn:iota'pi'zeta'gamma'(MAMS)}
\iota'=\id_C\btd \e_B,\;\;\;\;\pi'=\e_C\otimes\id_{B^\ast},\;\;\;\;
\zeta'=\id_C\otimes 1_B,\;\;\;\;\gamma'=1_C\btd\id_{B^\ast}.
\end{equation}
\end{lemma}

\begin{proof}
Note that
$$
\begin{array}{cccc}
\iota':& C &\rightarrow& C\btd B^\ast  \\
& x &\mapsto& x\btd\e_B
\end{array}
\;\;\;\;\text{and}\;\;\;\;
\begin{array}{cccc}
\pi': & C\btd B^\ast &\rightarrow& B^\ast  \\
& x\btd b^\ast &\mapsto& \la\e_C,x\ra b^\ast
\end{array}
$$
induce respectively the isomorphisms $C\cong C\btd\e$ and $B^\ast\cong (C\btd B^\ast)/(C^+\btd B^\ast)$ appearing in the proof of Lemma \ref{lem:Cstru&B*stru}. Thus $\iota'$ and $\pi'$ fit the requirements as consequences.

On the other hand, it is straightforward to know by Equation (\ref{eqn:rightPDmultiplicationORIGINAL(MAMS)}) that $\zeta'$ is a biunitary left $C$-module map. Besides, suppose $\{h_i\}$ is a linear basis of $H$ with dual basis $\{h_i^\ast\}$ of $H^\ast$. Then by the following calculation
\begin{eqnarray*}
(\id\otimes\pi')\circ\pd{\Delta}\circ\gamma'(b^\ast)
&=& (\id\otimes\pi')\circ\pd{\Delta}(1_C\btd b^\ast)  \\
&\overset{(\ref{eqn:rightpdcomultiplication})}=&
\sum_i [1_C\btd(h_i^\ast\btr b^\ast_{(1)})]
      \otimes\pi'[(1_C\btl h_i)\btd b^\ast_{(2)}]  \\
&\overset{(\ref{eqn:btlbtrnew3(MAMS)})}=&
\sum_i [1_C\btd(h_i^\ast\btr b^\ast_{(1)})]
      \otimes \pi'[\pi(h_i)\btd b^\ast_{(2)}]  \\
&=& \sum_i [1_C\btd(h_i^\ast\btr b^\ast_{(1)})]
      \otimes \la\e_C,\pi(h_i)\ra b^\ast_{(2)}  \\
&=& \sum_i [1_C\btd(\e\btr b^\ast_{(1)})]
      \otimes b^\ast_{(2)}  \\
&=& \sum (1_C\btd b^\ast_{(1)})\otimes b^\ast_{(2)}  \\
&=& \sum \gamma'(b^\ast_{(1)})\otimes b^\ast_{(2)},
\end{eqnarray*}
we know that $\gamma'$ is a right $B^\ast$-comodule map, which is clearly biunitary.

Then due to Remark \ref{rmk:pams}(2), in order to complete proving that (\ref{eqn:PAMSrightPD}) is a {\pams},
it suffices to verify that $(\iota'\circ\zeta')\ast(\gamma'\circ\pi')=\id_{C\btd B^\ast}$: Specifically, one could calculate for any $x\in C$ and $b^\ast\in B^\ast$ that
\begin{eqnarray*}
&& [(\iota'\circ\zeta')\ast(\gamma'\circ\pi')](x\btd b^\ast)  \\
&\overset{(\ref{eqn:rightpdcomultiplication})}=&
\sum_i (\iota'\circ\zeta')[x_{(1)}\btd(h_i^\ast\btr b^\ast_{(1)})]
  \cdot(\gamma'\circ\pi')[(x_{(2)}\btl h_i)\btd b^\ast_{(2)}]  \\
&=&
\sum_i (x_{(1)}\la h_i^\ast\btr b^\ast_{(1)},1_B\ra\btd \e_B)
  \cdot(1_C\btd\la\e_C,x_{(2)}\btl h_i\ra b^\ast_{(2)})  \\
&\overset{(\ref{eqn:btlbtrnew2(MAMS)})}=&
\sum_i (x_{(1)}\la h_i^\ast,1\ra\la b^\ast_{(1)},1_B\ra\btd \e_B)
  \cdot(1_C\btd\la\e_C,x_{(2)}\ra\la\e, h_i\ra b^\ast_{(2)})  \\
&=& (x\btd \e_B)(1_C\btd b^\ast)  \\
&\overset{(\ref{eqn:rightPDmultiplicationORIGINAL(MAMS)})}=&
x\btd b^\ast.
\end{eqnarray*}

Finally, we claim that the image $\gamma'(B^\ast)$ is a right coideal subalgebra, and hence
(\ref{eqn:PAMSrightPD}) is an {\ams} due to Proposition \ref{lem:PAMSdetermineHopfalgs(MAMS)}.
Indeed,
it follows from a similar argument to the proof of Lemma \ref{lem:Cstru&B*stru}(1) that $1_C\btd B^\ast$ is a right coideal subalgebra of $C\btd B^\ast$.
\end{proof}

In order to show our ``partial self-property'' stated below,
note that if $(\zeta,\gamma^\ast)$ is a {\ams},
then there are two formulas (\ref{eqn:associatortrivial-result2}) and (\ref{eqn:associatortrivial-result3}) appearing in the proof of Proposition \ref{lem:PAMSdetermineHopfalgs(MAMS)}:
For any $x,y\in C$,
\begin{equation}\label{eqn:gamma(x)gamma(y)(MAMS)}
\gamma(x)\gamma(y)=\gamma[x\btl\gamma(y)]
\end{equation}
and
\begin{equation}\label{eqn:gamma(x)1gamma(x)2(MAMS)}
\sum \gamma(x)_{(1)}\otimes\gamma(x)_{(2)}=
\sum \gamma\left(\pi[\gamma(x)_{(1)}]\right)\otimes\gamma(x)_{(2)}
\end{equation}
hold.

\begin{proposition}\label{prop:selfPD}
Denoted by $B\times C$ the left partially dualized Hopf algebra determined by the {\ams} (\ref{eqn:PAMSrightPD}) given in Lemma \ref{lem:rightPDams(MAMS)}. Then:
\begin{itemize}
\item[(1)]
The structures of $B\times C$ are given as follows: For any $b,c\in B$ and $x,y\in C$,
\begin{itemize}
\item
Multiplication:
\begin{equation}\label{eqn:crossprodmult(MAMS)}
(b\times x)(c\times y)
= \sum \zeta[\iota(b)\gamma(x)_{(1)}\iota(c)_{(1)}]\times \pi[\gamma(x)_{(2)}\iota(c)_{(2)}\gamma(y)]
\end{equation}

\item
Unit element: $1_B\times 1_C$;

\item
Comultiplication:
\begin{equation}\label{eqn:crossprodcomult(MAMS)}
\pd{\Delta}(b\times x)
= \sum \Big(\zeta(b_{(1)})\times \pi[\iota(b_{(2)})_{(1)}\gamma(x_{(1)})_{(1)}]\Big)
  \otimes\Big(\zeta[\iota(b_{(2)})_{(2)}\gamma(x_{(1)})_{(2)}]\times x_{(2)}\Big)
\end{equation}

\item
Counit: $\e_B\times\e_C$;

\end{itemize}

\item[(2)]
Furthermore, the Hopf algebra $B\times C$ is isomorphic to $H$ through
\begin{equation}\label{eqn:vartheta(MAMS)}
\vartheta:B\times C\cong H,\;\;b\times x\mapsto \iota(b)\gamma(x).
\end{equation}
\end{itemize}
\end{proposition}

\begin{proof}
\begin{itemize}
\item[(1)]
In order to determine the left partial dualized Hopf algebra $B\times C$ determined by the {\ams} (\ref{eqn:PAMSrightPD}), recall in Lemma \ref{lem:Cstru&B*stru} (1) and (2) that the right $C^\ast\# B$-coaction on $B$ and left $C\btd B^\ast$-coaction on $C$ (induced by ${\pi'}^\ast$ and $\iota'$ respectively) are:
\begin{equation}\label{eqn:Cstru&B*stru(MAMS)}
\begin{array}{ccc}
B &\rightarrow& B\otimes(C^\ast\# B)  \\
b &\mapsto& \sum\limits_j\zeta[\gamma(f_j^\ast)b_{(1)}]\otimes(f_j\# b_{(2)})
\end{array}
\;\;\text{and}\;\;
\begin{array}{ccc}
C &\rightarrow& (C\btd B^\ast)\otimes C  \\
x &\mapsto& \sum\limits_i[x_{(1)}\btd\iota^\ast(h^\ast_i)]\otimes(x_{(2)}\btl h_i)
\end{array},
\end{equation}
where $\{f_j\}$ is a basis of $C^\ast$ with dual basis $\{f^\ast_j\}$ of $C$, and $\{h_i\}$ is a basis of $H$ with dual basis $\{h^\ast_i\}$ of $H^\ast$.


Then one could calculate to find that the multiplication (\ref{eqn:smashprod}) on $B\times C$ as the smash product algebra is determined by:
\begin{eqnarray}
(b\times x)(c\times y)
&\overset{(\ref{eqn:smashprod})}=&
\sum_{i,j} b\zeta[\gamma(f^\ast_j)c_{(1)}]
  \times \la x_{(1)}\btd\iota^\ast(h^\ast_i),f_j\#c_{(2)}\ra [(x_{(2)}\btl h_i)\cdot y]  \nonumber  \\
&\overset{(\ref{eqn:Cmultiplication(MAMS)})}=&
\sum_{i,j} b\zeta[\gamma(f^\ast_j)c_{(1)}]
  \times \la f_j,x_{(1)}\ra\la\iota^\ast(h^\ast_i),c_{(2)}\ra
  [(x_{(2)}\btl h_i)\btl \gamma(y)]  \nonumber  \\
&=&
\sum b\zeta[\gamma(x_{(1)})c_{(1)}]
  \times [x_{(2)}\btl\iota(c_{(2)})\gamma(y)]
\label{eqn:crossprodmult0(MAMS)}   \\
&\overset{(\ref{eqn:iotapi})}=&
\sum b\zeta[\gamma(x_{(1)})\iota(c)_{(1)}]
  \times [x_{(2)}\btl\iota(c)_{(2)}\gamma(y)]
\label{eqn:crossprodmult2(MAMS)}    \\
&=&
\sum \zeta[\iota(b)\gamma(x)_{(1)}\iota(c)_{(1)}]
  \times \left(\pi[\gamma(x)_{(2)}]\btl\iota(c)_{(2)}\gamma(y)\right) \nonumber \\
&\overset{(\ref{eqn:iotapi})}=&
\sum \zeta[\iota(b)\gamma(x)_{(1)}\iota(c)_{(1)}]
  \times\pi[\gamma(x)_{(2)}\iota(c)_{(2)}\gamma(y)],  \nonumber
\end{eqnarray}
where the penultimate equality follows from Definition \ref{def:PAMS}(4) that $\zeta$ preserves left $B$-actions and $\gamma$ preserves right $C$-actions.

On the other hand, we should use the formulas in Theorem \ref{thm:partialdual}(1)
to determine the comultiplication on $B\times C$ as the left partial dual. Before that, note in $C\btd B^\ast$ that
\begin{eqnarray}\label{eqn:Delta(b*)(MAMS)}
\pd{\Delta}(1_C\btd b^\ast)
&=& \sum_l [1_C\btd(h^\ast_l\btr b^\ast_{(1)})]
  \otimes[(1_C\btl h_l)\btd b^\ast_{(2)}]  \nonumber  \\
  &=& \sum_l [1_C\btd(h^\ast_l\btr b^\ast_{(1)})]
  \otimes[\pi(h_l)\btd b^\ast_{(2)}]
\;\;\;\;(\forall b^\ast\in B^\ast),
\end{eqnarray}
which is due to the formula (\ref{eqn:rightpdcomultiplication}) of the comultiplication on the right partially dualized Hopf algebra.

Now let us focus on the {\ams} $(\zeta',{\gamma'}^\ast)$ given in (\ref{eqn:PAMSrightPD}). As the coactions of $B$ and $C$ are chosen as
(\ref{eqn:Cstru&B*stru(MAMS)}), we calculate the coproduct in the left partially dualized Hopf algebra $B\times C$ that:
\begin{eqnarray*}
\pd{\Delta}\left(b\times 1_C\right)
&\overset{(\ref{eqn:Delta(f)})}=&
\sum_{i,j} \left(\zeta[\gamma(f_j^\ast)b_{(1)}]
  \times \zeta'[\gamma'(b_i^\ast)\leftharpoonup(f_j\# b_{(2)})]\right)
  \otimes\left(b_i\times 1_C\right)  \\
&\overset{(\ref{eqn:iota'pi'zeta'gamma'(MAMS)})}=&
\sum_{i,j} \left(\zeta[\gamma(f_j^\ast)b_{(1)}]
  \times \zeta'\left[(1_C\btd b_i^\ast)\leftharpoonup(f_j\# b_{(2)})\right]\right)
  \otimes\left(b_i\times 1_C\right)  \\
&\overset{(\ref{eqn:Delta(b*)(MAMS)})}=&
\sum_{i,j,l} \left(\zeta[\gamma(f_j^\ast)b_{(1)}]
  \times \left\la f_j\# b_{(2)},1_C\btd(h^\ast_l\btr b^\ast_i{}_{(1)})\right\ra
  \zeta'[\pi(h_l)\btd b_i^\ast{}_{(2)}]\right)  \\
&& \;\;\;\;\;\; \otimes\left(b_i\times 1_C\right)  \\
&\overset{(\ref{eqn:iota'pi'zeta'gamma'(MAMS)})}=&
\sum_{i,j,l} \left(\zeta[\gamma(f_j^\ast)b_{(1)}]
  \times \la f_j,1_C\ra\la h^\ast_l\btr b^\ast_i{}_{(1)},b_{(2)}\ra
  \,\pi(h_l)\,\la b_i^\ast{}_{(2)},1_B\ra\right)  \\
&& \;\;\;\;\;\; \otimes\left(b_i\times 1_C\right)  \\
&=&
\sum_{i,l} \left(\zeta[\gamma(1_C)b_{(1)}]
  \times \la h^\ast_l\btr b^\ast_i,b_{(2)}\ra
  \,\pi(h_l)\right) \otimes\left(b_i\times 1_C\right)  \\
&\overset{(\ref{eqn:btlbtrnew3(MAMS)})}=&
\sum_{i,l} \left(\zeta(b_{(1)})
  \times \la h^\ast_l,b_{(2)}\ra\la b^\ast_i,b_{(3)}\ra
  \pi(h_l)\right)\otimes\left(b_i\times 1_C\right)  \\
&=&
\sum \left(\zeta(b_{(1)})\times\pi(b_{(2)})\right)\otimes\left(b_{(3)}\times 1_C\right),
\end{eqnarray*}
as well as
\begin{eqnarray*}
\pd{\Delta}(1_B\times x)
&\overset{(\ref{eqn:Delta(b)})}=&
\sum_{i,j}
  \left(1_B\times \zeta'\left[\gamma'(b^\ast_j)\big(x_{(1)}\btd\iota^\ast(h^\ast_i)\big)\right]\right)
  \otimes \left(b_j\times (x_{(2)}\btl h_i)\right)  \\
&\overset{(\ref{eqn:iota'pi'zeta'gamma'(MAMS)})}=&
\sum_{i,j} \left(1_B\times \zeta'\left[\big(1_C\btd b^\ast_j\big)
  \big(x_{(1)}\btd\iota^\ast(h^\ast_i)\big)\right]\right)
  \otimes \left(b_j\times (x_{(2)}\btl h_i)\right)  \\
&\overset{(\ref{eqn:rightpdmultiplication})}=&
\sum_{i,j} \left(1_B\times (1_C\btl[\zeta^\ast(b^\ast_j{}_{(1)})\rightharpoonup\gamma(x_{(1)})])\right)
   \\
&& \;\;\;\;\;\;
\otimes \left\la [\zeta^\ast(b^\ast_j{}_{(2)})\leftharpoonup\gamma(x_{(2)})]\btr\iota^\ast(h^\ast_i),
  1_B\right\ra\left(b_j\times (x_{(3)}\btl h_i)\right)  \\
&\overset{(\ref{eqn:btlbtrnew2(MAMS)}),\;(\ref{eqn:btlbtrnew3(MAMS)})}=&
\sum_{i,j} \left(1\times \pi[\zeta^\ast(b^\ast_j{}_{(1)})\rightharpoonup\gamma(x_{(1)})]\right)
  \la \zeta^\ast(b^\ast_j{}_{(2)})\leftharpoonup\gamma(x_{(2)}),1\ra  \\
&&  \;\;\;\;\;\;
\otimes \la \iota^\ast(h^\ast_i),1_B\ra\left(b_j\times (x_{(3)}\btl h_i)\right)  \\
&=&
\sum_{i} \left(1\times \pi[\zeta^\ast(b^\ast_j{}_{(1)})\rightharpoonup\gamma(x_{(1)})]\right)
  \la \zeta^\ast(b^\ast_j{}_{(2)}),\gamma(x_{(2)})\ra  \\
&&  \;\;\;\;\;\;
\otimes \left(b_j\times (x_{(3)}\btl 1)\right)  \\
&=&
\sum_{i} \left(1\times \pi[\zeta^\ast(b^\ast_j{}_{(1)})\rightharpoonup\gamma(x_{(1)})]\right)
  \la b^\ast_j{}_{(2)},\zeta[\gamma(x_{(2)})]\ra
  \otimes \left(b_j\times x_{(3)}\right),
\end{eqnarray*}
where $\{b_i\}$ is a basis of $B$ with dual basis $\{b_i^\ast\}$ of $B^\ast$.
However, since $\zeta\circ\gamma$ is trivial according to Proposition \ref{prop:PAMS-comptrival}(2), we could continue above calculations:
\begin{eqnarray*}
\pd{\Delta}(1_B\times x)
&=&
\sum_{i} \left(1\times \pi[\zeta^\ast(b^\ast_j)\rightharpoonup\gamma(x_{(1)})]\right)
  \otimes \left(b_j\times  x_{(2)}\right)  \\
&=&
\sum_{i} \left(1\times \pi[\gamma(x_{(1)})_{(1)}]\right)\la \zeta^\ast(b^\ast_j),\gamma(x_{(1)})_{(2)}\ra
  \otimes \left(b_j\times  x_{(2)}\right)  \\
&=&
\sum \left(1\times \pi[\gamma(x_{(1)})_{(1)}]\right)
  \otimes \left(\zeta[\gamma(x_{(1)})_{(2)}]\times  x_{(2)}\right).
\end{eqnarray*}

At final, as the left partially dualized Hopf algebra $B\times C$ has the smash product algebra structure, one could combine the previous two results to obtain our desired claim:
\begin{eqnarray}
&& \pd{\Delta}(b\times x)  \nonumber \\
&=& \pd{\Delta}(b\times 1_C)\pd{\Delta}(1_B\times x)  \nonumber \\
&=&
\sum \left(\zeta(b_{(1)})\times\pi(b_{(2)})\right)
     \left(1\times \pi[\gamma(x_{(1)})_{(1)}]\right)
     \otimes
     \left(b_{(3)}\times 1_C\right)
     \left(\zeta[\gamma(x_{(1)})_{(2)}]\times  x_{(2)}\right)  \nonumber \\
&=&
\sum \left(\zeta(b_{(1)})\times\pi(b_{(2)})\cdot\pi[\gamma(x_{(1)})_{(1)}]\right)
\otimes
     \left(b_{(3)}\zeta[\gamma(x_{(1)})_{(2)}]\times  x_{(2)}\right)  \nonumber \\
&\overset{(\ref{eqn:Cmultiplication(MAMS)})}=&
\sum \left(\zeta(b_{(1)})
  \times\big[\pi(b_{(2)})\btl\gamma\big(\pi[\gamma(x_{(1)})_{(1)}]\big)\big]\right)
  \otimes
     \left(b_{(3)}\zeta[\gamma(x_{(1)})_{(2)}]\times  x_{(2)}\right)  \nonumber \\
&\overset{(\ref{eqn:gamma(x)1gamma(x)2(MAMS)})}=&
\sum \left(\zeta(b_{(1)})
  \times\big[\pi(b_{(2)})\btl\gamma(x_{(1)})_{(1)}\big]\right)
  \otimes
     \left(b_{(3)}\zeta[\gamma(x_{(1)})_{(2)}]\times  x_{(2)}\right)
\label{eqn:crossprodcomult0(MAMS)}  \\
&\overset{(\ref{eqn:iotapi})}=&
\sum \left(\zeta(b_{(1)})
  \times \pi\big[b_{(2)}\gamma(x_{(1)})_{(1)}\big]\right)
  \otimes
     \left(\zeta[\iota(b_{(3)})\gamma(x_{(1)})_{(2)}]\times  x_{(2)}\right)
\label{eqn:crossprodcomult2(MAMS)}  \\
&\overset{(\ref{eqn:iotapi})}=&
\sum \left(\zeta(b_{(1)})
  \times \pi\big[\iota(b_{(2)})_{(1)}\gamma(x_{(1)})_{(1)}\big]\right)
  \otimes
     \left(\zeta[\iota(b_{(2)})_{(2)}\gamma(x_{(1)})_{(2)}]\times  x_{(2)}\right).
  \nonumber
\end{eqnarray}

As for the unit, note again that the algebra structure of the left partial dual $B\times C$ is defined as the smash product. Consequently, as $1_B$ and $1_C$ are the unit elements of $B$ and $C$ respectively, we know that $1_B\times 1_C$ is the unite element of $B\times C$.

Finally, it follows from Theorem \ref{thm:partialdual}(2) that the counit is $\e_B\times\e_C$, since $B$ is the linear dual space of the right module coalgebra $B^\ast$ whose counit is $1_B$.

\item[(2)]
It is evident to find that $\vartheta(1_B\times 1_C)=\iota(1_B)\gamma(1_C)=1$ as well as
$$\la\e,\vartheta(b\times x)\ra
=\la\e,\iota(b)\ra\la\e,\gamma(x)\ra
\overset{(\ref{eqn:1CeB(MAMS)})}=\la\e_B,b\ra\la\e_C,x\ra$$
for all $b\in B$ and $x\in C$.
This is because $\iota$ is an algebra map and $\gamma$ is biunitary according to Definition \ref{def:PAMS} (1) and (5).
Therefore, in order to show that $\vartheta$ (\ref{eqn:vartheta(MAMS)}) is a Hopf algebra map, it remains to check that $\vartheta$ preserves the multiplications and comultiplications.

Now we suppose $b,c\in B$ and $x,y\in C$. Recall again by Definition \ref{def:PAMS} (1) and (4) that $\zeta$ preserves left $B$-actions and $\pi$ preserves right $H$-actions.
Then it follows from the equation (\ref{eqn:crossprodmult2(MAMS)}) in the proof of (1) that
\begin{eqnarray*}
(b\times x)(c\times y)
&\overset{(\ref{eqn:crossprodmult2(MAMS)})}=&
\sum \zeta[\iota(b)\gamma(x)_{(1)}\iota(c)_{(1)}]
  \times\pi[\gamma(x)_{(2)}\iota(c)_{(2)}\gamma(y)]  \\
&=&
\sum b\zeta[\gamma(x)_{(1)}\iota(c)_{(1)}]
  \times\big(\pi[\gamma(x)_{(2)}\iota(c)_{(2)}]\btl\gamma(y)\big),
\end{eqnarray*}
and hence
\begin{eqnarray*}
\vartheta((b\times x)(c\times y))
&=&
\sum\vartheta\left(b\zeta[\gamma(x)_{(1)}\iota(c)_{(1)}]
  \times\big(\pi[\gamma(x)_{(2)}\iota(c)_{(2)}]\btl\gamma(y)\big)\right)  \\
&\overset{(\ref{eqn:vartheta(MAMS)})}=&
\sum\iota\left(b\zeta[\gamma(x)_{(1)}\iota(c)_{(1)}]\right)
  \gamma\left(\pi[\gamma(x)_{(2)}\iota(c)_{(2)}]\btl\gamma(y)\right)  \\
&\overset{(\ref{eqn:gamma(x)gamma(y)(MAMS)})}=&
\sum \iota(b)\iota\left(\zeta[\gamma(x)_{(1)}\iota(c)_{(1)}]\right)
  \gamma\left(\pi[\gamma(x)_{(2)}\iota(c)_{(2)}]\right)\gamma(y)  \\
&=& \iota(b)\gamma(x)\iota(c)\gamma(y)  \\
&\overset{(\ref{eqn:vartheta(MAMS)})}=&
\vartheta(b\times x)\vartheta(c\times y),
\end{eqnarray*}
where the penultimate equality follows from Definition \ref{def:PAMS}(6) that
\begin{equation}\label{eqn:convprod(MAMS)}
(\iota\circ\zeta)\ast(\gamma\ast\pi)=\id_H.
\end{equation}

On the other hand,
it follows from the equation (\ref{eqn:crossprodcomult2(MAMS)}) in the proof of (1) that
\begin{eqnarray*}
&& (\vartheta\otimes\vartheta)\circ\pd{\Delta}\left(b\times x\right)  \\
&\overset{(\ref{eqn:crossprodcomult2(MAMS)})}=&
\sum \vartheta\left(\zeta(b_{(1)})
  \times \pi\big[b_{(2)}\gamma(x_{(1)})_{(1)}\big]\right)
  \otimes
     \vartheta\left(\zeta[\iota(b_{(3)})\gamma(x_{(1)})_{(2)}]\times  x_{(2)}\right)  \\
&\overset{(\ref{eqn:vartheta(MAMS)})}=&
\sum \iota[\zeta(b_{(1)})]\gamma\big(\pi\big[b_{(2)}\gamma(x_{(1)})_{(1)}\big]\big)
  \otimes \iota\big(b_{(3)}\zeta[\gamma(x_{(1)})_{(2)}]\big)\gamma(x_{(2)})  \\
&\overset{(\ref{eqn:iotapi})}=&
\sum \iota[\zeta(b_{(1)})]\gamma[\pi(b_{(2)})\btl \gamma(x_{(1)})_{(1)}]
  \otimes \iota(b_{(3)})\iota\big(\zeta[\gamma(x_{(1)})_{(2)}]\big)\gamma(x_{(2)})  \\
&\overset{(\ref{eqn:gamma(x)gamma(y)(MAMS)})}=&
\sum \iota[\zeta(b_{(1)})]\gamma[\pi(b_{(2)})]\gamma(x_{(1)})_{(1)}
  \otimes \iota(b_{(3)})\iota\zeta[\gamma(x_{(1)})_{(2)}]\big)\gamma(x_{(2)})  \\
&\overset{(\ref{eqn:convprod(MAMS)})}=&
\sum b_{(1)}\gamma(x_{(1)})_{(1)}
  \otimes \iota(b_{(2)})\iota\big(\zeta[\gamma(x_{(1)})_{(2)}]\big)\gamma(x_{(2)})  \\
&=&
\sum b_{(1)}\gamma(x)_{(1)}
  \otimes \iota(b_{(2)})\iota\big(\zeta[\gamma(x)_{(2)}]\big)\gamma\big(\pi[\gamma(x)_{(3)}]\big)  \\
&\overset{(\ref{eqn:convprod(MAMS)})}=&
\sum b_{(1)}\gamma(x)_{(1)}\otimes \iota(b_{(2)})\gamma(x)_{(2)}   \\
&\overset{(\ref{eqn:iotapi})}=&
\sum \iota(b)_{(1)}\gamma(x)_{(1)}\otimes \iota(b)_{(2)}\gamma(x)_{(2)}   \\
&\overset{(\ref{eqn:vartheta(MAMS)})}=&
\Delta\circ\vartheta\left(b\times x\right),
\end{eqnarray*}
where the sixth equality is also due to Definition \ref{def:PAMS}(4) that $\gamma$ is a right $C$-comodule map.
\end{itemize}
\end{proof}

\begin{remark}
According to the equations (\ref{eqn:crossprodmult0(MAMS)}) and (\ref{eqn:crossprodcomult0(MAMS)}) in the proof of Proposition \ref{prop:selfPD},
we remark that the multiplication and comultiplication on $B\times C$ could be also expressed as follows: For any $b,c\in B$ and $x,y\in C$,
\begin{equation*}
(b\times x)(c\times y)
= \sum b\zeta[\gamma(x_{(1)})c_{(1)}]\times [x_{(2)}\btl\iota(c_{(2)})\gamma(y)]
\end{equation*}
as well as
\begin{equation*}
\pd{\Delta}(b\times x)
=\sum \Big(\zeta(b_{(1)})\times [\pi(b_{(2)})\btl \gamma(x_{(1)})_{(1)}]\Big)
  \otimes\Big(b_{(3)}\zeta[\gamma(x_{(1)})_{(2)}]\times x_{(2)}\Big).
\end{equation*}
\end{remark}

Moreover, with the notations in Proposition \ref{prop:selfPD}, it is straightforward to find that $\vartheta^{-1}=(\zeta\otimes\pi)\circ\Delta$ by using Definition \ref{def:PAMS}(6) that
$(\iota\circ\zeta)\ast(\gamma\ast\pi)=\id_H$.
Thus, as the right partial dual is defined in Definition \ref{def:rightpartialdual} to be the linear dual of the left partial dual, we could obtain the following consequence of Proposition \ref{prop:selfPD}(2):

\begin{corollary}\label{cor:doublerightPD(MAMS)}
The right partially dualized Hopf algebra
determined by the {\ams} (\ref{eqn:PAMSrightPD}) given in Lemma \ref{lem:rightPDams(MAMS)} is isomorphic to $H^\ast$ through
\begin{equation*}
b^\ast\otimes f\mapsto \zeta^\ast(b^\ast)\pi^\ast(f).
\end{equation*}
\end{corollary}

Roughly speaking, Corollary \ref{cor:doublerightPD(MAMS)} means that the
double right partial dualization could send $H$ to its dual Hopf algebra $H^\ast$.
As consequences, we might see that:
\begin{itemize}
\item
The triple right partial dualization (in canonical ways) could send $H$ to the dual of $C\btd B^\ast$, which becomes $C^\ast\#B$ by definitions;

\item
The quadruple right partial dualization (in canonical ways) could become double dualization, sending $H$ to it self.
\end{itemize}
Moreover, these two properties imply an opposite version of Proposition \ref{prop:selfPD}(2):
\begin{itemize}
\item
The (canonical) right partial dual of $C^\ast\#B$ could also be obtained by the quadruple right partial dualization from $H$, and hence it becomes $H$ itself (up to a canonical isomorphism).
\end{itemize}
Let us states the complete result as follows, without providing the detailed but straightforward proof:

\begin{proposition}\label{prop:selfPD2}
The right partially dualized Hopf algebra determined by the {\ams}
\begin{equation*}
\begin{array}{ccc}
\xymatrix{
C^\ast \ar@<.5ex>[r]^{{\zeta'}^\ast\;\;\;\;}
& C^\ast\# B \ar@<.5ex>[l]^{{\iota'}^\ast\;\;\;\;}
  \ar@<.5ex>[r]^{\;\;\;\;{\gamma'}^\ast}
& B \ar@<.5ex>[l]^{\;\;\;\;{\pi'}^\ast}  }
&\;\;\text{and}\;\;&
\xymatrix{
B^\ast \ar@<.5ex>[r]^{\gamma'\;\;\;\;}
& C\btd B^\ast \ar@<.5ex>[l]^{\pi'\;\;\;\;} \ar@<.5ex>[r]^{\;\;\;\;\zeta'}
& C \ar@<.5ex>[l]^{\;\;\;\;\iota'}  }
\end{array}
\end{equation*}
is the same as $B\times C$ introduced in Proposition \ref{prop:selfPD}(1), which is isomorphic to $H$ as well.
\end{proposition}

Similarly, we could also write analogous properties for (double, triple and quadruple) left partial dualization which are analogous to Corollary \ref{cor:doublerightPD(MAMS)} and its consequences for right partial dualization.
Combining these results as well as Propositions \ref{prop:selfPD} and \ref{prop:selfPD2}, one might find a conclusion as the diagram (\ref{eqn:multiplePD(MAMS)}).

\subsection{Bismash products constructed from matched pair of groups}\label{subsection:matchedpairofgrps}

The first classical structure we considered is when $H$ is the group algebra $\k(F\bowtie G)$, where $(F,G)$ is a matched pair of finite groups (\cite[Definition 2.1]{Tak81}). Our claim is that the bismash product $\k^G\#\k F$ and $\k G\#\k^F$ are exactly left and right partially dualized Hopf algebras of $\k(F\bowtie G)$ respectively.

Let us begin by recalling the definitions stated in \cite{Mas02}:

\begin{definition}(\cite[Definition 1.1]{Mas02})
Let $F$ and $G$ be two groups. Suppose
$\triangleright:G\times F\rightarrow F$ and $\triangleleft:G\times F\rightarrow G$ are group actions satisfying
$$x\triangleright bc=(x\triangleright b)((x\triangleleft b)\triangleright c)
\;\;\;\;\;\;\text{and}\;\;\;\;\;\;
xy\triangleleft b=(x\triangleleft(y\triangleright b))(y\triangleleft b)$$
for all $b,c\in F$ and $x,y\in G$. Then
$(F,G,\triangleright,\triangleleft)$ is called a matched pair of groups.

In this situation, the cartesian product $F\times G$ forms a group under the product
\begin{equation}\label{eqn:matchedpairprod}
(b,x)(c,y)=(b(x\triangleright c),(x\triangleleft c)y).
\end{equation}
This group is denoted by $F\bowtie G$.
\end{definition}

When $F$ and $G$ are both finite, denote the dual Hopf algebras $(\k F)^\ast$, $(\k G)^\ast$ and $\k(F\bowtie G)^\ast$ by $\k^F$, $\k^G$ and $\k^{F\bowtie G}$, respectively.
If we define the following injections and projections between $F\times G$ and its factors:
$$\begin{array}{cccc}
\iota_F:F\rightarrow F\times G,& \pi_F:F\times G\rightarrow F,  &
\iota_G:G\rightarrow F\times G,& \pi_G:F\times G\rightarrow G,  \\
\;\;\;\;\;\;b\mapsto(b,1),&\;\;\;\;\;\;\;\;\;(b,x)\mapsto b,  &
\;\;\;\;\;\;x\mapsto(1,x),&\;\;\;\;\;\;\;\;\;(b,x)\mapsto x,
\end{array}$$
then a straightforward verification follows that $(\pi_F,\iota_G^\ast)$ is a {\pams}. Furthermore, $(\pi_F,\iota_G^\ast)$ is an {\ams}
according to Corollary \ref{cor:Hopfalgconditions}(2) as $\iota_G$ is a bialgebra map.

\begin{lemma}
Let $(F,G)$ be a matched pair of finite groups.
There is an {\ams} $(\pi_F,\iota_G^\ast)$ for the injection $\iota_F$ of Hopf algebras:
\begin{equation*}
\begin{array}{ccc}
\xymatrix{
\k F \ar@<.5ex>[r]^{\iota_F\;\;\;\;\;\;}
& \k(F\bowtie G) \ar@<.5ex>@{-->}[l]^{\pi_F\;\;\;\;\;\;} \ar@<.5ex>[r]^{\;\;\;\;\;\;\pi_G}
& \k G \ar@<.5ex>@{-->}[l]^{\;\;\;\;\;\;\iota_G}  }
&\;\;\text{and}\;\;&
\xymatrix{
\k^G \ar@<.5ex>[r]^{\pi_G^\ast\;\;\;\;}
& \k^{F\bowtie G} \ar@<.5ex>@{-->}[l]^{\iota_G^\ast\;\;\;\;}
  \ar@<.5ex>[r]^{\;\;\;\;\iota_F^\ast}
& \k^F \ar@<.5ex>@{-->}[l]^{\;\;\;\;\pi_F^\ast}  },
\end{array}
\end{equation*}
where the right $\k(F\bowtie G)$-module structure of $\k G$ is given by
\begin{equation}\label{eqn:kGrightmod}
x\otimes(b,y)\mapsto (x\triangleleft b)y
\;\;\;\;\;\;(\forall b\in F,\;\forall x,y\in G).
\end{equation}
\end{lemma}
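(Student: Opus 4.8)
The plan is to reduce the whole statement to Corollary \ref{cor:BCP-exist}(1), which guarantees that a biunitary, convolution-invertible left $\k F$-module retraction of $\iota_F$ extends uniquely to a {\pams}. First I would record that $F$ and $G$ embed as subgroups of $F\bowtie G$ via $\iota_F$ and $\iota_G$: using the normalizations $1_G\triangleright b=b$, $1_G\triangleleft b=1_G$, $x\triangleright 1_F=1_F$, $x\triangleleft 1_F=x$ of a matched pair, the product (\ref{eqn:matchedpairprod}) restricts to $(b,1)(c,1)=(bc,1)$ and $(1,x)(1,y)=(1,xy)$. Hence $\iota_F$ is an inclusion of Hopf (in particular left comodule) algebras, $\pi_G$ is a coalgebra surjection, and a direct computation of $(\id\otimes\pi_G)\circ\Delta$ on the grouplikes $(b,x)$ shows that the right $\k G$-coinvariants of $\k(F\bowtie G)$ are exactly the span of $\{(b,1)\}$, i.e.\ the image of $\iota_F$. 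This verifies the hypotheses of Definition \ref{def:PAMS}(1),(2), and by Remark \ref{rmk:pams} identifies $\k(F\bowtie G)/(\k F)^+\k(F\bowtie G)\cong\k G$ through $\pi_G$.

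Next I would check that $\zeta:=\pi_F$ is an admissible retraction. Since $\pi_F$ is induced by a set map sending grouplikes to grouplikes it is a coalgebra map, and $\pi_F(1)=1_F$ together with counitarity (each group element has counit $1$) gives biunitarity. It is a left $\k F$-module map because $\iota_F(b)(c,y)=(bc,y)$ forces $\pi_F(\iota_F(b)(c,y))=bc=b\,\pi_F((c,y))$. For convolution invertibility I would exhibit the inverse explicitly: on the grouplike $(b,x)$ the comultiplication is diagonal, so the candidate $\overline{\pi_F}\colon(b,x)\mapsto b^{-1}$ satisfies $(\pi_F\ast\overline{\pi_F})((b,x))=b\,b^{-1}=1_F=(\overline{\pi_F}\ast\pi_F)((b,x))$, as required. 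The only point needing care here is that $\pi_F$ is \emph{not} an algebra map (nor is $\pi_G$), so these identities must be read off from the explicit group multiplication (\ref{eqn:matchedpairprod}) rather than from any multiplicativity.

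With these facts in hand, Corollary \ref{cor:BCP-exist}(1) produces a unique $\gamma$ making $(\pi_F,\gamma^\ast)$ a {\pams}, and it remains to identify $\gamma$ with $\iota_G$ and to pin down the module structure. Evaluating the defining formula (\ref{eqn:gammadef}) on the grouplike $(b,x)$ gives
\[
\gamma(\pi_G((b,x)))=\iota_F(\overline{\pi_F}((b,x)))\,(b,x)=(b^{-1},1)(b,x)=(1,x)=\iota_G(x),
\]
so $\gamma=\iota_G$; dualizing yields $\gamma^\ast=\iota_G^\ast$, exactly the claimed system. Finally, the quotient right $\k(F\bowtie G)$-action on $\k G$ is computed from (\ref{eqn:btl}): $x\btl(b,y)=\pi_G[\iota_G(x)(b,y)]=\pi_G[(x\triangleright b,(x\triangleleft b)y)]=(x\triangleleft b)y$, which is precisely (\ref{eqn:kGrightmod}). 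The main obstacle throughout is purely bookkeeping with the skew multiplication (\ref{eqn:matchedpairprod}): because neither projection is multiplicative, every verification hinges on substituting the correct cross-term $x\triangleright b$ or $x\triangleleft b$ and invoking the identity-normalizations of the matched pair, rather than on any structural shortcut.
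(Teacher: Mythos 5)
Your proposal is correct and supplies exactly the ``straightforward verification'' that the paper omits: you check the hypotheses of Definition \ref{def:PAMS}(1),(2) and the admissibility of $\zeta=\pi_F$, then use Corollary \ref{cor:BCP-exist}(1) together with the explicit formula (\ref{eqn:gammadef}) to identify the unique $\gamma$ with $\iota_G$ and recover the module structure (\ref{eqn:kGrightmod}). All the group-theoretic computations (the normalizations of the matched pair, the non-multiplicativity of $\pi_F$ and $\pi_G$, and the evaluation $(b^{-1},1)(b,x)=(1,x)$) are handled correctly, so this matches the paper's intended argument.
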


\begin{remark}
Suppose $\{p_x\mid x\in G\}$ is the basis of $\k^G$ which is dual to $G$, and suppose $\{p_{(b,x)}\mid b\in F,\;x\in G\}$ is the basis of $\k^{F\bowtie G}$ dual to $F\bowtie G$. The right $\k^{F\bowtie G}$-comodule structure of $\k^G$ induced by (\ref{eqn:kGrightmod}) would be:
\begin{equation}\label{eqn:k^Grightcomod}
p_x\mapsto
\sum {p_x}_{(1)}\otimes{p_x}_{(2)}
:=\sum_{\substack{d\in F,\;z,w\in G \\ (z\triangleleft d)w=x}}
p_z\otimes p_{(d,w)}
\in \k^G\otimes\k^{F\bowtie G}
\;\;\;\;\;\;(\forall x\in G).
\end{equation}
\end{remark}

Therefore the {\ams} $(\pi_F,\iota_G^\ast)$ would determine a left partially dualized Hopf algebra $\k^G\#\k F$ with structures defined by Theorem \ref{thm:partialdual}. Specifically, for any $b,c\in F$ and $x,y\in G$:
\begin{itemize}
\item
The unit is $\e\#1$, and the multiplication is defined such that:
\begin{eqnarray*}
(p_x\#b)(p_y\#c)
&\overset{(\ref{eqn:smashprod})}{=}&
\sum p_x{p_y}_{(1)}\#(b\leftharpoonup {p_y}_{(2)})c
~\overset{(\ref{eqn:k^Grightcomod})}{=}~
\sum_{\substack{d\in F,\;z,w\in G \\ (z\triangleleft d)w=y}}
  p_xp_z\#(b\leftharpoonup p_{(d,w)})c  \\
&=&
\sum_{\substack{d\in F,\;z,w\in G \\ (z\triangleleft d)w=y}}
  p_xp_z\#\la p_{(d,w)},(b,1)\ra bc
~=~
\delta_{x\triangleleft b,y}p_x\#bc;
\end{eqnarray*}

\item
Since it could be verified that
\begin{equation}\label{eqn:injG*projF*}
\begin{array}{rcccrcc}
\iota_G^\ast:\k^{F\bowtie G}&\rightarrow&\k^G,
&& \pi_F^\ast:\k^F&\rightarrow&\k^{F\bowtie G},    \\
p_{(c,w)}&\mapsto&\delta_{c,1}p_w,
&& p_c&\mapsto&\sum_{w'\in G}p_{(c,w')}
\end{array}
\end{equation}
hold, the comultiplication and counit are defined respectively such that
\begin{eqnarray*}
\pd{\Delta}(p_x\#b)
&\overset{(\ref{eqn:Deltacomplete})}{=}&
\sum_{c\in F,\;y\in G}
  \left({p_x}_{(1)}\#c\pi_F[\iota_G(y)b_{(1)}]\right)
  \otimes\left(\iota_G^\ast[{p_x}_{(2)}\pi^\ast_F(p_c)]p_y\#b_{(2)}\right)  \\
&\overset{(\ref{eqn:k^Grightcomod})}{=}&
\sum_{\substack{c,d\in F,\;y,z,w\in G \\ (z\triangleleft d)w=x}}
  \left(p_z\#c\pi_F[\iota_G(y)(b,1)]\right)
  \otimes\left(\iota_G^\ast[p_{(d,w)}\pi^\ast_F(p_c)]p_y\#b\right)  \\
&\overset{(\ref{eqn:injG*projF*})}{=}&
\sum_{\substack{c,d\in F,\;y,z,w\in G \\ (z\triangleleft d)w=x}}
  \left(p_z\#c\pi_F[(1,y)(b,1)]\right)  \\
&& \;\;\;\;\;\;\;\;\;\;\;\;\;\;\;\;\;\;\;\;\;\;
  \otimes\left(\iota_G^\ast[p_{(d,w)}(\sum\nolimits_{w'\in G} p_{(c,w')})]p_y\#b\right)  \\
&\overset{(\ref{eqn:matchedpairprod})}{=}&
\sum_{\substack{c\in F,\;w,y,z\in G \\ (z\triangleleft c)w=x}}
  \left(p_z\#c\pi_F[(y\triangleright b,y\triangleleft b)]\right)
  \otimes\left(\iota_G^\ast[p_{(c,w)}]p_y\#b\right)  \\
&\overset{(\ref{eqn:injG*projF*})}{=}&
\sum_{\substack{c\in F,\;w,y,z\in G \\ (z\triangleleft c)w=x}}
  \left(p_z\#c(y\triangleright b)\right)
  \otimes\left(\delta_{c,1}p_wp_y\#b\right)  \\
&=&
\sum_{\substack{y,z\in G \\ zy=x}}
  \left(p_z\#(y\triangleright b)\right)\otimes\left(p_y\#b\right)
~=~
\sum_{y\in G}
  \left(p_{xy^{-1}}\#(y\triangleright b)\right)\otimes\left(p_y\#b\right)
\end{eqnarray*}
and
$$\pd{\e}(p_x\#b)=\la p_x,1\ra\la\e,b\ra=\delta_{x,1}.$$
\end{itemize}
Clearly, the above structures coincide completely with the \textit{bismash product} structure of $\k^G\#\k F$, which could be found in \cite[Section 3]{LMS06} for example (the dual version of the one in \cite[Preliminaries]{BGM96}):

\begin{proposition}\label{prop:matchedpairbismashprod}
Let $(F,G)$ be a matched pair of finite groups. Then the bismash product Hopf algebra $\k^G\#\k F$ is the left partially dualized Hopf algebra determined by the {\ams} $(\pi_F,\iota_G^\ast)$ for  $\iota_F$.
\end{proposition}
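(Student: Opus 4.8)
The plan is to specialize Theorem~\ref{thm:partialdual} to $H=\k(F\bowtie G)$ with left coideal subalgebra $B=\k F$ embedded via $\iota_F$, using the {\pams} $(\pi_F,\iota_G^\ast)$ identified in the preceding lemma. First I would record the coalgebra identification $H/B^+H\cong\k G$, under which $\pi=\pi_G$ and $\gamma=\iota_G$, so that $(H/B^+H)^\ast=\k^G$ and the smash product algebra delivered by Theorem~\ref{thm:partialdual} is literally $\k^G\#\k F$; this already matches the underlying vector space and algebra of the bismash product. Since $\gamma=\iota_G$ is a bialgebra map and $\k G$ is a quotient bialgebra of $H$, Lemma~\ref{lem:Hopfalgconditions}(2) applies and forces the associator $\pd{\phi}$ to be trivial. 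This step is conceptually the crucial one: it is precisely what upgrades the reconstructed quasi-Hopf algebra to an honest Hopf algebra, explaining why a classical bismash product, and not a genuinely quasi-Hopf object, arises.

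With the algebra structure and the triviality of $\pd{\phi}$ in hand, the remainder is a comparison of the structure maps against the standard bismash formulas (e.g.\ \cite[Section~3]{LMS06}, dualizing \cite{BGM96}). I would evaluate the multiplication~(\ref{eqn:smashprod}), comultiplication~(\ref{eqn:Deltacomplete}) and counit~(\ref{eqn:epsilon}) of Theorem~\ref{thm:partialdual} on the dual bases $\{p_x\mid x\in G\}$ of $\k^G$ and $\{p_{(b,x)}\}$ of $\k^{F\bowtie G}$, feeding in the explicit right $\k^{F\bowtie G}$-comodule structure~(\ref{eqn:k^Grightcomod}) of $\k^G$ coming from the action~(\ref{eqn:kGrightmod}). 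For the product, pairing the second factor's comodule structure against $(b,1)$ forces $(d,w)=(b,1)$, and with $p_xp_z=\delta_{x,z}p_x$ the index constraint $(z\triangleleft d)w=y$ collapses to $x\triangleleft b=y$, yielding $(p_x\#b)(p_y\#c)=\delta_{x\triangleleft b,\,y}\,p_x\#bc$; the counit reduces at once to $\pd{\e}(p_x\#b)=\delta_{x,1}$.

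The main obstacle, and the only genuinely long computation, is the comultiplication. Here I would substitute the dualized maps~(\ref{eqn:injG*projF*}), namely $\iota_G^\ast(p_{(c,w)})=\delta_{c,1}p_w$ and $\pi_F^\ast(p_c)=\sum_{w'\in G}p_{(c,w')}$, into~(\ref{eqn:Deltacomplete}), and then expand the internal product of $F\bowtie G$ via~(\ref{eqn:matchedpairprod}). The delta functions produced by $\iota_G^\ast$ and by the comodule structure should conspire to annihilate the summations over $F$ and collapse the remaining sum over $G$ to $\pd{\Delta}(p_x\#b)=\sum_{y\in G}\big(p_{xy^{-1}}\#(y\triangleright b)\big)\otimes\big(p_y\#b\big)$, the classical bismash coproduct. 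I expect the delicate point to be correctly aligning the constraint $(z\triangleleft c)w=x$ with the matched-pair factorization $(y\triangleright b,\,y\triangleleft b)$ so that the two index sets match up. Once this is verified, together with the agreement of the units $\e\#1$, all the structure maps coincide with those of the bismash product, so the left partial dual determined by $(\pi_F,\iota_G^\ast)$ equals $\k^G\#\k F$ as Hopf algebras, as claimed.
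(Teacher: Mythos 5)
Your proposal follows the paper's own argument essentially verbatim: identify $H/B^+H\cong\k G$ with $\pi=\pi_G$ and $\gamma=\iota_G$, invoke Lemma \ref{lem:Hopfalgconditions}(2) (via the fact that $\iota_G$ is a bialgebra map) to trivialize the associator, and then evaluate the smash-product multiplication, the comultiplication (\ref{eqn:Deltacomplete}) and the counit (\ref{eqn:epsilon}) on the dual bases $\{p_x\}$ and $\{p_{(b,x)}\}$ to recover the classical bismash formulas. The computations you sketch --- the collapse of the product to $\delta_{x\triangleleft b,\,y}\,p_x\#bc$ and of the coproduct to $\sum_{y\in G}\bigl(p_{xy^{-1}}\#(y\triangleright b)\bigr)\otimes\bigl(p_y\#b\bigr)$ --- are exactly those carried out in the paper, so the proof is correct and takes the same approach.
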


Then a result in \cite{BGM96} on the gauge equivalence of the Drinfeld doubles could be obtained:

\begin{corollary}(cf. \cite[Proposition 5.5]{BGM96})
The Hopf algebras $D(\k^G\#\k F)$ and $D(\k(G\bowtie F))$ are gauge equivalent.
\end{corollary}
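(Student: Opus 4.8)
The plan is to read off the gauge equivalence directly from the reconstruction machinery already developed for left partial duals, specialized to the {\pams} $(\pi_F,\iota_G^\ast)$ for $\iota_F:\k F\rightarrowtail\k(F\bowtie G)$. First I would invoke Proposition~\ref{prop:matchedpairbismashprod}, which identifies the bismash product $\k^G\#\k F$ with the left partial dual of $H=\k(F\bowtie G)$ determined by this {\pams}, taking the left coideal subalgebra to be $B=\k F$. This places $\k^G\#\k F$ squarely inside the setting of Subsection~\ref{subsection:relativeHopfmods}, so that the consequences collected in Proposition~\ref{prop:YDmodsequiv} become available.

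Next I would apply Proposition~\ref{prop:YDmodsequiv}(3) with $H=\k(F\bowtie G)$ and $(H/B^+H)^\ast\#B=\k^G\#\k F$, which yields a tensor equivalence
$$\Rep\left(D(\k^G\#\k F)\right)\approx\Rep\left(D(\k(F\bowtie G))\right).$$
Both quantum doubles are finite-dimensional Hopf algebras, being the Drinfeld doubles of ordinary finite-dimensional Hopf algebras. I would then argue exactly as in the proof of Proposition~\ref{prop:partialdualsequiv}: by \cite[Theorem 2.2]{NS08}, a tensor equivalence between the representation categories of two finite-dimensional (quasi-)Hopf algebras forces the algebras themselves to be gauge equivalent. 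Hence $D(\k^G\#\k F)$ and $D(\k(F\bowtie G))$ are gauge equivalent.

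Finally I would reconcile this with the notation of the statement. The underlying abstract group of the matched pair admits both exact factorizations $FG$ and $GF$, so that $F\bowtie G\cong G\bowtie F$ as groups (cf.\ \cite{Tak81,Mas02}); consequently $\k(F\bowtie G)\cong\k(G\bowtie F)$ and therefore $D(\k(F\bowtie G))\cong D(\k(G\bowtie F))$ as Hopf algebras. Composing this isomorphism with the gauge equivalence obtained above gives the asserted gauge equivalence of $D(\k^G\#\k F)$ and $D(\k(G\bowtie F))$.

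The only genuinely nontrivial ingredient is the implication ``tensor equivalence of representation categories $\Rightarrow$ gauge equivalence'', and this is not something I need to reprove, since it is precisely \cite[Theorem 2.2]{NS08}, already exploited for Proposition~\ref{prop:partialdualsequiv}. Thus the remaining work is essentially bookkeeping: checking that the hypotheses of Proposition~\ref{prop:YDmodsequiv}(3) are met (finite-dimensionality, together with the partial-dual identification of Proposition~\ref{prop:matchedpairbismashprod}) and matching the group-theoretic notation $F\bowtie G$ against $G\bowtie F$. I expect the notation-matching step to be the only point where a reader might hesitate, so I would be explicit about the factorization argument giving $F\bowtie G\cong G\bowtie F$ there.
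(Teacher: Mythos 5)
Your proposal is correct and takes essentially the same route as the paper, which simply cites Proposition \ref{prop:matchedpairbismashprod} together with Proposition \ref{prop:YDmodsequiv}; you have merely made explicit the two steps the paper leaves implicit, namely the passage from the tensor equivalence $\Rep(D(\k^G\#\k F))\approx\Rep(D(\k(F\bowtie G)))$ to gauge equivalence via \cite[Theorem 2.2]{NS08} (exactly as in Proposition \ref{prop:partialdualsequiv}) and the group isomorphism $F\bowtie G\cong G\bowtie F$ reconciling the notation.
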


\begin{proof}
Straightforward by Propositions \ref{prop:matchedpairbismashprod} and \ref{prop:YDmodsequiv}.
\end{proof}

On the other hand, the {\ams} $(\pi_F,\iota_G^\ast)$ determines a right partially dualized Hopf algebra $\k G\#\k^F$ as well. Then we could write (\ref{eqn:PAMS(right)}) as a self-duality of bismash products (\cite{BGM96}), without fulfilling the details and the proof:
\begin{corollary}(cf. \cite[Proposition 2.1]{BGM96})
There is an isomorphism of Hopf algebras:
$$(\k^G\#\k F)^\ast\cong\k G\#\k^F.$$
\end{corollary}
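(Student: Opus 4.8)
The plan is to read the statement directly off the defining property \eqref{eqn:PAMS(right)} of the right partial dual, once both partial duals have been recognized as genuine bismash Hopf algebras. By Proposition \ref{prop:matchedpairbismashprod} the left partial dual of $H=\k(F\bowtie G)$ determined by the {\pams} $(\pi_F,\iota_G^\ast)$ is the bismash product $\k^G\#\k F$, and since $\iota_G$ is a bialgebra map, Lemma \ref{lem:Hopfalgconditions}(2) guarantees that \emph{both} the associated left and right partial duals are honest Hopf algebras. The right partial dual is, by Definition \ref{def:rightpartialdual}, the object $H/B^+H\btd B^\ast=\k G\btd\k^F$ (here $B=\k F$ and $H/B^+H\cong\k G$ via $\pi_G$), and \eqref{eqn:PAMS(right)} furnishes an isomorphism of Hopf algebras
\begin{equation*}
\k^G\#\k F\;\cong\;(\k G\btd\k^F)^\ast,\qquad f\#b\mapsto\la f\otimes b,-\ra .
\end{equation*}
Dualizing this isomorphism and invoking the canonical identification of a finite-dimensional Hopf algebra with its double dual yields $(\k^G\#\k F)^\ast\cong\k G\btd\k^F$, so the entire statement reduces to identifying the smash-coproduct Hopf algebra $\k G\btd\k^F$ with the classical bismash product $\k G\#\k^F$.

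First I would make the structures of $\k G\btd\k^F$ explicit. Its coalgebra structure is the smash coproduct \eqref{eqn:rightpdcomultiplication} assembled from the right $\k(F\bowtie G)$-module coalgebra $\k G$ (via $\btl$) and the right $\k(F\bowtie G)$-comodule coalgebra $\k^F$, while its ``multiplication'' is \eqref{eqn:rightpdmultiplication} read off the {\pams} $(\pi_F,\iota_G^\ast)$. I would then evaluate \eqref{eqn:rightpdmultiplication} and \eqref{eqn:rightpdcomultiplication} on the standard bases of $\k G$ and $\{p_c\mid c\in F\}$ of $\k^F$, substituting the explicit forms of $\btl$, $\btr$, $\gamma=\iota_G$ and $\zeta^\ast=\pi_F^\ast$, exactly in the manner already carried out for $\k^G\#\k F$ after Proposition \ref{prop:matchedpairbismashprod}. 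The expected outcome is the standard bismash product rule in which $\k^F$ acts and coacts on $\k G$ through the matched-pair actions $\triangleleft,\triangleright$, so that $\k G\btd\k^F$ coincides with the bismash product $\k G\#\k^F$ of \cite{BGM96,LMS06}.

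The routine but most delicate step is this final identification: one must match the smash-coproduct comultiplication \eqref{eqn:rightpdcomultiplication} and the ``multiplication'' \eqref{eqn:rightpdmultiplication} termwise against the published formulas for $\k G\#\k^F$, keeping careful track of which matched-pair action ($\triangleleft$ versus $\triangleright$) enters each slot, and of the transpose relations \eqref{eqn:btlbtr} between $\btl,\btr$ and the dual pairings. Once the two pairs of structure maps are seen to agree, the chain $(\k^G\#\k F)^\ast\cong\k G\btd\k^F\cong\k G\#\k^F$ of Hopf algebra isomorphisms completes the argument, recovering the self-duality of bismash products recorded in \cite[Proposition 2.1]{BGM96}.
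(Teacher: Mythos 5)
Your proposal is correct and follows exactly the route the paper intends: identify $\k^G\#\k F$ as the left partial dual via Proposition \ref{prop:matchedpairbismashprod}, invoke Lemma \ref{lem:Hopfalgconditions}(2) to see both partial duals are Hopf algebras, and read the isomorphism off the duality (\ref{eqn:PAMS(right)}). In fact the paper explicitly states this corollary ``without fulfilling the details,'' so your plan to verify termwise that the smash coproduct $\k G\btd\k^F$ (with multiplication (\ref{eqn:rightpdmultiplication})) matches the classical bismash product $\k G\#\k^F$ supplies precisely the computation the paper omits.
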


\begin{remark}
Moreover, with the help of the results in Subsection \ref{subsection:6.2}, one could establish more relations of the Hopf algebras $\k(G\bowtie F)$, $\k^{G\bowtie F}$, $\k F\#\k^G$ and $\k^F\#\k G$. Specifically, we have the following diagram due to (\ref{eqn:multiplePD(MAMS)}):
\begin{equation*}
\xymatrix{
\k(G\bowtie F)
  \ar@<0.75ex>@{|->}[rrr]^{\text{left partial dualization}}
  \ar@<0.75ex>@{|-->}[ddd]^{\text{\shortstack{right partial\\dualization}}}
&&& k^G\#\k F
  \ar@<0.75ex>@{|->}[ddd]^{\text{\shortstack{left partial\\dualization}}}
  \ar@<0.75ex>@{|-->}[lll]^{\text{right partial dualization}}
\\  \\  \\  \k G\#\k^F
  \ar@<0.75ex>@{|->}[uuu]^{\text{\shortstack{left partial\\dualization}}}
  \ar@<0.75ex>@{|-->}[rrr]^{\text{right partial dualization}}
&&& \k^{G\bowtie F}\;.
  \ar@<0.75ex>@{|->}[lll]^{\text{left partial dualization}}
  \ar@<0.75ex>@{|-->}[uuu]^{\text{\shortstack{right partial\\dualization}}}
}
\end{equation*}
\end{remark}

\subsection{Partial duals of bosonizations, and dually paired (braided) Hopf algebras}\label{subsection:partialdualbiproduct}

In \cite{HS13}, the bosonizations of dually paired Hopf algebras $(B',B)$ in the category ${}^A_A\YD$ of (left-left) Yetter-Drinfeld modules over a Hopf algebra $A$ with bijective antipode are studied. We remark that this construction could be regarded as a particular case in \cite{BLS15} when $B$ is a Hopf algebra in an arbitrary braided category instead of $\Vec$, and is referred as \textit{partially dualized Hopf algebras}.
In this subsection, we show that our construction for the partial dualization is a generalization of the structures introduced in \cite[Sections 1 and 2]{HS13} when $A$ and $B$ are finite-dimensional.

Let $A$ be a finite-dimensional Hopf algebra over $\k$, and let $B$ be a finite-dimensional Hopf algebra in the category ${}^A_A\YD$. It is known that we could formulate the \textit{bosonization} or \textit{Radford's biproduct} $H:=B\dotrtimes A$,
which is a Hopf algebra over $\k$ with the structures of smash product and smash coproduct.
Specifically, the multiplication and comultiplication in $H$ is given by
$$(b\dotrtimes x)(c\dotrtimes y):=\sum b(x_{(1)}c)\dotrtimes x_{(2)}y$$
and
$$b\dotrtimes x\mapsto
\sum (b^{(1)}\dotrtimes b^{(2)\la-1\ra}x_{(1)})\otimes(b^{(2)\la0\ra}\dotrtimes x_{(2)})$$
for any $b,c\in B$ and $x,y\in A$, where the coalgebra and left $A$-comodule structure on $B$ are denoted respectively by
$$b\mapsto\sum b^{(1)}\otimes b^{(2)}\in B\otimes B
\;\;\;\;\text{and}\;\;\;\;
b\mapsto\sum b^{\la-1\ra}\otimes b^{\la0\ra}\in A\otimes B.$$
As for the other side, the bosonization $A\dotltimes C$ is defined similarly if $C$ is a Hopf algebra in $\YD{^A_A}$.

However, we would consider an equivalent construction for $B\dotrtimes A$, which is formulated as follows:

\begin{lemma}(\cite[Section 2]{AS98})\label{lem:braidedHopfalg}
Suppose that $H$ and $A$ are finite-dimensional Hopf algebras with Hopf algebra maps
$$\pi:H\twoheadrightarrow A\;\;\;\;\text{and}\;\;\;\;\gamma:A\rightarrowtail H$$
satisfying $\pi\circ\gamma=\id_{A}$.
Then
\begin{equation}\label{eqn:braidedHopfalg}
B:=\{h\in H\mid \sum h_{(1)}\otimes\pi(h_{(2)})=h\otimes\pi(1)\}
\overset{\iota}{\hookrightarrow} H
\end{equation}
is a Hopf algebra in ${}^A_A\YD$, and $H\cong B\dotrtimes A$ as Hopf algebras.
\end{lemma}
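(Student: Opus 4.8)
The plan is to exhibit $B$ as the right $\pi$-coinvariants of $H$ and to recover $H$ from $B$ and $A$ through Radford's biproduct construction, as in \cite{AS98}. The starting point is the linear projection
$$\Pi\colon H\to H,\qquad \Pi(h)=\sum h_{(1)}\,\gamma\bigl(S(\pi(h_{(2)}))\bigr),$$
which I would first show is idempotent with image exactly the space $B$ of (\ref{eqn:braidedHopfalg}), restricts to $\id_B$, and satisfies $\Pi\circ\gamma=\la\e,-\ra\,1_H$. Two further elementary facts are needed: that $B$ is a unital subalgebra of $H$, and that $\Delta(B)\subseteq H\otimes B$ — the latter by applying $\id\otimes\id\otimes\pi$ to $(\id\otimes\Delta)\Delta(b)$ and invoking the coinvariance defining $B$. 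In particular the right-hand tensorand of $\Delta(b)$ already lies in $B$ for every $b\in B$, which is what makes the formulas below land in the right spaces.

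Next I would endow $B$ with the structure of a Hopf algebra in ${}^A_A\YD$. I take the left $A$-action and left $A$-coaction
$$x\cdot b=\sum\gamma(x_{(1)})\,b\,\gamma(S(x_{(2)})),\qquad
\delta(b)=\sum\pi(b_{(1)})\otimes b_{(2)}\in A\otimes B,$$
the inherited product and unit, the braided coproduct $\underline{\Delta}(b)=\sum\Pi(b_{(1)})\otimes b_{(2)}$ and counit $\e|_B$, and a braided antipode $\underline{S}$ assembled from $S$, $\gamma$ and $\pi$. I would then verify, by Sweedler computation, that $x\cdot b\in B$ and $\delta(b)\in A\otimes B$, that $(B,\,\cdot\,,\delta)\in{}^A_A\YD$ (the Yetter-Drinfeld compatibility between action and coaction), and that $B$ with these data is a braided Hopf algebra; the one genuinely structural point here is that $\underline{\Delta}$ is an algebra map for the \emph{braided} tensor product of ${}^A_A\YD$, where the braiding of the category enters.

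The isomorphism is then produced explicitly. I define
$$\Theta\colon B\dotrtimes A\to H,\qquad b\dotrtimes x\mapsto \iota(b)\gamma(x),$$
with candidate inverse $H\to B\dotrtimes A$, $h\mapsto\sum\Pi(h_{(1)})\dotrtimes\pi(h_{(2)})$. Multiplicativity of $\Theta$ is immediate: in $\Theta\bigl((b\dotrtimes x)(c\dotrtimes y)\bigr)$ the adjoint action $x_{(1)}\cdot c=\sum\gamma(x_{(1)})c\gamma(S(x_{(2)}))$ appears, and applying $\gamma$ (an algebra map) telescopes $\gamma(S(x_{(2)}))\gamma(x_{(3)})$ to a counit, leaving $\iota(b)\gamma(x)\iota(c)\gamma(y)=\Theta(b\dotrtimes x)\Theta(c\dotrtimes y)$. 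That $\Theta$ and its candidate inverse compose to the identity in both directions follows from $\Pi|_B=\id_B$, from $\pi\circ\gamma=\id_A$, and from the telescoping identity $\sum h_{(1)}\gamma(S(\pi(h_{(2)})))\gamma(\pi(h_{(3)}))=h$.

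The hard part will be the coalgebra half: checking that $\Theta$ intertwines $\Delta_H$ with the smash coproduct of $B\dotrtimes A$, whose second tensorand is twisted by the coaction $b\mapsto\sum b^{\la-1\ra}\otimes b^{\la0\ra}$ and whose first tensorand uses the braided coproduct $b\mapsto\sum b^{(1)}\otimes b^{(2)}$. This is the computation that genuinely exploits the precise form of $\Pi$, the inclusion $\Delta(B)\subseteq H\otimes B$, and the interaction of $\pi$, $\gamma$ and $S$; every other verification is routine Sweedler bookkeeping. I would therefore concentrate the argument on this compatibility and cite \cite{AS98} for the standard remaining checks, including the detailed braided-Hopf-algebra axioms for $B$.
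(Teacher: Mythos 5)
The paper offers no proof of this lemma at all---it is quoted from \cite[Section 2]{AS98} (ultimately Radford's projection theorem)---and your proposal is a correct reconstruction of exactly that standard argument: the idempotent $\Pi(h)=\sum h_{(1)}\gamma(S(\pi(h_{(2)})))$ with image $B$, the adjoint action and the coaction $b\mapsto\sum\pi(b_{(1)})\otimes b_{(2)}$, the braided coproduct $\sum\Pi(b_{(1)})\otimes b_{(2)}$, and the mutually inverse maps $b\dotrtimes x\mapsto\iota(b)\gamma(x)$ and $h\mapsto\sum\Pi(h_{(1)})\dotrtimes\pi(h_{(2)})$ are precisely the textbook ingredients. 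The only caveat is that the step you flag as the hard part (compatibility of $\Theta$ with the smash coproduct) in fact telescopes at once from $\gamma(S(\pi(b_{(2)})))\gamma(\pi(b_{(3)}))=\la\e,\pi(b_{(2)})\ra 1$, whereas the genuinely lengthy verification is the braided bialgebra axiom for $B$ in ${}^A_A\YD$, which you appropriately defer to the reference.
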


Thus the existence of an {\ams} introduced in \cite[Theorem 3(c)]{Rad85} could be as the following lemma:

\begin{lemma}\label{lem:admissiblemapsys}
Under the assumptions in Lemma \ref{lem:braidedHopfalg}, there exists a unique left $B$-module map $\zeta:H\rightarrow B$ such that $(\zeta,\gamma^\ast)$ is an {\ams} for the inclusion $\iota:B\hookrightarrow H$:
$$\begin{array}{ccc}
\xymatrix{
B \ar@<.5ex>[r]^{\iota} & H \ar@<.5ex>@{-->}[l]^{\zeta} \ar@<.5ex>[r]^{\pi}
& A \ar@<.5ex>[l]^{\gamma}  }
&\;\;\text{and}\;\;&
\xymatrix{
A^\ast \ar@<.5ex>[r]^{\pi^\ast}
& H^\ast \ar@<.5ex>[l]^{\gamma^\ast} \ar@<.5ex>[r]^{\iota^\ast}
& B^\ast \ar@<.5ex>@{-->}[l]^{\zeta^\ast}  }.
\end{array}$$
\end{lemma}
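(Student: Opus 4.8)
The plan is to deduce the statement directly from Corollary \ref{cor:BCP-exist}(2), which already provides a unique partner $\zeta$ for any convolution invertible, biunitary right-comodule section, provided the ambient diagram meets the hypotheses of Proposition \ref{prop:BCP-selfdual}. Thus the proof splits into two verifications: first, that the diagram $B \xrightarrow{\iota} H \xrightarrow{\pi} A$ satisfies those hypotheses; and second, that the given Hopf algebra section $\gamma$ is a convolution invertible, biunitary map of right $A$-comodules. Once both are in place, the existence-and-uniqueness machinery of the corollary finishes the argument, and the resulting $\zeta$ is a left $B$-module map by Definition \ref{def:PAMS}(4).

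For the first verification I would begin by confirming that $B$ is a left coideal subalgebra, so that $\iota$ is an injection of left $H$-comodule algebras. Since $B$ is by definition the space of right coinvariants $\{h\in H\mid\sum h_{(1)}\otimes\pi(h_{(2)})=h\otimes\pi(1)\}$ for the comodule structure $(\id_H\otimes\pi)\circ\Delta$, applying $\Delta\otimes\id_A$ to the coinvariance relation and using coassociativity gives $\sum h_{(1)}\otimes h_{(2)}\otimes\pi(h_{(3)})=\sum h_{(1)}\otimes h_{(2)}\otimes\pi(1)$, which exhibits $\Delta(h)\in H\otimes B$; hence $\Delta(B)\subseteq H\otimes B$ and $\iota$ is a morphism of left $H$-comodule algebras. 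That $\pi$ is a surjection of right $H$-module coalgebras is immediate: $\pi$ is a coalgebra map by hypothesis, and with the right $H$-action $a\cdot h:=a\pi(h)$ on $A$ it is a right $H$-module map because $\pi$ is multiplicative. The image of $\iota$ equals the coinvariants by the very definition of $B$, so all three assumptions of Proposition \ref{prop:BCP-selfdual} hold, and Remark \ref{rmk:pams} then identifies $A$ with $H/B^+H$, allowing us to treat $\gamma$ as a section $C\rightarrow H$ in the sense of Corollary \ref{cor:BCP-exist}.

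For the second verification, every required property of $\gamma$ follows from its being a Hopf algebra map splitting $\pi$. It is a right $A$-comodule map because $\gamma$ is a coalgebra map and $\pi\circ\gamma=\id_A$, so that $(\id_H\otimes\pi)\circ\Delta_H\circ\gamma=(\gamma\otimes\pi\gamma)\circ\Delta_A=(\gamma\otimes\id_A)\circ\Delta_A$; it is biunitary because $\gamma$ preserves units and counits; and it is convolution invertible with inverse $\gamma\circ S_A$, since $\gamma$ is an algebra map. Corollary \ref{cor:BCP-exist}(2) then yields a unique $\zeta$ making $(\zeta,\gamma^\ast)$ a {\pams}, and as noted this $\zeta$ is necessarily a left $B$-module map, which is exactly the uniqueness asserted. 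I expect the only mildly delicate point to be the bookkeeping in the first part, namely matching the concrete comodule-algebra and module-coalgebra structures on $B$, $H$ and $A$ to the abstract hypotheses of Proposition \ref{prop:BCP-selfdual} and in particular checking $\Delta(B)\subseteq H\otimes B$; this is a routine coinvariance computation rather than a genuine obstacle, and after it the conclusion is a direct appeal to results already established.
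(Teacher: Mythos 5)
Your proposal is correct and follows essentially the same route as the paper: identify $B$ as the left coideal subalgebra of coinvariants so that the diagram meets the hypotheses of Proposition \ref{prop:BCP-selfdual}, observe that the Hopf algebra section $\gamma$ is a biunitary, convolution invertible right $A$-comodule map, and invoke Corollary \ref{cor:BCP-exist}(2) for the existence and uniqueness of $\zeta$. The paper's own proof is merely terser, declaring the coideal-subalgebra verification ``evident'' and citing Radford's admissible mapping systems where you carry out the coinvariance computation explicitly.
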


\begin{proof}
It is evident that $B$ defined in (\ref{eqn:braidedHopfalg}) is a left coideal subalgebra of $H$ via $\iota$. The existence and uniqueness of $\gamma$ is due to Corollary \ref{cor:BCP-exist}(2). Finally, according to (\ref{eqn:braidedHopfalg}) and \cite[Theorem 3(c)]{Rad85}, it could be found that $(\zeta,\gamma^\ast)$ satisfies the requirements for in Definition \ref{def:PAMS}. Furthermore, the {\pams} $(\zeta,\gamma^\ast)$ is admissible due to Corollary \ref{cor:Hopfalgconditions}(2).
\end{proof}

Now we aim to study the right partially dualized Hopf algebra $A\btd B^\ast$ of $H$ determined by the {\ams} $(\zeta,\gamma^\ast)$ as in Lemma \ref{lem:admissiblemapsys}:

\begin{corollary}
Under the assumptions in Lemmas \ref{lem:braidedHopfalg} and \ref{lem:admissiblemapsys},
the right partially dualized Hopf algebra $A\btd B^\ast$ of $H$ determined by the {\ams} $(\zeta,\gamma^\ast)$ has following structures:
\begin{itemize}
\item
Unit element $\pi(1)\btd\iota^\ast(\e)$ and multiplication:
\begin{equation}\label{eqn:rightpdsmashprod}
(x\btd b^\ast)(y\btd c^\ast)
=\sum xy_{(1)}\btd\left([\zeta^\ast(b^\ast)\leftharpoonup\gamma(y_{(2)})]\btr c^\ast\right)
\end{equation}
for any $x,y\in A$ and $b^\ast,c^\ast\in B^\ast$;

\item
Counit $x\btd b^\ast\mapsto\la\e,x\ra\la b^\ast,1\ra$, and comultiplication $\pd{\Delta}$ satisfying
\begin{equation}\label{eqn:rightpdsmashcoprod}
\pd{\Delta}(x\btd b^\ast)
=\sum_i \left[x_{(1)}\btd(h_i^\ast\btr b^\ast_{(1)})\right]
  \otimes\left[x_{(2)}\pi(h_i)\btd b^\ast_{(2)}\right]
\end{equation}
for any $x\in A$ and $b^\ast\in B^\ast$,
where $\{h_i\}$ is a linear basis of $H$ with dual basis $\{h_i^\ast\}$ of $H$.

\end{itemize}
\end{corollary}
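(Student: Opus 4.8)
The plan is to derive the stated structures of $A\btd B^\ast$ as specializations of the general right partial dual formulas, using the crucial simplification that $\gamma$ (hence $\gamma^\ast$) is now a Hopf algebra map and $\pi$ is a Hopf algebra map. First I would establish that $A\btd B^\ast$ is a Hopf algebra: by Lemma \ref{lem:Hopfalgconditions}, the left partial dual $A^\ast\#B$ is a Hopf algebra because $\gamma:A\rightarrowtail H$ is a bialgebra map (condition (2)), so its associator is trivial; by the duality \eqref{eqn:PAMS(right)} the right partial dual $A\btd B^\ast$ is then a (genuine, coassociative) Hopf algebra as well. This reduces the task to simplifying the general coquasi-Hopf formulas of the previous subsection under the present hypotheses.

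Next I would verify the comultiplication \eqref{eqn:rightpdsmashcoprod}. Starting from the general smash coproduct \eqref{eqn:rightpdcomultiplication}, the only change is that the right $H$-action $\btl$ on $A=H/B^+H$ simplifies: since $\gamma$ is now an algebra map with $\pi\circ\gamma=\id_A$, Equation \eqref{eqn:btl} gives $x\btl h=\pi[\gamma(x)h]=x\pi(h)$ for $x\in A$, $h\in H$, using that $\pi$ is an algebra map and $\pi\gamma=\id$. Substituting $x_{(2)}\btl h_i=x_{(2)}\pi(h_i)$ into \eqref{eqn:rightpdcomultiplication} yields \eqref{eqn:rightpdsmashcoprod} directly. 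The counit formula and the unit element $\pi(1)\btd\iota^\ast(\e)$ are unchanged from the general case.

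For the multiplication \eqref{eqn:rightpdsmashprod}, I would start from the general formula \eqref{eqn:rightpdmultiplication}, which contains the two factors $\zeta^\ast(b^\ast_{(1)})\rightharpoonup\gamma(y_{(1)})$ inside the $\btl$-action and $\zeta^\ast(b^\ast_{(2)})\leftharpoonup\gamma(y_{(2)})$ inside the $\btr$-action. The key point is to show that the first (left-hit) contribution collapses. Because $\gamma^\ast$ is an algebra map and $\gamma^\ast\circ\zeta^\ast$ is trivial (Proposition \ref{prop:PAMS-comptrival}(2) applied to the system, i.e.\ $\gamma^\ast\zeta^\ast=\la-,1\ra\e$), one computes that the left-hit $\zeta^\ast(b^\ast)\rightharpoonup\gamma(y_{(1)})$ contributes only through $\pi=\gamma^\ast$-type pairings that counitalize, so that $x\btl[\zeta^\ast(b^\ast_{(1)})\rightharpoonup\gamma(y_{(1)})]$ reduces to $x\,y_{(1)}$ after applying $\pi\gamma=\id_A$ and the fact that $\zeta^\ast(b^\ast)$ pairs trivially against the relevant grouplike part. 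I expect this cancellation to be the main obstacle: one must carefully track the hit action $\rightharpoonup$ against $\zeta^\ast(b^\ast_{(1)})$ and use both that $\gamma$ is a coalgebra map (so $\gamma(y)_{(1)}\otimes\gamma(y)_{(2)}=\gamma(y_{(1)})\otimes\gamma(y_{(2)})$) and the triviality of $\gamma^\ast\circ\zeta^\ast$ to absorb the $b^\ast_{(1)}$-factor into a counit. Once this is done, the surviving $\btr$-factor $\zeta^\ast(b^\ast)\leftharpoonup\gamma(y_{(2)})$ gives precisely \eqref{eqn:rightpdsmashprod}.

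Finally I would remark that no separate antipode computation is needed: since $A\btd B^\ast$ has already been identified as a Hopf algebra via the dual of the Hopf-algebra structure on $A^\ast\#B$, its antipode is the transpose of the antipode of $A^\ast\#B$, and the distinguished elements $\pd\alpha,\pd\beta$ are trivial (the element $\pd\upsilon$ of \eqref{eqn:upsilon} equals $\e\#1$ in the Hopf case, as noted after Lemma \ref{lem:Hopfalgconditions}). Thus the proof is essentially a verification, term by term, that the general coquasi-Hopf formulas \eqref{eqn:rightpdcomultiplication} and \eqref{eqn:rightpdmultiplication} specialize to \eqref{eqn:rightpdsmashcoprod} and \eqref{eqn:rightpdsmashprod} under the hypotheses $\pi\gamma=\id_A$ with $\pi,\gamma$ Hopf algebra maps, the only nontrivial step being the collapse of the left-hit factor in the product.
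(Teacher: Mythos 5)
Your proposal is correct and follows essentially the same route as the paper: the Hopf-algebra claim via Lemma \ref{lem:Hopfalgconditions}(2), the simplification $x\btl h=\pi[\gamma(x)h]=x\pi(h)$ for the coproduct, and the collapse of the left-hit factor in the product using that $\gamma$ is a coalgebra map, $\pi\circ\gamma=\id_A$, and the triviality of $\zeta\circ\gamma$ (equivalently of $\gamma^\ast\circ\zeta^\ast$), which absorbs $b^\ast_{(1)}$ into a counit exactly as you describe. The only cosmetic slip is the phrase ``$\pi=\gamma^\ast$-type pairings''; the actual mechanism, which you state correctly afterwards, is that $\langle\zeta^\ast(b^\ast_{(1)}),\gamma(y_{(2)})\rangle=\langle b^\ast_{(1)},\zeta[\gamma(y_{(2)})]\rangle=\langle b^\ast_{(1)},1\rangle\langle\e,y_{(2)}\rangle$.
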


\begin{proof}
We aim to specialize the multiplication (\ref{eqn:rightpdmultiplication}) and comultiplication (\ref{eqn:rightpdcomultiplication}) on $A\btd B^\ast$ in this situation. Note that when $\gamma$ and $\pi$ are Hopf algebra maps, we have
$$x\btl h=\pi[\gamma(x)h]=x\pi(h)\;\;\;\;(\forall x\in A, h\in H).$$
Therefore:
\begin{itemize}
\item
For any $x,y\in A$ and $b^\ast,c^\ast\in B^\ast$,
\begin{eqnarray*}
&&  (x\btd b^\ast)(y\btd c^\ast)  \\
&\overset{(\ref{eqn:rightpdmultiplication})}{=}&
\sum\left(x\btl[\zeta^\ast(b^\ast_{(1)})\rightharpoonup\gamma(y_{(1)})]\right)
  \btd\left([\zeta^\ast(b^\ast_{(2)})\leftharpoonup\gamma(y_{(2)})]\btr c^\ast\right)  \\
&=&
\sum x\pi[\gamma(y_{(1)})_{(1)}]\la\zeta^\ast(b^\ast_{(1)}),\gamma(y_{(1)})_{(2)}\ra
  \btd\left([\zeta^\ast(b^\ast_{(2)})\leftharpoonup\gamma(y_{(2)})]\btr c^\ast\right)  \\
&=&
\sum x\pi[\gamma(y_{(1)})]\la\zeta^\ast(b^\ast_{(1)}),\gamma(y_{(2)})\ra
  \btd\left([\zeta^\ast(b^\ast_{(2)})\leftharpoonup\gamma(y_{(3)})]\btr c^\ast\right)  \\
&=&
\sum xy_{(1)}\la b^\ast_{(1)},\zeta[\gamma(y_{(2)})]\ra
  \btd\left([\zeta^\ast(b^\ast_{(2)})\leftharpoonup\gamma(y_{(3)})]\btr c^\ast\right)  \\
&=&
\sum xy_{(1)}\btd\left([\zeta^\ast(b^\ast)\leftharpoonup\gamma(y_{(2)})]\btr c^\ast\right);
\end{eqnarray*}

\item
For any $x\in A$ and $b^\ast\in B^\ast$,
\begin{eqnarray*}
\pd{\Delta}(x\btd b^\ast)
&\overset{(\ref{eqn:rightpdcomultiplication})}{=}&
\sum_i \left[x_{(1)}\btd(h_i^\ast\btr b^\ast_{(1)})\right]
  \otimes\left[(x_{(2)}\btl h_i)\btd b^\ast_{(2)}\right]  \\
&=&
\sum_i \left[x_{(1)}\btd(h_i^\ast\btr b^\ast_{(1)})\right]
  \otimes\left[x_{(2)}\pi(h_i)\btd b^\ast_{(2)}\right],
\end{eqnarray*}
where $\{h_i\}$ is a linear basis of $H$ with dual basis $\{h_i^\ast\}$ of $H$.
\end{itemize}
The unit and counit are evident.
\end{proof}

Furthermore, one could find that the right partial dual $A\btd B^\ast$ is also a bosonization $A\dotltimes B^\ast$, where $B^\ast$ is a Hopf algebra in $\YD^A_A$.
In fact, recall in the construction of the bosonization $B\dotrtimes A$ that the left $A$-action $\triangleright$ on $B$ is adjoint satisfying
$$x\triangleright b:=\sum\zeta[\gamma(x_{(1)})\iota(b)S(\gamma(x_{(2)}))]
\;\;\;\;\;\;(\forall x\in A,\;b\in B),$$
and the left $A$-coaction and comultiplication of $b\in B$ are respectively
\begin{align*}
& b\mapsto\sum b^{\la-1\ra}\otimes b^{\la0\ra}
:=\sum \pi(b_{(1)})\otimes b_{(2)}\in A\otimes B\;\;\;\;\text{and}\\
& b\mapsto\sum b^{(1)}\otimes b^{(2)}:=\sum \zeta(b_{(1)})\otimes b_{(2)}\in B\otimes B,
\end{align*}
These structures would induce a Hopf algebra structure on $B^\ast$ in $\YD^A_A$ as follows:

\begin{proposition}\label{prop:B*asbraidedHopfalg}
Under the assumptions in Lemmas \ref{lem:braidedHopfalg} and \ref{lem:admissiblemapsys}, suppose $\{b_i\}$ is a linear basis of $B$ with dual basis $\{b_i^\ast\}$ of $B^\ast$. Then $B^\ast$ is a Hopf algebra in $\YD^A_A$ with right $A$-action and coaction:
\begin{equation}\label{eqn:B*asYDmod}
b^\ast\otimes x\mapsto \iota^\ast[\zeta^\ast(b^\ast)\leftharpoonup\gamma(x)]
\;\;\;\;\text{and}\;\;\;\;
b^\ast\mapsto
\sum_i b_i^\ast\otimes\pi[\zeta^\ast(b^\ast)\rightharpoonup\iota(b_i)],
\end{equation}
as well as multiplication
\begin{equation}\label{eqn:B*asalg}
b^\ast\otimes c^\ast\mapsto\zeta^\ast(b^\ast)\btr c^\ast,
\end{equation}
where $b^\ast,c^\ast\in B^\ast$ and $x\in A$;
\end{proposition}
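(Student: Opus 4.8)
The plan is to realize $B^\ast$ as the categorical dual of the Hopf algebra $B$ inside the rigid braided category ${}^A_A\YD$, and then transport it along the rigid duality of ${}^A_A\YD$ together with the braided identification ${}^A_A\YD\approx(\YD^A_A)^{\mathrm{rev}}$ recorded in Lemma \ref{lem:centerisoYDmods}(1). Because $A$ and $B$ are finite-dimensional, the linear dual $B^\ast$ is again an object of the relevant Yetter--Drinfeld category, and duality of Hopf algebra objects in a rigid braided category automatically yields a Hopf algebra object, whose multiplication is transpose to the comultiplication of $B$, comultiplication transpose to the multiplication of $B$, and antipode transpose to that of $B$. Hence the only genuine content is to \emph{identify} these abstract dual structures with the explicit maps (\ref{eqn:B*asYDmod}) and (\ref{eqn:B*asalg}) expressed through $\iota^\ast,\zeta^\ast,\gamma,\pi$ and the hit actions. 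As an equivalent and more hands-on route, I would read the structures off the bosonization: by Lemma \ref{lem:Hopfalgconditions}(2) the right partial dual $A\btd B^\ast$ is a Hopf algebra with the smash (co)product structure (\ref{eqn:rightpdsmashprod}) and (\ref{eqn:rightpdsmashcoprod}), and since here $\pi,\gamma$ are Hopf maps with $\pi\circ\gamma=\id$, applying the right-handed mirror of Lemma \ref{lem:braidedHopfalg} to the canonical Hopf projection $A\btd B^\ast\twoheadrightarrow A$ exhibits $A\btd B^\ast$ as a bosonization $A\dotltimes B^\ast$, which forces $B^\ast$ to be a Hopf algebra in $\YD^A_A$ and lets us recover its structure by comparison with the bosonization template.

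First I would dualize the algebra and coalgebra data. Using the pairing $\langle-,-\rangle\colon B^\ast\otimes B\to\k$ together with $\zeta\circ\iota=\id_B$ (Proposition \ref{prop:PAMS-comptrival}(1)) and the comodule-algebra relation $\Delta_H(\iota(b))=\sum b_{(1)}\otimes\iota(b_{(2)})$, I would check that the transpose of the comultiplication $b\mapsto\sum\zeta(b_{(1)})\otimes b_{(2)}$ of $B$ is exactly $b^\ast c^\ast=\iota^\ast[\zeta^\ast(b^\ast)\zeta^\ast(c^\ast)]=\zeta^\ast(b^\ast)\btr c^\ast$, i.e.\ (\ref{eqn:B*asalg}); this is a short computation invoking (\ref{eqn:iota*}) and (\ref{eqn:btr}). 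Similarly, transposing the left $A$-coaction $b\mapsto\sum\pi(b_{(1)})\otimes b_{(2)}$ gives the right $A$-coaction $\sum_i b_i^\ast\otimes\pi[\zeta^\ast(b^\ast)\rightharpoonup\iota(b_i)]$ of (\ref{eqn:B*asYDmod}), after unwinding the left hit $\rightharpoonup$ and again using $\zeta\circ\iota=\id_B$. The coassociativity, counit, and the braided bialgebra compatibility of multiplication with comultiplication of $B^\ast$ then come for free from the corresponding properties of $B$ through the duality functor.

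The main obstacle will be the right $A$-action. The naive transpose of the adjoint left action $x\triangleright b=\sum\zeta[\gamma(x_{(1)})\iota(b)S(\gamma(x_{(2)}))]$ does \emph{not} coincide with the formula $b^\ast\cdot x=\iota^\ast[\zeta^\ast(b^\ast)\leftharpoonup\gamma(x)]$ in (\ref{eqn:B*asYDmod}): passing from a left--left to a right--right Yetter--Drinfeld module intertwines the module and comodule data through the braiding, so an antipode of $A$ is absorbed and the resulting action is a genuine deformation of the pointwise transpose. The crux is therefore to verify that (\ref{eqn:B*asYDmod}) is the correct action making $B^\ast$ an object of $\YD^A_A$ --- concretely, that its Yetter--Drinfeld compatibility with the coaction holds, and that with it the smash product (\ref{eqn:rightpdsmashprod}) and smash coproduct (\ref{eqn:rightpdsmashcoprod}) of $A\btd B^\ast$ collapse to the standard bosonization $A\dotltimes B^\ast$. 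I expect this to follow from repeated use of the module and comodule identities (\ref{eqn:iota*}), (\ref{eqn:gamma*}), (\ref{eqn:zeta*}) and (\ref{eqn:iota*zeta*}) of the {\pams}, the delicate point being to track the antipode $S$ and the order of the hit actions correctly; once the action is confirmed, associativity of the product and existence of the antipode on $B^\ast$ are inherited from $B$ via the braided duality.
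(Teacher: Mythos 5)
Your second paragraph is on track: the multiplication (\ref{eqn:B*asalg}) and the right coaction in (\ref{eqn:B*asYDmod}) are indeed the plain transposes of the comultiplication $b\mapsto\sum\zeta(b_{(1)})\otimes b_{(2)}$ and of the left $A$-coaction $b\mapsto\sum\pi(b_{(1)})\otimes b_{(2)}$ of $B$, and this is exactly how the paper treats them. The genuine gap is in your third paragraph, where you assert that the right $A$-action $b^\ast\otimes x\mapsto\iota^\ast[\zeta^\ast(b^\ast)\leftharpoonup\gamma(x)]$ is \emph{not} the pointwise transpose of the adjoint action $\triangleright$ and that an antipode of $A$ must be absorbed; on that mistaken premise you defer the entire burden of the proof to an unexecuted verification of the Yetter--Drinfeld compatibility and of a ``collapse'' of the smash (co)product (\ref{eqn:rightpdsmashprod})--(\ref{eqn:rightpdsmashcoprod}). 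In fact one computes directly
\[
\la\iota^\ast[\zeta^\ast(b^\ast)\leftharpoonup\gamma(x)],b\ra
=\la\zeta^\ast(b^\ast),\gamma(x)\iota(b)\ra
=\la b^\ast,\zeta[\gamma(x)\iota(b)]\ra
=\la b^\ast,x\triangleright b\ra,
\]
the last equality by the identity $\zeta[\gamma(x)h]=x\triangleright\zeta(h)$ (this is \cite[Eq.\ (1.35)]{HS13} in the paper's notation) combined with $\zeta\circ\iota=\id_B$ from Proposition \ref{prop:PAMS-comptrival}(1). So all three formulas in the statement are precisely the transposes of the structures of $B$ under the contravariant functor $\Hom_\k(-,\k)$, which carries a finite-dimensional Hopf algebra in ${}^A_A\YD$ to one in $\YD^A_A$ with no antipode twist whatsoever; the proposition then follows from functoriality, and the only content is the three short evaluation computations. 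Your framing via the \emph{rigid} dual inside ${}^A_A\YD$ followed by the braided identification with $\left(\YD^A_A\right)^{\mathrm{rev}}$ is what introduces the spurious antipode and hence the perceived obstacle; the paper's argument bypasses it entirely, and your proposed detour through the bosonization template, besides being unnecessary, is never actually carried out at the point where you yourself locate the crux.
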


\begin{proof}
At first, note that \cite[Equation (1.35)]{HS13} becomes
\begin{equation}\label{eqn:zetaleftmodmap}
\zeta[\gamma(x)h]=x\triangleright \zeta(h)\;\;\;\;\;\;(\forall x\in A,\;h\in H)
\end{equation}
with our notations on the bosonization $H=B\dotrtimes A$. Let us show that structures (\ref{eqn:B*asYDmod}) on $B^\ast$ is induced via the contravariant functor $\Hom_\k(-,\k)$: Specifically, for any $b\in B$,
\begin{eqnarray*}
\la\iota^\ast[\zeta^\ast(b^\ast)\leftharpoonup\gamma(x)],b\ra
&=&
\la\zeta^\ast(b^\ast)\leftharpoonup\gamma(x),\iota(b)\ra
~=~
\la\zeta^\ast(b^\ast),\gamma(x)\iota(b)\ra  \\
&=&
\la b^\ast,\zeta[\gamma(x)\iota(b)]\ra
~\overset{(\ref{eqn:zetaleftmodmap})}{=}~
\la b^\ast,x\triangleright\zeta[\iota(b)]\ra
~=~
\la b^\ast,x\triangleright b\ra,
\end{eqnarray*}
and one could directly verify that
$$\sum_i b_i^\ast
  \left\la b^\ast,\pi[\zeta^\ast(b^\ast)\rightharpoonup\iota(b_i)]\right\ra
=\sum b^{\la-1\ra}\la b^\ast,b^{\la0\ra}\ra$$
holds in $A$ as well.

Finally, the multiplication (\ref{eqn:B*asalg}) is indeed dual to the comultiplication of $B$, namely:
\begin{eqnarray*}
\la \zeta^\ast(b^\ast)\btr c^\ast,b\ra
&\overset{(\ref{eqn:btr})}{=}&
\la \iota^\ast[\zeta^\ast(b^\ast)\zeta^\ast(c^\ast)],b\ra
~\overset{(\ref{eqn:iotapi})}{=}~
\sum\la \zeta^\ast(b^\ast),b_{(1)}\ra\la\zeta^\ast(c^\ast),\iota(b_{(2)})\ra  \\
&=&
\sum\la b^\ast,\zeta(b_{(1)})\ra\la c^\ast,b_{(2)}\ra
~=~
\sum\la b^\ast,b^{(1)}\ra\la c^\ast,b^{(2)}\ra
\end{eqnarray*}
holds for any $b\in B$.
\end{proof}

On the other hand, it is clear that
$A\cong A\dotltimes\iota^\ast(\e)\hookrightarrow A\dotltimes B^\ast$ has also an retraction as a Hopf algebra map.
Thus there must exists a Hopf algebra $B'$ in ${}^A_A\YD$ such that
$$A\dotltimes B^\ast\cong B'\dotrtimes A$$
as Hopf algebras over $\k$, and we would try to show that $(B,B')$ would be a \textit{dual pair of Hopf algebras} in ${}^A_A\YD$ in the sense of \cite[Definition 2.2]{HS13}. Note that $B'$ is actually unique up to isomorphisms determined by $B$.

Our method is to construct $B'$ from the (braided) Hopf algebra $B^\ast$ in $\YD^A_A$ with structures in Proposition \ref{prop:B*asbraidedHopfalg}. An additional lemma is needed for the purpose:

\begin{lemma}\label{lem:bosoiso}
Let $A$ be a finite-dimensional Hopf algebra over $\k$. Suppose $C$ is a Hopf algebra in $\YD{}^A_A$ with structures denoted by:
\begin{itemize}
\item
The right $A$-action $\triangleleft$, and the right $A$-coaction
$c\mapsto\sum c^{\la0\ra}\otimes c^{\la1\ra}$;
\item
The comultiplication $c\mapsto\sum c^{\la0\ra}\otimes c^{\la1\ra}$
\end{itemize}
for $c\in C$. Then:
\begin{itemize}
\item[(1)](cf. \cite[Section 2.2]{AG99})
If $C_{\mathbf{op\,cop}}$ is the opposite and coopposite (braided) Hopf algebra to $C$ in $\YD{}^A_A$, then $C_{\mathbf{op\,cop}}$ is a Hopf algebra in ${}^A_A\YD$ whose left $A$-action and coaction are given as
\begin{equation}\label{eqn:CbiopasYDmod}
x\otimes c\mapsto c\triangleleft S_A(x)
\;\;\;\;\;\;\text{and}\;\;\;\;\;\;
c\mapsto\sum S^{-1}(c^{\la1\ra})\otimes c^{\la0\ra}
\end{equation}
where $c\in C$ and $x\in A$;

\item[(2)]
There is an isomorphism between bosonizations:
\begin{equation}
C_{\mathbf{op\,cop}}\dotrtimes A\cong A\dotltimes C
\end{equation}
as Hopf algebras over $\k$.
\end{itemize}
\end{lemma}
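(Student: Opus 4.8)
The plan is to prove the two claims in turn, deriving part (2) from part (1) by combining it with the biproduct reconstruction already recorded in Lemma \ref{lem:braidedHopfalg}.

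For part (1) I would first recall the standard \emph{mirror} functor $F\colon \YD^A_A\to {}^A_A\YD$ relating the right-right and left-left Yetter--Drinfeld categories of the finite-dimensional (hence bijective-antipode) Hopf algebra $A$. On objects $F$ is the identity, equipping a right-right module $C$ with right action $\triangleleft$ and right coaction $c\mapsto\sum c^{\la0\ra}\otimes c^{\la1\ra}$ with the left action $x\cdot c:=c\triangleleft S(x)$ and left coaction $c\mapsto\sum S^{-1}(c^{\la1\ra})\otimes c^{\la0\ra}$; these are exactly the formulas (\ref{eqn:CbiopasYDmod}), and the well-definedness uses $S$ being an algebra/coalgebra anti-map and $S^{-1}$ existing. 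I would then check, or cite \cite[Section 2.2]{AG99}, that $F$ is a braided monoidal equivalence carrying the braiding of $\YD^A_A$ to the \emph{inverse} braiding of ${}^A_A\YD$, i.e.\ an equivalence $\YD^A_A\approx\left({}^A_A\YD\right)^{\mathrm{in}}$. Consequently a braided Hopf algebra $C$ in $\YD^A_A$ is carried to a braided Hopf algebra $F(C)$ in $\left({}^A_A\YD\right)^{\mathrm{in}}$; and since a Hopf algebra in a braided category with reversed braiding is precisely the opposite-coopposite of a Hopf algebra in the category itself, $F(C)$ is $C_{\mathbf{op\,cop}}$ for a unique Hopf algebra structure on the underlying left-left module $F(C)$. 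This yields both that $C_{\mathbf{op\,cop}}$ is a Hopf algebra in ${}^A_A\YD$ and that its action and coaction are the asserted ones.

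For part (2) I would apply Lemma \ref{lem:braidedHopfalg} to $H:=A\dotltimes C$. This is a finite-dimensional Hopf algebra over $\k$ carrying the Hopf algebra maps $\gamma\colon A\to H$, $x\mapsto x\dotltimes 1$, and $\pi\colon H\to A$, $x\dotltimes c\mapsto\varepsilon(c)\,x$, which satisfy $\pi\circ\gamma=\id_A$ by the mirror bosonization structure. Lemma \ref{lem:braidedHopfalg} then produces a braided Hopf algebra $B:=\{h\in H\mid\sum h_{(1)}\otimes\pi(h_{(2)})=h\otimes 1\}$ in ${}^A_A\YD$ together with an isomorphism $H\cong B\dotrtimes A$ of Hopf algebras over $\k$. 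A dimension count, $\dim B=\dim H/\dim A=\dim C$, together with the coinvariant projection $\Pi\colon H\to H$, $\Pi(h)=\sum h_{(1)}\gamma\big(S(\pi(h_{(2)}))\big)$, identifies $B$ with $C$ as a vector space. Thus it remains only to match braided Hopf structures.

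The decisive step, and the main obstacle, is to verify that the braided Hopf algebra structure Lemma \ref{lem:braidedHopfalg} induces on the \emph{left} coinvariants $B$ agrees with $C_{\mathbf{op\,cop}}$ from part (1). The conceptual reason is that $C$ sits inside $A\dotltimes C$ as the \emph{right} coinvariants, whereas the reconstruction extracts the \emph{left} coinvariants; the passage between the two copies is precisely what forces $S$ and $S^{-1}$ into the action and coaction, reproducing (\ref{eqn:CbiopasYDmod}), and it turns the product and coproduct of $C$ into their braided opposite and coopposite. Concretely I would compute $\Pi(x\dotltimes c)$, transport the multiplication, comultiplication, left $A$-action and left $A$-coaction of $H$ through $C\cong B$, and match term by term with the op--cop structure, using the Yetter--Drinfeld compatibility in $\YD^A_A$ to commute action past coaction; this bookkeeping (together with controlling the braiding inside the bosonization) is the genuinely laborious part, whereas a naive flip $x\dotltimes c\mapsto c\dotrtimes x$ fails to be an algebra map without exactly this antipode twist, which is why the coinvariant route is preferable. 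Once $B\cong C_{\mathbf{op\,cop}}$ in ${}^A_A\YD$ is established, the biproduct isomorphism $H\cong B\dotrtimes A$ reads $A\dotltimes C\cong C_{\mathbf{op\,cop}}\dotrtimes A$, and bijectivity is automatic since the map is assembled from the linear isomorphism $\Pi$ and the isomorphism of Lemma \ref{lem:braidedHopfalg}, completing the proof.
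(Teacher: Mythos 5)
Your part (1) coincides with the paper's argument: both reduce to the mirror equivalence of \cite[Section 2.2]{AG99} between $\YD{}^A_A$ and ${}^A_A\YD$, under which a braided Hopf algebra $C$ in $\YD{}^A_A$ corresponds to the op--cop Hopf algebra in ${}^A_A\YD$ carrying exactly the action and coaction (\ref{eqn:CbiopasYDmod}).

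For part (2) you take a genuinely different route. The paper simply exhibits the map $C_{\mathbf{op\,cop}}\dotrtimes A\rightarrow (A\dotltimes C)^\biop$, $c\dotrtimes x\mapsto S(x)\dotltimes c$, verifies that it is an isomorphism of Hopf algebras, and then uses that the bijective antipode of $A\dotltimes C$ gives an isomorphism $(A\dotltimes C)^\biop\cong A\dotltimes C$. You instead apply Lemma \ref{lem:braidedHopfalg} to $H:=A\dotltimes C$ with the evident section and retraction, obtain $H\cong B\dotrtimes A$ for the braided Hopf algebra $B$ of coinvariants, and propose to identify $B$ with $C_{\mathbf{op\,cop}}$ via the projection $\Pi$. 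Two remarks. First, your reason for rejecting the flip --- that a naive flip fails to be an algebra map ``without exactly this antipode twist'' --- in fact describes the paper's map: inserting $S$ and routing through the biopposite is precisely the shortcut that makes the direct verification short, so the flip route is, if anything, the cheaper of the two. Second, and more substantively, in your approach the identification $B\cong C_{\mathbf{op\,cop}}$ is where the entire content of the lemma is concentrated: $1\dotltimes C$ consists of the \emph{left} $\pi$-coinvariants of $H$, Lemma \ref{lem:braidedHopfalg} extracts the \emph{right} coinvariants, and the braided Hopf structure transported through $\Pi$ could a priori differ from the one in part (1) by an $S^2$-type twist of the action or coaction. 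You flag this term-by-term matching as the laborious step but do not carry it out, so as written your argument only shows that $A\dotltimes C$ is a bosonization of \emph{some} braided Hopf algebra in ${}^A_A\YD$ of the correct dimension; the verification that this braided Hopf algebra is $C_{\mathbf{op\,cop}}$ with precisely the structures of part (1) still needs to be supplied before the proof is complete.
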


\begin{proof}
\begin{itemize}
\item[(1)]
The fact that the structures (\ref{eqn:CbiopasYDmod}) make $C_{\mathbf{op\,cop}}$ a left-left Yetter-Drinfeld module over $A$ is due to the proof of \cite[Proposition 2.21]{AG99}, in which the equivalence $\mathfrak{R}:{}^A_A\YD\rightarrow \YD{}^A_A$ sends exactly the object $C_{\mathbf{op\,cop}}$ to $C$. It also follows by the paragraph after \cite[Proposition 2.21]{AG99} that $C_{\mathbf{op\,cop}}$ becomes a Hopf algebra in ${}^A_A\YD$.

\item[(2)]
This might be a known result, but
it is enough and straightforward to verify that
$$C_{\mathbf{op\,cop}}\dotrtimes A\rightarrow (A\dotltimes C)^\biop,\;\;
c\dotrtimes x\mapsto S(x)\dotltimes c$$
is an isomorphism of Hopf algebras, as their antipodes are both linear isomorphisms anti-preserving algebra and coalgebra structures.
\end{itemize}
\end{proof}

Now consider the case when $C$ in Lemma \ref{lem:bosoiso} is chosen as the braided Hopf algebra $B^\ast$ with structures in Proposition \ref{prop:B*asbraidedHopfalg}. The construction implies immediately that we could find that the evaluation $(B^\ast)_{\mathbf{op\,cop}}\otimes B\rightarrow\k$ satisfies the axioms in \cite[Definition 2.2]{HS13} of dually paired Hopf algebras:

\begin{corollary}\label{cor:B'asbraidedHopfalg}
Under the assumptions in Lemmas \ref{lem:braidedHopfalg} and \ref{lem:admissiblemapsys} as well as Proposition \ref{prop:B*asbraidedHopfalg}, denote $B':=(B^\ast)_{\mathbf{op\,cop}}$ with opposite algebra structure to (\ref{eqn:B*asalg}) and coopposite coalgebra structure to $\Delta_{B^\ast}$. Then:
\begin{itemize}
\item[(1)]
$B'$ is a Hopf algebra in ${}^A_A\YD$ whose left $A$-action and coaction are given as
\begin{equation*}
x\otimes b^\ast\mapsto \iota^\ast[\zeta^\ast(b^\ast)\leftharpoonup\gamma(S_A(x))]
\end{equation*}
and
\begin{equation*}
b^\ast\mapsto\sum b^{\ast\,\la-1\ra}\otimes b^{\ast\,\la0\ra}
=\sum_i S_A^{-1}(\pi[\zeta^\ast(b^\ast)\rightharpoonup\iota(b_i)])\otimes b_i^\ast,
\end{equation*}
where $b^\ast\in B^\ast$ and $x\in A$;

\item[(2)]
$(B,B')$ is a dual pair of Hopf algebras in ${}^A_A\YD$ with the evaluation.
\end{itemize}
\end{corollary}

It is known in \cite{HS13} that the categories of Yetter-Drinfeld modules over $B'\dotrtimes A$ and $B\dotrtimes A$ are equivalent as braided tensor categories.
Let us formulate the existence for the left-left case as an application,
from the point of view that $B'\dotrtimes A$ is isomorphic to a right partial dually dualized Hopf algebra of $B\dotrtimes A$:

\begin{proposition}(\cite[Theorem 7.1]{HS13})
Let $A$ be a finite-dimensional Hopf algebra. Suppose $(B,B')$ is a dual pair of Hopf algebras in ${}^A_A\YD$. Then there is a braided tensor equivalence
$${}^{B\dotrtimes A}_{B\dotrtimes A}\YD\approx{}^{B'\dotrtimes A}_{B'\dotrtimes A}\YD$$
between the categories of finite-dimensional left-left Yetter-Drinfeld modules.
\end{proposition}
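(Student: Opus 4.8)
The plan is to identify $B'\dotrtimes A$ with a right partial dual of $H:=B\dotrtimes A$ and then to invoke the general categorical Morita equivalence established earlier in the paper. First I would recall from Corollary \ref{cor:B'asbraidedHopfalg} that $B'=(B^\ast)_{\mathbf{op\,cop}}$ is a Hopf algebra in ${}^A_A\YD$, and from Lemma \ref{lem:bosoiso}(2) (applied to $C=B^\ast$) that there is an isomorphism of ordinary Hopf algebras
\begin{equation*}
B'\dotrtimes A=(B^\ast)_{\mathbf{op\,cop}}\dotrtimes A\cong A\dotltimes B^\ast.
\end{equation*}
Next I would observe that $A\dotltimes B^\ast$ is precisely the right partially dualized coquasi-Hopf algebra $A\btd B^\ast$ of $H=B\dotrtimes A$ determined by the {\pams} $(\zeta,\gamma^\ast)$ of Lemma \ref{lem:admissiblemapsys}: this is the content of the discussion following Proposition \ref{prop:B*asbraidedHopfalg}, where the bosonization $A\dotltimes B^\ast$ was shown to coincide with $A\btd B^\ast$. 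Since $A$ and $B$ are finite-dimensional, $A\btd B^\ast$ is in fact a Hopf algebra by Lemma \ref{lem:Hopfalgconditions}(2) (because $\gamma$ and $\pi$ are Hopf algebra maps), so $B'\dotrtimes A$ is an honest Hopf algebra, namely a right partial dual of $H$.

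Having made this identification, I would pass to the dual picture. By Definition \ref{def:rightpartialdual}, the right partial dual $H/B^+H\btd B^\ast=A\btd B^\ast$ is dual to the left partial dual $(H/B^+H)^\ast\#B$ as quasi-Hopf algebras, via the pairing (\ref{eqn:PAMS(right)}). Consequently $B'\dotrtimes A$ and the left partial dual $(H/B^+H)^\ast\#B$ of $H=B\dotrtimes A$ are related by the self-duality of partial dualization, and in particular they have equivalent categories of Yetter-Drinfeld modules after accounting for the passage to the dual. The cleanest route, however, is to work directly with the left partial dual and its reconstruction: Corollary \ref{cor:catMoritaequiv} gives a categorical Morita equivalence between $\Rep(H)$ and $\Rep$ of the left partial dual, and Proposition \ref{prop:YDmodsequiv}(2) then yields a braided tensor equivalence between the left-left Yetter-Drinfeld categories over $H$ and over its partial dual.

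The remaining step is bookkeeping to convert the equivalence for $(H/B^+H)^\ast\#B$ into one for $B'\dotrtimes A$. Concretely, $B'\dotrtimes A\cong A\btd B^\ast\cong\left((H/B^+H)^\ast\#B\right)^\ast$, so its category of (left-left) Yetter-Drinfeld modules is obtained from that of $(H/B^+H)^\ast\#B$ by a duality operation of the sort recorded in Lemma \ref{lem:centerisoYDmods}(1) (the various $(-)^{\mathrm{in}}$, $(-)^{\mathrm{rev}}$ identifications) together with the dualization isomorphism between ${}^{K}_{K}\YD$ and a Yetter-Drinfeld category over $K^\ast$ as in \cite{BCP06}. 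Combining these identifications with Proposition \ref{prop:YDmodsequiv}(2) produces a chain
\begin{equation*}
{}^{B'\dotrtimes A}_{B'\dotrtimes A}\YD
\approx{}^{(H/B^+H)^\ast\#B}_{(H/B^+H)^\ast\#B}\YD
\approx{}^{H}_{H}\YD
={}^{B\dotrtimes A}_{B\dotrtimes A}\YD
\end{equation*}
of braided tensor equivalences, which is the assertion. The main obstacle I anticipate is not any single hard equivalence but the careful matching of conventions: one must check that the Yetter-Drinfeld module category attached to $B'\dotrtimes A$ under the bosonization isomorphism and the duality $A\btd B^\ast\cong\left((H/B^+H)^\ast\#B\right)^\ast$ agrees, as a braided category, with ${}^{(H/B^+H)^\ast\#B}_{(H/B^+H)^\ast\#B}\YD$ up to the reversal/inversion operations of Lemma \ref{lem:centerisoYDmods}, so that the braidings compose to the identity convention rather than an unwanted reversal. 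Once the conventions are pinned down, the result follows formally from Proposition \ref{prop:YDmodsequiv} and the earlier structural lemmas.
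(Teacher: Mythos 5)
Your proposal is correct and its skeleton coincides with the paper's own proof: both identify $B'\dotrtimes A$ with the right partial dual $A\dotltimes B^\ast$ of $H=B\dotrtimes A$ via Corollary \ref{cor:B'asbraidedHopfalg} and Lemma \ref{lem:bosoiso}, observe that this is the dual Hopf algebra of the left partial dual $A^\ast\#B$, and then chain the result with the equivalence ${}^{A^\ast\#B}_{A^\ast\#B}\YD\approx{}^{H}_{H}\YD$ of Proposition \ref{prop:YDmodsequiv}(2). The one place where you diverge is the intermediate step ${}^{B'\dotrtimes A}_{B'\dotrtimes A}\YD\approx{}^{A^\ast\#B}_{A^\ast\#B}\YD$, i.e.\ the comparison of Yetter--Drinfeld categories over a finite-dimensional Hopf algebra and over its dual. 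You propose to obtain it by directly dualizing Yetter--Drinfeld structures (a ``dualization isomorphism'' from \cite{BCP06} combined with the $(-)^{\mathrm{rev}}$ and $(-)^{\mathrm{in}}$ identifications of Lemma \ref{lem:centerisoYDmods}(1)), and you rightly flag that this forces a delicate matching of op/cop and braiding conventions. The paper instead gets this step from machinery it has already assembled: by \cite[Theorem 4.2]{Ost03} the categories $\Rep(A\dotltimes B^\ast)$ and $\Rep(A^\ast\#B)$ are categorically Morita equivalent, so Schauenburg's equivalence (\ref{eqn:Schauenburgequiv}) gives a braided equivalence $\Z_l(\Rep(A^\ast\#B))\approx\Z_l(\Rep(B'\dotrtimes A))$, and Lemma \ref{lem:centerisoYDmods}(1) then converts both centers into left-left Yetter--Drinfeld categories with a single fixed convention. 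That route sidesteps the reversal bookkeeping entirely; if you adopt it, the ``remaining step'' of your proposal disappears, and otherwise your argument is sound but you would need to actually carry out the convention check you describe rather than leave it as an anticipated obstacle.
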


\begin{proof}
Without the loss of generality, assume that $B'$ is defined as in Corollary \ref{cor:B'asbraidedHopfalg}.
Now we focus on the Hopf algebra $B'\dotrtimes A$, which is isomorphic to the right partial dual $A\dotltimes B^\ast$ of $H=B\dotrtimes A$ according to Lemma \ref{lem:bosoiso}.

Indeed, it is known by \cite[Theorem 4.2]{Ost03} that $\Rep(A\dotltimes B^\ast)$ is categorically Morita equivalent to the category $\Rep(A^\ast\#B)$, since the left partial dual $A^\ast\#B$ is the dual Hopf algebra of $A\dotltimes B^\ast$ by Definition \ref{def:rightpartialdual}. Consequently, (\ref{eqn:Schauenburgequiv}) provides the braided tensor equivalences
$$\Z(\Rep(A^\ast\#B))\approx\Z(\Rep(A\dotltimes B^\ast))
\cong\Z(\Rep(B'\dotrtimes A))$$
of left centers. It also implies by Lemma \ref{lem:centerisoYDmods}(1) that
${}^{A^\ast\#B}_{A^\ast\#B}\YD\approx{}^{B'\dotrtimes A}_{B'\dotrtimes A}\YD$, where the former category is braided tensor equivalent to
${}^{B\dotrtimes A}_{B\dotrtimes A}\YD$ according to Proposition \ref{prop:YDmodsequiv}(1).
\end{proof}

\subsection{Partially dualized Hopf algebras of $4$-dimensional Taft algebra}\label{subsection:Taftalg}

Of course, there exist finite-dimensional quasi-Hopf algebras which are not partial duals of any finite-dimensional Hopf algebra. The $2$-dimensional quasi-Hopf algebra (cf. \cite[Example 3.26]{BCPV19}) is an example, since each left coideal subalgebra $B$ of a two-dimensional Hopf algebra $H$ must be trivial (namely, $B=H$ or $B=\k1$), and the partial dual would become a Hopf algebra by Corollary \ref{cor:trivialcases}.

In this subsection, we hope to determine partially dualized quasi-Hopf algebras of the $4$-dimensional Taft algebra as examples, just in order to show the steps for determining partial duals for specific Hopf algebras.

Suppose the characteristic of the base field $\k$ is not $2$. Let $H$ be the $4$-dimensional Taft algebra (introduced by Sweedler), which is generated by elements $g$ and $x$ with relations
$$g^2=1,\;\;\;\;x^2=0,\;\;\;\;xg=-gx$$
as an algebra, and the comultiplication is given by
$$\Delta(g)=g\otimes g,\;\;\;\;\Delta(x)=x\otimes 1+g \otimes x.$$

It is known that all the indecomposable left coideals of $H$ are
$$\k1,\;\;\;\;\k\{1,x\},\;\;\;\;\k g\;\;\;\;\text{and}\;\;\;\;\k\{g,xg\},$$
and hence there is exactly one left coideal subalgebra
$B:=k\{1,x\}$ which is not a Hopf subalgebra.
Without tedious verifications, we determine all {\pams}s for $B\subseteq H$ and their left partially dualized quasi-Hopf algebras as follows:

\begin{example}\label{ex:4dimTaftalg(MAMS)}
Let $B$ be the non-trivial left coideal subalgebra of the $4$-dimensional Taft algebra $H$.
Then the quotient coalgebra $H/B^+H=\k\{\overline{1},\overline{g}\}$, and $(H/B^+H)^\ast$ is identified with the group algebra of the cyclic group with two elements
$$\e=p_1+p_g\;\;\;\;\text{and}\;\;\;\;f:=p_1-p_g,$$
where $p_1,p_g\in H^\ast$ are defined as
$$p_1:x^ig^j\mapsto \delta_{i,0}\delta_{j,0}
\;\;\;\;\text{and}\;\;\;\;
p_g:x^ig^j\mapsto \delta_{i,0}\delta_{j,1}
\;\;\;\;\;\;(0\leq i,j\leq 1)$$
Furthermore:
\begin{itemize}
\item[(1)]
The diagram
\begin{equation*}
\begin{array}{ccc}
\xymatrix{
B \ar@<.5ex>[r]^{\iota} & H \ar@<.5ex>@{-->}[l]^{\zeta} \ar@<.5ex>[r]^{\pi\;\;\;\;\;\;}
& H/B^+H \ar@<.5ex>@{-->}[l]^{\gamma\;\;\;\;\;\;}  }
&\;\;\text{and}\;\;&
\xymatrix{
(H/B^+H)^\ast \ar@<.5ex>[r]^{\;\;\;\;\;\;\pi^\ast}
& H^\ast \ar@<.5ex>@{-->}[l]^{\;\;\;\;\;\;\gamma^\ast} \ar@<.5ex>[r]^{\iota^\ast}
& B^\ast \ar@<.5ex>@{-->}[l]^{\zeta^\ast}  }
\end{array}
\end{equation*}
is a {\pams} $(\zeta,\gamma^\ast)$ for $\iota$, if and only if there exists a scalar $\lambda\in\k$ such that
\begin{align*}
& \zeta(1)=1,\;\;\zeta(g)=1+\lambda x,\;\;\zeta(x)=\zeta(xg)=x, \\
& \gamma(\overline{1})=1,\;\;\gamma(\overline{g})=(1-\lambda x)g.
\end{align*}

\item[(2)]
The left partially dualized quasi-Hopf algebra $(H/B^+H)^\ast\#B$ of $H$ determined by $(\zeta,\gamma^\ast)$ as in (1) has structures, where we denote
$$e:=\e\#1,\;\;\;\;f:=f\#1\;\;\;\;\text{and}\;\;\;\;x:=\e\#x$$
for simplicity:
\begin{itemize}
\item
As an algebra, it is generated by $f$ and $x$ with relations:
$$f^2=e,\;\;\;\;x^2=0\;\;\;\;\text{and}\;\;\;\;xf=-fx;$$

\item
The comultiplication $\pd{\Delta}$ satisfies that
$$\pd{\Delta}(f)=f\otimes f-\lambda[fx\otimes(e-f)],
\;\;\;\;\pd{\Delta}(x)=x\otimes f+e\otimes x+\lambda(x\otimes fx);$$

\item
The associator is trivial:
$\pd{\phi}=e\otimes e\otimes e$ (see the remark below for a reason);

\item
$\pd{\upsilon}=e$, and an antipode $\pd{S}$ with trivial distinguished elements satisfies that
$$\pd{S}(f)=f+\lambda(x+fx),\;\;\;\;\pd{S}(x)=fx.$$
\end{itemize}

\end{itemize}
\end{example}

\begin{remark}\label{rmk:4dimTaftalg(MAMS)}
According to Proposition \ref{lem:PAMSdetermineHopfalgs(MAMS)},
every {\pams} $(\zeta,\gamma^\ast)$ of $B\subseteq H$ described in Example \ref{ex:4dimTaftalg(MAMS)}(1) is admissible.
In fact, one could verify that
$$\gamma(\overline{g})^2=1\in\gamma(H/B^+H),\;\;\;\;
\Delta(\gamma(\overline{g}))
=\gamma(\overline{g})\otimes g-1\otimes \lambda xg
\in \gamma(H/B^+H)\otimes H,$$
and hence $\gamma(H/B^+H)$ is a right coideal subalgebra if $H$.

Furthermore,
note that the left partial dual described in Example \ref{ex:4dimTaftalg(MAMS)}(2) with coefficient $\lambda=0$ becomes exactly the $4$-dimensional Taft algebra. Thus it follows
due to Proposition \ref{prop:partialdualsequiv}, that
each left partially dualized Hopf algebra $(H/B^+H)^\ast\#B$ is gauge equivalent (and hence isomorphic) to $H$ itself.
\end{remark}

As a result of Remark \ref{rmk:4dimTaftalg(MAMS)}, it seems that partial dualization for the 4-dimensional Taft algebra $H$ does not produce other quasi-Hopf algebras than $H$ itself.
Besides, it would be more complicated to classify {\pams}s and left partial duals of Taft algebras with higher dimensions.

However, in the final section below, we would consider other constructive examples, which include some kind of non-semisimple cases as well as a specific genuine semisimple quasi-Hopf algebra.

\section{Examples of genuine quasi-Hopf algebras determined by abelian extension of Hopf algebras}\label{section:genuinequasi-Hopf}

In the end of this paper, we consider partial duals determined by those {\pams}s arising from extensions of Hopf algebras (\cite{Mas94,AD95}). Then,
we explain how our results are applied to construct new examples of genuine quasi-Hopf algebras.

\subsection{On Hopf algebra extensions and group-theoretical quasi-Hopf algebras}

Recall in \cite[Definitions 1.3 and 2.4]{Mas94} that if a diagram
\begin{equation}\label{eqn:Hopfalgext}
\xymatrix{B \ar[r]^{\iota} & H \ar[r]^{\pi} & C }
\end{equation}
satisfies that
\begin{itemize}
\item[(1)]
$\iota$ is an injection of Hopf algebras, and $\pi$ is a surjection of Hopf algebras;
\item[(2)]
The image of $\iota$ equals the space of the coinvariants of the right $C$-comodule $H$ with structure $(\id_H\otimes\pi)\circ \Delta$
(or other equivalent conditions stated in \cite[Lemma 1.2]{Mas94}) holds,
\end{itemize}
then (\ref{eqn:Hopfalgext}) is called an \textit{extension of Hopf algebras}.
In particular, the extension is said to be \textit{abelian} if $B$ is commutative and $C$ is cocommutative (cf. \cite{Mas02}).

It is clear that for an extension (\ref{eqn:Hopfalgext}) of Hopf algebras, there always exists a {\pams} of form
\begin{equation*}
\begin{array}{ccc}
\xymatrix{
B \ar@<.5ex>[r]^{\iota} & H \ar@<.5ex>@{-->}[l]^{\zeta} \ar@<.5ex>[r]^{\pi}
& C \ar@<.5ex>@{-->}[l]^{\gamma}  }
&\;\;\text{and}\;\;&
\xymatrix{
C^\ast \ar@<.5ex>[r]^{\pi^\ast}
& H^\ast \ar@<.5ex>@{-->}[l]^{\gamma^\ast} \ar@<.5ex>[r]^{\iota^\ast}
& B^\ast \ar@<.5ex>@{-->}[l]^{\zeta^\ast}  },
\end{array}
\end{equation*}
for $\iota$.
In this situation, the algebra structure of the left partial dual determined by $(\zeta,\gamma^\ast)$ is shown to be special:

\begin{lemma}(cf. \cite[Theorem 1.3]{Nat03})\label{lem:abelextleftPD}
Let $H$ be a finite-dimensional Hopf algebra fitting into an extension $B\xrightarrow{\iota}H\xrightarrow{\pi}C$
of Hopf algebras.
Then:
\begin{itemize}
\item[(1)]
Every left partially dualized quasi-Hopf algebra
$$C^\ast\#B=C^\ast\otimes B$$
as algebras. In particular, if the extension is abelian, then $C^\ast\#B$ is commutative.

\item[(2)]
Every right partially dualized coquasi-Hopf algebra
$$C\btd B^\ast=C\otimes B^\ast$$
as coalgebras.
Furthermore, if $x\in C$ and $b^\ast\in B$ are group-like elements, then $x\btd b^\ast$ is also group-like in $C\btd B^\ast$.
\end{itemize}
\end{lemma}

\begin{proof}
\begin{itemize}
\item[(1)]
Note here that $\iota:B\rightarrowtail H$ and $\pi:H\twoheadrightarrow C$ are Hopf algebra maps, and hence the right $H^\ast$-comodule structure of $C^\ast$ and the left $H$-comodule structure of $B$ are respectively:
$$\begin{array}{ccc}
\begin{array}{ccc}
C^\ast &\rightarrow& C^\ast\otimes H^\ast  \\
f &\mapsto& \sum f_{(1)}\otimes \pi^\ast(f_{(2)})
\end{array}
&\;\;\;\;\text{and}\;\;\;\;&
\begin{array}{ccc}
B &\rightarrow& H\otimes B  \\
b &\mapsto& \sum \iota(b_{(1)})\otimes b_{(2)},
\end{array}
\end{array}$$
where the Sweedler notations stand for the comultiplications of respective Hopf algebras $C^\ast$ and $B$.
Then we calculate in the left partially dualized quasi-Hopf algebra $C^\ast\#B$ with the formula (\ref{eqn:smashprod}) that: For all $f,g\in C^\ast$ and $b,c\in B$,
\begin{eqnarray*}
(f\#b)(g\#c)
&=&
\sum f(\iota(b_{(1)})\rightharpoonup g)\# b_{(2)}c
~=~
\sum fg_{(1)}\langle\pi^\ast(g_{(2)}),\iota(b_{(1)})\rangle\# b_{(2)}c  \\
&=&
\sum fg_{(1)}\langle g_{(2)},\pi[\iota(b_{(1)})]\rangle\# b_{(2)}c
\overset{(\ref{eqn:piiota})}{=}
\sum fg_{(1)}\langle g_{(2)},\e(b_{(1)})1\rangle\# b_{(2)}c  \\
&=& fg\#bc.
\end{eqnarray*}

Furthermore, if $B$ is commutative and $C$ is cocommutative, then the tensor product $C^\ast\otimes B$ of algebras becomes commutative, since $C^\ast$ is commutative as well.

\item[(2)]
Since
$C\btd B^\ast=(C^\ast\#B)^\ast$ as quasi-bialgebras,
we know that the first claim in (2) follows immediately from (1). Also, it is clear that the second claim on group-like elements hold as a direct consequence.
\end{itemize}
\end{proof}

\begin{remark}
Suppose $H$ is a semisimple Hopf algebra fitting into an abelian extension of Hopf algebras. Then we could infer by Lemma \ref{lem:abelextleftPD}(1) and Corollary \ref{cor:catMoritaequiv} that the fusion category $\Rep(H)$ is categorically Morita equivalent to a pointed one, and hence $\Rep(H)$ is \textit{group-theoretical} in the sense of \cite[Definition 8.40]{ENO05}. This fact was shown by Natale in \cite[Theorem 1.3]{Nat03}.
\end{remark}

Another particular case for extension of Hopf algebras is the split condition:

\begin{definition}(\cite[Definition 6.5.2]{Sch02})\label{def:splitHopfalgext}
An extension
$B\xrightarrow{\iota} H\xrightarrow{\pi} C$
of Hopf algebras is said to be split, if there exist a left $B$-module coalgebra map $\zeta:H\rightarrow B$ and a right $C$-comodule algebra map $\gamma:C\rightarrow H$, such that $(\iota\circ\zeta)\ast(\gamma\circ\pi)=\id_H$ holds.
\end{definition}

Under the conditions in Definition \ref{def:splitHopfalgext},
it is clear that $\zeta$ preserves the counits, and $\gamma$ preserves the units. Moreover, due to the arguments in the proof of \cite[Theorem 9]{DT86} and
\cite[Lemma 2.15]{Mas92}, we could assume that $\zeta$ and $\gamma$ both preserve the units and the counits since $\zeta(1)$ must be a group-like element in this case. Details are also found in the paragraphs before Lemma \ref{lem:cleftcocleft2}.

Consequently, a split extension (\ref{eqn:Hopfalgext}) of finite-dimensional Hopf algebras induces a {\pams} $(\zeta,\gamma^\ast)$ of $\iota$, such that $\zeta$ is a left $B$-module coalgebra map and $\gamma$ is a right $C$-comodule algebra map. In this situation, we could have an observation as in the following lemma:

\begin{lemma}\label{lem:splitextdetermineHopfalgs(MAMS)}
Let $H$ be a finite-dimensional Hopf algebra fitting into a split extension $B\xrightarrow{\iota}H\xrightarrow{\pi}C$
of Hopf algebras.
Suppose that $(\zeta,\gamma^\ast)$ is a {\pams} for $\iota$, where $\zeta$ is a left $B$-module coalgebra map and $\gamma$ is a right $C$-comodule algebra map.
Then
$(\zeta,\gamma^\ast)$ determines partially dualized Hopf algebras.
\end{lemma}

\begin{proof}
Our goal is to show that $\gamma(C)$ is a right coideal of $H$. Firstly, since $\zeta$ is a coalgebra map and $\gamma$ is a right $C$-comodule map, we could write
\begin{equation}\label{eqn:zetacoalgmap(MAMS)}
\sum\zeta(h_{(1)})\otimes\zeta(h_{(2)})=\sum\zeta(h)_{(1)}\otimes \zeta(h)_{(2)}
\;\;\;\;\;\;\;\;(\forall h\in H)
\end{equation}
and
\begin{equation}\label{eqn:gammarightC-comodmap(MAMS)}
\sum\gamma(x_{(1)})\otimes\pi[\gamma(x_{(2)})]=\sum\gamma(x_{(1)})\otimes x_{(2)}
\;\;\;\;\;\;\;\;(\forall x\in C).
\end{equation}
Now let us calculate for any $u\in C$ that:
\begin{eqnarray*}
\sum\zeta[\gamma(x)_{(1)}]\otimes\gamma(x)_{(2)}
&\overset{(\ref{eqn:convolutionprod(MAMS)})}=&
\sum\zeta[\gamma(x)_{(1)}]\otimes
  \iota\left(\zeta[\gamma(x)_{(2)}]\right)\gamma\left(\pi[\gamma(x)_{(3)}]\right)  \\
&\overset{(\ref{eqn:gammarightC-comodmap(MAMS)})}=&
\sum\zeta[\gamma(x_{(1)})_{(1)}]\otimes
  \iota\left(\zeta[\gamma(u_{(1)})_{(2)}]\right)\gamma(u_{(2)})  \\
&\overset{(\ref{eqn:zetacoalgmap(MAMS)})}=&
\sum\zeta[\gamma(x_{(1)})]_{(1)}\otimes
  \iota\left(\zeta[\gamma(x_{(1)})]_{(2)}\right)\gamma(x_{(2)})  \\
&=&
1_B\otimes \gamma(x),
\end{eqnarray*}
where the last equality is due to Proposition \ref{prop:PAMS-comptrival}(2) that $\zeta\circ\gamma$ is trivial.
Hence
\begin{eqnarray*}
\sum\gamma(x)_{(1)}\otimes\gamma(x)_{(2)}
&\overset{(\ref{eqn:convolutionprod(MAMS)})}=&
\sum\iota\left(\zeta[\gamma(x)_{(1)}]\right)\gamma\left(\pi[\gamma(x)_{(2)}]\right)
  \otimes\gamma(x)_{(3)}  \\
&=&
\sum\iota(1_B)\gamma\left(\pi[\gamma(x)_{(1)}]\right)\otimes\gamma(x)_{(2)}  \\
&=&
\sum\gamma\left(\pi[\gamma(x)_{(1)}]\right)\otimes\gamma(x)_{(2)}
\;\;\in \gamma(C)\otimes H.
\end{eqnarray*}

At final, we might apply Proposition \ref{lem:PAMSdetermineHopfalgs(MAMS)} to know that $(\pi,\iota^\ast)$ determines partially dualized Hopf algebras.
\end{proof}

\begin{corollary}(cf. \cite{Sch02})\label{cor:splitabelext}
Suppose $H$ is a finite-dimensional Hopf algebra fitting into a split abelian extension of Hopf algebras.
Then $\Rep(H)$ is categorically Morita equivalent to $\Rep(K)$ for some finite-dimensional cocommutative Hopf algebra $K$.
\end{corollary}

\begin{proof}
As it is explained in the paragraphs after Definition \ref{def:splitHopfalgext}, there must exist a {\pams} $(\zeta,\gamma^\ast)$ satisfying the assumptions of Lemma \ref{lem:splitextdetermineHopfalgs(MAMS)}. Consequently, we could conclude by Lemmas \ref{lem:abelextleftPD} and \ref{lem:splitextdetermineHopfalgs(MAMS)} that $H$ has a commutative left partially dualized Hopf algebra, whose dual Hopf algebra $K$ is then cocommutative. It follows that $\Rep(H)$ and $\Rep(K)$ are also categorically Morita equivalent.
\end{proof}

\subsection{On Hopf algebras with coradical fitting into abelian extensions}\label{subsection:7.2}

In this subsection, we attempt to show that our partial dual constructions are applied to a class of non-semisimple Hopf algebras,
by providing a generalization of Lemma \ref{lem:abelextleftPD}(1) as follows.

Recall that a finite-dimensional algebra $K$ is said to be \textit{basic}, if every irreducible representation of $K$ is 1-dimesnional (or equivalently, the dual coalgebra $K^\ast$ is pointed).

\begin{proposition}\label{prop:coradicalabelext(MAMS)}
Let $H$ be a finite-dimensional Hopf algebra whose coradical $H_0$ is a Hopf subalgebra fitting into an abelian extension. Then $H$ has a left partially dualized quasi-Hopf algebra which is basic.
\end{proposition}

The proof of Proposition \ref{prop:coradicalabelext(MAMS)} is based on the following observation:

\begin{lemma}\label{lem:PAMScoradicalabelext(MAMS)}
Under the assumptions of Proposition \ref{prop:coradicalabelext(MAMS)},
we denote by $\jmath:H_0\hookrightarrow H$ the inclusion of the coradical $H_0$ into $H$, and denote $\iota:=\jmath\circ\iota_0$.
Suppose $B\xrightarrow{\iota_0}H_0\xrightarrow{\pi_0}C$ is an abelian extension of Hopf algebras, and $(\zeta_0,\gamma_0^\ast)$ is a {\pams} of $\iota_0$.
Then there is a $\k$-linear diagram
$$\xymatrix{
B \ar@<.5ex>[rr]^{\iota_0}   \ar@{=}[dd]
&& H_0 \ar@<.5ex>@{-->}[ll]^{\zeta_0} \ar@<.5ex>[rr]^{\pi_0}
  \ar@<.5ex>[dd]^{\jmath}
&& C \ar@<.5ex>@{-->}[ll]^{\gamma_0}  \ar[dd]^{\ell}
\\  \\
B  \ar@<.5ex>[rr]^{\iota}
&& H  \ar@<.5ex>[rr]^{\pi}  \ar@<.5ex>@{-->}[ll]^{\zeta}
  \ar@<.5ex>@{-->}[uu]^{\zeta'}
&& D \ar@<.5ex>@{-->}[ll]^{\gamma}\;,
}$$
such that:
\begin{itemize}
\item[(1)]
$\zeta':H\rightarrow H_0$ is a left $H_0$-module coalgebra map satisfying $\zeta'\circ\jmath=\id_{H_0}$;

\item[(2)]
$\zeta:=\zeta_0\circ\zeta'$ denotes the composition, and $(\zeta,\gamma^\ast)$ is a {\pams} of $\iota:B\hookrightarrow H$;

\item[(3)]
$\ell$ is an injection of coalgebras whose image $\ell(C)$ is the coradical of $D$;

\item[(4)]
$\gamma[\ell(C)]\subseteq H_0$.
\end{itemize}
\end{lemma}

\begin{proof}
Let us show the existence of the desired maps by steps:
\begin{itemize}
\item[(1)]
According to \cite[Theorem 3.1]{Mas03} and its following sentence, there exists a left $H_0$-module coalgebra map $\zeta':H\rightarrow H_0$ with $\zeta'(1)=1_B$. Therefore, we find that $\zeta'(a)=a\zeta'(1)=a$ hold for each $a\in H_0$.

\item[(2)]
We aim to verify that the composition map $\zeta:H\rightarrow B$ satisfies the desired properties in Definition \ref{def:PAMS}, and then construct $\gamma$ as (\ref{eqn:gammabardef}) in order to obtain the {\pams} $(\zeta,\gamma^\ast)$.

Firstly, it is clear that $\zeta:H\rightarrow B$ is a left $B$-module map, since $\zeta'$ and $\zeta_0$ preserve left $H_0$-actions and $B$-actions respectively.
Moreover, note in (1) that $\zeta'$ is a coalgebra map. Thus one could check that $\zeta=\zeta_0\circ\zeta'$ has convolution inverse $\overline{\zeta_0}\circ\zeta'$. Namely,
\begin{equation}\label{eqn:zeta0zeta'convinv(MAMS)}
\overline{\zeta}=\overline{\zeta_0}\circ\zeta',
\end{equation}
where $\overline{\zeta_0}$ is the convolution inverse of the cointegral $\zeta_0$.

On the other hand, since $\zeta'$ and $\zeta_0$ are both unitary, we find that $\zeta$ is also unitary. Besides, note again that $\zeta'$ is a coalgebra map, and it follows that $\zeta'$ (as well as $\zeta_0$, of course) is counitary. Therefore, their composition $\zeta$ is also counitary.

Consequently, if we denote $D:=H/B^+H$ and the quotient map $\pi:H\twoheadrightarrow D$, then the map
\begin{equation}\label{eqn:gammadef7.2(MAMS)}
\gamma:D\rightarrow H,\;\pi(h)\mapsto\sum\iota[\overline{\zeta}(h_{(1)})]h_{(2)}
\end{equation}
would satisfy that $(\zeta,\gamma^\ast)$ is a {\pams} of $\iota$. This is
due to a similar argument with Lemmas \ref{lem:cleftcocleft} and \ref{lem:cleftcocleft2}.

\item[(3)]
Here we make identification $C=H_0/B^+H_0$ without the loss of generality, and claim that $$B^+H\cap H_0=B^+H_0$$ as subspaces of $H$. In fact, it follows from (1) that $\zeta'$ is a projection from $H$ to its Hopf subalgebra $H_0$ ( namely, $\zeta'\mid_{H_0}=\id_{H_0}$). Thus one has
$$B^+H\cap H_0=\zeta'(B^+H\cap H_0)\subseteq\zeta'(B^+H)
=B^+\zeta'(H)=B^+H_0,$$
where the penultimate equality is because $\zeta'$ preserves left $H_0$-actions (and hence preserves left $B$-actions).
Conversely, it is evident that $B^+H\cap H_0\supseteq B^+H_0$ holds as well.

As a result, we could define $\ell:C\rightarrow D$ to be the composition of the following injections:
$$\ell:H_0/B^+H_0=H_0/(B^+H\cap H_0)\cong (H_0+B^+H)/B^+H
\hookrightarrow H/B^+H.$$
in other words, this definition means that
\begin{equation}\label{eqn:ellpi0=pijmath(MAMS)}
\ell[\pi_0(a)]=\pi[\jmath(a)]\;\;\;\;(\forall a\in H_0),
\end{equation}
and hence $\ell$ is a injective coalgebra map.

Furthermore, since $H_0$ is a cosemisimple Hopf algebra, its quotient Hopf algebra would also be cosemisimple by the dual version of \cite[Proposition 10.3.4]{Rad12}. Thus $\ell(C)$ is a cosemisimple subcoalgebra of $D$. Meanwhile, note that $H$ is cogenerated by its coradical $H_0$ (via wedge products). It follows that the $D$ is cogenerated by $\ell(C)$, because
$$D=H/B^+H=\pi(H),\;\;\;\;\ell(C)=\ell(H_0/B^+H_0)=\pi[\jmath(H_0)]=\pi(H_0),$$
and $\pi$ is a coalgebra map. One could conclude that $\ell(C)$ is the coradical of $D$.

\item[(4)]
Recall in (\ref{eqn:ellpi0=pijmath(MAMS)}) that $\ell\circ\pi_0=\pi\circ\jmath$ holds,
which implies that
$$\gamma[\ell(C)]=\gamma\left(\ell[\pi_0(H_0)]\right)
=\gamma\left(\pi[\jmath(H_0)]\right)=\gamma[\pi(H_0)].$$
Thus it suffices to show $\gamma\left(\pi[\jmath(H_0)]\right)\subseteq H_0$.
In fact, according to the definition (\ref{eqn:gammadef7.2(MAMS)}) of $\gamma$, we calculate for any $a\in H_0$ that
$$\gamma[\pi(a)]
=\sum\iota[\overline{\zeta}(a_{(1)})]a_{(2)}
\overset{(\ref{eqn:zeta0zeta'convinv(MAMS)})}{=}
\sum\iota\left(\overline{\zeta_0}[\zeta'(a_{(1)})]\right)a_{(2)}
\in H_0,
$$
since the convolution inverse $\overline{\zeta_0}$ of $\zeta_0$ is a map into the Hopf subalgebra $H_0$.
\end{itemize}
\end{proof}

\begin{proof}
[Proof of Proposition \ref{prop:coradicalabelext(MAMS)}]~

Our goal is to prove the desired claim in the dual form. Specifically, we aim to show that: With the notations of Lemma \ref{lem:PAMScoradicalabelext(MAMS)}, the right partially dualized coquasi-Hopf algebra $D\btd B^\ast$ of $H$ determined by the {\pams} $(\zeta,\gamma^\ast)$ is pointed.

Note at first that $\iota=\jmath\circ\iota_0$ is a Hopf algebra map, and hence $\iota^\ast$ is a Hopf algebra map as well. Then according to the definitions of the actions $\btr$ and $\btl$ in (\ref{eqn:btlbtrnew(MAMS)}), as well as
the definition (\ref{eqn:rightpdcomultiplication}) of the comultiplication $\pd{\Delta}$ on the right partial dual $D\btd B^\ast$, we could write for any $x\in D$ and $b^\ast\in B^\ast$ that
\begin{eqnarray}\label{eqn:Delta(ub*)(MAMS)}
\pd{\Delta}(x\btd b^\ast)
&=&
\sum_i \left[x_{(1)}\btd(h_i^\ast\btr b^\ast_{(1)})\right]
  \otimes\left[(x_{(2)}\btl h_i)\btd b^\ast_{(2)}\right]  \nonumber  \\
&=&
\sum_i \left(x_{(1)}\btd\iota^\ast[h_i^\ast\zeta^\ast(b^\ast_{(1)})]\right)
  \otimes\left(\pi[\gamma(x_{(2)})h_i]\btd b^\ast_{(2)}\right)  \nonumber  \\
&=&
\sum_i \left(x_{(1)}\btd\iota^\ast(h_i^\ast)\iota^\ast[\zeta^\ast(b^\ast_{(1)})]\right)
  \otimes\left(\pi[\gamma(x_{(2)})h_i]\btd b^\ast_{(2)}\right)  \nonumber  \\
&\overset{(\ref{eqn:iota*zeta*})}=&
\sum_i \left(x_{(1)}\btd\iota^\ast(h_i^\ast)b^\ast_{(1)}\right)
  \otimes\left(\pi[\gamma(x_{(2)})h_i]\btd b^\ast_{(2)}\right),
\end{eqnarray}
where $\{h_i\}$ is a linear basis of $H$ with dual basis $\{h_i^\ast\}$ of $H^\ast$.

However, one could find that the element
\begin{equation}\label{eqn:inB*B(MAMS)}
\sum_i \iota^\ast(h_i^\ast)\otimes h_i\in B^\ast\otimes B,
\end{equation}
where $B^\ast\otimes B$ is regarded as a subspace of $B^\ast\otimes H$. Indeed, its image under $b\otimes\id_H$ for an arbitrary $b\in B$ would become
$$\sum_i \langle\iota^\ast(h_i^\ast),b\rangle h_i
=\sum_i \langle h_i^\ast,\iota(b)\rangle h_i=\iota(b)\in B.$$
Therefore, if we identify $C$ as a subcoalgebra of $D$ via the injection $\ell$ for convenience, then
\begin{equation}\label{eqn:inB*H0(MAMS)}
\sum_i \iota^\ast(h_i^\ast)\otimes \gamma(C)h_i
\subseteq B^\ast\otimes \gamma(C)B\subseteq B^\ast\otimes H_0,
\end{equation}
where the last inclusion is due to Lemma \ref{lem:PAMScoradicalabelext(MAMS)}(4)
that $\gamma(C)\subseteq H_0$.

As a consequence, since $\pi\mid_{H_0}=\pi_0$ is a Hopf algebra map when the injections $\jmath$ and $\ell$ are abbreviated, we could write by Equation (\ref{eqn:Delta(ub*)(MAMS)}) for any $y\in C$ and $b^\ast\in B^\ast$ that
\begin{eqnarray*}
\pd{\Delta}(y\btd b^\ast)
&=&
\sum_i \left(y_{(1)}\btd\iota^\ast(h_i^\ast)b^\ast_{(1)}\right)
  \otimes\left(\pi[\gamma(y_{(2)})h_i]\btd b^\ast_{(2)}\right)  \\
&=&
\sum_i \left(y_{(1)}\btd\iota^\ast(h_i^\ast)b^\ast_{(1)}\right)
  \otimes\left(\pi_0[\gamma(y_{(2)})h_i]\btd b^\ast_{(2)}\right)  \\
&\overset{(\ref{eqn:inB*B(MAMS)})}=&
\sum_i \left(y_{(1)}\btd\iota^\ast(h_i^\ast)b^\ast_{(1)}\right)
  \otimes\left(\pi_0[\gamma(y_{(2)})]\pi_0(h_i)\btd b^\ast_{(2)}\right)  \\
&=&
\sum_i \left(y_{(1)}\btd\iota^\ast(h_i^\ast)b^\ast_{(1)}\right)
  \otimes\left(\pi[\gamma(y_{(2)})]\pi(h_i)\btd b^\ast_{(2)}\right)  \\
&=&
\sum_i \left(y_{(1)}\btd\iota^\ast(h_i^\ast)b^\ast_{(1)}\right)
  \otimes\left(y_{(2)}\pi(h_i)\btd b^\ast_{(2)}\right),
\end{eqnarray*}
where the last equality is obtained from Lemma \ref{lem:cleftcocleft2}(2) that $\pi\circ\gamma=\id_C$. However, it is straightforward to find that
$\sum_i\iota^\ast(h_i^\ast)\otimes\pi(h_i)=\e\otimes1$ according to Equation (\ref{eqn:piiota}) that $\pi\circ\iota$ is trivial, and hence
$$\pd{\Delta}(y\btd b^\ast)
=\sum (y_{(1)}\btd b^\ast_{(1)})
  \otimes(y_{(2)}\btd b^\ast_{(2)})
\;\;\;\;(\forall y\in C,\;\forall b^\ast\in B^\ast).$$
Thus we conclude that $C\btd B^\ast$ is a subcoalgebra of $D\btd B^\ast$, and $C\btd B^\ast=C\otimes B^\ast$ as coalgebras. It also follows that $C\btd B^\ast$ is cosemisimple and cocommutative according to the definition of abelian extension $B\rightarrow H_0\rightarrow C$ of the cosemisimple Hopf algebras.

On the other hand, if we denote by $\{D_n\}$ the coradical filtration of the coalgebra $D$, then it is clear that $D_0=C$ by Lemma \ref{lem:PAMScoradicalabelext(MAMS)}(4) and that
$$\Delta(D_n)\subseteq D_{n-1}\otimes D+D\otimes C\;\;\;\;(\forall n\geq1).$$
Similarly, one could find with the help of Equation (\ref{eqn:Delta(ub*)(MAMS)}) that
\begin{eqnarray*}
\Delta(D_n\btd B^\ast)
&\subseteq&
(D_{n-1}\btd B^\ast)\otimes(D\btd B^\ast)
+ \sum_i\left(D\btd \iota(h_i^\ast)B^\ast\right)
\otimes\left(\pi[\gamma(C)h_i]\btd B^\ast\right)  \\
&\subseteq&
(D_{n-1}\btd B^\ast)\otimes(D\btd B^\ast)
+ \left(D\btd B^\ast\right)
\otimes\left(C\btd B^\ast\right)
\end{eqnarray*}
for all $n\geq1$, where the last inclusion is due to (\ref{eqn:inB*H0(MAMS)}) and the fact that $\pi(H_0)=\pi_0(H_0)=C$. As a conclusion, the right partial dual $D\btd B^\ast$ has the coradical filtration $\{D_n\btd B^\ast\}$, and hence its coradical should be $D_0\btd B^\ast$, which is exactly $C\btd B^\ast$, a cosemisimple and cocommutative subcoalgebra. This means that $D\btd B^\ast$ is a pointed coalgebra, or equivalently, the left partial dual $D^\ast\#B$ of $H$ is a basic algebra.
\end{proof}

\subsection{A method to determine examples of genuine quasi-Hopf algebras}

We have mentioned in
Remark \ref{rmk:gaugeequiv(MAMS)} about the notion of \textit{gauge equivalence} of quasi-Hopf algebras. Here, let us recall that a finite-dimensional quasi-Hopf algebra $K$ is said to be \textit{genuine}, if $K$ is not gauge equivalent to any Hopf algebra (or equivalently, $\Rep(K)$ is not tensor equivalent to $\Rep(K')$ for any finite-dimensional Hopf algebra $K'$).

In order to decide whether a quasi-Hopf algebra is genuine,
we would use the notion of the \textit{Grothendieck ring} $\mathsf{Gr}(\C)$ (\cite[Section 3]{Eti02} and \cite[Section 2.1]{EO04}) of a finite tensor category $\C$, and
one could see \cite[Sections 3.1 and 4.5]{EGNO15} for its detailed theory.
Clearly, the Grothendieck ring is invariant under tensor equivalences with the following properties for the case of quasi-Hopf algebras:

\begin{lemma}\label{lem:Groring(MAMS)}
Let $K$ and $K'$ be finite-dimensional quasi-Hopf algebras.
\begin{itemize}
\item[(1)]
(cf. \cite[Remark 4.5.6]{EGNO15})
If $K$ and $K'$ are gauge equivalent, then $\mathsf{Gr}(\Rep(K))\cong\mathsf{Gr}(\Rep(K'))$ as $\mathbb{Z}_+$-rings;

\item[(2)]
Suppose $K$ is isomorphic to the dual group algebra $(\k G)^\ast$ as an algebra and a coalgebra, where $G$ is a finite group. Then $\mathsf{Gr}(\Rep(K))\cong \mathbb{Z}G$ as $\mathbb{Z}_+$-rings.
\end{itemize}
\end{lemma}

\begin{remark}\label{rmk:Z+ringofgrps(MAMS)}
The definition of $\mathbb{Z}_+$-rings referred to as in \cite[Sections 3.1]{EGNO15} implies the following fact: For two finite groups $G$ and $G'$, one has
\begin{center}
$\mathbb{Z}G\cong\mathbb{Z}G'$ as $\mathbb{Z}_+$-rings
\;\;\;\;$\Longleftrightarrow$\;\;\;\;
$G\cong G'$ as groups.
\end{center}
\end{remark}

Another notion of gauge invariant called the \textit{exponent} of fusion categories and finite-dimensional semisimple quasi-Hopf algebras:

\begin{definition}(\cite[Section 6]{Eti02})
Let $\C$ be a fusion category over $\mathbb{C}$. The exponent of $\C$ is defined as the order of the square of the braiding $\sigma$ for its left center $\Z(\C)$. Namely,
\begin{equation}
\exp(\C):=\min\{n\geq1\mid \forall V,W\in\Z(\C),\;\;
(\sigma_{W,V}\circ\sigma_{V,W})^n=\id_V\otimes\id_W\}.
\end{equation}

When $\C=\Rep(K)$ is the category of finite-dimensional modules over a semisimple quasi-Hopf algebra $K$ over $\mathbb{C}$, we also denote
$\exp(K):=\exp(\Rep(K)).$
\end{definition}

\begin{remark}\label{rmk:fusioncatexp(MAMS)}
\begin{itemize}
\item[(1)]
The exponent $\exp(\C)$ of a fusion category $\C$ is a positive integer;

\item[(2)]
Note by \cite[Theorem 2.15]{ENO05} that the dual tensor category $\C_\M^\ast$ is also fusion if $\M$ is a finite semisimple indecomposable $\C$-module category.
Furthermore, it follows from \cite[Proposition 6.3(2)]{Eti02} that $\exp(\C_\M^\ast)=\exp(\C)$ for any dual fusion category $\C_\M^\ast$;

\item[(3)]
In particular, if two semisimple quasi-Hopf algebra $K$ and $K'$ over $\mathbb{C}$ are gauge equivalent, then $\exp(K)=\exp(K')$.

\item[(4)]
Suppose $H$ is a semisimple Hopf algebra over $\mathbb{C}$. Then it is known that
$$\exp(H)=\min\big\{n\geq1\mid \forall h\in H,\;\;
  \sum h_{(1)}S^{-2}(h_{(2)})\cdots S^{-2n+2}(h_{(n)})=\e(h)1_H\big\},$$
which is also called in \cite{Kas99,Kas00,EG99} the \textit{exponent} of $H$ (with involutory antipode).
\end{itemize}
\end{remark}

However, due to the reconstruction $\Rep(H)_{\Rep(B)}^\ast\approx\Rep(C^\ast\# B)$ in Corollary \ref{cor:catMoritaequiv}, one could know by Remark \ref{rmk:fusioncatexp(MAMS)} that:

\begin{corollary}\label{cor:leftpdsemisimple(MAMS)}
Let $H$ be a semisimple Hopf algebra over $\mathbb{C}$ with left partially dualized quasi-Hopf algebra $C^\ast\# B$. Then:
\begin{itemize}
\item[(1)]
$C^\ast\# B$ is a semisimple quasi-Hopf algebra;
\item[(2)]
$\exp(H)=\exp(C^\ast\# B)$ holds.
\end{itemize}
\end{corollary}

For the remaining of this paper, we focus on our construction of genuine quasi-Hopf algebras:

Let $H$ be the $8$-dimensional Kac Hopf algebra over $\mathbb{C}$ found by Kac in 1960s. For convenience, we would study it with the structure described in \cite[Remark 2.14(2)]{Mas95}: Suppose $q$ is a primitive $8$th root of 1, and write $\sqrt{-1}:=q^2$. As an algebra, $H$ is generated by elements $b,c,w$ with relations
$$b^2=c^2=w^2=1,\;\;\;\;
cb=bc,\;\;\;\;wb=cw,\;\;\;\;wc=bw.$$
The coalgebra structure and the antipode are given by
$$\left\{\begin{array}{l}
\Delta(b)=b\otimes b,\;\;\;\; \Delta(c)=c\otimes c,\\
\Delta(w)
=\left(\frac{1}{2}(1+bc)\otimes 1+\frac{1+\sqrt{-1}}{4}(1-bc)\otimes b +\frac{1-\sqrt{-1}}{4}(1-bc)\otimes c\right)(w\otimes w),\\
\e(b)=\e(c)=\e(w)=1,  \\
S(b)=b,\;\;\;\;S(c)=c,\;\;\;\;
S(w)=\left(\frac{1+\sqrt{-1}}{2}b+\frac{1-\sqrt{-1}}{2}c\right)w.
\end{array}\right.$$
It is known that $H$ is a semisimple Hopf algebra, and its exponent is 8 due to \cite[Theorem 12]{Kas99}.

In fact, $H$ is group-theoretical, since it fits into an abelian extension given in the following lemma, from which a particular {\pams} arises:

\begin{lemma}\label{lem:K8pams}
With the notations above, consider the abelian extension
$B\xrightarrow{\iota}H\xrightarrow{\pi}C$,
where:
\begin{itemize}
\item
$B=\mathbb{C}\{1,b,c,bc\}$ is the Klein four-group algebra;
\item
$C:=H/B^+H=\mathbb{C}\{\overline{1},\overline{w}\}$
is the algebra of the cyclic group generated by $\overline{w}$;
\item
$\iota$ is the (normal) inclusion of Hopf algebras, and $\pi$ is the quotient Hopf algebra map.
\end{itemize}
Then
\begin{equation*}
\begin{array}{ccc}
\xymatrix{
B \ar@<.5ex>[r]^{\iota} & H \ar@<.5ex>@{-->}[l]^{\zeta} \ar@<.5ex>[r]^{\pi}
& C \ar@<.5ex>@{-->}[l]^{\gamma}  }
&\;\;\text{and}\;\;&
\xymatrix{
C^\ast \ar@<.5ex>[r]^{\pi^\ast}
& H^\ast \ar@<.5ex>@{-->}[l]^{\gamma^\ast} \ar@<.5ex>[r]^{\iota^\ast}
& B^\ast \ar@<.5ex>@{-->}[l]^{\zeta^\ast}  },
\end{array}
\end{equation*}
is a {\pams}, where:
\begin{itemize}
\item
$\zeta$ is the left $B$-module map satisfying
$\zeta(w)=1$ and $\zeta\mid_B=\id_B$,
and
\item
$\gamma$ satisfies $\gamma(\overline{1})=1$ and
$\gamma(\overline{w})=w$.
\end{itemize}
\end{lemma}

\begin{proof}
Note that the axioms (1) and (2) in Definition \ref{def:PAMS} hold since $B\xrightarrow{\iota}H\xrightarrow{\pi}C$ is an extension of Hopf algebras, and it remains to check the axioms (3) to (6).

\textbf{Axioms (3) and (4):}
As $H$ is clearly a free left $B$-module with basis $\{1,w\}$, we know that $\zeta$ is a well-defined map preserving left $B$-actions. Moreover, the convolution inverse $\overline{\zeta}$ of $\zeta$ is the linear map satisfying
\begin{equation}\label{eqn:K8zetabar(MAMS)}
\overline{\zeta}(1)=1,\;\;\;\;
\overline{\zeta}(w)=\frac{1}{2}(1-\sqrt{-1}b+\sqrt{-1}c+bc),\;\;\;\;
\overline{\zeta}(ah)=\overline{\zeta}(h)S(a)\;\;\;\;
(\forall a\in B,\;\forall h\in H).
\end{equation}
In fact, 
we could find that
\begin{eqnarray*}
&& (\overline{\zeta}\ast\zeta)(w)  \\
&=& \frac{1}{2}\overline{\zeta}[(1+bc)w]\zeta(w)
+\frac{1+\sqrt{-1}}{4}\overline{\zeta}[(1-bc)w]\zeta(bw) +\frac{1-\sqrt{-1}}{4}\overline{\zeta}[(1-bc)w]\zeta(cw)  \\
&=& \frac{1}{2}\overline{\zeta}(w)S(1+bc)\zeta(w)
+\frac{1+\sqrt{-1}}{4}\overline{\zeta}(w)S(1-bc)b\zeta(w) +\frac{1-\sqrt{-1}}{4}\overline{\zeta}(w)S(1-bc)c\zeta(w)  \\
&=& \frac{1}{2}\overline{\zeta}(w)(1+bc)
+\frac{1+\sqrt{-1}}{4}\overline{\zeta}(w)(b-c) +\frac{1-\sqrt{-1}}{4}\overline{\zeta}(w)(c-b)  \\
&=& \overline{\zeta}(w)\cdot\frac{1}{2}(1+\sqrt{-1}b-\sqrt{-1}c+bc)  \\
&=& \frac{1}{4}(1-\sqrt{-1}b+\sqrt{-1}c+bc)(1+\sqrt{-1}b-\sqrt{-1}c+bc)  \\
&=& 1
~=~ \e(w)1,
\end{eqnarray*}
and $(\zeta\ast\overline{\zeta})(w)=1$ holds similarly. Therefore, for any $a\in B$,
we have
\begin{eqnarray*}
&& (\overline{\zeta}\ast\zeta)(aw)
=\sum \overline{\zeta}(a_{(1)}w_{(1)})\zeta(a_{(2)}w_{(2)})
=\sum \overline{\zeta}(w_{(1)})S(a_{(1)})a_{(2)}\zeta(w_{(2)})=\e(a)1,  \\
&& (\zeta\ast\overline{\zeta})(aw)
=\sum \zeta(a_{(1)}w_{(1)})\overline{\zeta}(a_{(2)}w_{(2)})
=\sum a_{(1)}\zeta(w_{(1)})\overline{\zeta}(w_{(2)})S(a_{(2)})=\e(a)1.
\end{eqnarray*}

On the other hand, the linear map $\gamma$ is well-defined, as $\{\overline{1},\overline{w}\}$ is a linear basis of $C$.
In addition,
it preserves right $C$-coactions, because
$$(\id\otimes\pi)\circ\Delta\circ\gamma(\overline{1})
=1=(\gamma\otimes\id)\circ\Delta(\overline{1})$$
and
\begin{eqnarray*}
&& (\id\otimes\pi)\circ\Delta\circ\gamma(\overline{w})
~=~ \sum w_{(1)}\otimes\pi(w_{(2)})  \\
&=& \frac{1}{2}(1+bc)w\otimes\pi(w)+\frac{1+\sqrt{-1}}{4}(1-bc)w\otimes\pi(bw) +\frac{1-\sqrt{-1}}{4}(1-bc)w\otimes\pi(cw)  \\
&=& \left(\frac{1}{2}(1+bc)+\frac{1+\sqrt{-1}}{4}(1-bc)
+\frac{1-\sqrt{-1}}{4}(1-bc)\right)w\otimes\pi(w)   \\
&=& w\otimes\overline{w}
~=~ (\gamma\otimes\id)\circ\Delta(\overline{w}).
\end{eqnarray*}
According to the proof of Lemma \ref{lem:cleftcocleft}, one knows that $\gamma$ should have convolution inverse, which is indeed $\gamma$ itself.

\textbf{Axiom (5):}
This is straightforward to verify.

\textbf{Axiom (6):}
Firstly, note by direct calculations that
\begin{eqnarray*}
&& [(\iota\circ\zeta)\ast(\gamma\circ\pi)](w)  \\
&=& \sum \iota[\zeta(w_{(1)})]\gamma[\pi(w_{(2)})]  \\
&=&
\frac{1}{2}\zeta[(1+bc)w]\gamma[\pi(w)]
+\frac{1+\sqrt{-1}}{4}\zeta[(1-bc)w]\gamma[\pi(bw)]  \\ && +\frac{1-\sqrt{-1}}{4}\zeta[(1-bc)w]\gamma[\pi(cw)]  \\
&=&
\frac{1}{2}(1+bc)\zeta(w)\gamma(\overline{w})
+\frac{1+\sqrt{-1}}{4}(1-bc)\zeta(w)\gamma(\overline{w}) +\frac{1-\sqrt{-1}}{4}(1-bc)\zeta(w)\gamma(\overline{w})   \\
&=&
\left(\frac{1}{2}(1+bc)+\frac{1+\sqrt{-1}}{4}(1-bc)
+\frac{1-\sqrt{-1}}{4}(1-bc)\right)w  \\
&=& w.
\end{eqnarray*}
Therefore, for any $a\in B$, we also have
\begin{eqnarray*}
[(\iota\circ\zeta)\ast(\gamma\circ\pi)](aw)
&=& \sum \iota[\zeta(a_{(1)}w_{(1)})]\gamma[\pi(a_{(2)}w_{(2)})]  \\
&=& \sum \iota[a_{(1)}\zeta(w_{(1)})]\gamma[\e(a_{(2)})\pi(w_{(2)})]  \\
&=& \sum a\iota[\zeta(w_{(1)})]\gamma[\pi(w_{(2)})]
~=~
aw.
\end{eqnarray*}
\end{proof}

Consequently, it follows from Corollary \ref{cor:leftpdsemisimple(MAMS)} and Lemma \ref{lem:abelextleftPD}(1) that the left partially dualized quasi-Hopf algebra $C^\ast\#B$ determined by $(\zeta,\gamma^\ast)$ in Lemma \ref{lem:K8pams} is semisimple and commutative. Thus $C^\ast\#B\cong(\mathbb{C}G)^\ast$ as algebras and coalgebras for some group $G$ of order 8.

However, in order to decide which group $G$ is, we should provide additional formulas in the right partial dual $C\btd B^\ast\cong \mathbb{C}G$. According to the axioms in Definition \ref{def:PAMS}, it is not hard to prove the following lemma:

\begin{lemma}\label{lem:rightPDformulas}
Let $B\xrightarrow{\iota}H\xrightarrow{\pi}C$ be an extension of finite-dimensional Hopf algebras.
Suppose $C\btd B^\ast$ is a right partially dualized coquasi-Hopf algebra of $H$. Then
for all $x,y\in C$ and $b^\ast,c^\ast\in B^\ast$, we have
\begin{equation}\label{eqn:rightPDmultiplication1}
(x\btd \e)(y\btd c^\ast)=xy\btd c^\ast,
\;\;\;\;\;\;\;\;
(x\btd b^\ast)(1\btd c^\ast)=x\btd b^\ast c^\ast
\end{equation}
and
\begin{equation}\label{eqn:rightPDmultiplication2}
(1\btd b^\ast)(y\btd \e)
=\sum\pi\left[\zeta^\ast(b^\ast_{(1)})\rightharpoonup\gamma(y_{(1)})\right]
  \btd\iota^\ast\left[\zeta^\ast(b^\ast_{(2)})\leftharpoonup\gamma(y_{(2)})\right].
\end{equation}
\end{lemma}

\begin{proof}
Note in Proposition \ref{prop:PAMS-comptrival}(1) that $\zeta\circ\iota=\id_B$ and $\pi\circ\gamma=\id_C$ hold.
Therefore, since $\iota$ and $\pi$ are Hopf algebra maps, one could find by the definitions of $\btl$ (\ref{eqn:btl}) and $\btr$ (\ref{eqn:btr}) that
\begin{equation}\label{eqn:btlHopfext}
x\btl h=\pi[\gamma(x)h]
=\pi[\gamma(x)]\pi(h)=x\pi(h)
\;\;\;\;\;\;\;\;(\forall h\in H,\;\forall x\in C\cong H/B^+H)
\end{equation}
and
\begin{equation}\label{eqn:btrHopfext}
h^\ast\btr b^\ast=\iota^\ast[h^\ast\zeta^\ast(b^\ast)]
=\iota^\ast(h^\ast)\iota^\ast[\zeta^\ast(b^\ast)]
=\iota^\ast(h^\ast)b^\ast
\;\;\;\;\;\;\;\;(\forall h^\ast\in H^\ast,\;\forall b\in B^\ast).
\end{equation}

Suppose $x,y\in C$ and $b^\ast,c^\ast\in B^\ast$.
Recall in Proposition \ref{prop:rightpdmultiplication} that
\begin{equation}\label{eqn:rightpdmultiplication(MAMS)}
(x\btd b^\ast)(y\btd c^\ast)
=\sum\left(x\btl[\zeta^\ast(b^\ast_{(1)})\rightharpoonup\gamma(y_{(1)})]\right)
  \btd\left([\zeta^\ast(b^\ast_{(2)})\leftharpoonup\gamma(y_{(2)})]\btr c^\ast\right)
\end{equation}
holds in the right partial dual $C\btd B^\ast$, and
note in Definition \ref{def:PAMS}(5) that $\zeta$ and $\gamma$ preserve both units and counits. Therefore, we could calculate that:
\begin{eqnarray*}
(x\btd \e)(y\btd c^\ast)
&\overset{(\ref{eqn:rightPDmultiplicationORIGINAL(MAMS)})}{=}&
(x\btl\gamma(y))\btd c^\ast
\overset{(\ref{eqn:btlHopfext})}{=}
x\pi[\gamma(y)]\btd c^\ast  \\
&=&
xy\btd c^\ast.
\end{eqnarray*}
The other equation in (\ref{eqn:rightPDmultiplication1}) holds due to similar calculations.

Moreover, we could also have
\begin{eqnarray*}
(1\btd b^\ast)(y\btd \e)
&\overset{(\ref{eqn:rightpdmultiplication(MAMS)})}{=}&
\sum\left(1\btl[\zeta^\ast(b^\ast_{(1)})\rightharpoonup\gamma(y_{(1)})]\right)
  \btd\left([\zeta^\ast(b^\ast_{(2)})\leftharpoonup\gamma(y_{(2)})]\btr \e\right)  \\
&\overset{(\ref{eqn:btlHopfext}),\;(\ref{eqn:btrHopfext})}{=}&
\sum\pi\left[\zeta^\ast(b^\ast_{(1)})\rightharpoonup\gamma(y_{(1)})\right]
  \btd\iota^\ast\left[\zeta^\ast(b^\ast_{(2)})\leftharpoonup\gamma(y_{(2)})\right].
\end{eqnarray*}
\end{proof}

Now we could describe the detailed structures of the desired right partial dual $C\btd B^\ast$ of the 8-dimensional Kac algebra $H$, with the language of generators and relations of an algebra and a coalgebra:

\begin{lemma}
With the notations in Lemma \ref{lem:K8pams}, the linear functions
\begin{equation}\label{eqn:b*andc*}
\varphi=p_1+p_b-p_c-p_{bc}\;\;\;\;\;\;\;\;\text{and}\;\;\;\;\;\;\;\;
\psi=p_1-p_b+p_c-p_{bc}
\end{equation}
are group-like elements in $B^\ast$, where $\{p_1,p_b,p_c,p_{bc}\}$ is the basis of $B^\ast$ dual to $\{1,b,c,bc\}$.
Then
the elements
\begin{equation}\label{eqn:3grplikes(MAMS)}
\overline{w}\btd\e,\;\;\;\;\;\;\;\;
\overline{1}\btd \varphi\;\;\;\;\;\;\;\;
\text{and}\;\;\;\;\;\;\;\;
\overline{1}\btd \psi
\end{equation}
are group-like in $C\btd B^\ast$, and they satisfy
$$\left\{\begin{array}{ll}
(\overline{w}\btd\e)^2
=(\overline{1}\btd{\varphi})^2
=(\overline{1}\btd{\psi})^2=1\btd\e, \;\;\;\; &
(\overline{1}\btd \psi)(\overline{1}\btd \varphi)
=(\overline{1}\btd \varphi)(\overline{1}\btd \psi), \\
(\overline{1}\btd \varphi)(\overline{w}\btd\e)
=(\overline{w}\btd\e)(\overline{1}\btd \psi), &
(\overline{1}\btd \psi)(\overline{w}\btd\e)
=(\overline{w}\btd\e)(\overline{1}\btd \varphi).
\end{array}\right.$$
\end{lemma}

\begin{proof}
At first, it is evident that $\varphi$ and $\psi$ defined in (\ref{eqn:b*andc*}) are both group-like in $B^\ast$, as one could directly check that they are algebra maps.
Therefore, it follows from Lemma \ref{lem:abelextleftPD}(2) that the three elements given in (\ref{eqn:3grplikes(MAMS)}) are all group-like.

Next, we focus on showing the equations
$$(\overline{1}\btd \varphi)(\overline{w}\btd\e)
=(\overline{w}\btd\e)(\overline{1}\btd \psi)
\;\;\;\;\text{and}\;\;\;\;
(\overline{1}\btd \psi)(\overline{w}\btd\e)
=(\overline{w}\btd\e)(\overline{1}\btd \varphi),$$
while others are easy to verify by Lemma \ref{lem:rightPDformulas}. In fact, note that
\begin{eqnarray*}
&& \sum \pi(w_{(1)})\otimes w_{(2)}  \\
&=& \frac{1}{2}\pi[(1+bc)w]\otimes w+\frac{1+\sqrt{-1}}{4}\pi[(1-bc)w]\otimes bw +\frac{1-\sqrt{-1}}{4}\pi[(1-bc)w]\otimes cw  \\
&=& \frac{1}{2}\cdot 2\overline{w}\otimes w
~=~ \overline{w}\otimes w,
\end{eqnarray*}
as well as for any $a\in\{1,b,c,bc\}$ that
\begin{eqnarray*}
&& \sum\langle\zeta^\ast(\varphi)_{(1)},w\rangle
\langle\iota^\ast[\zeta^\ast(\varphi)_{(2)}],a\rangle
~=~ \langle\zeta^\ast(\varphi),wa\rangle
~=~ \langle \varphi,\zeta(wa)\rangle  \\
&=& \left\{\begin{array}{ll}
    \langle \varphi,\zeta(w)\rangle, & a=1, \\
    \langle \varphi,y\zeta(w)\rangle, & a=b,  \\
    \langle \varphi,x\zeta(w)\rangle, & a=c, \\
    \langle \varphi,xy\zeta(w)\rangle, & a=bc,
\end{array}\right.
\overset{(\ref{eqn:b*andc*})}{=} \left\{\begin{array}{ll}
    1, & a=1, \\
    -1, & a=b,  \\
    1, & a=c, \\
    -1, & a=bc,
\end{array}\right.
\overset{(\ref{eqn:b*andc*})}{=} \langle \psi,a\rangle.
\end{eqnarray*}
Then we could find that
\begin{eqnarray*}
(\overline{1}\btd \varphi)(\overline{w}\btd\e)
&\overset{(\ref{eqn:rightPDmultiplication2})}{=}&
\sum\pi\left[\zeta^\ast(\varphi)\rightharpoonup\gamma(\overline{w})\right]
  \btd\iota^\ast\left[\zeta^\ast(\varphi)\leftharpoonup\gamma(\overline{w})\right] \\
&=&
\sum\pi\left[\zeta^\ast(\varphi)\rightharpoonup w\right]
  \btd\iota^\ast\left[\zeta^\ast(\varphi)\leftharpoonup w\right] \\
&=&
\sum\pi(w_{(1)})\langle \varphi,\zeta(w_{(2)})\rangle
  \btd \langle \zeta^\ast(\varphi)_{(1)},w\rangle\iota^\ast[\zeta^\ast(\varphi)_{(2)}] \\
&=&
\overline{w}\langle \varphi,\zeta(w)\rangle\btd \psi
~=~
\overline{w}\langle \varphi,1\rangle\btd \psi  \\
&\overset{(\ref{eqn:b*andc*})}{=}&
\overline{w}\btd \psi
\overset{(\ref{eqn:rightPDmultiplication1})}{=}
(\overline{w}\btd \e)(\overline{1}\btd \psi),
\end{eqnarray*}
and the other equation
$(\overline{1}\btd \psi)(\overline{w}\btd\e)
=(\overline{w}\btd \e)(\overline{1}\btd \varphi)$
holds due to similar calculations.
\end{proof}

For convenience, let us denote by
\begin{equation}\label{eqn:D8(MAMS)}
D_8=\langle\, r,s,t\mid r^2=s^2=t^2=1,\;sr=rt,\;tr=rs,\;ts=st\,\rangle
\end{equation}
the dihedral group of order 8.
Then
\begin{equation}\label{eqn:D8iso}
r\mapsto \overline{w}\btd\e,\;\;\;\;s\mapsto \overline{1}\btd \varphi,\;\;\;\;
t\mapsto \overline{1}\btd \psi
\end{equation}
provide an isomorphism
$\mathbb{C}D_8\cong C\btd B^\ast$ as algebras and coalgebras.

\begin{proposition}\label{prop:PDofK8genuine(MAMS)}
With notations in Lemma \ref{lem:K8pams}, the left partial dual $C^\ast\#B$ determined by the {\pams} $(\zeta,\gamma^\ast)$ is
a commutative semisimple quasi-Hopf algebra which is genuine.
\end{proposition}

\begin{proof}
As mentioned before Lemma \ref{lem:rightPDformulas}, one concludes by Lemma \ref{lem:abelextleftPD}(1) and Corollary \ref{cor:leftpdsemisimple(MAMS)} that
each left partially dualized quasi-Hopf algebra $C^\ast\#B$ of $H$ would be commutative and semisimple with
$$\exp(C^\ast\#B)=\exp(H)=8.$$
Thus by Remark \ref{rmk:fusioncatexp(MAMS)}(3), any Hopf algebra gauge equivalent to $C^\ast\#B$ would be also commutative and semisimple with exponent 8, but the only Hopf algebra satisfying these properties is $\mathbb{C}C_8$ (where $C_8$ denotes the cyclic group of order 8), because of the classification result \cite[Theorem 2.13]{Mas95}.

On the other hand, we know according to the paragraph before this proposition that $C^\ast\#B\cong(\mathbb{C}D_8)^\ast$ as algebras and coalgebras. As a consequence, it follows from Lemma \ref{lem:Groring(MAMS)}(2) the Grothendieck ring of the fusion category $\Rep(C^\ast\#B)$ should be $\mathbb{Z}D_8$. However by Remark \ref{rmk:Z+ringofgrps(MAMS)}, as a $\mathbb{Z}_+$-ring, $\mathbb{Z}D_8$ is not isomorphic to $\mathbb{Z}C_8$, which is exactly the Grothendieck ring of the fusion category $\Rep((\mathbb{C}C_8)^\ast)\cong\Rep(\mathbb{C}C_8)$. Thus
$$\Rep(C^\ast\#B)\;\;\;\;\;\;\;\;\text{and}\;\;\;\;\;\;\;\;\Rep(\mathbb{C}C_8)$$
are not tensor equivalent, and hence $C^\ast\#B$ is not gauge equivalent to the Hopf algebra $\mathbb{C}C_8$.
\end{proof}

\begin{remark}
Moreover, according to Proposition \ref{prop:partialdualsequiv},
every left partial dual $C^\ast\#B$ determined by an arbitrary {\pams} for $\iota:B\hookrightarrow H$ would be
a commutative semisimple quasi-Hopf algebra which is genuine.
\end{remark}

Therefore, the left partially dualized quasi-Hopf algebra $C^\ast\#B$ is isomorphic to $(\mathbb{C}D_8)^\ast$ with some non-trivial associator, which should be identified with a normalized 3-cocycle on $D_8$. Also, its antipode could be also described.

\begin{example}\label{ex:D8(MAMS)}
There is a normalized 3-cocycle $\pd{\phi}$ on $D_8$ such that
\begin{eqnarray}\label{eqn:3cocycle}
&&\pd{\phi}\Big(r^{i}s^{j}t^{k},r^{i'}s^{j'}t^{k'},r^{i''}s^{j''}t^{k''}\Big)^{-1}
\nonumber \\
&=&\delta_{i'',0}+\delta_{i'',1}\Big(\delta_{j,k}+\delta_{j+k,1}
\Big[\frac{1-\sqrt{-1}}{2}(-1)^{j'}+\frac{1+\sqrt{-1}}{2}(-1)^{k'}\Big]\Big)
\end{eqnarray}
hold for all $i,i',i'',j,j',j'',k,k',k''\in\{0,1\}$. Furthermore,
$(\mathbb{C}D_8)^\ast$ is a genuine quasi-Hopf algebra with associator $\pd{\phi}$ and antipode $(\pd{S},\pd{\alpha},\pd{\beta})$ given as follows:
\begin{eqnarray}\label{eqn:D8antipode(MAMS)}
& \pd{S}(r^{i}s^{j}t^{k})=\delta_{i,0}rs^{1-k}t^{1-j}+\delta_{i,0}s^{1-j}t^{1-k}
,\;\;\;\;\;\;\;\;
\pd{\alpha}=\pd{\e},  \nonumber  \\
& \pd{\beta}(r^{i}s^{j}t^{k})
=\delta_{i,0}+\delta_{i,1}
 \big(\delta_{j,k}+\sqrt{-1}\delta_{j+k,1}(-1)^{j}\big)
\end{eqnarray}
for all $0\leq i,j,k\leq 1$.
\end{example}

\begin{proof}
Our proof is based on the coquasi-Hopf algebra isomorphism (\ref{eqn:D8iso}) $\mathbb{C}D_8\cong C\btd B^\ast$, where $C\btd B^\ast$ is the right partial dual of $H$ determined by the {\pams}
\begin{equation*}
\begin{array}{ccc}
\xymatrix{
B \ar@<.5ex>[r]^{\iota} & H \ar@<.5ex>@{-->}[l]^{\zeta} \ar@<.5ex>[r]^{\pi}
& C \ar@<.5ex>@{-->}[l]^{\gamma}  }
&\;\;\text{and}\;\;&
\xymatrix{
C^\ast \ar@<.5ex>[r]^{\pi^\ast}
& H^\ast \ar@<.5ex>@{-->}[l]^{\gamma^\ast} \ar@<.5ex>[r]^{\iota^\ast}
& B^\ast \ar@<.5ex>@{-->}[l]^{\zeta^\ast}  },
\end{array}
\end{equation*}
given in Lemma \ref{lem:K8pams}.

Recall also in Lemma \ref{lem:K8pams} that
$\zeta$ is the left $B$-module map satisfying
$\zeta(w)=1$, $\zeta\mid_B=\id_B$,
$\gamma(\overline{1})=1$ and
$\gamma(\overline{w})=w$. Thus we could make calculations
\begin{eqnarray*}
&& \sum \zeta(ww_{(1)})\otimes \zeta(w_{(2)})   \\
&=&
\zeta\Big( w\cdot\frac{1}{2}(1+bc)w\Big)\otimes \zeta(w)
+\zeta\Big(w\cdot\frac{1+\sqrt{-1}}{4}(1-bc)w\Big)\otimes \zeta(bw)    \\
&& +\zeta\Big(w\cdot\frac{1-\sqrt{-1}}{4}(1-bc)w\Big)\otimes \zeta(cw)   \\
&=&
\zeta\Big( \frac{1}{2}(1+bc)\Big)\otimes \zeta(w)
+\zeta\Big(\frac{1+\sqrt{-1}}{4}(1-bc)\Big)\otimes b\zeta(w)    \\
&& +\zeta\Big(\frac{1-\sqrt{-1}}{4}(1-bc)\Big)\otimes c\zeta(w)    \\
&=&
\frac{1}{2}(1+bc)\otimes 1
+\frac{1+\sqrt{-1}}{4}(1-bc)\otimes b
+\frac{1-\sqrt{-1}}{4}(1-bc)\otimes c
\end{eqnarray*}
and
\begin{eqnarray}\label{eqn:zetaw1zetaw2}
&& \sum \zeta(w_{(1)})\otimes \zeta(w_{(2)})   \nonumber  \\
&=&
\zeta\Big(\frac{1}{2}(1+bc)w\Big)\otimes \zeta(w)
+\zeta\Big(\frac{1+\sqrt{-1}}{4}(1-bc)w\Big)\otimes \zeta(bw)  \nonumber   \\
&&  +\zeta\Big(\frac{1-\sqrt{-1}}{4}(1-bc)w\Big)\otimes \zeta(cw)  \nonumber   \\
&=&
\frac{1}{2}(1+bc)\otimes 1
+\frac{1+\sqrt{-1}}{4}(1-bc)\otimes b
+\frac{1-\sqrt{-1}}{4}(1-bc)\otimes c,
\end{eqnarray}
and find that $\sum \zeta(ww_{(1)})\otimes \zeta(w_{(2)})=\sum \zeta(w_{(1)})\otimes \zeta(w_{(2)})$ holds.

Now let us consider the linear basis $\{\overline{1},\overline{w}\}$ of $C$ with dual basis $\{p_{\overline{1}},p_{\overline{w}}\}$ of $C^\ast$, and note that $p_{\overline{1}}+p_{\overline{w}}=\e$.
Then
we could apply the formula (\ref{eqn:phi^-1(MAMS)})
know that the associator $\pd{\phi}$ of the left partially dualized quasi-Hopf algebra $C^\ast\#B$ has inverse
\begin{eqnarray*}
\pd{\phi}^{-1}
&=&
\sum \big(\e\#\zeta[\gamma(\overline{1})\gamma(\overline{1})_{(1)}]\big)
\otimes \big(p_{\overline{1}}\#\zeta[\gamma(\overline{1})_{(2)}]\big)
\otimes \big(p_{\overline{1}}\#1\big)  \\
&&
+ \sum \big(\e\#\zeta[\gamma(\overline{w})\gamma(\overline{1})_{(1)}]\big)
\otimes \big(p_{\overline{w}}\#\zeta[\gamma(\overline{1})_{(2)}]\big)
\otimes \big(p_{\overline{1}}\#1\big)  \\
&&
+ \sum \big(\e\#\zeta[\gamma(\overline{1})\gamma(\overline{w})_{(1)}]\big)
\otimes \big(p_{\overline{1}}\#\zeta[\gamma(\overline{w})_{(2)}]\big)
\otimes \big(p_{\overline{w}}\#1\big)  \\
&&
+\sum \big(\e\#\zeta[\gamma(\overline{w})\gamma(\overline{w})_{(1)}]\big)
\otimes \big(p_{\overline{w}}\#\zeta[\gamma(\overline{w})_{(2)}]\big)
\otimes \big(p_{\overline{w}}\#1\big)  \\
&=&
 (\e\#\zeta(1))
\otimes (p_{\overline{1}}\#\zeta(1))
\otimes (p_{\overline{1}}\#1)
+  (\e\#\zeta(w))
\otimes (p_{\overline{w}}\#\zeta(1))
\otimes (p_{\overline{1}}\#1)  \\
&&
+ \sum (\e\#\zeta(w_{(1)}))
\otimes (p_{\overline{1}}\#\zeta(w_{(2)}))
\otimes (p_{\overline{w}}\#1)  \\
&&
+\sum (\e\#\zeta(ww_{(1)}))
\otimes (p_{\overline{w}}\#\zeta(w_{(2)}))
\otimes (p_{\overline{w}}\#1)  \\
&=&
(\e\#1)\otimes (p_{\overline{1}}\#1)\otimes (p_{\overline{1}}\#1)
+  (\e\#1)\otimes (p_{\overline{w}}\#1)\otimes (p_{\overline{1}}\#1)  \\
&&
+ \sum (\e\#\zeta(w_{(1)}))
\otimes (p_{\overline{1}}\#\zeta(w_{(2)}))
\otimes (p_{\overline{w}}\#1)  \\
&&
+\sum (\e\#\zeta(w_{(1)}))
\otimes (p_{\overline{w}}\#\zeta(w_{(2)}))
\otimes (p_{\overline{w}}\#1)  \\
&=&
(\e\#1)\otimes (\e\#1)\otimes (p_{\overline{1}}\#1)
+ \sum (\e\#\zeta(w_{(1)}))\otimes (\e\#\zeta(w_{(2)}))
\otimes (p_{\overline{w}}\#1)  \\
&\overset{(\ref{eqn:zetaw1zetaw2})}{=}&
(\e\#1)\otimes (\e\#1)\otimes (p_{\overline{1}}\#1)
+ \Big(\e\#\frac{1}{2}(1+bc)\Big)\otimes (\e\#1)\otimes (p_{\overline{w}}\#1)   \\
&&
+ \Big(\e\#\frac{1+\sqrt{-1}}{4}(1-bc)\Big)\otimes (\e\#b)\otimes (p_{\overline{w}}\#1)  \\
&&
+ \Big(\e\#\frac{1-\sqrt{-1}}{4}(1-bc)\Big)\otimes (\e\#c)\otimes (p_{\overline{w}}\#1).
\end{eqnarray*}

Moreover, it follows from the notations (\ref{eqn:b*andc*}) that
\begin{equation}\label{eqn:varphi^j psi^k(MAMS)}
\varphi^{j} \psi^{k}
=p_1+(-1)^kp_b+(-1)^jp_c+(-1)^{j+k}p_{bc}\;\;\;\;\;\;\;\;
(\forall 0\leq j,k\leq 1)
\end{equation}
holds, since $p_1,p_b,p_c,p_{bc}$ are orthonormal idempotents in $B^\ast$.
Then we regard $\pd{\phi}^{-1}$ as a linear function on $(C\btd B^\ast)^{\otimes\,3}$, and compute for any $i,i',i'',j,j',j'',k,k',k''\in\{0,1\}$ that
\begin{eqnarray*}
&& \big\langle \pd{\phi}^{-1},
  (\overline{w}{}^{i}\btd \varphi^{j} \psi^{k})
  \otimes (\overline{w}{}^{i'}\btd \varphi^{j'} \psi^{k'})
  \otimes (\overline{w}{}^{i''}\btd \varphi^{j''} \psi^{k''})
\big\rangle  \\
&=&
\big\langle p_{\overline{1}},\overline{w}{}^{i''}\big\rangle
+\big\langle \varphi^{j} \psi^{k},\frac{1}{2}(1+bc)\big\rangle
 \big\langle p_{\overline{w}},\overline{w}{}^{i''}\big\rangle   \\
&&
+ \big\langle \varphi^{j} \psi^{k},\frac{1+\sqrt{-1}}{4}(1-bc)\big\rangle
  \big\langle \varphi^{j'} \psi^{k'},b\big\rangle
  \big\langle p_{\overline{w}},\overline{w}{}^{i''}\big\rangle  \\
&&
+ \big\langle \varphi^{j} \psi^{k},\frac{1-\sqrt{-1}}{4}(1-bc)\big\rangle
  \big\langle \varphi^{j'} \psi^{k'},c\big\rangle
  \big\langle p_{\overline{w}},\overline{w}{}^{i''}\big\rangle  \\
&=&
\delta_{i'',0}
+ \frac{1}{2}[1+(-1)^{j+k}]\delta_{i'',1}  \\
&&+ \frac{1+\sqrt{-1}}{4}[1-(-1)^{j+k}](-1)^{k'}\delta_{i'',1}
+ \frac{1-\sqrt{-1}}{4}[1-(-1)^{j+k}](-1)^{j'}\delta_{i'',1} \\
&=&
\delta_{i'',0}+\delta_{i'',1}\Big(\delta_{j,k}+\delta_{j+k,1}
\Big[\frac{1+\sqrt{-1}}{2}(-1)^{k'}+\frac{1-\sqrt{-1}}{2}(-1)^{j'}\Big]\Big)  \\
&=&
\delta_{i'',0}+\delta_{i'',1}\Big(\delta_{j,k}+\delta_{j+k,1}
\Big[\frac{1-\sqrt{-1}}{2}(-1)^{j'}+\frac{1+\sqrt{-1}}{2}(-1)^{k'}\Big]\Big).
\end{eqnarray*}
As a conclusion, we could apply the isomorphism (\ref{eqn:D8iso}) $\mathbb{C}D_8\cong C\btd B^\ast$ to know that $\pd{\phi}$ given by (\ref{eqn:3cocycle}) is a normalized 3-cocycle on the group $D_8$,
and it is non-trivial due to the argument in Proposition \ref{prop:PDofK8genuine(MAMS)}.

Furthermore,
according to the constructions in \cite[Example 3.50]{BCPV19} for instance,
the quasi-bialgebra $(\mathbb{C}D_8)^\ast$ has an antipode $(\pd{S},\pd{\alpha},\pd{\beta})$ given by
$$\pd{S}(r^{i}s^{j}t^{k})=(r^{i}s^{j}t^{k})^{-1},\;\;\;\;
\pd{\alpha}=\pd{\e}\;\;\;\;\text{and}\;\;\;\;
\pd{\beta}(r^{i}s^{j}t^{k})=
\pd{\phi}\left(r^{i}s^{j}t^{k},(r^{i}s^{j}t^{k})^{-1},r^{i}s^{j}t^{k}\right){}^{-1}$$
for all  $0\leq i,j,k\leq 1$.
In specific, one could calculate that
\begin{eqnarray*}
&& \pd{S}(r^{i}s^{j}t^{k})
= (r^{i}s^{j}t^{k})^{-1}=t^{1-k}s^{1-j}r^{1-i}  \\
&=& \left\{\begin{array}{ll}
t^{1-k}s^{1-j}r & (\text{when}\;\;i=0) \\
t^{1-k}s^{1-j} & (\text{when}\;\;i=1)
\end{array}\right.
\overset{(\ref{eqn:D8(MAMS)})}=
\left\{\begin{array}{ll}
r^1s^{1-k}t^{1-j} & (\text{when}\;\;i=0) \\
r^0s^{1-j}t^{1-k} & (\text{when}\;\;i=1)
\end{array}\right.  \\
&=& \delta_{i,0}r^1s^{1-k}t^{1-j}+\delta_{i,1}r^0s^{1-j}t^{1-k},
\end{eqnarray*}
which coincides with the first formula listed in (\ref{eqn:D8antipode(MAMS)}).

As for the distinguished element $\pd{\beta}$, it follows that
\begin{eqnarray*}
&& \pd{\beta}(r^{i}s^{j}t^{k})
=
\pd{\phi}\left(r^{i}s^{j}t^{k},(r^{i}s^{j}t^{k})^{-1},r^{i}s^{j}t^{k}\right){}^{-1} \\
&=&
\delta_{i,0}\pd{\phi}
  \left(r^0s^{j}t^{k},r^1s^{1-k}t^{1-j},r^0s^{j}t^{k}\right){}^{-1}
+\delta_{i,1}\pd{\phi}
  \left(r^1s^{j}t^{k},r^0s^{1-j}t^{1-k},r^1s^{j}t^{k}\right){}^{-1}  \\
&\overset{(\ref{eqn:3cocycle})}=&
\delta_{i,0}+\delta_{i,1}
 \Big(\delta_{j,k}+\delta_{j+k,1}
\Big[\frac{1-\sqrt{-1}}{2}(-1)^{1-j}+\frac{1+\sqrt{-1}}{2}(-1)^{1-k}\Big]\Big)  \\
&=&
\delta_{i,0}+\delta_{i,1}
 \Big(\delta_{j,k}+\delta_{j+k,1}
\Big[-\frac{1-\sqrt{-1}}{2}(-1)^{j}+\frac{1+\sqrt{-1}}{2}(-1)^{j}\Big]\Big)  \\
&=&
\delta_{i,0}+\delta_{i,1}
 \big(\delta_{j,k}+\sqrt{-1}\delta_{j+k,1}(-1)^{j}\big),
\end{eqnarray*}
which coincides with the last formula listed in (\ref{eqn:D8antipode(MAMS)}).
\end{proof}

\begin{remark}
In fact, the form of distinguished elements $\pd{\alpha}$ and $\pd{\beta}$ could be also obtained by the constructions in Theorem \ref{thm:partialdual}(4) (or equivalently, the formulas in Remark \ref{rmk:equivDelta}(4)). Specifically, since the left partially dualized quasi-Hopf algebra $C^\ast\#B$ is commutative in this example,
it follows from the axiom
$$\sum\pd{\phi}^1\pd{\beta}\pd{S}(\pd{\phi}^2)\pd{\alpha}\pd{\phi}^3=\pd{e}$$
of quasi Hopf algebras that the distinguished elements $\pd{\alpha}$ and $\pd{\beta}$ are both invertible.
Thus, we could choose $\pd{\alpha}=\pd{\e}$ and
apply the formula in Remark \ref{rmk:equivDelta}(4) to calculate
$$\pd{\beta}=\pd{v}
=
\left(\e\#\overline{\zeta}[\overline{\gamma}(\overline{1})]\right)
(p_{\overline{1}}\#1)
+ \left(\e\#\overline{\zeta}[\overline{\gamma}(\overline{w})]\right)
(p_{\overline{w}}\#1) \\
=
p_{\overline{1}}\#\overline{\zeta}[\overline{\gamma}(\overline{1})]
+ p_{\overline{w}}\#\overline{\zeta}[\overline{\gamma}(\overline{w})],$$
since $\{\overline{1},\overline{w}\}$ is a linear basis of $C$ with dual basis $\{p_{\overline{1}},p_{\overline{w}}\}$ of $C^\ast$.

However, recall in Lemma \ref{lem:K8pams} and its proof that $\overline{1}$ and $\overline{w}$ are group-like in $C$, and hence
$$\overline{\gamma}(\overline{1})=\gamma(\overline{1})^{-1}=1
\;\;\;\;\text{and}\;\;\;\;
\overline{\gamma}(\overline{w})=\gamma(\overline{w})^{-1}=w^{-1}=w.$$
Therefore, we have
\begin{eqnarray*}
\pd{\beta}
&=&
p_{\overline{1}}\#\overline{\zeta}[\overline{\gamma}(\overline{1})]
+ p_{\overline{w}}\#\overline{\zeta}[\overline{\gamma}(\overline{w})]
~=~
p_{\overline{1}}\#\overline{\zeta}(1)
+ p_{\overline{w}}\#\overline{\zeta}(w)  \\
&\overset{(\ref{eqn:K8zetabar(MAMS)})}=&
p_{\overline{1}}\#1
+\frac{1}{2}\left[p_{\overline{w}}\#(1-\sqrt{-1}b+\sqrt{-1}c+bc)\right].
\end{eqnarray*}
Consequently, for all $0\leq i,j,k\leq 1$, note from (\ref{eqn:varphi^j psi^k(MAMS)}) that
$$\varphi^{j} \psi^{k}=p_1+(-1)^kp_b+(-1)^jp_c+(-1)^{j+k}p_{bc},$$
and thus
\begin{eqnarray*}
\big\langle\pd{\beta},\overline{w}{}^{i}\btd \varphi^{j} \psi^{k}\big\rangle
&=&
\big\langle p_{\overline{1}},\overline{w}{}^{i}\big\rangle
\big\langle \varphi^{j} \psi^{k},1\big\rangle
+\frac{1}{2}\big\langle p_{\overline{w}},\overline{w}{}^{i}\big\rangle
\big\langle \varphi^{j} \psi^{k},1-\sqrt{-1}b+\sqrt{-1}c+bc\big\rangle  \\
&=&
\delta_{i,0}
+\frac{1}{2}\delta_{i,1}\Big(1-\sqrt{-1}(-1)^k+\sqrt{-1}(-1)^j+(-1)^{j+k}\Big)  \\
&=&
\delta_{i,0}
+\frac{1}{2}\delta_{i,1}(\delta_{j,k}+\delta_{j+k,1})
\Big(1-\sqrt{-1}(-1)^k+\sqrt{-1}(-1)^j+(-1)^{j+k}\Big)  \\
&=&
\delta_{i,0}
+\frac{1}{2}\delta_{i,1}
\Big[\delta_{j,k}\Big(1-\sqrt{-1}(-1)^j+\sqrt{-1}(-1)^j+(-1)^{2k}\Big)  \\
&&  \;\;\;\;\;\;\;\;\;\;\;\;\;\;\;\;\;\;\;
+\delta_{j+k,1}\Big(1-\sqrt{-1}(-1)^{1-j}+\sqrt{-1}(-1)^j+(-1)^{1}\Big)\Big]  \\
&=&
\delta_{i,0}
+\frac{1}{2}\delta_{i,1}
\big[2\delta_{j,k}+\delta_{j+k,1}\big(2\sqrt{-1}(-1)^j\big)\big]  \\
&=&
\delta_{i,0}+\delta_{i,1}
\big(\delta_{j,k}+\sqrt{-1}\delta_{j+k,1}(-1)^j\big),
\end{eqnarray*}
which coincides with the last formula listed in (\ref{eqn:D8antipode(MAMS)}) as well.
\end{remark}

More generally, it might be possible to use the same method in this subsection, in order to construct genuine quasi-Hopf algebras which are not commutative. For this purpose, one could apply the (left) partial dualization to the following kinds of examples:
\begin{itemize}
\item
Non-group-theoretical semisimple Hopf algebras (\cite{Nik08,GNN09} etc.) which fit into certain extensions of Hopf algebras;

\item
Hopf algebras which fit into extensions of two non-semisimple Hopf algebras.
\end{itemize}
On the other hand, their left partially dualized quasi-Hopf algebras might be shown to be genuine with the help of some specific invariants (under categorical Morita equivalences).

\section*{Acknowledgements}

The author would like to thank Professors Kenichi Shimizu, Akira Masuoka, Gongxiang Liu, Shenglin Zhu and Zhimin Liu for motivations, suggestions as well as valuable discussions on the manuscript.

\end{document}